\tikzset{shorten <>/.style={shorten >=#1,shorten <=#1}}
\titleformat{\paragraph}[runin]{\bfseries}{\theparagraph}{7em plus .2em minus .3em}{}[.]
\theoremstyle{plain}
\newtheorem{theorem}{Theorem}
\newtheorem{proposition}[theorem]{Proposition}
\newtheorem{lemma}[theorem]{Lemma}
\newtheorem{corollary}[theorem]{Corollary}
\theoremstyle{definition}
\newtheorem{definition}[theorem]{Definition}
\newtheorem*{convention}{Convention}
\theoremstyle{remark}
\newtheorem{example}[theorem]{Example}
\newtheorem{remark}[theorem]{Remark}
\numberwithin{theorem}{section}
\DeclarePairedDelimiterX{\set}[1]{\{}{\}}{#1}
\newcommand{\midvert}{\,\delimsize|\,\mathopen{}}
\DeclarePairedDelimiterX{\setst}[2]{\{}{\}}{#1\ \midvert\ #2}
\DeclarePairedDelimiterX{\segcc}[2]{[}{]}{#1, #2}
\DeclarePairedDelimiterX{\segoo}[2]{]}{[}{#1, #2}
\DeclarePairedDelimiterX{\segco}[2]{[}{[}{#1, #2}
\DeclarePairedDelimiterX{\segoc}[2]{]}{]}{#1, #2}
\DeclarePairedDelimiterX{\sem}[1]{\llbracket}{\rrbracket}{#1} % ``semantics of''
\newcommand{\optionaldesc}[2]{%
	\phantomsection
	#1\protected@edef\@currentlabel{#1}\label{#2}%
}
\newcommand{\N}{\mathbb{N}}
\DeclareMathOperator{\powerset}{\mathscr{P}}
\newcommand*{\restr}[2]{\left.#1\right\rvert_{#2}} % to restrict functions
\newcommand{\ovl}{\overline}
\newcommand{\into}{\hookrightarrow}
\newcommand{\from}{\leftarrow}
\newcommand{\uset}{{\uparrow}}
\newcommand{\dset}{{\downarrow}}
\DeclareMathOperator{\Up}{Up} %up-sets
\DeclareMathOperator{\ClU}{Clp^{\uparrow}}
\DeclareMathOperator{\Spec}{Spec}
\renewcommand{\hat}{\widehat}
\newcommand{\cat}[1]{\ensuremath{\mathbf{#1}}}
\newcommand{\cBA}{\cat{BA}}
\newcommand{\cPoset}{\cat{Poset}}
\newcommand{\DL}{\cat{DL}}
\newcommand{\cDL}{\cat{DL}}
\newcommand{\cHeyt}{\cat{HA}}
\newcommand{\cPriestley}{\cat{Priestley}}
\newcommand{\cFinSet}{\cat{FinSet}}
\newcommand{\cFinSetInj}{\cat{FinSetInj}}
\newcommand{\cKHaus}{\cat{KHaus}}
\newcommand{\cKOrd}{\cat{KOrd}}
\newcommand{\cSet}{\cat{Set}}
\newcommand{\cBoolSp}{\cat{BoolSp}}
\newcommand{\cEsakia}{\cat{Esakia}}
\newcommand{\cC}{\cat{C}}
\newcommand{\cD}{\cat{D}}
\newcommand{\cE}{\cat{E}}
\newcommand{\cT}{\cat{T}}
\newcommand{\cI}{\cat{I}}
\newcommand{\cJ}{\cat{J}}
\DeclareMathOperator{\cInd}{\cat{Ind}}
\DeclareMathOperator{\colim}{colim}
\newcommand{\op}{\mathrm{op}} % opposite category
\newcommand{\ladj}[1]{{#1}^*} % left adjoint
\newcommand{\radj}[1]{{#1}^{\#}} % right adjoint
\DeclareMathOperator{\Hom}{Hom}
\DeclareMathOperator{\Sub}{Sub}
\DeclareMathOperator{\Mod}{Mod} % models
\DeclareMathOperator{\Tp}{Tp} % types
\newcommand{\hD}{\mathcal{D}} % A hyperdoctrine
\newcommand{\sP}{\mathcal{P}} % A polyadic space
\newcommand{\sQ}{\mathcal{Q}} % A polyadic space
\newcommand{\sS}{\mathcal{S}} % A polyadic space
\title{On duality and model theory for polyadic spaces}
\author{Sam~v.~Gool and Jérémie~Marquès}
\date{\today}
\begin{document}

\maketitle
\abstract{This paper is a study of first-order coherent logic from the point of view of duality and categorical logic. We prove a duality theorem between coherent hyperdoctrines and open polyadic Priestley spaces, which we subsequently apply to prove completeness, omitting types, and Craig interpolation theorems for coherent or intuitionistic logic. Our approach emphasizes the role of interpolation and openness properties, and allows for a modular, syntax-free treatment of these model-theoretic results. As further applications of the same method, we prove completeness theorems for constant domain and Gödel-Dummett intuitionistic predicate logics.}

% !TeX spellcheck = en-US
% !TeX root = GM2022.tex

\section{Introduction}
The aim of this paper is to show how the point of view of duality and categorical logic can be used to gain insight into, and generalize, some classical theorems of model theory. Our main object of study is \emph{polyadic spaces}.%
\footnote{The term \emph{polyadic space}, as we use it in this paper, comes from Joyal's note \cite{joyalpoly}, in reference to Halmos' \emph{polyadic algebras}. There exists another, entirely different, use of the term ``polyadic space'' \cite{Mrowka1970}, as a generalization of ``dyadic space,'' introduced independently from and around the same time as Joyal's. In case confusion between the two notions might arise, one could use the slightly longer name ``polyadic type space'' for the notion we study in this paper.}
They are the pointwise Priestley duals of hyperdoctrines, which were introduced by Lawvere as an algebraization of first-order logic in \cite{LawvereAdjoint,LawvereEqHyp}.

Let us give a quick description of hyperdoctrines and polyadic spaces over the base category $\cFinSet$; the general definition is given in Sections~\ref{sec:hyperdoctrines} and \ref{sec:duality}. A hyperdoctrine may be thought of as an algebra representing a first-order theory, cf., e.g., \cite[Ch.~5]{CoumansThesis} or \cite{See83}. Elements of this algebra are formulas modulo equivalence, and the operations of the algebra model conjunction, disjunction and quantification. Hyperdoctrines are multi-sorted algebras, having one sort for each finite cardinal $n$. This distinguishes them from the more classical polyadic and cylindric algebras \cite{Hal62, HMT71, HMT85}, which are single-sorted. In a hyperdoctrine representing a theory $T$, the elements of the $n^{\text{th}}$ sort are the formulas whose free variables are taken among $x_1,...,x_n$, modulo equivalence in the theory $T$.

The base logic that we consider in this paper is \emph{coherent (first-order) logic}. This logic may be understood in two equivalent ways: first, as the fragment of classical first-order logic that only uses equality, existential quantification, and finitary conjunctions and disjunctions; second, as the fragment of intuitionistic first-order logic that does not have implication nor universal quantification. The propositional fragment of coherent logic is algebraized by distributive lattices, so that a coherent hyperdoctrine is a collection of distributive lattices, organized into a functor $\cFinSet \to \cDL$ from finite sets to distributive lattices.

Applying Priestley duality, we get a collection of Priestley spaces indexed by finite sets.
The $n^{\text{th}}$ space is known as the \emph{space of $n$-types} in model theory: its points can be thought of as $n$-pointed models of the theory modulo equivalence in coherent logic, i.e., models equipped with $n$ distinguished points and two such models are considered equal if they satisfy the same coherent formulas. What makes this intuition correct is the associated completeness theorem: each complete $n$-type is realized by a model. These spaces of types are organized into a functor $\sS \colon \cFinSet^\op \to \cPriestley$ from finite sets to Priestley spaces; this is the prime example of a \emph{polyadic space}. The operations of existential quantification and equality of coherent logic are then realized topologically by taking direct images under maps $\sS(f)$, for appropriate choices of $f$; see Proposition~\ref{prop:left-adjoint-dually}. Note that, while in a classical (Boolean) first-order theory, the order on the space of $n$-types is discrete, proper inclusions between types can occur in coherent logic.

Our first main result, Theorem~\ref{thm:duality}, gives a dual equivalence between coherent hyperdoctrines and open polyadic Priestley spaces.
Our axiomatization of these spaces relies on two essential aspects: \emph{interpolation} and \emph{openness}.
Correspondingly, we need two duality-theoretic results: first, interpolation is auto-dual (Proposition~\ref{prop:dual-interpolation}), and thus appears both on the topological and algebraic sides of the duality; second, openness dually corresponds to the existence of adjoints (Proposition~\ref{prop:left-adjoint-dually}).
The duality theorem allows us to exclusively work on the topological side in the remainder of the paper, in which we use polyadic spaces to give proofs of completeness, omitting types and Craig interpolation theorems.

We briefly note the origins of our approach to hyperdoctrines in this paper. The first source of the idea can be found in Joyal's short 1971 announcement~\cite{joyalpoly}, while the authors initially learned of this approach through the 2019 lecture course~\cite{joyalcourse}. Joyal's work in particular already contained the formulation of the dual of a Beck-Chevalley condition as amalgamation and the notion of model that we use in this paper. The results announced in~\cite{joyalpoly} were never fully published, but our results here are much indebted to Joyal's view. Our contributions in this paper expand and extend this view, and show how it is naturally placed in the context of Priestley duality and compact ordered spaces.

Polyadic spaces are closely related to the \emph{type space functors} of \cite{EAGLE2021102907,HAYKAZYAN_2019,Kamsma_2022}, \emph{type categories} of \cite{Knight2007}, and \emph{compact abstract theories} of \cite{Ben-Yaacov03}; we point out two distinguishing features of our work here. First, in these works, the type spaces are endowed with the \emph{spectral} topology.
In our work here, on the other hand, the specialization order on these spaces will play a crucial role through the interpolation properties, and for this reason we will work with \emph{Priestley} spaces instead.
This in particular allows us to give an order-topological characterization of type space functors as the open polyadic Priestley spaces over the base category $\cFinSet$ (Theorem~\ref{thm:duality}).
Second, instead of working exclusively over $\cFinSet$, our definition of polyadic space is relative to a more general base category $\cC$ of ``small objects.'' For example, by taking $\cC$ to be the category of finite graphs or of finite linear orders, polyadic spaces over $\cC$ represent first-order theories extending respectively the theories of graphs or linear orders. The only condition that we require on $\cC$ is that it has pushouts, or see Remark~\ref{rmk:fw-colim-butterflies} for a weaker condition.

Since what we do in this paper works not only for Priestley spaces but for compact ordered spaces, we will often place ourselves in this more general context on the topological side. An algebraic dual and logical interpretation of this more general notion of polyadic space will be given in a forthcoming paper, based on Abbadini and Reggio's duality for compact ordered spaces \cite{AbbadiniThesis2021,AbbadiniReggio_2020}. This yields a variation on the continuous syntactic categories of \cite{AlbHar16}, and seems related to the link between compact abstract theories and continuous logic noted in \cite{yaacov_berenstein_henson_usvyatsov_2008}.

The paper falls into two parts: in Sections~\ref{sec:prelim}--\ref{sec:main-lemma} we introduce the duality between hyperdoctrines and polyadic spaces, using first-order coherent logic and its model theory as the guiding example. In the remaining Sections~\ref{sec:complete-coherent}--\ref{sec:variations-int} we apply this point of view to prove various results in the model theory of  first-order logics in the more general setting of compact ordered spaces. These results in particular generalize known results in coherent, intuitionistic and classical first-order logics.

In particular, after we set up some basic notation and recall preliminaries in Section~\ref{sec:prelim}, we define in Sections~\ref{sec:hyperdoctrines}~and~\ref{sec:duality} coherent hyperdoctrines and characterize their Priestley duals, open polyadic spaces, leading to our first main result, the dual equivalence of Theorem~\ref{thm:duality}. In Section~\ref{sec:models}, we explain how models and types can be viewed through the lens of hyperdoctrines and polyadic spaces. Section~\ref{sec:main-lemma} is devoted to proving a technical result that we call the interpolation extension principle (Proposition~\ref{prop:main}). This principle is central to our approach, as it allows us to extend the interpolation properties of polyadic spaces to their inductive completions.

Interpolation properties allow us to use what is usually called the method of diagrams in model theory to prove various completeness theorems (Sections~\ref{sec:complete-coherent}, \ref{sec:complete-intuitionistic} and \ref{sec:variations-int}).
In Section~\ref{sec:access}, we see how to compute filtered colimits in categories of models, which is needed in Section~\ref{sec:FO-interpol}.
Under the additional hypothesis of openness, we can prove an omitting types theorem and Craig interpolation for intuitionistic logic (Sections~\ref{sec:omitting-types} and \ref{sec:FO-interpol}, respectively).
% !TeX spellcheck = en-US
% !TeX root = GM2022.tex

\section{Lattices, Priestley duality, and categories}
\label{sec:prelim}

In this section, we collect preliminaries and notation that will be used throughout the paper. For basic definitions and notations of lattice theory and category theory, we follow the conventions in \cite{GG22} unless noted otherwise. One notable difference with \cite{GG22} is that we will represent distributive lattices as clopen \emph{up}-sets, rather than down-sets, of their Priestley dual space, see further in Section~\ref{sec:duality}. This will fit better with existing literature when we consider Kripke semantics for intuitionistic logics below.

\paragraph{Basic order theory}
Throughout the paper, we use the word \emph{order} to mean a reflexive, transitive and anti-symmetric relation. Linearity is always explicitly mentioned when it is assumed. Given a subset $U$ of a poset $X$, we write ${\uparrow} U \coloneqq \setst{x \in X}{\exists y \in U\colon y \leq x}$ for its \emph{upward closure}, and symmetrically ${\downarrow} U$ for its \emph{downward closure}. We say that $U$ is an \emph{up-set} if $U = {\uparrow}U$ and a \emph{down-set} if $U = {\downarrow} U$. An order-preserving function $f \colon X\to Y$ is called \emph{bounded} if the direct image $f[U]$ of each up-set $U \subseteq X$ is an up-set. When $f \colon A \to B$ is an order-preserving map between ordered sets, we denote its left adjoint, if it exists, by $\ladj{f}$, and its right adjoint, if it exists, by $\radj{f}$.

A distributive lattice is an ordered set in which finite meets and joins exist and distribute over each other; in particular, all distributive lattices in this paper are assumed to have a least and greatest element, denoted $\bot$ and $\top$, respectively, and homomorphisms are required to preserve them. The category of distributive lattices with homomorphisms is denoted $\cDL$.
When $A$ is a distributive lattice and $a \in A$, the principal down-set ${\downarrow} a$ is a distributive lattice itself. We introduce the following notation for the \emph{projection map}
\[ p_a \colon A \to {\downarrow} a, \quad p_a(b) \coloneqq a \wedge b,\]
and we note that this is a surjective homomorphism. It may also be described as the quotient of $A$ by the congruence generated by the relation $a = \top$.

A distributive lattice $A$ is a \emph{Heyting algebra} if, and only if, for all $a,b \in A$, there exists a necessarily unique element $a\to b \in A$ such that $c \leq a\to b \iff c \land a \leq b$ for all $c \in A$. Equivalently, $p_a$ has a right adjoint $q_a$ for each $a \in A$ and $a\to b = q_a(p_a(b))$. In a Heyting algebra, we write $\neg a \coloneqq a \to \bot$. The category of Heyting algebras with Heyting homomorphisms, i.e., lattice homomorphisms that moreover preserve $\to$, is denoted $\cHeyt$. A \emph{Boolean algebra} is a Heyting algebra in which $a \vee \neg a = \top$ for all $a$; the full subcategory is denoted $\cBA$. We write $\powerset(X)$ for the Boolean algebra of subsets of a set $X$.

Given two posets $A$ and $B$, an \emph{order relation} from $A$ to $B$ is an up-set of $A^{\op}\times B$. If $f \colon A \to B$ is a monotone function, then the upward closure of its graph in $A^{\op} \times B$, i.e., the relation $\setst{(a,b) \in A^{\op} \times B}{f(a) \leq b}$, is an order relation. We will say an order relation $R \subseteq A^\op \times B$ is \emph{represented} by $f \colon A \to B$ if $R$ is the upward closure of the graph of $f$. The composition of two order relations $R \subseteq A^\op \times B$ and $S \subseteq B^\op \times C$ is
\[ R \cdot S \coloneqq \setst{(a,c) \in A^\op \times C}{\exists b\in B \colon R(a,b) \land S(b,c)}\text{.} \]
Note that if $R$ and $S$ are represented respectively by $f$ and $g$, then $R\cdot S$ is represented by the composite $fg$, which is our notation for `first $f$, then $g$', also see ``Categories'' below.

\paragraph{Compact ordered spaces}
A \emph{compact ordered space} is a a compact topological space $X$ equipped
with an order $\leq$ that is closed as a subset of $X^2$ with respect to the product topology. These spaces generalize compact
Hausdorff spaces to the ordered setting: compact Hausdorff spaces are the compact ordered
spaces whose order is discrete (the equality order), and any compact ordered space is necessarily Hausdorff. A \emph{morphism} between compact ordered spaces
is a continuous order-preserving function.
We write $\cKOrd$ for the category of compact ordered spaces and $\cKHaus$ for the full
subcategory of compact Hausdorff spaces.

We recall the following basic fact about compact ordered spaces, that we will use in Section~\ref{sec:main-lemma}.

\begin{lemma}\label{lem:cofiltered-limit-non-empty}
	For any cofiltered diagram $D \colon \cI \to \cKOrd$, if $D(i) \neq \emptyset$ for all $i \in \cI$, then $\lim D$ is non-empty.
\end{lemma}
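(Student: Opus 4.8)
The plan is to exhibit $\lim D$ concretely as a closed subspace of the product of the $D(i)$ and to prove it non-empty by a finite-intersection-property argument, the key combinatorial input being that a cofiltered category admits a cone over every finite subdiagram.

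First, recall that $\cKOrd$ is complete, with limits constructed as in $\cat{Set}$ and topologized and ordered as subspaces of products: the product $P \coloneqq \prod_{i \in \cI} D(i)$ is compact by Tychonoff's theorem, and its order --- the intersection over $i$ of the preimages, under the maps $\pi_i \times \pi_i$, of the closed orders of the $D(i)$ --- is closed, so $P \in \cKOrd$; and $\lim D$ is the set of tuples $(x_k)_k \in P$ such that $D(f)(x_i) = x_j$ for every morphism $f \colon i \to j$ of $\cI$. For each such $f$ the set $C_f \coloneqq \setst{(x_k)_k \in P}{D(f)(x_i) = x_j}$ is closed, since the continuous maps $(x_k)_k \mapsto D(f)(x_i)$ and $(x_k)_k \mapsto x_j$ from $P$ to the Hausdorff space $D(j)$ have closed equalizer. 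Hence $\lim D = \bigcap_f C_f$ is a closed, thus compact, subspace of $P$, and it remains to show it is non-empty.

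By compactness of $P$ it suffices to check that the family $\set{C_f}_f$ has the finite intersection property. Fix finitely many morphisms $f_1, \dots, f_n$ of $\cI$, say $f_m \colon i_m \to j_m$; together they mention only finitely many objects. Since $\cI$ is cofiltered, the finite subdiagram on these objects and morphisms admits a cone: there are an object $c$ and morphisms $g_k \colon c \to k$, for $k$ among the objects occurring, with $g_{i_m} f_m = g_{j_m}$ for every $m$. By hypothesis $D(c) \neq \emptyset$, so pick $x \in D(c)$ and define a tuple $(x_k)_{k \in \cI} \in P$ by setting $x_k \coloneqq D(g_k)(x)$ when $k$ is one of the objects occurring and letting $x_k$ be an arbitrary element of $D(k)$ otherwise --- legitimate since every $D(k) \neq \emptyset$, this step using the axiom of choice. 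For each $m$, functoriality of $D$ gives
$D(f_m)(x_{i_m}) = D(f_m)\bigl(D(g_{i_m})(x)\bigr) = D(g_{i_m} f_m)(x) = D(g_{j_m})(x) = x_{j_m}$,
so $(x_k)_k \in C_{f_1} \cap \dots \cap C_{f_n}$.

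Therefore $\lim D = \bigcap_f C_f \neq \emptyset$. The only step that is not bookkeeping is the appeal to the fact that a cofiltered category has a cone over any finite subdiagram; this is standard and is proved by induction on the number of objects and morphisms of the subdiagram, using repeatedly that any two objects of $\cI$ have a common lower bound and that any parallel pair of morphisms of $\cI$ can be equalized on the left. Everything else reduces to Tychonoff's theorem together with the observation that equalizers of continuous maps into a Hausdorff space are closed.
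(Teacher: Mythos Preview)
Your proof is correct and follows the same approach as the paper: realize $\lim D$ as an intersection of closed subsets of the compact product $\prod_i D(i)$ and appeal to the finite intersection property, using cofilteredness to produce a point in any finite sub-intersection. The paper's proof is a compressed one-liner (``filtered intersection of non-empty closed subsets''), and you have simply unpacked the details that make this work.
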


\begin{proof}
	Limits in $\cKOrd$ can be computed as in $\cSet$, since arbitrary products and closed subspaces of compact ordered spaces are compact ordered. Hence, $\lim D$ can be computed as a filtered intersection of non-empty closed subsets of $\prod_i D(i)$, which is non-empty because $\prod_i D(i)$ is a compact space.
\end{proof}

An alternative, purely topological description of a compact ordered space $X$ can be given by considering the topological space $X^{\uparrow}$, defined on the same set of points as $X$, but equipped with the subtopology of \emph{open up-sets} on $X$.  The original topology and the order of $X$ can be recovered from the space $X^{\uparrow}$. A function $X \to Y$ between compact ordered spaces is called \emph{lower semi-continuous} if it is continuous as a function $X^{\uparrow} \to Y^{\uparrow}$ (this terminology is standard in the literature). Any morphism of compact ordered spaces is lower semi-continuous, but the converse is not true. Analogously, we may define the topological space $X^{\downarrow}$ on the same set of points as $X$ with the topology of \emph{open down-sets}.
The topological spaces of the form $X^{\uparrow}$ or  $X^{\downarrow}$, for $X$ a compact ordered space, are called \emph{stably compact spaces}. For more on the general theory of compact ordered and stably compact spaces, the reader may refer to, e.g., \cite{Lawson11} or \cite[Sec.~2.3]{GG22}.

\paragraph{Priestley duality}
Priestley duality \cite{Pri1970} is a dual equivalence of categories
\begin{equation}\label{eq:Priestley-duality}
	\Spec \ \colon \cat{DL}^\op \leftrightarrows \cat{Priestley} \ \colon \ClU.
\end{equation}
We briefly recall the definitions and some basic properties of $\cat{Priestley}$, $\ClU$ and $\Spec$ that we will rely on; see e.g. \cite[Ch.~3]{GG22} or \cite[Sec.~1.5]{DST2019} for more detailed accounts.

A \emph{Priestley space} is a compact ordered space that is
moreover \emph{totally order disconnected}, i.e., for any $x, y \in X$, if $x
\nleq y$, then there exists a clopen up-set $U$ of $X$ such that $x \in U$ and
$y \not\in U$; note that total order disconnectedness in particular implies that
$\leq$ is closed as a subset of $X^2$. The category $\cat{Priestley}$ is the full subcategory of $\cKOrd$ on the Priestley spaces. The correspondence between compact ordered spaces and stably compact spaces mentioned in the previous subsection restricts to an isomorphism between \cat{Priestley} and the category of \emph{spectral} spaces with functions whose inverse image preserves compact-open sets.
For later use in the paper, we note two consequences of total order disconnectedness.

\begin{lemma}\label{lem:adh-spec}
	Let $X$ be a Priestley space and $A \subseteq X$ a subset. Then ${\uparrow}\ovl{A}$ is the intersection of all the clopen up-sets containing $A$.
\end{lemma}

\begin{proof}
	Let $x \not\in {\uparrow}\ovl{A}$. By total order disconnectedness, for each $a \in A$, pick a clopen up-set $K_a$ containing $a$ and not $x$. Since $A$ is closed, it is compact; pick a finite subcover $(K_a)_{a \in F}$ of $A$. Then $\bigcup_{a \in F} K_a$ is a clopen up-set containing $A$ and not $x$.
\end{proof}

\begin{lemma}[Strong Priestley separation property]\label{lem:Priest-sep}
	Let $X$ be a Priestley space. Let $A,B\subseteq X$ be respectively a closed up-set and a closed down-set. If $A$ and $B$ are disjoint, then they are separated by a clopen up-set $U \subseteq X$ containing $A$ and disjoint from $B$.
\end{lemma}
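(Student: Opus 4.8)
The statement is the order-topological analogue of "disjoint closed and compact sets in a compact Hausdorff space can be separated by a clopen." My plan is to exploit Lemma~\ref{lem:adh-spec}: since $A$ is already a closed up-set we have ${\uparrow}\ovl{A} = A$, so that lemma tells us $A$ equals the intersection of the family $\mathcal{K}$ of all clopen up-sets containing $A$. The separating set $U$ will then be produced as a \emph{finite} intersection of members of $\mathcal{K}$, the finiteness coming from a compactness argument against $B$.

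\textbf{Key steps.} First, invoke Lemma~\ref{lem:adh-spec} with the subset $A$; because $A = {\uparrow} A = \ovl{A}$, this gives $A = \bigcap \mathcal{K}$ where $\mathcal{K} = \setst{K \subseteq X}{K \text{ a clopen up-set}, A \subseteq K}$. Second, translate disjointness $A \cap B = \emptyset$ into a covering statement: $B \subseteq X \setminus A = X \setminus \bigcap\mathcal{K} = \bigcup_{K \in \mathcal{K}} (X \setminus K)$, and each $X \setminus K$ is open. Third, use that $B$ is closed in the compact space $X$, hence compact, to extract a finite subcover $B \subseteq (X \setminus K_1) \cup \dots \cup (X \setminus K_n)$ with $K_1,\dots,K_n \in \mathcal{K}$. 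Fourth, set $U \coloneqq K_1 \cap \dots \cap K_n$; this is a clopen up-set (finite intersections of clopen up-sets are clopen up-sets), it contains $A$ since $A \subseteq K_i$ for every $i$, and $U \cap B = \emptyset$ since $B$ avoids $U$ by the finite subcover. This $U$ is the required separating clopen up-set.

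\textbf{Alternative, if one prefers not to cite Lemma~\ref{lem:adh-spec}.} One can argue directly from total order disconnectedness: for $a \in A$, $b \in B$ one has $a \nleq b$ (otherwise $a \leq b \in B$ forces $a \in B$ since $B$ is a down-set, contradicting disjointness), so pick a clopen up-set $U_{a,b} \ni a$ with $b \notin U_{a,b}$; then compactness of $B$ gives a clopen up-set $U_a = \bigcap_{i} U_{a,b_i}$ with $a \in U_a$ and $U_a \cap B = \emptyset$; then compactness of $A$ gives $U = \bigcup_j U_{a_j} \supseteq A$, still disjoint from $B$ and still a clopen up-set.

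\textbf{Main obstacle.} There is no real obstacle here; the only point needing a moment's care is the observation that a closed up-set $A$ satisfies ${\uparrow}\ovl{A} = A$, so that Lemma~\ref{lem:adh-spec} applies cleanly, and the bookkeeping that finite intersections (resp.\ unions) of clopen up-sets remain clopen up-sets. The substance is just a double application of compactness, once against $B$ and—in the direct argument—once against $A$.
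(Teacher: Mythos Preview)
Your main argument is correct and is essentially the paper's own proof: both invoke Lemma~\ref{lem:adh-spec} to see that $A$ is the intersection of all clopen up-sets containing it, then use compactness of $B$ to extract a finite subfamily whose intersection separates $A$ from $B$. The only cosmetic difference is that the paper indexes the covering family by points $b\in B$ (picking one $K_b\ni A$ with $b\notin K_b$), whereas you work with the entire family $\mathcal{K}$ at once; the compactness step and the resulting $U$ are identical.
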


\begin{proof}
	By Lemma~\ref{lem:adh-spec}, for each $b \in B$, pick a clopen up-set $K_b$ that contains $A$ and not $b$. Then $(K_b^c)_{b \in B}$ covers $B$, so since $B$ is closed, hence compact, pick a finite subcover $(K_b^c)_{b \in F}$ of $B$. Then $U \coloneqq \bigcap_{b \in B} K_b$ is a clopen up-set containing $A$ and disjoint from $B$.
\end{proof}

We now describe the functors $\ClU$ and $\Spec$ in the dual equivalence \eqref{eq:Priestley-duality}. The clopen up-sets of a Priestley space $X$ form a distributive lattice, that we
denote by $\ClU(X)$, and if $f \colon X \to Y$ is a morphism between Priestley
spaces then $f^{-1} \colon \ClU(Y) \to \ClU(X)$ is a lattice homomorphism. The functor in the
other direction, $\Spec \colon \cat{DL} \to \cat{Priestley}$, takes a
distributive lattice $A$ and equips the set $\Spec(A) \coloneqq \Hom_\cat{DL}(A, 2)$
with the pointwise order{\footnotemark} and topology inherited from the product topology on
$2^L$, where $2 = \{0, 1\}$ is ordered by $0 \leq 1$ and equipped with the
discrete topology. In what follows, when $A$ is a lattice, we write $A_*$
for the Priestley space $\Spec A$. A lattice homomorphism $h \colon A \to B$ is
sent to the function $h_* \colon B_* \to A_*$ that is defined by sending $x
\colon B \to 2$ to $x \circ h \colon A \to 2$. Priestley duality is actually a poset-enriched equivalence, in the sense that,
for any pair of lattice homomorphisms $h, k \colon A \rightrightarrows B$, we
have that $h \leq k$ pointwise if, and only if, $h_* \leq k_*$ pointwise.
\footnotetext{The definition of the
partial order on $\Spec A$ is subject to some discussion (and confusion) in the
literature. In this paper, we use the convention that, for $x, y \in \Spec A$,
$x \leq y$ iff for all $a \in A$, $x(a) = 1$ implies $y(a) = 1$; that is, $x
\leq y$ means that the prime filter $x^{-1}(1)$ is a subset of the prime filter
$y^{-1}(1)$. Some references, e.g. \cite[Ch.~3]{GG22}, use the reverse of this
order, and there are good arguments for this, also see that chapter.}

The unit of the dual equivalence \eqref{eq:Priestley-duality} is an isomorphism between a distributive lattice $A$ and $\ClU(A_*)$; for any $a \in A$, we write $\hat{a} \coloneqq \setst{x \in A_*}{x(a) = 1}$ for the corresponding clopen up-set of $A_*$. The fact that the map $a \mapsto \widehat{a}$ is injective relies on the prime filter-ideal theorem, a weak form of the axiom of choice, see, e.g. \cite[Thm.~3.10]{GG22}.

A Priestley space $A_*$ whose dual $A$ is a Heyting algebra is called an \emph{Esakia space}.
A well-known equivalent condition is that for every open $U \subseteq A_*$, the set ${\downarrow}U \subseteq A_*$ is open. Moreover, if $f_* \colon A_* \to B_*$ is a morphism between Esakia spaces, then its dual $f$ is a morphism of Heyting algebras
if and only if $f$ is bounded; in Remark~\ref{rmk:Heyt-dual} below, we explain how these well-known facts can be deduced as corollaries to our results in Section~\ref{sec:duality}. We write
$\cEsakia$ for the category of Esakia spaces and continuous order-preserving morphisms that are \emph{bounded}, where we recall that a function $f$ between posets is called \emph{bounded} if the direct image under $f$ of any up-set is an up-set.
Priestley duality restricts to an equivalence between $\cHeyt^\op$ and
$\cEsakia$ that is known as \emph{Esakia duality} \cite{Esakia2019}.

A distributive lattice is a Boolean algebra if, and only if, the partial order on its spectrum is trivial. In
this case (only), the Priestley topology, open up-set topology, and open
down-set topology all coincide. We thus also call a Priestley space
\emph{Boolean} if its partial order is trivial, and in this case the definition
requires exactly that the topology is compact, Hausdorff, and zero-dimensional.%
\footnote{Boolean spaces have also been referred to as \emph{Stone} spaces in
the literature.} The further restriction of the dual equivalence is called (Boolean) \emph{Stone duality} \cite{Sto1938BA}.

\paragraph{Coherent first-order logic} A formula of classical first-order logic is \emph{coherent} if it uses only finitary conjunctions and disjunctions, the existential quantifier and equality. A first-order theory is coherent if all its axioms are of the form $\forall x_1,\dots,x_n\colon \varphi(x_1,\dots,x_n) \to \psi(x_1,\dots,x_n)$ with $\varphi$ and $\psi$ coherent formulas. We write this axiom $\varphi(x_1,\dots,x_n) \vdash \psi(x_1,\dots,x_n)$ in the context of coherent logic.

\paragraph{Ordered Stone--Čech compactification} The forgetful functor $\cKOrd \to \cPoset$ has a left adjoint $\beta \colon \cPoset \to \cKOrd$ that we call the \emph{ordered Stone-Čech compactification}, generalizing the well known Stone-Čech compactification which is left adjoint to the forgetful functor $\cKHaus \to \cSet$. If $X$ is a poset, we can describe $\beta X$ as the Priestley dual of the lattice of up-sets of $X$. The poset $X$ is a dense subspace of $\beta X$. If $u \colon X \to A$ is an order-preserving function from a poset to a compact ordered space, we write $\ovl{u}$ for its unique extension by continuity to $\beta X$.

\paragraph{Categories}
Given two morphisms $f \colon A\to B$ and $g \colon B\to C$ in a category, we write either $g \circ f$ or $fg$ for their composite; note the change of order according to whether or not the symbol $\circ$ is used. We will mostly work in categories whose $\Hom$-sets are equipped with an order, and we often use \emph{lax commutative diagrams}, also simply called \emph{lax diagrams}, which in this setting means that some faces in the diagram represent inequalities instead of equalities. Concretely, a lax diagram
\begin{equation*}
\begin{tikzcd}
			A \ar[r,"g"] \ar[d,"f"'] & C \ar[d,"v"]\\
			B \ar[r,"u"'] \ar[ru,phantom,"\leq"{description,sloped}] & D
	\end{tikzcd}\end{equation*}
expresses the property that $u \circ f \leq v \circ g$, which we may also write as $fu \leq gv$.

When $\cat{C}$ and $\cat{D}$ are categories, we denote by $[\cat{C}, \cat{D}]$ the category of functors from $\cat{C}$ to $\cat{D}$, with natural
transformations between them. A functor $\cat{C}^\op \to \cat{D}$ is sometimes called a \emph{$\cat{D}$-valued presheaf on $\cat{C}$}, and a functor $\cat{C} \to \cat{D}$ is sometimes called a \emph{$\cat{D}$-valued copresheaf on $\cat{C}$}. In case $\cat{D} = \cSet$, the adjective ``$\cSet$-valued'' is often omitted.

\paragraph{Oplax cocones and oplax colimits} Let $\cC$ be a small category. The category $[\cC^\op,\cPoset]$ is order-enriched: given two natural transformations $\alpha, \beta : K \rightrightarrows L$, we say that $\alpha \leq \beta$ if $\alpha_c(x) \leq \beta_c(x)$ for all $c \in \cC$ and all $x \in K(c)$. We recall some $2$-categorical terminology in this special case, see \cite[Sec.~6]{Borceux94T1} and \cite{Lack2010} for more about lax limits in general. Let $\sS \in [\cC^\op,\cPoset]$ and let $F \colon \cI \to [\cC^\op,\cPoset]$ be a diagram indexed by a poset $\cI$. An \emph{oplax cocone} $(F(i) \to \sS)_i$ is a family of morphisms $F(i) \to \sS$ such that $F(i) \to \sS \leq F(i) \to F(j) \to \sS$ for all $i \leq j$. Oplax cocones of this kind correspond to natural transformations $\tilde{F} \to \sS$ where $\tilde{F} \colon \cC^\op\to\cPoset$ is the \emph{oplax colimit} of $F$. Elements of $\tilde{F}(c)$ are pairs $(i \in \cI, x \in F(i)(c))$ and $(i,x) \leq (j,y)$ means that $i \leq j$ and $F(i \to j)(x) \leq y$.

Given a presheaf $F \colon \cC^\op \to \cSet$, we write $\int F$ for the \emph{category of elements} of $F$, equipped with a forgetful functor $\int F \to \cC$. Its objects are the pairs $(c \in \cC,x \in F(c))$ and the morphisms $(c,x) \to (d,y)$ are the morphisms $c \to d$ in $\cC$ such that $x = F(c\to d)(y)$. We will exclusively use this notation $\int F$ when $F$ is a $\cSet$-valued presheaf.

\paragraph{Inductive completion and ind-objects} A category is called \emph{filtered} if every finite diagram admits a cocone. A colimit is called filtered if it is indexed by a filtered category, and a limit is called cofiltered if it is indexed by the opposite of a filtered category. Any essentially small category $\cC$ has a universal cocompletion with respect to filtered colimits, denoted $\cInd(\cC)$, and called the \emph{inductive completion} of $\cC$ \cite[Cor.~2.1.9${}^\prime$]{MakkaiPare}; objects of $\cInd(\cC)$ are called \emph{ind-objects of $\cC$}.
Given a functor $F \colon \cC \to \cD$ whose codomain admits all filtered colimits, we write $\ovl{F} \colon \cInd(\cC) \to \cD$ for its left Kan extension along $\cC \into \cInd(\cC)$, which is the essentially unique extension of $F$ to $\cInd(\cC)$ preserving filtered colimits. We call $\ovl{F}$ the \emph{extension by continuity} of $F$. An object $X$ of a category is called \emph{$\omega$-presentable} if $\Hom(X,-)$ preserves filtered colimits. More generally, $X$ is called $\kappa$-presentable if $\Hom(X,-)$ preserves $\kappa$-filtered colimits (i.e., colimits indexed by categories in which every diagram of cardinality less than $\kappa$ admits a cocone). An important property is that the canonical embedding $\cC \hookrightarrow \cInd(\cC)$ sends every object of $\cC$ to an $\omega$-presentable object in $\cInd(\cC)$. The category of $\omega$-presentable objects of $\cInd(\cC)$ is the \emph{Cauchy completion} of the category $\cC$.

We briefly recall how $\cInd(\cC)$ may be realized. Given an essentially small category $\cC$, the Yoneda embedding associates to any object $n \in \cC$ the so-called \emph{representable presheaf} $\cC(-,n) \in [\cC^\op, \cSet]$. The inductive completion of $\cC$ may be realized as the full subcategory of $[\cC^\op,\cSet]$ on presheaves $X$ such that $\int X$ is filtered. An equivalent condition is that $X$ is a filtered colimit of representables, see \cite[Thm.~1.2.2]{MakkaiPare} or \cite[Thm.~8.3.3]{SGA4-1}.
For such a presheaf $X$ and $n \in \cC$, the set $X(n)$ is naturally isomorphic to the set of morphisms $n \to X$ in $\cInd(\cC)$; here and in what follows, we suppress notation for the canonical embedding $\cC \hookrightarrow \cInd(\cC)$. For instance, $\cInd(\cFinSet) \simeq \cSet$, and in this case the set of morphisms $n \to X$ is $X^n$. Even in the case of a general category $\cC$, when $X \in \cInd(\cC)$ and $n \in \cC$, we will sometimes use the notation $X^n$ for the set of morphisms $n \to X$.

\begin{convention}
	Unless noted otherwise, $\cC$ will always denote an essentially small category with pushouts. The existence of pushouts can be relaxed (Remarks~\ref{rmk:no-pushout}, \ref{rmk:theories-presheaf-type} and \ref{rmk:fw-colim-butterflies}), but most of our proofs will be made under this hypothesis.
\end{convention}
% !TeX spellcheck = en-US
% !TeX root = GM2022.tex

\section{Hyperdoctrines and interpolation}
\label{sec:hyperdoctrines}

Propositional logics are commonly studied using lattice-based algebraic structures, such as distributive lattices, Boolean algebras, and frames. Lawvere \cite{LawvereAdjoint,LawvereEqHyp} defined \emph{hyperdoctrines} to extend these algebraizations of propositional logics to the first-order case, making use of the insight that quantifiers can be modeled using adjoints. The aim of this section is to recall the definitions of coherent, intuitionistic and Boolean hyperdoctrines, while emphasizing our perspective that, in addition to adjunction, two \emph{interpolation} properties are used. We refer the reader to \cite[Ch.~5]{CoumansThesis} and \cite[Sec.~4.4 and 7]{MarquisReyes2011} for much more background than we can give here.

\paragraph{Definition of hyperdoctrines via interpolation}
The following interpolation property for a lax square makes sense in any category of ordered structures. It seems to have been first defined explicitly, in the context of Heyting algebras, in \cite{PittsAM:amaich}.

\begin{definition}\label{dfn:interp}
	Let
	\begin{equation}\label{ABCDinterp}\begin{tikzcd}
			A \ar[r,"g"] \ar[d,"f"'] & C \ar[d,"v"]\\
			B \ar[r,"u"'] \ar[ru,phantom,"\leq"{description,sloped}] & D
	\end{tikzcd}\end{equation}
	be a lax square of ordered structures.
	We say that the square \eqref{ABCDinterp} has the \emph{interpolation property} if for all $b \in B$ and all $c \in C$ such that $u(b) \leq v(c)$, there is an interpolant $a \in A$ verifying $b \leq f(a)$ and $g(a) \leq c$. When the orders are discrete, we say that the square has the \emph{amalgamation property}.
\end{definition}

Note that the interpolation property is not invariant under transposition of the lax square. We will often need this property for squares that are actually commutative, but in which the interpolation property only holds in one direction; see, e.g., the definition of morphism between coherent hyperdoctrines (Definition~\ref{dfn:morphism-c-hyp}). Considering a commutative square as a lax square then allows us to indicate the direction of the interpolation property.

\begin{remark}\label{rmk:interp-relations}
	Given the square \eqref{ABCDinterp}, we have two order relations from $B$ to $C$, known as the \emph{weakening relations} defined respectively by the span $(f,g)$ and by the cospan $(u, v)$ \cite[Sec.~2.2]{KurMosJun21}:
	\begin{align*}
			R_1 &\coloneqq \setst{ (b,c) \in B^\op \times C}{\exists a \in A \text{ such that } b \leq f(a) \text{ and } g(a) \leq c}\text{,} \\
			R_2 &\coloneqq \setst{(b,c) \in B^\op \times C}{u(b) \leq v(c)}\text{.}
	\end{align*}
	Note that $R_1 \subseteq R_2$ expresses exactly the lax commutativity of the square \eqref{ABCDinterp}, and that $R_2 \subseteq R_1$ is equivalent to the interpolation property for the square \eqref{ABCDinterp}. In other words, a lax square with the interpolation property is precisely one for which $R_1 = R_2$. Lax squares with the interpolation property are called \emph{exact squares} in \cite{KurMosJun21}.
\end{remark}

Our definition of coherent hyperdoctrine will use the following property of a homomorphism between distributive lattices, which is closely related to Frobenius reciprocity, as we will show in Proposition~\ref{prop:frobenius}.

\begin{definition}\label{dfn:frobenius}
	A homomorphism of distributive lattices $h \colon A \to B$ is \emph{Frobenius} if, for every $a \in A$, the following square has the interpolation property:
	\begin{equation}\label{eq:frobenius}
		\begin{tikzcd}
			A \ar[d,"h"'] \ar[r,"p_a"] & {\downarrow}a \ar[d,"\restr{h}{{\downarrow} a}"] \\
			B \ar[r,"p_{h(a)}"'] \ar[ru,phantom,"\leq"{sloped,description,pos=0.4}] & {\downarrow}h(a)
		\end{tikzcd}
	\end{equation}
\end{definition}

\begin{definition}\label{dfn:c-hyp}
	A \emph{coherent hyperdoctrine} is a functor $\hD \colon \cC \to \DL$ satisfying the following three axioms:
	\begin{enumerate}
		\item[\optionaldesc{Int1}{h-axiom:interpol-1}.] The image by $\hD$ of any pushout square in $\cC$ has the interpolation property.% , in both directions.
		\item[\optionaldesc{Int2}{h-axiom:interpol-2}.] For any morphism $\sigma \colon n\to m$ in $\cC$, $\hD\sigma$ is Frobenius.
		\item[\optionaldesc{AdjLeft}{h-axiom:adjoint-left}.] For any morphism $\sigma \colon n\to m$ in $\cC$, $\hD\sigma$ has a left adjoint.
	\end{enumerate}
\end{definition}
We emphasize that, in our definition, a coherent hyperdoctrine over a base category $\cC$ is a \emph{covariant} functor from $\cC$ to the category of distributive lattices, while hyperdoctrines are usually presented as contravariant functors in the literature. The covariant way of phrasing the definition fits with our view, to be pursued later in this paper, that $\mathbf{C}$ is thought of as a category of ``small objects,'' rather than as a category of contexts. We will explain in detail the correspondence with the usual definitions after giving a few examples.

\paragraph{Some examples of hyperdoctrines}
We now give three examples of hyperdoctrines. We start with two main examples in the case $\cC = \cFinSet$, one syntactical and the other semantical. The first example shows how coherent hyperdoctrines give an algebraic counterpart to coherent first-order logic, which is defined in detail in, e.g., \cite[Appendix~E]{CoumansThesis}, but we do not need it in the rest of the paper.

\begin{example}\label{exa:logic-hyperdoctrine}
	To a coherent first-order theory $T$, we associate the functor $\hD_T \colon \cFinSet \to \cDL$ defined as follows. For a finite set $n$, $\hD_T(n)$ is the distributive lattice of coherent formulas, modulo equivalence in the theory $T$, whose free variables are taken in the set $n$. Alternatively, it is the lattice of coherent sentences modulo equivalence in the theory $T$ with $n$ constants added; we will not make any distinction between these two points of view. For a function $f \colon n \to m$, the image of $\varphi$ by $\hD_T(f)$ is obtained by substituting in the formula $\varphi$ each variable $x \in n$ by the variable $f(x)$, in a capture-avoiding way. More explicitly,
	\[ \hD_T(f)(\varphi(x_1,{\ldots},x_n)) = \varphi(f(x_1),{\ldots},f(x_n)) \text{.} \]
	This functor $\hD_T$ is a coherent hyperdoctrine. Indeed, in the papers \cite{LawvereAdjoint,LawvereEqHyp} introducing hyperdoctrines, Lawvere remarked that equality and existential quantifiers are given by the left adjoints of the morphisms $\hD(f)$ for appropriate choices of $f$, which naturally leads to the axiom \ref{h-axiom:adjoint-left}. The two interpolation axioms \ref{h-axiom:interpol-1} and \ref{h-axiom:interpol-2} then express how these adjoints must interact with other substitutions and conjunction. For more details on this construction, see \cite[Sec.~5.1.1]{CoumansThesis}.
\end{example}

Although we will not require this fact in the rest of the paper, it turns out that every coherent hyperdoctrine over $\cFinSet^\op$ is isomorphic to one of the form $\hD_T$ for some coherent first-order theory $T$.
From this perspective, a coherent first-order theory is a presentation of a coherent hyperdoctrine in the sense of multi-sorted universal algebra.

\begin{example}\label{exa:powerset-hyperdoc}
	For any set $X$, we define the \emph{hyperdoctrine of predicates on $X$}, $\powerset_X \colon \cFinSet \to \DL$, as the composite of the functor $X^{(-)} \colon \cFinSet \to \cSet^\op$ with the power set functor $\cSet^\op \to \cDL$. This is a coherent hyperdoctrine, which is even Boolean, see Definition~\ref{dfn:int-hyp} below.
\end{example}

A connection between the above two examples will be made when we speak about models in Section~\ref{sec:models}.

\begin{example}\label{exa:subobj-hyperdoc}
	An alternative but closely related categorical approach to coherent logic is provided by coherent categories. To each coherent category $\cE$ is associated a canonical coherent hyperdoctrine of subobjects $\Sub_\cE \colon \cE^\op \to \cDL$ sending each object to its lattice of subobjects. This construction is part of a $2$-adjunction between coherent categories and coherent hyperdoctrines, see \cite{CoumansThesis} for more details. In this paper, this example will only intervene again in Remark~\ref{rmk:link-conceptual-comp} where we explain the link between our statement of conceptual completeness and the one for pretoposes in \cite{MakkaiReyes}.
\end{example}

\paragraph{Comparison with the usual definition}
Let us highlight two differences between the standard definition of coherent hyperdoctrines and the one we gave here, and explain why they are equivalent.

First, a coherent hyperdoctrine is usually presented as a functor $\cC^\op \to \DL$ where $\cC$ is thought of as a category of contexts, the morphisms being substitutions. In this paper, though, we present hyperdoctrines as functors $\cC \to \DL$ where $\cC$ is thought of as a category of ``small objects.'' We find that this gives an easier intuition for what follows (cf. Section~\ref{sec:models}), even if the usual choice is better-behaved with regard to the connection with coherent categories and toposes. So the concept we define in Definition~\ref{dfn:c-hyp} above could be called a ``coherent $\cC^\op$-hyperdoctrine.'' We will often just refer to these objects as ``hyperdoctrines'' when the base category is fixed and the adjective ``coherent'' is clear from the context. Moreover, we note here that the assumption that the base category $\cC$ has pushouts is not essential, but simplifies the definition; in Remark~\ref{rmk:no-pushout}, we will indicate a definition that does not require this assumption.

Second, in the literature, the interpolation axioms \ref{h-axiom:interpol-1} and \ref{h-axiom:interpol-2} are more commonly expressed as the so-called Beck-Chevalley and Frobenius conditions, respectively, see below. We have chosen to phrase the definition so that the existence of a left adjoint is separate from the interpolation properties, as we find this leads to a more transparent and modular duality theory in the next section. On the other hand, the usual definition has the advantage that it shows that coherent hyperdoctrines form a variety of multisorted algebras.
We now explain in some detail why our Definition~\ref{dfn:c-hyp} is equivalent to the one existing in the literature, e.g., \cite[Def.~7]{Coumans12}.

Recall that a lax square of posets \eqref{ABCD} is said to satisfy the \emph{Beck--Chevalley condition} if $f$ and $v$ have left adjoints $\ladj{f}$ and $\ladj{v}$ such that the square \eqref{ABCD2} commutes.
	
\begin{minipage}{0.485\linewidth}
\begin{equation}\label{ABCD}\begin{tikzcd}
	A \ar[r,"g"] \ar[d,"f"'] & C \ar[d,"v"]\\
	B \ar[r,"u"'] \ar[ru,phantom,"\leq"{sloped,description}] & D
\end{tikzcd}\end{equation}
\end{minipage}\begin{minipage}{0.485\linewidth}
\begin{equation}\label{ABCD2}\begin{tikzcd}
	A \ar[r,"g"] & C \\
	B \ar[r,"u"'] \ar[u,"\ladj{f}"] & D \ar[u,"\ladj{v}"']
\end{tikzcd}\end{equation}
\end{minipage}

The following fact was remarked in \cite[p.~156]{PittsAM:amaich}; we give a proof using weakening relations.

\begin{proposition}\label{prop:interp-BC}
	Suppose given a lax square of posets as in \eqref{ABCD}, and suppose that $f$ and $v$ have left adjoints, $\ladj{f}$ and $\ladj{v}$, respectively. Then the lax square \eqref{ABCD} has the interpolation property if, and only if, the square \eqref{ABCD2} is commutative.
\end{proposition}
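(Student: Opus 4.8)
The plan is to exploit the weakening relations $R_1,R_2$ from Remark~\ref{rmk:interp-relations} together with the hom-set characterisations of the two adjunctions. Recall that, for the square~\eqref{ABCD}, the inclusion $R_1\subseteq R_2$ is precisely its lax commutativity (which we are given), and $R_2\subseteq R_1$ is precisely the interpolation property. So the task reduces to showing that, under the standing hypothesis $R_1\subseteq R_2$, the reverse inclusion $R_2\subseteq R_1$ holds if and only if \eqref{ABCD2} commutes.

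First I would rewrite both relations using the adjunctions. Since $\ladj{f}\dashv f$, the condition $b\leq f(a)$ is equivalent to $\ladj{f}(b)\leq a$, so
\[
	R_1 = \setst{(b,c)\in B^\op\times C}{\exists a\in A\colon \ladj{f}(b)\leq a \text{ and } g(a)\leq c}\text{.}
\]
Because $g$ is monotone, an interpolant $a$ exists if and only if the least admissible candidate $a=\ladj{f}(b)$ already works; hence $R_1=\setst{(b,c)}{g(\ladj{f}(b))\leq c}$. Symmetrically, using $\ladj{v}\dashv v$ to rewrite $u(b)\leq v(c)$ as $\ladj{v}(u(b))\leq c$, we get $R_2=\setst{(b,c)}{\ladj{v}(u(b))\leq c}$. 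Thus $R_1$ and $R_2$ are the order relations represented (in the sense of Section~\ref{sec:prelim}) by the monotone maps $g\circ\ladj{f}$ and $\ladj{v}\circ u$, respectively.

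I would then invoke the elementary observation that, for monotone maps $\varphi,\psi\colon B\to C$, the relation represented by $\psi$ is contained in the one represented by $\varphi$ if and only if $\varphi\leq\psi$ pointwise: the ``only if'' by instantiating $c=\psi(b)$, the ``if'' by transitivity. Applying this with $\varphi=g\circ\ladj{f}$ and $\psi=\ladj{v}\circ u$ gives $R_2\subseteq R_1\iff g\circ\ladj{f}\leq\ladj{v}\circ u$, while the hypothesis $R_1\subseteq R_2$ reads $\ladj{v}\circ u\leq g\circ\ladj{f}$. Together these say $g\circ\ladj{f}=\ladj{v}\circ u$, which is exactly commutativity of \eqref{ABCD2}; conversely, if \eqref{ABCD2} commutes then $R_1=R_2$, so in particular the interpolation property holds.

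There is no serious obstacle here: the proof is a short diagram chase once the relations are translated through the adjunctions. The only points demanding care are bookkeeping of the order direction and of the composition convention (the paper's ``$fg$'' meaning ``first $f$, then $g$''), and the small but essential remark that monotonicity of $g$ is precisely what lets us discharge the existential quantifier in $R_1$ by evaluating at the least admissible interpolant $\ladj{f}(b)$.
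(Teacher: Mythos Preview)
Your proof is correct and follows essentially the same route as the paper: both use the weakening relations $R_1,R_2$ from Remark~\ref{rmk:interp-relations}, identify them as the order relations represented by $g\circ\ladj{f}$ and $\ladj{v}\circ u$ respectively, and conclude. The paper phrases the identification via composition of order relations (writing $R_1$ and $R_2$ as composites and noting that the adjoints represent the constituent factors), while you unwind the existential directly using the adjunction and monotonicity of $g$; these are two packagings of the same computation.
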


\begin{proof}
	We use the characterization of the interpolation property of Remark~\ref{rmk:interp-relations}. Note that the relation $R_1$ defined there is equal to the composite of relations
		\[ \setst{(b,a) \in B^\op\times A}{b \leq f(a)} \cdot \setst{(a,c) \in A^\op\times C}{g(a) \leq c} \text{,}\]
		and that, similarly, $R_2$ is equal to the composite
		\[ \setst{(b,d) \in B^\op\times D}{u(b) \leq d} \cdot \setst{(d,c) \in D^\op\times C}{d \leq v(c)} \text{.} \]
	Note also that the relations $\setst{(b,a) \in B^\op\times A}{b \leq f(a)}$ and $\setst{(d,c) \in D^\op\times C}{d \leq v(c)}$ are represented by the left adjoints $\ladj{f}$ and $\ladj{v}$, respectively, and the composite relations $R_1$ and $R_2$ are thus also represented by the composites $\ladj{f} g$ and $u \ladj{v}$, respectively. The stated equivalence now follows.
\end{proof}

In particular, looking back at Definition~\ref{dfn:c-hyp}, Proposition~\ref{prop:interp-BC} implies that if the axiom \ref{h-axiom:adjoint-left} holds, then the axiom \ref{h-axiom:interpol-1} is equivalent to the statement that $\hD$ sends pushout squares to Beck-Chevalley squares, which is how this axiom is usually stated. Viewed like this, the corresponding logical intuition is that existential quantification and equality interact well with substitutions.

We now show the connection between the axiom \ref{h-axiom:interpol-2} and Frobenius reciprocity. Recall that if a homomorphism $f \colon A\to B$ of distributive lattices has a left adjoint $\ladj{f} \colon B\to A$, then the adjoint pair is said to satisfy \emph{Frobenius reciprocity} if, for all $a \in A$ and all $b \in B$,
\[ \ladj{f}(b \land f(a)) = \ladj{f}(b) \land a\text{.} \]

\begin{proposition}\label{prop:frobenius}
	Let $f \colon A \to B$ be a homomorphism of distributive lattices which has a left adjoint, $\ladj{f}$. The homomorphism $f$ is Frobenius in the sense of Definition~\ref{dfn:frobenius} if, and only if, the adjoint pair satisfies Frobenius reciprocity.
\end{proposition}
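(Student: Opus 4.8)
The plan is to recognize the square~\eqref{eq:frobenius} as an instance of the setting of Proposition~\ref{prop:interp-BC}, and then to compute that the resulting Beck--Chevalley condition is literally the Frobenius reciprocity identity. Throughout, fix $a \in A$ and regard~\eqref{eq:frobenius} as a (commutative, hence lax) square of the shape~\eqref{ABCD}, with left vertical $f$ and right vertical $\restr{f}{{\downarrow} a} \colon {\downarrow} a \to {\downarrow} f(a)$.

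The main --- and essentially only --- point requiring a small argument is that $\restr{f}{{\downarrow} a}$ has a left adjoint. For this, observe that the counit of $\ladj{f} \dashv f$ gives $\ladj{f}(f(a)) \leq a$, so by monotonicity $\ladj{f}(b) \leq a$ whenever $b \leq f(a)$; hence $\ladj{f}$ corestricts to a monotone map ${\downarrow} f(a) \to {\downarrow} a$, and the adjunction $\ladj{f} \dashv f$ restricts to show this corestriction is left adjoint to $\restr{f}{{\downarrow} a}$. Since $f$ has the left adjoint $\ladj{f}$ by hypothesis, Proposition~\ref{prop:interp-BC} applies to the square~\eqref{eq:frobenius} for our fixed $a$.

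By Proposition~\ref{prop:interp-BC}, the square~\eqref{eq:frobenius} has the interpolation property if and only if the associated square of left adjoints commutes, that is, if and only if $p_a \circ \ladj{f} = \ladj{(\restr{f}{{\downarrow} a})} \circ p_{f(a)}$ as maps $B \to {\downarrow} a$. Evaluating at an arbitrary $b \in B$, the left-hand side is $a \wedge \ladj{f}(b)$ and the right-hand side is $\ladj{(\restr{f}{{\downarrow} a})}(f(a) \wedge b) = \ladj{f}(f(a) \wedge b)$, using that $f(a) \wedge b \in {\downarrow} f(a)$, on which the left adjoint of $\restr{f}{{\downarrow} a}$ agrees with $\ladj{f}$. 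So, for this fixed $a$, the square~\eqref{eq:frobenius} has the interpolation property if and only if $a \wedge \ladj{f}(b) = \ladj{f}(f(a) \wedge b)$ for all $b \in B$.

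It remains to quantify over $a$: by Definition~\ref{dfn:frobenius}, $f$ is Frobenius if and only if~\eqref{eq:frobenius} has the interpolation property for every $a \in A$, which by the previous paragraph holds if and only if $a \wedge \ladj{f}(b) = \ladj{f}(f(a) \wedge b)$ for all $a \in A$ and $b \in B$ --- i.e.\ exactly when $\ladj{f} \dashv f$ satisfies Frobenius reciprocity. The substantive content is thus supplied by Proposition~\ref{prop:interp-BC}, and no genuine obstacle is expected. (If one prefers to bypass Proposition~\ref{prop:interp-BC}, the two inequalities can be obtained directly: for the ``if'' direction, given $b$ and $c \leq a$ with $f(a) \wedge b \leq f(c)$, the element $\ladj{f}(b)$ is an interpolant, using the unit for $b \leq f(\ladj{f}(b))$ and Frobenius reciprocity followed by the counit for $a \wedge \ladj{f}(b) \leq c$; for the ``only if'' direction, apply the interpolation property of~\eqref{eq:frobenius} with $c \coloneqq \ladj{f}(f(a) \wedge b) \in {\downarrow} a$, whose hypothesis $f(a) \wedge b \leq f(c)$ holds by the unit, to recover the nontrivial inequality $a \wedge \ladj{f}(b) \leq \ladj{f}(f(a) \wedge b)$, the reverse inequality being automatic.)
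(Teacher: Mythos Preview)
Your proof is correct and follows exactly the approach of the paper, which simply says ``By Proposition~\ref{prop:interp-BC} applied to \eqref{eq:frobenius}''; you have spelled out the details the paper leaves implicit, including the existence of the left adjoint to $\restr{f}{{\downarrow} a}$ and the identification of the Beck--Chevalley condition with the Frobenius identity. The parenthetical direct argument is a nice bonus, also correct.
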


\begin{proof}
	By Proposition~\ref{prop:interp-BC} applied to \eqref{eq:frobenius}.
\end{proof}

We note in passing that the two axioms \ref{h-axiom:interpol-1} and \ref{h-axiom:interpol-2} may alternatively be combined into an equivalent single axiom which says that $\hD$ sends any pushout square in $\cC$ to a square \eqref{ABCD} with the following \emph{strong interpolation property}: for any $a \in A$, $b \in B$ and $c \in C$ such that $u(b\land f(a)) \leq v(c)$, there is an interpolant $z \in A$ such that $b \leq f(z)$ and $g(z\land a) \leq c$.

The coherent hyperdoctrines over a fixed base category are the objects of a category, under the following notion of morphism.

\begin{definition}\label{dfn:morphism-c-hyp}
	A \emph{morphism} between coherent hyperdoctrines is a natural transformation $\tau \colon \hD_1 \to \hD_2$ such that, for every morphism $\sigma \colon n \to m$ in $\cC$, the naturality square, viewed as a lax square
	\[\begin{tikzcd}
		\hD_1(n) \ar[d, "\hD_1\sigma"'] \ar[r, "\tau_n"] & \hD_2(n) \ar[d, "\hD_2\sigma"] \\
		\hD_1(m) \ar[r, "\tau_m"'] \ar[ru,phantom,"\leq"{description,sloped}] & \hD_2(m),
	\end{tikzcd}\]
	has the interpolation property.
\end{definition}

By Proposition~\ref{prop:interp-BC}, the additional requirement on naturality squares in Definition~\ref{dfn:morphism-c-hyp} is equivalent to saying that, for every morphism $\sigma \colon n \to m$, we have $\ladj{(\hD_1\sigma)} \tau_n = \tau_m \ladj{(\hD_2\sigma)}$. The logical intuition here is that the natural transformation ``preserves existential quantification and equality.'' This is the usual definition of morphism between coherent hyperdoctrines in the literature, and it is also the one given by seeing coherent hyperdoctrines as a variety of multisorted algebras. For a more general notion of morphism that allows for a change of base category, see for example \cite[Def.~7]{Coumans12}.

\paragraph{Intuitionistic hyperdoctrines}
Later in this paper, we will also be concerned with intuitionistic first-order logic, where we add the Heyting implication and universal quantification to coherent logic. Accordingly, intuitionistic hyperdoctrines take values in the category $\cHeyt$ of Heyting algebras instead of distributive lattices, and, in order to account for universal quantifiers, right adjoints must exist in addition to left adjoints.

\begin{definition}\label{dfn:int-hyp}
	An \emph{intuitionistic hyperdoctrine} is a functor $\hD \colon \cC \to \cHeyt$ satisfying the following axioms.
	\begin{enumerate}
		\item[\optionaldesc{Int1}{h-axiom:interpol-1-int}.] The image by $\hD$ of any pushout square in $\cC$ has the interpolation property.
		\item[\optionaldesc{AdjLeft}{h-axiom:adjoint-left-int}.] $\hD\sigma$ has a left adjoint for any map $\sigma \colon n\to m$ in $\cC$.
		\item[\optionaldesc{AdjRight}{h-axiom:adjoint-right-dfn-int}.] $\hD\sigma$ has a right adjoint for any map $\sigma \colon n\to m$ in $\cC$.
	\end{enumerate}

	A \emph{Boolean hyperdoctrine} is an intuitionistic hyperdoctrine such that $\hD(n)$ is a Boolean algebra for every $n$. In this case, \refeq{h-axiom:adjoint-left-int} holds iff \refeq{h-axiom:adjoint-right-dfn-int} holds.
\end{definition}

Note that the axiom \ref{h-axiom:interpol-2} from Definition~\ref{dfn:c-hyp} has disappeared in Definition~\ref{dfn:int-hyp}. The reason is that \ref{h-axiom:interpol-2} is subsumed by the condition that $\hD$ takes value in $\cHeyt$, as we will briefly explain now.
Suppose that $\sigma \colon n \to m$ is a morphism in $\cC$ and that $\hD(n)$ and $\hD(m)$ are Heyting algebras. Let $s \coloneqq \hD(\sigma)$ and let $a \in \hD(n)$. 
Note that a reformulation of Proposition~\ref{prop:interp-BC} with right adjoints instead of left adjoints says that a lax square of posets as in \eqref{ABCD} has the interpolation property if, and only if, $f \circ \radj{g} = \radj{u} \circ v$, where $\radj{g}$ and $\radj{u}$ denote the right adjoints of $g$ and $u$, respectively.
Therefore, 
the square in the axiom \ref{h-axiom:interpol-2} has interpolation if, and only if, for all $b \leq a$, we have $s(a \to b) = s(a) \to s(b)$. So the axiom \ref{h-axiom:interpol-2} is equivalent to the fact that $\hD(f)$ is a morphism of Heyting algebras.

Intuitionistic hyperdoctrines form a non-full subcategory of coherent hyperdoctrines, with morphisms defined as follows.

\begin{definition}\label{dfn:int-hyp-mor}
	A \emph{morphism of intuitionistic hyperdoctrines} is a natural transformation $\tau \colon \hD_1 \to \hD_2$ such that, for every morphism $\sigma \colon n \to m$ in $\cC$, the associated naturality square has the interpolation property in both directions, as depicted below.
	
	\begin{minipage}{0.485\linewidth}
		\[\begin{tikzcd}
			\hD_1(n) \ar[d, "\hD_1\sigma"'] \ar[r, "\tau_n"] & \hD_2(n) \ar[d, "\hD_2\sigma"] \\
			\hD_1(m) \ar[r, "\tau_m"'] \ar[ru,phantom,"\leq"{description,sloped}] & \hD_2(m)\text{,}
		\end{tikzcd}\]
	\end{minipage}\begin{minipage}{0.485\linewidth}
		\[\begin{tikzcd}
			\hD_1(n) \ar[d, "\hD_1\sigma"'] \ar[r, "\tau_n"] & \hD_2(n) \ar[d, "\hD_2\sigma"] \\
			\hD_1(m) \ar[r, "\tau_m"'] \ar[ru,phantom,"\geq"{description,sloped}] & \hD_2(m)\text{,}
		\end{tikzcd}\]
	\end{minipage}
\end{definition}

Compared to Definition~\ref{dfn:morphism-c-hyp}, the additional interpolation property on naturality squares in Definition~\ref{dfn:int-hyp-mor} is equivalent to requiring in addition that $\radj{(\hD_1\sigma)} \tau_n = \tau_m \radj{(\hD_2\sigma)}$, so that the natural transformation $\tau$ also preserves universal quantification, in addition to existential quantification. Note that, in the case of Boolean hyperdoctrines, it suffices to assume that one of the two squares in Definition~\ref{dfn:int-hyp-mor} has the interpolation property, as the other then follows.
% !TeX spellcheck = en-US
% !TeX root = GM2022.tex

\section{Duality between hyperdoctrines and polyadic spaces}
\label{sec:duality}

The aim of this section is to apply Priestley duality for distributive lattices to obtain a duality for coherent hyperdoctrines.
To this end, we express the algebraic axioms introduced in the previous section in topological terms, in order to obtain duality theorems for coherent and intuitionistic hyperdoctrines, Theorem~\ref{thm:duality} and Corollary~\ref{cor:duality-int}.
In the process, we provide two general duality-theoretic propositions: interpolation is self-dual (Proposition~\ref{prop:dual-interpolation}), and existence of adjoints is dual to openness (Proposition~\ref{prop:left-adjoint-dually}).

\paragraph{Priestley duality for $\cat{DL}$-valued presheaves}
Note that, for any category $\cat{C}$, we
immediately obtain from Priestley duality a dual equivalence
between
$\cat{DL}$-valued copresheaves and $\cat{Priestley}$-valued
presheaves on $\cat{C}$,
\begin{equation} \label{eq:priestley-lifted}
[\cat{C},\Spec] \colon [\cat{C}, \cat{DL}] \leftrightarrows [\cat{C}^\op, \cat{Priestley}]^\op \colon [\cat{C}^\op,\ClU]\text{,}
\end{equation}
by applying the Priestley duality functors pointwise.
Concretely, $[\cat{C},\Spec]$ sends
a functor $D \colon \cat{C} \to \cat{DL}$ to the functor $\Spec \circ D^\op \colon
\cat{C}^\op \to \cat{Priestley}$, and a natural transformation $\alpha \colon D
\to E$ to the natural transformation $\beta \colon \Spec \circ E^\op \to \Spec \circ
D^\op$ which is defined for any object $n$ of $\cat{C}$ by $\beta_n \coloneqq
(\alpha_n)_*$. The definition of $[\cat{C}^\op, \ClU]$ is analogous.

\begin{example}
	Let $\hD_T \colon \cFinSet \to \cDL$ be a coherent hyperdoctrine which is presented by some coherent first-order theory $T$, in the sense of Example~\ref{exa:logic-hyperdoctrine}, and denote by $\sS \colon \cFinSet^\op \to \cPriestley$ the dual presheaf $\Spec \circ \hD_T$. Then $\sS(n)$ is the \emph{space of $n$-types} of the coherent theory $T$; that is, its points may be identified with $n$-pointed models of $T$ modulo equivalence in coherent logic. The action of $\sS$ on morphisms is given by composition: if $f \colon n\to m$ is a function and $x \in \sS(m)$ is the $m$-type of a tuple $(x_1,\dots,x_m)$ in some model, then $\sS(f)(x)$ is the $n$-type of $(x_{f(1)},\dots,x_{f(n)})$ in the same model. The claims made in this example will be fully justified by Gödel's completeness theorem (Theorem~\ref{thm:Godel}). 
\end{example}

In the rest of this section, we will characterize the $\cPriestley$-valued presheaves that occur as pointwise duals of a coherent hyperdoctrine. To obtain this dual characterization, we first need to transfer the axioms given in Section~\ref{sec:hyperdoctrines} to the topological side.

\paragraph{The dual of interpolation}
The property for a lax square to have interpolation turns out to be self-dual, as we prove now.

\begin{proposition}\label{prop:dual-interpolation}
	Consider the following lax square \eqref{dia:latticesquare} in $\cat{DL}$ and its dual lax square \eqref{dia:spacesquare} in $\cat{Priestley}$.\\
	\begin{minipage}{0.485\linewidth}
	\begin{equation}\label{dia:latticesquare}
		\begin{tikzcd}
		A \ar[r,"g"] \ar[d,"f"'] & C \ar[d,"v"]\\
		B \ar[ru,phantom,"\leq"{description,sloped}] \ar[r,"u"'] & D
		\end{tikzcd}
	\end{equation}
	\end{minipage}\begin{minipage}{0.485\linewidth}
	\begin{equation}\label{dia:spacesquare}
		\begin{tikzcd}
		D_* \ar[r,"v_*"] \ar[d,"u_*"'] & C_* \ar[d,"g_*"] \\
		B_* \ar[ru,phantom,"\leq"{description,sloped}] \ar[r,"f_*"'] & A_*
		\end{tikzcd}
	\end{equation}
	\end{minipage}\\
	The square \eqref{dia:latticesquare} has the interpolation property if, and only if, the square \eqref{dia:spacesquare} has the interpolation property.
\end{proposition}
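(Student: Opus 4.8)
The plan is to prove both implications directly, using the characterization of the interpolation property in terms of the two weakening relations from Remark~\ref{rmk:interp-relations}. On the algebraic side, the square \eqref{dia:latticesquare} has interpolation iff $R_1 = R_2$, where $R_1 = \setst{(b,c) \in B^\op\times C}{\exists a \in A\colon b \leq f(a),\ g(a) \leq c}$ and $R_2 = \setst{(b,c)}{u(b)\leq v(c)}$; dually, unwinding Definition~\ref{dfn:interp} for \eqref{dia:spacesquare}, that square has interpolation iff $S_1 = S_2$, where $S_1 = \setst{(y,z) \in B_*^\op \times C_*}{\exists w \in D_*\colon y \leq u_*(w),\ v_*(w)\leq z}$ and $S_2 = \setst{(y,z)}{f_*(y) \leq g_*(z)}$. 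Since $R_1 \subseteq R_2$ and $S_1 \subseteq S_2$ hold automatically (they express lax commutativity, which is preserved and reflected by Priestley duality, as the latter is poset-enriched), it suffices to prove $R_2 \subseteq R_1 \Rightarrow S_2 \subseteq S_1$ and $S_2 \subseteq S_1 \Rightarrow R_2 \subseteq R_1$. Throughout I identify a point $x$ of a dual space with the prime filter $x^{-1}(1)$ and recall that $f_*(y) = y \circ f$, so that $f_*(y) \leq g_*(z)$ unwinds to: for all $a \in A$, $y(f(a)) = 1$ implies $z(g(a)) = 1$.

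For $R_2 \subseteq R_1 \Rightarrow S_2 \subseteq S_1$: given $(y,z) \in S_2$, I want a prime filter $W$ of $D$ containing $u[y^{-1}(1)]$ and disjoint from $v[\complement z^{-1}(1)]$, since such a $W$ is precisely a witness $w \in D_*$ for $(y,z) \in S_1$. Let $G$ be the filter of $D$ generated by $u[y^{-1}(1)]$ and $J$ the ideal generated by $v[\complement z^{-1}(1)]$; by the prime filter--ideal theorem it is enough to check $G \cap J = \emptyset$. If $d \in G \cap J$, then, using that $y^{-1}(1)$ is closed under finite meets, $\complement z^{-1}(1)$ under finite joins, and that $u,v$ are lattice homomorphisms, one produces $b \in y^{-1}(1)$ and $c \notin z^{-1}(1)$ with $u(b) \leq d \leq v(c)$, hence $(b,c)\in R_2 = R_1$. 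An interpolant $a \in A$ then satisfies $f(a) \in y^{-1}(1)$ (from $b \leq f(a)$, upward closure) and $g(a) \leq c \notin z^{-1}(1)$, so $z(g(a)) = 0$ (downward closure of the prime ideal); but $(y,z) \in S_2$ forces $z(g(a)) = 1$ from $y(f(a)) = 1$ --- a contradiction.

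For the converse $S_2 \subseteq S_1 \Rightarrow R_2 \subseteq R_1$: given $(b,c) \in R_2$, i.e. $u(b)\leq v(c)$, I look for a clopen up-set $\hat a$ of $A_*$ with $f_*[\hat b] \subseteq \hat a \subseteq \complement g_*[\complement\hat c]$, since these two inclusions are equivalent to $b \leq f(a)$ and $g(a) \leq c$ respectively (via the isomorphism $A \cong \ClU(A_*)$ and $\widehat{f(a)} = f_*^{-1}(\hat a)$, $\widehat{g(a)} = g_*^{-1}(\hat a)$). Now $\hat b$ is clopen hence compact, so $f_*[\hat b]$ is closed and ${\uparrow}f_*[\hat b]$ is a closed up-set by Lemma~\ref{lem:adh-spec}; dually ${\downarrow}g_*[\complement\hat c]$ is a closed down-set. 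By the strong Priestley separation property (Lemma~\ref{lem:Priest-sep}) it then suffices to show these two sets are disjoint. If some $x \in A_*$ lay in both, there would be $y \in \hat b$ with $f_*(y) \leq x$ and $z \in \complement\hat c$ with $x \leq g_*(z)$, hence $(y,z) \in S_2 = S_1$; a witness $w \in D_*$ then has $y \leq u_*(w)$ and $v_*(w) \leq z$, so $w(u(b)) = 1$ (since $y(b) = 1$), hence $w(v(c)) = 1$ (since $u(b) \leq v(c)$ and $w^{-1}(1)$ is upward closed), hence $z(c) = 1$ (since $v_*(w) \leq z$) --- contradicting $z \in \complement\hat c$.

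The main obstacle is that, although the two directions are mirror images in the sense that Priestley duality interchanges the span of \eqref{dia:latticesquare} with the cospan of \eqref{dia:spacesquare} and vice versa, they do not reduce to one another for free: the two separation tools involved --- the prime filter--ideal theorem on the algebraic side, the strong Priestley separation property on the topological side --- are genuinely different, so each implication has to be run explicitly. A secondary subtlety worth flagging is that one should resist trying to build the interpolant by producing a prime filter directly inside $B$ and pushing it forward: as $B$ need not be a Heyting algebra there is no implication available to control compatibility, and the argument must instead be carried out at the vertex ($D$, respectively $A_*$) where the span and cospan meet.
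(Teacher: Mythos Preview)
Your proof is correct and follows essentially the same route as the paper's: for the forward direction you show the filter generated by $u[y^{-1}(1)]$ and the ideal generated by $v[\complement z^{-1}(1)]$ in $D$ are disjoint and invoke the prime filter--ideal theorem, and for the converse you show ${\uparrow}f_*[\hat b]$ and ${\downarrow}g_*[\complement\hat c]$ in $A_*$ are disjoint and invoke strong Priestley separation (Lemma~\ref{lem:Priest-sep}). The only cosmetic difference is that the paper derives the contradiction in the second direction via the inclusion $u_*^{-1}(\hat b)\subseteq v_*^{-1}(\hat c)$ rather than by tracking $w(u(b))$ and $w(v(c))$ as you do, but the content is identical.
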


\begin{proof}
	First suppose that \eqref{dia:latticesquare} has the interpolation property; we
	show that \eqref{dia:spacesquare} does, as well. Let $y \in B_*$ and $z \in
	C_*$ and suppose that $f_*(y) \leq g_*(z)$, i.e., $y\circ f \leq z\circ g$.
	We claim that the following filter $F$ and ideal $I$ of $D$ are disjoint:
	\[ F \coloneqq {\uparrow} \setst{ u(b)}{b \in B,\ y(b) = \top }\text{,} \quad I \coloneqq
	{\downarrow} \setst{ v(c)}{c \in C,\ z(c) = \bot}\text{.}\]
	Indeed, towards a contradiction,
	suppose that $F \cap I \neq \emptyset$. Pick $b \in B$ with $y(b) = \top$ and $c \in C$ with $z(c) = \bot$, such that $u(b) \leq v(c)$. By the interpolation property of
	\eqref{dia:latticesquare}, pick $a \in A$ such that $b \leq f(a)$ and $g(a) \leq
	c$. Then
	\[ \top = y(b) \leq y(f(a)) \leq z(g(a)) \leq z(c) = \bot \text{,} \]
	which is the desired contradiction. Therefore, by
	the prime filter-ideal theorem, pick $x \in D_*$ such that
	$\restr{x}{I} = \bot$ and $\restr{x}{F} = \top$. It follows from the definitions that $y \leq u_*(x)$ and $v_*(x) \leq z$, as required.
	
	For the converse, we make a very similar argument after exchanging the algebraic and topological sides and by replacing the prime filter-ideal theorem by Lemma~\ref{lem:Priest-sep}. Suppose that \eqref{dia:spacesquare} has the interpolation property; we show that \eqref{dia:latticesquare} does, as well. Let $b \in B$ and $c \in C$ such that $u(b) \leq v(c)$. This means that $u_*^{-1}(\hat{b}) \subseteq v_*^{-1}(\hat{c})$. Consider the closed up-set ${\uparrow} f_*[\hat{b}]$ and the closed down-set ${\downarrow} g_*[C_* \setminus \hat{c}]$. We claim they are disjoint. If not there is some $x \in \hat{b}$ and $y \in C_* \setminus \hat{c}$ such that $f_*(x) \leq g_*(y)$. By the interpolation property of \eqref{dia:spacesquare}, there is some $z \in D_*$ such that $x \leq u_*(z)$ and $v_*(z) \leq y$. But the first inequality implies that $z \in u_*^{-1}(\hat{b})$ and the second one implies that $z \not\in v_*^{-1}(\hat{c})$, which contradicts $u_*^{-1}(\hat{b}) \subseteq v_*^{-1}(\hat{c})$. Therefore, by Lemma~\ref{lem:Priest-sep}, there is some clopen up-set $\hat{a} \subseteq A_*$ containing $f_*[\hat{b}]$ and disjoint from $g_*[C_* \setminus \hat{c}]$. This means that $b \leq f(a)$ and $g(a) \leq c$.
\end{proof}

\begin{remark}
	We outline an alternative proof of Proposition~\ref{prop:dual-interpolation}, which uses Priestley duality for relations. We omit the details since they are not needed in what follows, see e.g. \cite{KurMosJun21} or \cite[Sec.~4.5]{GG22}.
	
	In Remark~\ref{rmk:interp-relations}, we saw that the interpolation property of \eqref{dia:latticesquare} can be interpreted as the equality of the two relations $R_1$ and $R_2$, defined there. Dually, consider the following two relations $S_1, S_2 \subseteq B_* \times C_*$:
	\begin{align*}
	S_1 &\coloneqq \setst{ (y,z) \in B_* \times C_*}{f_*(y) \leq g_*(z)}\text{,} \\
	S_2 &\coloneqq \setst{ (y,z) \in B_* \times C_*}{\exists x \in A_* \text{ such that } y \leq u_*(x) \text{ and } v_*(x) \leq z}\text{.}
	\end{align*}
	Note also here that $S_2 \subseteq S_1$ always holds using commutativity of the square \eqref{dia:spacesquare}, and that the interpolation property for square \eqref{dia:spacesquare} says precisely that $S_1 \subseteq S_2$. Thus, the statement of Proposition~\ref{prop:dual-interpolation} is equivalent to: $R_2 \subseteq R_1$ if, and only if, $S_1 \subseteq S_2$. Now, one may show that, for each $i \in \{1,2\}$, $S_i$ is the ``dual'' relation of $R_i$, meaning that
	\[ S_i = \bigcap \setst[\big]{\hat{b}^c \times \hat{c}}{(b,c) \in R_i}.\]
	This is because $R_i$ and $S_i$ are built by composing dual relations, and that duality for these relations respects composition. Combining this observation with the fact that the operation of taking the dual is an anti-isomorphism between a certain poset of ``filtering'' relations from $B$ to $C$ and a poset of ``closed'' relations from $B_*$ to $C_*$, one may then conclude that $R_2 \subseteq R_1$ iff $S_1 \subseteq S_2$, as required.
\end{remark}

\begin{remark}\label{rmk:conseq-autodual-interp}
	The self-duality of Proposition~\ref{prop:dual-interpolation} can be used to show that pushouts of Heyting algebras have the interpolation property, by translating this statement into a statement in the category of Esakia spaces (see \cite[Thm.~B]{PittsAM:amaich} for a constructive version of this proof). We also mention that \cite[Lemma~5.4]{Gehrke_2018} can be seen as a special case of Proposition~\ref{prop:dual-interpolation}.
\end{remark}

In light of Proposition~\ref{prop:dual-interpolation}, we can now in particular compute dual properties for the axioms \ref{h-axiom:interpol-1} and \ref{h-axiom:interpol-2} of Definition~\ref{dfn:c-hyp}. For \ref{h-axiom:interpol-1}, this will be straight-forward. For \ref{h-axiom:interpol-2}, we need the following consequence of Proposition~\ref{prop:dual-interpolation} and Priestley duality. 

\begin{proposition}\label{prop:frobenius-bounded}
	Let $h \colon A \to B$ be a homomorphism between distributive lattices and write $f$ for the dual function $B_* \to A_*$. The homomorphism $h$ is Frobenius if, and only if, the function $f$ is bounded.
\end{proposition}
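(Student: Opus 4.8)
The plan is to unwind the definition of ``Frobenius'' (Definition~\ref{dfn:frobenius}) via Proposition~\ref{prop:dual-interpolation}, and then recognize the resulting dual square as exactly the condition that $f$ is bounded. First I would fix $a \in A$ and apply Priestley duality to the defining square~\eqref{eq:frobenius}. The key observation is how $\Spec$ acts on the projection map $p_a \colon A \to {\downarrow}a$: the Priestley dual of ${\downarrow}a$ is (up to isomorphism) the clopen up-set $\hat{a} \subseteq A_*$ with its induced order and topology, and $(p_a)_* \colon ({\downarrow}a)_* \to A_*$ is the inclusion of $\hat{a}$ into $A_*$. Dually, $\restr{h}{{\downarrow}a} \colon {\downarrow}a \to {\downarrow}h(a)$ has Priestley dual the restriction-corestriction $f\restriction_{\widehat{h(a)}} \colon \widehat{h(a)} \to \hat{a}$ of $f$ (note $f^{-1}(\hat a) = \widehat{h(a)}$ since $h$ is a homomorphism and $x \in \widehat{h(a)} \iff x(h(a)) = 1 \iff (x\circ h)(a) = 1 \iff f(x) \in \hat a$). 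Applying Proposition~\ref{prop:dual-interpolation}, the square~\eqref{eq:frobenius} has the interpolation property iff the dual square
\[
\begin{tikzcd}
\widehat{h(a)} \ar[r] \ar[d] & \hat{a} \ar[d] \\
B_* \ar[r,"f"'] \ar[ru,phantom,"\leq"{description,sloped}] & A_*
\end{tikzcd}
\]
has the interpolation property, where the top map is $f\restriction$, the left map is the inclusion $\widehat{h(a)} \hookrightarrow B_*$, and the right map is the inclusion $\hat a \hookrightarrow A_*$. (One should double-check the orientation of the square matches the convention of Proposition~\ref{prop:dual-interpolation}, since interpolation is not transposition-invariant; this is the one genuinely fiddly bookkeeping point.)

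Next I would translate the interpolation property of this dual square into elementary terms. It says: for every $y \in B_*$ (playing the role of ``$b \in B$'') and every $z \in \hat a$ with $y \leq z$ in $A_*$ (here $y$ is viewed in $A_*$ via $f$, so really $f(y) \leq z$), there exists $x \in \widehat{h(a)}$ with $y \leq x$ in $B_*$ and $f(x) \leq z$ in $A_*$. Unpacked: whenever $f(y) \leq z$ with $z \in \hat a$, there is $x \geq y$ in $B_*$ with $x \in \widehat{h(a)}$ (equivalently $f(x) \in \hat a$) and $f(x) \leq z$. Since every $w \in A_*$ with $w \geq f(y)$ lies in the up-set ${\uparrow}f(y)$, and the clopen up-sets of the form $\hat a$ (for $a$ ranging over $A$, hence $\hat a$ ranging over all clopen up-sets of $A_*$) separate points by total order-disconnectedness, I claim this family of conditions (as $a$ ranges over $A$) is equivalent to the statement $f({\uparrow}y) = {\uparrow}f(y)$ for every $y \in B_*$ — i.e. that $f$ is bounded. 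One inclusion, $f({\uparrow}y) \subseteq {\uparrow}f(y)$, is automatic from monotonicity of $f$; the content is the reverse: given $z \geq f(y)$ in $A_*$, produce $x \geq y$ in $B_*$ with $f(x) = z$, or at least $f(x) \leq z$ for arbitrarily tight clopen upper approximations $\hat a$ of $z$, and then use a compactness argument (cofiltered intersection of the nonempty closed sets ${\uparrow}y \cap f^{-1}(\hat a)$, which is nonempty by Lemma~\ref{lem:cofiltered-limit-non-empty} or directly by compactness) to land an $x$ with $f(x) = z$.

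The main obstacle, and the step deserving the most care, is precisely this last equivalence between ``interpolation holds for all $a$'' and ``$f$ is bounded.'' Going from boundedness to interpolation is easy: if $f({\uparrow}y) = {\uparrow}f(y)$ and $f(y) \leq z \in \hat a$, pick any $x \in {\uparrow}y$ with $f(x) = z$ (exists by boundedness); then $f(x) = z \in \hat a$, so $x \in \widehat{h(a)}$, and $f(x) = z \leq z$. For the converse one must assemble boundedness out of the ``clopen approximations'' that interpolation provides; this is where total order-disconnectedness (Lemma~\ref{lem:adh-spec}) and a compactness argument enter, to upgrade ``for every clopen up-set $\hat a$ containing $z$ there is a suitable $x$'' into the existence of a single $x$ with $f(x) \leq z$ for all such $\hat a$ simultaneously, hence $f(x) \in \bigcap\{\hat a : z \in \hat a, \hat a \text{ clopen up-set}\} = {\uparrow}\overline{\{z\}} = {\uparrow}z$, combined with $f(x) \leq z$ forcing $f(x) = z$. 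With that in hand, boundedness follows, completing both directions.
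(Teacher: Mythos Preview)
Your approach is essentially the same as the paper's: dualize the Frobenius square via Proposition~\ref{prop:dual-interpolation}, read off the resulting pointwise condition, and then prove that condition equivalent to boundedness, the nontrivial direction using a compactness argument over clopen up-sets containing the target point. The paper's write-up differs only in presentation: it names the dualized condition explicitly and, for the compactness step, intersects the closed sets $C \cap f^{-1}(\hat a)$ where $C \coloneqq {\uparrow}y \cap f^{-1}({\downarrow}z)$, rather than just ${\uparrow}y \cap f^{-1}(\hat a)$ as you wrote. This is a small but genuine slip in your sketch: intersecting only ${\uparrow}y \cap f^{-1}(\hat a)$ yields an $x$ with $f(x) \in {\uparrow}z$, but you then invoke ``combined with $f(x) \leq z$'' without having carried that constraint through the intersection. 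Including the closed set $f^{-1}({\downarrow}z)$ from the start (which is legitimate since each interpolant already satisfies $f(x) \leq z$) fixes this and gives $f(x) = z$ directly. With that correction your argument is complete and matches the paper's.
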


\begin{proof}
	By Proposition~\ref{prop:dual-interpolation} and Priestley duality, the square \eqref{eq:frobenius} has interpolation for every $a \in A$ if, and only if, the following condition holds:
	\begin{equation}\label{eq:bdd-proof-cond}
		\forall a \in A, y \in B_*, x \in \hat{a} \colon f(y) \leq x \implies \exists z \in f^{-1}(\hat{a}) \colon y \leq z \text{ and } f(z) \leq x \text{.}
	\end{equation}
	We show that \eqref{eq:bdd-proof-cond} is equivalent to $f$ being bounded.

	First, assume $f$ is bounded, let $a \in A$ and suppose that $f(y) \leq x$ for some $y \in B_*$, $x \in \hat{a}$. Then, since $f[{\uparrow}y]$ is an up-set by the boundedness of $f$, it contains $x$. Pick $z \in {\uparrow} y$ such that $f(z) = x$. Then in particular $z \in f^{-1}(\hat{a})$, as required.

	Conversely, assume that $f$ satisfies \eqref{eq:bdd-proof-cond}.
	Note that, to show that $f$ is bounded, it suffices to prove that $f[{\uparrow} y]$ is an up-set for every $y \in B_*$. Let $x \in A_*$ be such that $f(y) \leq x$. Define
	\[ C \coloneqq {\uparrow} y \cap f^{-1}({\downarrow} x).\]
	Note that $C$ is a closed subset of $B_*$, and property \eqref{eq:bdd-proof-cond} gives that $C \cap f^{-1}(\hat{a})$ is non-empty for every $a \in A$ such that $x \in \hat{a}$. Thus, by compactness, the set
	\[C \cap\bigcap\setst{f^{-1}(\hat{a})}{a \in A, x \in \hat{a}} = C \cap f^{-1}({\uparrow} x)\]
	is non-empty; pick a point $z$ in it. Then $z \geq y$ and $f(z) = x$, so $x \in f[{\uparrow} y]$, as required.
\end{proof}

Let $\hD \colon \cat{C} \to \cat{DL}$ be a functor and write $\sS \colon \cat{C}^\op \to \cat{Priestley}$ for the dual functor $\Spec \circ \hD$. By Proposition~\ref{prop:dual-interpolation}, we get that $\hD$ satisfies the axiom \ref{h-axiom:interpol-1} if, and only if, $\sS$ satisfies the following:
\begin{enumerate}
	\item[\optionaldesc{$\text{Int1}_*$}{p-axiom:interpol-1}.] $\sS$ sends pullback squares to squares with interpolation.
\end{enumerate}
In light of Proposition~\ref{prop:frobenius-bounded}, $\hD$ satisfies \ref{h-axiom:interpol-2} if, and only if, $\sS$ satisfies:
\begin{enumerate}
	\item[\optionaldesc{$\text{Int2}_*$}{p-axiom:interpol-2}.] For any morphism $\sigma \colon n \to m$ in $\cat{C}$, $\sS \sigma$ is bounded.
\end{enumerate}

\begin{remark}\label{rmk:p-axioms-1+2}
	We can combine the two axioms \ref{p-axiom:interpol-1} and \ref{p-axiom:interpol-2} into a strong interpolation property saying that if the square below is the image of a pushout square in $\cC$ by $\sS$, then for any $b \in B$ and $c \in C$ such that $u(b) \leq v(c)$, there is some $a \in A$ such that $b \leq f(a)$ and $g(a) = c$.
	\[\begin{tikzcd}
		A \ar[r,"g"] \ar[d,"f"'] & C \ar[d,"v"]\\
		B \ar[r,"u"] & D
	\end{tikzcd}\]
\end{remark}

\paragraph{The dual of quantification}
As explained in Section~\ref{sec:hyperdoctrines}, first-order quantifiers correspond to the adjoints of substitution maps. In particular, the definition of a coherent hyperdoctrine requires that the image of any map has a left adjoint. We now prove a well-known proposition identifying the dual meaning of left adjoints. This proposition may also be derived, for example, from the corresponding fact about coherent frames, see for example~\cite[Sec~IX.7]{MM1992}. %TODO find a reference for this.

\begin{proposition}\label{prop:left-adjoint-dually}
	Let $f \colon X \to Y$ be a morphism between Priestley spaces, write $A \coloneqq \ClU(X), B \coloneqq \ClU(Y)$, and $h \colon B \to A$ for the dual homomorphism.
	The following are equivalent:
	\begin{enumerate}
		\item The homomorphism $h$ has a left adjoint. \label{cond-lad:1}
		\item For any open up-set $U$ of $X$, the set ${\uparrow} f[U]$ is open in $X$. \label{cond-lad:2}
	\end{enumerate}
	Moreover, if these properties are verified, and $g \colon A \to B$ denotes the left adjoint of $h$, then for every $U \in A$, $g(U) = {\uparrow} f[U]$.
\end{proposition}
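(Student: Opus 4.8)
The plan is to prove the two implications separately, using the strong Priestley separation property (Lemma~\ref{lem:Priest-sep}) and Lemma~\ref{lem:adh-spec} as the main tools, and then to extract the explicit formula for the left adjoint along the way.

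First I would establish $(\ref{cond-lad:2}) \Rightarrow (\ref{cond-lad:1})$ together with the formula. Assume that ${\uparrow} f[U]$ is open for every open up-set $U$ of $X$. Given a clopen up-set $U \in A = \ClU(X)$, the set $U$ is in particular open, so ${\uparrow}f[U]$ is an open up-set of $Y$; but $U$ is also closed, hence compact, so $f[U]$ is compact, hence ${\uparrow}f[U] = {\uparrow}\ovl{f[U]}$ is a closed up-set by Lemma~\ref{lem:adh-spec} (being an intersection of clopen up-sets), and an open up-set that is also closed is clopen. Thus the assignment $g(U) \coloneqq {\uparrow}f[U]$ defines a map $A \to B$. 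To see it is left adjoint to $h = f^{-1}$, I would check the adjunction inequality directly: for $U \in A$ and $V \in B$, we have $g(U) \subseteq V$ iff ${\uparrow}f[U] \subseteq V$ iff $f[U] \subseteq V$ (since $V$ is an up-set) iff $U \subseteq f^{-1}(V) = h(V)$. This is exactly $g(U) \leq V \iff U \leq h(V)$, so $g \dashv h$ and moreover $g(U) = {\uparrow}f[U]$, giving the "Moreover" clause at the same time.

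For the converse $(\ref{cond-lad:1}) \Rightarrow (\ref{cond-lad:2})$, suppose $h$ has a left adjoint $g \colon A \to B$. The key observation is that any open up-set $U$ of $X$ is a directed union of the clopen up-sets it contains (by total order disconnectedness: if $x \in U$ and $x \nleq y$ for $y \notin U$... — more carefully, one writes $U = \bigcup\setst{K \in \ClU(X)}{K \subseteq U}$, and this family is directed since $\ClU(X)$ is closed under finite unions). Since $g$ is a left adjoint it preserves joins, and one shows ${\uparrow}f[U] = \bigcup\setst{g(K)}{K \in \ClU(X), K \subseteq U}$: the inclusion $\supseteq$ follows because $g(K) = {\uparrow}f[K] \subseteq {\uparrow}f[U]$ (using that $g(K)$ must equal ${\uparrow}f[K]$, which itself follows from the adjunction as in the previous paragraph applied to clopen up-sets), and $\subseteq$ follows because every point of ${\uparrow}f[U]$ lies above $f(x)$ for some $x \in U$, hence $x \in K$ for some clopen up-set $K \subseteq U$, so $f(x) \in f[K]$ and the point is in ${\uparrow}f[K] = g(K)$. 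Thus ${\uparrow}f[U]$ is a union of clopen (in particular open) up-sets, hence open.

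The main obstacle I anticipate is the bookkeeping needed to show that for a \emph{clopen} up-set $K$, $g(K)$ must coincide with ${\uparrow}f[K]$ when $g$ is merely assumed to be \emph{some} left adjoint of $h$ — this is what ties the two directions together and yields the formula. This follows formally from uniqueness of adjoints: once direction $(\ref{cond-lad:2}) \Rightarrow (\ref{cond-lad:1})$ is known, the formula $g(K) = {\uparrow}f[K]$ holds whenever ${\uparrow}f[K]$ is clopen, and for clopen $K$ this clopenness was shown above using compactness of $K$ and Lemma~\ref{lem:adh-spec} without assuming $(\ref{cond-lad:2})$. So a clean way to organize the proof is: (a) observe that for \emph{clopen} $U \in A$, ${\uparrow}f[U]$ is always a clopen up-set of $Y$ (compactness $+$ Lemma~\ref{lem:adh-spec}); (b) verify $U \mapsto {\uparrow}f[U]$ is left adjoint to $h$ on clopen up-sets by the one-line calculation above — this gives $(\ref{cond-lad:1})$ unconditionally?? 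No: wait, (a)+(b) would prove $h$ \emph{always} has a left adjoint, which is false in general. The subtlety is that ${\uparrow}f[U]$ need not be \emph{clopen} — it is closed (Lemma~\ref{lem:adh-spec}) but perhaps not open — so $U \mapsto {\uparrow}f[U]$ a priori lands in closed up-sets, not in $\ClU(Y)$. So in (a) I must be careful: ${\uparrow}f[U]$ is a closed up-set always, but clopen only under hypothesis $(\ref{cond-lad:2})$. Reworking: in direction $(\ref{cond-lad:2})\Rightarrow(\ref{cond-lad:1})$, hypothesis $(\ref{cond-lad:2})$ applied to the open up-set $U$ gives openness, and compactness of $U$ plus Lemma~\ref{lem:adh-spec} gives closedness, so clopenness; then (b) applies. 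In direction $(\ref{cond-lad:1})\Rightarrow(\ref{cond-lad:2})$, from $g \dashv h$ one shows $g(K) \supseteq {\uparrow}f[K]$ directly (for $U \in B$, $g(K) \leq U \iff K \leq h(U) = f^{-1}(U) \iff f[K] \subseteq U \iff {\uparrow}f[K] \subseteq U$, and since $g(K)$ is the least such clopen $U$ while ${\uparrow}f[K]$ is the least \emph{closed} such, we get ${\uparrow}f[K] \subseteq g(K)$; conversely any clopen up-set containing ${\uparrow}f[K]$ contains $f[K]$ hence is $\geq g(K)$ by adjunction... so actually $g(K) = {\uparrow}\ovl{f[K]} = {\uparrow}f[K]$ by Lemma~\ref{lem:adh-spec}). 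Then the directed-union argument above shows openness of ${\uparrow}f[U]$ for general open up-sets $U$. This is the delicate point and I would present it carefully, but it is not deep — it is essentially the interplay between "least closed" and "least clopen" mediated by Lemma~\ref{lem:adh-spec}.
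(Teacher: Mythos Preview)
Your proof is correct and uses the same essential ingredients as the paper's: the reduction to clopen up-sets, Lemma~\ref{lem:adh-spec} to identify ${\uparrow}f[U]$ with the intersection of clopen up-sets containing $f[U]$, and the adjunction characterization $g(U) \leq V \iff f[U] \subseteq V$. The paper organizes the argument more compactly by working with the \emph{partial} left adjoint from the outset: it observes that $g(U)$, if it exists, is by definition the smallest clopen up-set containing $f[U]$, while ${\uparrow}f[U]$ is always the intersection of all such clopen up-sets (Lemma~\ref{lem:adh-spec}); hence $g(U)$ exists iff ${\uparrow}f[U]$ is itself clopen, i.e.\ open (it is always closed). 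This handles both implications and the formula simultaneously, whereas you separate the two directions and rediscover the identity $g(K) = {\uparrow}f[K]$ in the $(i)\Rightarrow(ii)$ direction. Your version is slightly longer but perhaps more transparent about why the two conditions meet; the paper's is terser and avoids the back-and-forth you went through to sort out what is automatic and what needs the hypothesis.
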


\begin{proof}
	Since every open up-set $U \subseteq X$ is a union of clopen up-sets and since $U \mapsto {\uparrow}f[U]$ preserves unions, we can restrict $U$ to clopen up-sets in condition \ref{cond-lad:2}. Let $g \colon A \to B$ be the partial left adjoint of $h$, which means that $g(U)$ is, if it exists, the unique element of $B$ verifying $g(U) \leq V \iff U \leq h(V)$ for all $V \in A$. Then $h$ has a left adjoint if and only if $g$ is defined everywhere. By definition, $g(U)$ is the smallest clopen up-set of $Y$ such that $U \subseteq f^{-1}(g(U))$, i.e., $f[U] \subseteq g(U)$. Since $U$ is closed, $f[U]$ is closed by compactness. Hence, ${\uparrow} f[U]$ is the intersection of all the clopen up-sets containing $f[U]$ by Lemma~\ref{lem:adh-spec}. We conclude that if $g(U)$ exists, then it is ${\uparrow}f[U]$, and this is equivalent to ${\uparrow}f[U]$ being open, since it is always a closed up-set.
\end{proof}

Thus, if $\hD \colon \cat{C} \to \cat{DL}$ is a functor and $\sS \colon \cat{C}^\op \to \cat{Priestley}$ is the dual functor $\Spec \circ \hD$, then $\hD$ satisfies the axiom \ref{h-axiom:adjoint-left} if, and only if, $\sS$ satisfies the following:
\begin{enumerate}
	\item[\optionaldesc{$\text{AdjLeft}_*$}{p-axiom:adjoint-left-dual}.] For any map $\sigma \colon n\to m$ in $\cC$ and any open up-set $U$ of $\sS(m)$, ${\uparrow}\sS\sigma[U]$ is open in $\sS(n)$.
\end{enumerate}

\begin{remark}\label{rmk:Heyt-dual}
	Note that, applying order duality to Proposition~\ref{prop:left-adjoint-dually}, we obtain that a morphism $h \colon B\to A$ of distributive lattices has a \emph{right} adjoint if, and only if, its dual $f \colon A_* \to B_*$ satisfies that ${\downarrow}f[U]$ is open for every open down-set $U \subseteq A_*$.
	We briefly explain how this fact can be used to prove that a distributive lattice $A$
	is a Heyting algebra if and only if $A_*$ is an Esakia space, i.e., for every open $U \subseteq A_*$, the set ${\downarrow} U$
	is open. Recall that a distributive lattice $A$ is a Heyting algebra if $p_a \colon A
	\to {\downarrow}a$ has a right adjoint for all $a \in A$.
	By the order-dual of Proposition~\ref{prop:left-adjoint-dually}, this means that for every clopen up-sets $U \subseteq A_*$, the
	inclusion $i \colon U \hookrightarrow A_*$ satisfies that ${\downarrow}i[V]$ is open
	for any clopen down-set $V \subseteq U$. In other words, ${\downarrow}[U \cap V]$ is open
	for any $U,V \subseteq A_*$ with $U$ a clopen up-set and $V$ a clopen down-set.
	Since the sets of the form $U \cap V$ form a basis of opens, we obtain that
	${\downarrow} W$ is open for every open $W \subseteq A_*$. As for morphisms,
	Proposition~\ref{prop:frobenius-bounded} shows that the duals of homomorphisms of Heyting algebras
	are the continuous order-preserving
	functions between Esakia spaces which are bounded.
\end{remark}

\paragraph{Semi-open morphisms}
In light of the results of this section, a functor $\sS \colon \cat{C}^\op \to \cat{Priestley}$ is the pointwise dual of a coherent $\cC^\op$-hyperdoctrine, if, and only if, $\sS$ satisfies the axioms \ref{p-axiom:interpol-1}, \ref{p-axiom:interpol-2} and \ref{p-axiom:adjoint-left-dual}. We now show how to combine the two axioms \ref{p-axiom:interpol-2} and \ref{p-axiom:adjoint-left-dual} into one
natural property of (lower semi-)\emph{openness}, as follows.

Recall from Section~\ref{sec:prelim} that the collection of open up-sets of a compact ordered space $X$ is a subtopology on $X$, and that we denote by $X^{\uparrow}$ the underlying set of $X$ equipped with this topology.
Let $f \colon X \to Y$ be a function between compact ordered spaces. 
We will say that $f$ is \emph{lower semi-open} if it is an open map when viewed as a function from $X^{\uparrow}$ to $Y^{\uparrow}$, that is, if for every open up-set $U \subseteq X$, $f[U]$ is an open up-set. This terminology corresponds with the standard terminology that $f$ is called \emph{lower semi-continuous} if it is continuous as as map from $X^{\uparrow}$ to $Y^{\uparrow}$.
The following proposition shows how lower semi-openness is related to the axioms \ref{h-axiom:interpol-2} and \ref{h-axiom:adjoint-left}, a well known fact in the context of frames.

\begin{proposition}\label{prop:stab-open-bounded}
	Let $f \colon X \to Y$ be a morphism of Priestley spaces with dual lattice homomorphism $h \colon B \to A$. Then $f$ is lower semi-open if, and only if, $h$ is Frobenius and has a left adjoint.
\end{proposition}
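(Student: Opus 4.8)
The statement decomposes $f$ lower semi-open into two conditions that have already been treated separately: Frobenius (equivalently, boundedness of $f$, by Proposition~\ref{prop:frobenius-bounded}) and existence of a left adjoint for $h$ (equivalently, ${\uparrow}f[U]$ open for every open up-set $U$, by Proposition~\ref{prop:left-adjoint-dually} in the form of axiom \ref{p-axiom:adjoint-left-dual}). So the plan is to show that ``$f[U]$ is an open up-set for every open up-set $U$'' is equivalent to the conjunction of ``$f$ bounded'' and ``${\uparrow}f[U]$ open for every open up-set $U$''.

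The forward direction is immediate: if $f$ is lower semi-open, then for every open up-set $U$ the set $f[U]$ is an open up-set; in particular $f[U]={\uparrow}f[U]$ is open, giving the left-adjoint condition, and taking $U={\uparrow}x$ for an arbitrary point $x$ shows $f[{\uparrow}x]$ is an up-set, so $f$ is bounded (equivalently, by Proposition~\ref{prop:frobenius-bounded}, $h$ is Frobenius).

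For the converse, suppose $h$ is Frobenius and has a left adjoint. Let $U\subseteq X$ be an open up-set. Boundedness of $f$ gives that $f[U]$ is an up-set: indeed $U$ is a union of sets of the form ${\uparrow}x$, each $f[{\uparrow}x]$ is an up-set by boundedness, and $f[U]$ is the corresponding union. Hence $f[U]={\uparrow}f[U]$, and the left-adjoint condition \ref{p-axiom:adjoint-left-dual} says precisely that ${\uparrow}f[U]$ is open. Therefore $f[U]$ is an open up-set, so $f$ is lower semi-open.

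I do not expect any genuine obstacle here: the proposition is essentially a bookkeeping statement repackaging Propositions~\ref{prop:frobenius-bounded} and~\ref{prop:left-adjoint-dually}. The only mild subtlety is checking that the left-adjoint condition, which a priori is only asserted for \emph{clopen} up-sets when derived from the lattice-theoretic adjoint, extends to all open up-sets — but this is exactly the content of Proposition~\ref{prop:left-adjoint-dually}, whose proof already observes that $U\mapsto{\uparrow}f[U]$ commutes with unions and that every open up-set is a union of clopen up-sets, so no further work is needed.
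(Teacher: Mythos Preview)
Your converse direction is fine and matches the paper's remark that the condition is ``clearly sufficient.'' However, your forward direction has a genuine gap. You write ``taking $U={\uparrow}x$ for an arbitrary point $x$ shows $f[{\uparrow}x]$ is an up-set,'' but lower semi-openness only tells you that $f[U]$ is an open up-set when $U$ is an \emph{open} up-set, and in a Priestley space ${\uparrow}x$ is closed, not open, in general. So you cannot apply the hypothesis directly to ${\uparrow}x$.

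The paper fills this gap with a compactness argument: given $x\in X$ and $y\in Y$ with $f(x)\leq y$, one must find a point in $f^{-1}(\{y\})\cap{\uparrow}x$. For every clopen up-set $U$ containing $x$, the image $f[U]$ is an up-set containing $f(x)$, hence contains $y$, so $f^{-1}(\{y\})\cap U$ is non-empty. Since $f^{-1}(\{y\})$ is closed and ${\uparrow}x$ is the intersection of all such clopen up-sets $U$, compactness (finite intersection property) yields that $f^{-1}(\{y\})\cap{\uparrow}x$ is non-empty. This is exactly the step your argument is missing.
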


\begin{proof}
	In light of Propositions~\ref{prop:frobenius-bounded} and \ref{prop:left-adjoint-dually}, it suffices to show that $f$ is lower semi-open if, and only if, $f$ is bounded and satisfies \ref{cond-lad:2} in Proposition~\ref{prop:left-adjoint-dually}. Note that the condition is clearly sufficient. Now suppose $f$ is lower semi-open. Then clearly $f$ satisfies Prop.~\ref{prop:left-adjoint-dually}\ref{cond-lad:2}. We show that $f$ is bounded. Let $x \in X$ and $y \in Y$ such that $f(x) \leq y$. Write $X_y	\coloneqq f^{-1}(\{y\})$; we need to show that $X_y \cap {\uparrow} x$ is
	non-empty. Since $f$ is continuous, $X_y$ is closed. Moreover, for every $U \in	A$ such that $x \in U$, $f[U]$ contains $f(x)$, and thus, since it
	is an up-set, it contains $y$. Therefore, for every $U \in A$ such that $x \in U$, $X_y \cap U$ is non-empty. By compactness, the set
	\[ X_y \cap {\uparrow} x = X_y \cap \bigcap \setst{U}{U \in A, x \in U}\]
	is non-empty, as required.
\end{proof}

\paragraph{Definition of polyadic spaces} We are now ready to give the general definition of an (open) polyadic compact ordered space, of which an (open) polyadic Priestley space is a special case. The reason for this generalization from Priestley spaces to compact ordered spaces is twofold: first, everything we do in this paper works exactly in the same way for compact ordered spaces; second, there is a good algebraic dual interpretation of polyadic compact ordered spaces, see Remark~\ref{rmk:duality-kord} below.

\begin{remark}\label{rmk:duality-kord}
	It is possible to extend all the propositions of this section from Priestley spaces to compact ordered spaces by using the duality for $\cKOrd$ given in \cite{AbbadiniReggio_2020,AbbadiniThesis2021}, after replacing the strong Priestley separation property (Lemma~\ref{lem:Priest-sep}) by the Kat{\v e}tov-Tong theorem, a generalization of the Tietze extension theorem. The resulting notion is a variation on the continuous syntactic categories defined in \cite{AlbHar16}. We refer to \cite[Sec.~1.3]{JMThesis} for more on this.
\end{remark}

\begin{definition}\label{dfn:polyadic-priestley}
	A functor $\sS \colon \cat{C}^\op \to \cKOrd$ is a \emph{polyadic compact ordered space} if it satisfies the following two axioms.
	\begin{enumerate}
		\item[\optionaldesc{$\text{Int1}_*$}{p-axiom:interpol-1-dfn}.] $\sS$ sends pushout squares to squares with interpolation.
		\item[\optionaldesc{$\text{Int2}_*$}{p-axiom:interpol-2-dfn}.] For any morphism $\sigma\colon n\to m$ in $\cC$, $\sS\sigma$ is bounded.
	\end{enumerate}
	We say that $\sS$ is \emph{open} if it moreover satisfies the following axiom.
	\begin{enumerate}
		\item[\optionaldesc{Open}{p-axiom:open}.] For any map $\sigma \colon n \to m$ in $\cC$, $\sS \sigma$ is lower semi-open.
	\end{enumerate}
	
	A \emph{morphism} of polyadic compact ordered spaces is a natural transformation $\sS_1 \to \sS_2$ whose naturality squares below have the interpolation property.
	\[\begin{tikzcd}
		\sS_1(m) \ar[d] \ar[r] & \sS_1(n) \ar[d] \\
		\sS_2(m) \ar[r] \ar[ru,phantom,"\leq"{description,sloped}] & \sS_2(n)
	\end{tikzcd}\]
	
	A \emph{polyadic Priestley space} is a polyadic compact ordered space taking values in the full subcategory $\cPriestley$ of $\cKOrd$.
\end{definition}

When we want to emphasize the base category $\cC$, we will speak of $\cC$-adic spaces or polyadic spaces over $\cC$.

Note that a functor $\cC^\op \to \cPriestley$ which satisfies \ref{p-axiom:open} is already a polyadic Priestley space as soon as it satisfies \ref{p-axiom:interpol-1-dfn}, since the axiom \ref{p-axiom:interpol-2-dfn} then automatically holds by Proposition~\ref{prop:stab-open-bounded}. Thus, an \emph{open polyadic Priestley space} may be defined more succinctly as a functor $\sS \colon \cC^\op \to \cPriestley$ that sends every map to a lower semi-open map, and pushout squares to squares with the interpolation property. We obtain our duality theorem by combining the previous results of this section.

\begin{theorem}\label{thm:duality}
	The category of coherent hyperdoctrines over $\cC$ is dually equivalent to the category of open polyadic Priestley spaces over $\cC$.
\end{theorem}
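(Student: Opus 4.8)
The plan is to derive Theorem~\ref{thm:duality} as a restriction of the pointwise-lifted Priestley duality \eqref{eq:priestley-lifted}, i.e.\ of $[\cC,\Spec]\colon[\cC,\DL]\leftrightarrows[\cC^\op,\cPriestley]^\op\colon[\cC^\op,\ClU]$. Since this is already a dual equivalence, it suffices to check that the two functors carry coherent hyperdoctrines to open polyadic Priestley spaces and conversely, both on objects and on morphisms; the restricted functors are then automatically mutually quasi-inverse, yielding the asserted dual equivalence. The whole argument reduces to invoking the duality-theoretic propositions proved earlier in this section, and the only real care needed is in matching variance and the orientation of lax squares (recall from Definition~\ref{dfn:interp} that the interpolation property is not transposition-invariant).

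\emph{Objects.} Fix a functor $\hD\colon\cC\to\DL$ and write $\sS\coloneqq\Spec\circ\hD\colon\cC^\op\to\cPriestley$ for its Priestley dual. A pushout square of $\cC$ is a pullback square of $\cC^\op$, and its image under $\sS$ is the dual, in the sense of \eqref{dia:latticesquare}--\eqref{dia:spacesquare}, of its image under $\hD$; hence, by Proposition~\ref{prop:dual-interpolation}, $\hD$ satisfies \ref{h-axiom:interpol-1} iff $\sS$ satisfies \ref{p-axiom:interpol-1-dfn}. By Proposition~\ref{prop:frobenius-bounded}, $\hD$ satisfies \ref{h-axiom:interpol-2} iff every $\sS\sigma$ is bounded, i.e.\ iff $\sS$ satisfies \ref{p-axiom:interpol-2-dfn}. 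By Proposition~\ref{prop:left-adjoint-dually}, $\hD$ satisfies \ref{h-axiom:adjoint-left} iff every $\sS\sigma$ has property~\ref{cond-lad:2} there. Finally, Proposition~\ref{prop:stab-open-bounded} identifies lower semi-openness of a morphism of Priestley spaces with the conjunction of boundedness and property~\ref{cond-lad:2}; so the conjunction of \ref{p-axiom:interpol-2-dfn} and ``every $\sS\sigma$ has property~\ref{cond-lad:2}'' is equivalent to the conjunction of \ref{p-axiom:interpol-2-dfn} and \ref{p-axiom:open}. Assembling these equivalences, $\hD$ is a coherent hyperdoctrine (Definition~\ref{dfn:c-hyp}) iff $\sS$ is an open polyadic Priestley space (Definition~\ref{dfn:polyadic-priestley}); the symmetric computations through $\ClU$ give the converse, though by the equivalence one direction already suffices.

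\emph{Morphisms.} Let $\tau\colon\hD_1\to\hD_2$ be a natural transformation with Priestley dual $\tau_*\colon\sS_2\to\sS_1$ (components $(\tau_n)_*$), where $\sS_i\coloneqq\Spec\circ\hD_i$. For each $\sigma\colon n\to m$ in $\cC$, the naturality square of $\tau$ at $\sigma$, read as the lax square \eqref{dia:latticesquare}, is sent by Priestley duality to exactly the naturality square of $\tau_*$ at $\sigma$, read as the lax square \eqref{dia:spacesquare}; so by Proposition~\ref{prop:dual-interpolation} the former has the interpolation property iff the latter does. Hence $\tau$ is a morphism of coherent hyperdoctrines (Definition~\ref{dfn:morphism-c-hyp}) iff $\tau_*$ is a morphism of polyadic Priestley spaces (Definition~\ref{dfn:polyadic-priestley}); note that no separate ``openness'' condition on morphisms intervenes. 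Since $[\cC,\Spec]$ is full and faithful, every natural transformation $\sS_2\to\sS_1$ arises as such a $\tau_*$; combined with the object correspondence, $[\cC,\Spec]$ therefore restricts to an essentially surjective, full and faithful functor from coherent hyperdoctrines over $\cC$ to open polyadic Priestley spaces over $\cC$, with quasi-inverse the restriction of $[\cC^\op,\ClU]$, which is the desired dual equivalence.

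\emph{Expected main obstacle.} Once Propositions~\ref{prop:dual-interpolation}, \ref{prop:frobenius-bounded}, \ref{prop:left-adjoint-dually} and \ref{prop:stab-open-bounded} are in hand there is no conceptual difficulty left, and the one point that genuinely requires attention is checking that orientations match under dualization. Concretely, one must verify that the orientation of the morphism-naturality square fixed in Definition~\ref{dfn:morphism-c-hyp} is sent, by the passage \eqref{dia:latticesquare}$\,\rightsquigarrow\,$\eqref{dia:spacesquare} of Proposition~\ref{prop:dual-interpolation}, precisely to the orientation fixed in Definition~\ref{dfn:polyadic-priestley} (after the relabelling $\sS_1\leftrightarrow\sS_2$ forced by contravariance), and likewise that the image under $\sS$ of a pushout square of $\cC$ is oriented as in the strong form of \ref{p-axiom:interpol-1-dfn} described in Remark~\ref{rmk:p-axioms-1+2}. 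Since the interpolation property is direction-sensitive, a mismatch here would invalidate the argument; with the orientations as chosen in the paper, everything lines up.
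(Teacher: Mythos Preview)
Your proof is correct and follows essentially the same approach as the paper: restrict the lifted Priestley duality \eqref{eq:priestley-lifted} using Propositions~\ref{prop:dual-interpolation}, \ref{prop:frobenius-bounded}, \ref{prop:left-adjoint-dually} and~\ref{prop:stab-open-bounded} for objects, and Proposition~\ref{prop:dual-interpolation} for morphisms. Your explicit attention to the orientation of the lax squares under dualization is a welcome expansion of what the paper leaves implicit, but the argument is otherwise identical.
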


\begin{proof}
	Recalling the dual equivalence \eqref{eq:priestley-lifted}, the category of coherent hyperdoctrines is dually equivalent to its image under the functor $[\cC^\op, \Spec]$. Propositions~\ref{prop:dual-interpolation},~\ref{prop:frobenius-bounded},~and~\ref{prop:left-adjoint-dually} together show that on the objects, this image consists of those functors $\sS \colon \cC^\op \to \cat{Priestley}$ such that \ref{p-axiom:interpol-1}, \ref{p-axiom:interpol-2}, and \ref{p-axiom:adjoint-left-dual} hold. By Proposition~\ref{prop:stab-open-bounded}, the conjunction of \ref{p-axiom:interpol-2} and \ref{p-axiom:adjoint-left-dual} is equivalent to \ref{p-axiom:open}. On the morphisms, Proposition~\ref{prop:dual-interpolation} shows that morphisms of open polyadic Priestley spaces are the duals of morphisms of coherent hyperdoctrines.
\end{proof}

\begin{remark}\label{rmk:no-pushout}
	If $\cC$ doesn't have pushouts, then Definition~\ref{dfn:polyadic-priestley} generalizes by replacing pushouts in the axiom \ref{p-axiom:interpol-1-dfn} by ``formal pushouts'' computed in the free completion $[\cC,\cSet]^\op$ of $\cC$. More explicitly, this means that for each span $(f \colon A \to B, g \colon A \to C)$ in $\cC$, the square below has interpolation, where $X$ ranges over all the cocones over the span.
	\begin{equation}\label{dia:generalized-interpol}\begin{tikzcd}
			\colim_X \sS(X) \ar[r] \ar[d] & \sS(C) \ar[d,"\sS(g)"]\\
			\sS(B) \ar[r,"\sS(f)"'] \ar[ru,phantom,"\leq"{description,sloped}] & \sS(A)
	\end{tikzcd}\end{equation}
	While the existence of the colimit requires $\cC$ to be small, we remark that, in the more general case of a not necessarily small base category $\cC$, the colimit can be avoided by formulating the interpolation property more directly, parametric in the cocones over the span. The Priestley duals of the open polyadic Priestley spaces in this generalized sense form an algebraic variety under the condition that for every span in $\cC$, there is a finite set $C$ of cocones such that any cocone factors through at least one of the cocones in $C$. In the terminology explained below in Remark~\ref{rmk:theories-presheaf-type}, this means that $\cC$ has fjw pushouts. Under this condition, $\colim_X \sS(X)$ can be replaced in \eqref{dia:generalized-interpol} by a coproduct ranging over this finite set of cocones. Proposition~\ref{prop:dual-interpolation} implies that the interpolation property of the resulting lax square is equivalent to the fact that for each clopen up-set $U \subseteq \sS(B)$, we have
	\[ \sS(g)^{-1}(\sS(f)[U]) = \bigcup_{(u,v) \in C} \sS(v)[\sS(u)^{-1}(U)] \text{.} \]
	This is equivalent to an equational condition in the language of distributive lattices, which makes it possible to generalize the fact, implicit in the literature, that coherent hyperdoctrines form a multi-sorted algebraic variety. More details on this, and its generalization to the category dual to compact ordered spaces, will be given in a forthcoming paper by the second-named author.
\end{remark}

As remarked in Section~\ref{sec:prelim}, Boolean algebras are dual to Priestley spaces having discrete orders, which we call \emph{Boolean spaces}. In this special case, the interpolation axiom \ref{p-axiom:interpol-1} can be formulated as the following \emph{amalgamation} condition, see also \cite[Dfn.~2.18(5)]{Ben-Yaacov03}, \cite[Sec.~3.1]{Reg21}, \cite[below Dfn.~4.13]{Kamsma_2022} and \cite{MarRamics}.

\begin{definition}\label{dfn:polyadic-set}
	A \emph{polyadic set} is a functor $\sS
	\colon \cC^\op \to \cSet$ with \emph{amalgamation}, by which we mean that every span in $\int \sS$ admits a cocone. A \emph{polyadic Boolean space} is a
	functor $\sS \colon \cC^\op \to \cBoolSp$ with amalgamation.
\end{definition}

A detailed unraveling of the amalgamation condition can be found, for example, in \cite[Sec~3.1]{Reg21}. 
A polyadic set can be understood as the type space functor of a multi-sorted first-order theory in the logic $\mathcal{L}_{\infty,\omega}$, where disjunctions and conjunctions can be taken over sets of any size.

The following corollary is now immediate from Theorem~\ref{thm:duality} and the remarks above.

\begin{corollary}\label{cor:boolean-duality}
	The category of Boolean hyperdoctrines over $\cC$ is dually equivalent to the category of open polyadic Boolean spaces over $\cC$.
\end{corollary}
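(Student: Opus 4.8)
The plan is to deduce the statement by restricting the dual equivalence of Theorem~\ref{thm:duality} to the Boolean-valued objects on each side, and then checking that these restrictions are exactly the categories of Boolean hyperdoctrines and of open polyadic Boolean spaces. The key preliminary observation is that Priestley duality \eqref{eq:Priestley-duality} restricts to Stone duality between $\cBA^\op$ and $\cBoolSp$, and that a distributive lattice is a Boolean algebra exactly when its spectrum has discrete order; hence, for a functor $\hD\colon\cC\to\cDL$ with dual presheaf $\sS\coloneqq\Spec\circ\hD$, each $\hD(n)$ is a Boolean algebra if and only if each $\sS(n)$ is a Boolean space. Since being Boolean-valued is a pointwise condition that is preserved and reflected by the equivalence, Theorem~\ref{thm:duality} restricts to a dual equivalence between the full subcategory of coherent hyperdoctrines whose values are Boolean algebras and the full subcategory of open polyadic Priestley spaces whose values are Boolean spaces.

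Next I would identify the left-hand subcategory with the category of Boolean hyperdoctrines. A lattice homomorphism between Boolean algebras automatically preserves complements, so a coherent hyperdoctrine $\hD$ with Boolean values is the same data as a functor $\cC\to\cBA$; axiom \ref{h-axiom:adjoint-left} supplies a left adjoint to each $\hD\sigma$, and a left adjoint $\ladj h$ between Boolean algebras automatically has a right adjoint (via $\radj h=\neg\circ\ladj h\circ\neg$), which gives \ref{h-axiom:adjoint-right-dfn-int}, so $\hD$ is an intuitionistic hyperdoctrine with Boolean values. Conversely, a Boolean hyperdoctrine is a coherent hyperdoctrine: the only axiom to recover is \ref{h-axiom:interpol-2}, which by Proposition~\ref{prop:frobenius} reduces to Frobenius reciprocity for each $\hD\sigma$, and this is automatic for a Boolean homomorphism $h$ with left adjoint $\ladj h$, since left adjoints preserve joins, so $\ladj h(b)=\ladj h(b\wedge h(a))\vee\ladj h(b\wedge h(\neg a))$ with $\ladj h(b\wedge h(\neg a))\le\neg a$, whence $\ladj h(b)\wedge a\le\ladj h(b\wedge h(a))$, the reverse inequality being immediate. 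On morphisms, the last sentence of Definition~\ref{dfn:int-hyp-mor} says that a morphism of Boolean hyperdoctrines is precisely a natural transformation whose naturality squares have the interpolation property, i.e.\ a morphism of the underlying coherent hyperdoctrines.

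Finally I would identify the right-hand subcategory with the category of open polyadic Boolean spaces. When $\sS\colon\cC^\op\to\cPriestley$ has Boolean values, each $\sS(n)$ has discrete order, so $\sS(n)^\uparrow=\sS(n)$: every function out of $\sS(n)$ is bounded, so axiom \ref{p-axiom:interpol-2-dfn} is automatic, and lower semi-openness of $\sS\sigma$ is just openness as a map of Stone spaces, which is the content of \ref{p-axiom:open}. Moreover, by Definition~\ref{dfn:interp}, interpolation for a lax square of Boolean spaces is amalgamation; applied to the $\sS$-image of a pushout square $d=b+_a c$ in $\cC$, it says exactly that any $x_b\in\sS(b)$ and $x_c\in\sS(c)$ with equal image in $\sS(a)$ are jointly the images of some $x_d\in\sS(d)$, and since $\cC$ has pushouts every cocone in $\int\sS$ over a span factors through the $\sS$-image of the pushout of that span, so \ref{p-axiom:interpol-1-dfn} is equivalent to the amalgamation condition of Definition~\ref{dfn:polyadic-set}. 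Thus the Boolean-valued open polyadic Priestley spaces are exactly the open polyadic Boolean spaces, and Proposition~\ref{prop:dual-interpolation} (as already used in the proof of Theorem~\ref{thm:duality}) matches the morphisms on the two sides.

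This is essentially bookkeeping once Theorem~\ref{thm:duality} is in hand; the only points needing a genuine (but short) computation are the automatic validity of Frobenius reciprocity for Boolean homomorphisms with a left adjoint and the reconciliation of the two phrasings of the amalgamation condition, the latter resting on $\cC$ having pushouts. I do not expect any serious obstacle.
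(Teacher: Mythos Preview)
Your proposal is correct and takes essentially the same approach as the paper: the corollary is obtained by restricting Theorem~\ref{thm:duality} to Boolean-valued objects on both sides, using that a distributive lattice is Boolean iff its Priestley dual has discrete order, and that under discrete orders the interpolation axiom reduces to amalgamation. You have simply spelled out in detail the identifications (Boolean hyperdoctrines with Boolean-valued coherent hyperdoctrines, and open polyadic Boolean spaces with Boolean-valued open polyadic Priestley spaces) that the paper treats as immediate from the preceding remarks.
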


\begin{example}
	We give an example of a polyadic Priestley space $\sS\colon \cFinSet^\op \to \cPriestley$  which does not have the amalgamation property, meaning that the composite of $\sS$ with the forgetful functor $\cPriestley \to \cBoolSp$ is not a polyadic Boolean space. In logical terms, this shows in particular that it is not straight-forward to Booleanize a coherent theory, since the polyadic Boolean space corresponding to the Booleanization can not always be obtained by simply composing with the forgetful functor.
	
	Our example here is a slight modification of \cite[Example~4.15]{Kamsma_2022}. We start by considering a poset $\set{x,y,z}$ whose order is generated by $x > y$ and $x > z$. As a set, we define $\sS(n) \coloneqq \set{x,y}^n \cup \set{x,z}^n$. The order $(x_1,\dots,x_n) \leq (y_1,\dots,y_n)$ of $\sS(n)$ is defined by the following two conditions:
	\begin{enumerate}
		\item $x_i \leq y_i$ for all $1 \leq i \leq n$.
		\item $x_i = x_j \implies y_i = y_j$ for all $1 \leq i,j \leq n$.
	\end{enumerate}
	If $f \colon n\to m$ is a map in $\cFinSet$, we define $\sS(f)(x_1,\dots,x_m) = (x_{f(1)},\dots,x_{f(n)})$. We leave to the reader the task of checking that this defines a functor satisfying the interpolation properties \ref{p-axiom:interpol-1} and \ref{p-axiom:interpol-2}. On the other hand, it doesn't satisfy the amalgamation property: the elements $y$ and $z$ of $\sS(1)$ are both sent to the unique point of $\sS(0)$ but the unique possible amalgam $(y,z)$ is not in $\sS(2)$. We note that $\sS$ takes its values in finite posets, so that it is automatically a polyadic Esakia space as we will define below in Definition~\ref{dfn:morphism-int}.
	
	This example is actually the polyadic Priestley space associated to the following coherent first-order theory with only three models. The signature has two base unary symbols $P(x)$ and $Q(x)$, and the axioms are:
	\begin{enumerate}
		\item $P(x) \land Q(y) \vdash Q(x) \lor P(y)$
		\item $\vdash \exists x \colon P(x) \land Q(x)$
		\item $P(x) \land Q(x) \land P(y) \land Q(y) \vdash x=y$
		\item $P(x) \land P(y) \vdash x=y \lor Q(x) \lor Q(y)$
		\item $Q(x) \land Q(y) \vdash x=y \lor P(x) \lor P(y)$
	\end{enumerate}
	Only the first axiom is essential, the other ones are included only to simplify the description of $\sS$.
\end{example}

\paragraph{The intuitionistic case}
We now show how Esakia duality can also be extended to a duality for intuitionistic hyperdoctrines; this is again a consequence of Propositions~\ref{prop:dual-interpolation} and \ref{prop:left-adjoint-dually}. The only important notion that we will use in the rest of the paper will be that of \emph{intuitionistic morphism} of polyadic compact ordered spaces.

\begin{definition}\label{dfn:morphism-int}
	A \emph{polyadic Esakia space} is an open polyadic Priestley space $\sS$ that takes values in $\cEsakia$, such that moreover ${\downarrow}\sS(\sigma)[U]$ is open for all $\sigma \colon n \to m$ in $\cC$ and all open down-set $U \subseteq \sS(m)$.

	A morphism $\tau \colon \sS_1 \to \sS_2$ of polyadic compact ordered spaces is \emph{intuitionistic} if, for every object $n$ of $\cC$, $\tau_n$ is bounded, and, in addition to the naturality squares on the left below, the naturality squares on the right below have the interpolation property.
	
	\begin{minipage}{0.485\linewidth}
		\[\begin{tikzcd}
			\sS_1(m) \ar[d] \ar[r] & \sS_1(n) \ar[d] \\
			\sS_2(m) \ar[r] \ar[ru,phantom,"\leq"{description,sloped}] & \sS_2(n)
		\end{tikzcd}\]
	\end{minipage}\begin{minipage}{0.485\linewidth}
		\[\begin{tikzcd}
			\sS_1(m) \ar[d] \ar[r] & \sS_1(n) \ar[d] \\
			\sS_2(m) \ar[r] \ar[ru,phantom,"\geq"{description,sloped}] & \sS_2(n)
		\end{tikzcd}\]
	\end{minipage}
\end{definition}

\begin{corollary}\label{cor:duality-int}
	The category of intuitionistic hyperdoctrines over $\cC$ is dually equivalent to the category of polyadic Esakia spaces over $\cC$ with intuitionistic morphisms between them.
\end{corollary}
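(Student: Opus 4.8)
The plan is to mirror the proof of Theorem~\ref{thm:duality}, now applying Esakia duality --- recalled in Section~\ref{sec:prelim} as the restriction of Priestley duality to $\cHeyt^\op \leftrightarrows \cEsakia$ --- pointwise, and additionally keeping track of the two extra pieces of structure that distinguish an intuitionistic hyperdoctrine from a coherent one: the Heyting structure on each value $\hD(n)$, and the right adjoints of the maps $\hD\sigma$. Throughout, write $\sS \coloneqq \Spec \circ \hD$ for the pointwise Priestley dual of a functor $\hD \colon \cC \to \cHeyt$.

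First, at the level of objects. By the first part of Remark~\ref{rmk:Heyt-dual}, $\hD(n)$ is a Heyting algebra if and only if $\sS(n)$ is an Esakia space; and by Proposition~\ref{prop:frobenius-bounded}, combined with the observation following Definition~\ref{dfn:int-hyp} that a distributive-lattice homomorphism between Heyting algebras preserves $\to$ precisely when it is Frobenius, the map $\hD\sigma$ is a Heyting homomorphism if and only if $\sS\sigma$ is bounded. Hence $\hD$ factors through $\cHeyt$ exactly when $\sS$ factors through $\cEsakia$. Axiom \ref{h-axiom:interpol-1-int} dualizes to \ref{p-axiom:interpol-1} by Proposition~\ref{prop:dual-interpolation}, exactly as in the coherent case. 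For axiom \ref{h-axiom:adjoint-left-int}, Proposition~\ref{prop:left-adjoint-dually} shows that $\hD\sigma$ has a left adjoint if and only if ${\uparrow}\sS\sigma[U]$ is open for every open up-set $U$; since $\hD\sigma$ is already Frobenius, being a Heyting homomorphism, Proposition~\ref{prop:stab-open-bounded} turns this into the statement that $\sS\sigma$ is lower semi-open, i.e.\ axiom \ref{p-axiom:open}. Finally, by the order-dual of Proposition~\ref{prop:left-adjoint-dually} recorded in Remark~\ref{rmk:Heyt-dual}, axiom \ref{h-axiom:adjoint-right-dfn-int} dualizes to the condition that ${\downarrow}\sS\sigma[U]$ is open for every open down-set $U \subseteq \sS(m)$. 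Assembling these equivalences, $\hD$ is an intuitionistic hyperdoctrine if and only if $\sS$ is a polyadic Esakia space in the sense of Definition~\ref{dfn:morphism-int}.

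For the morphisms, let $\tau \colon \hD_1 \to \hD_2$ be a natural transformation between functors valued in $\cHeyt$, so that each component $\tau_n$ is a Heyting homomorphism, with dual map $(\tau_n)_*$. Again by Proposition~\ref{prop:frobenius-bounded} via the Frobenius characterization of Heyting homomorphisms, each $\tau_n$ being a Heyting homomorphism is equivalent to each $(\tau_n)_*$ being bounded. Applying Proposition~\ref{prop:dual-interpolation} separately to each of the two naturality squares of Definition~\ref{dfn:int-hyp-mor} shows that $\tau$ has the interpolation property in both directions on naturality squares if and only if its dual natural transformation does. Comparing with Definition~\ref{dfn:morphism-int}, morphisms of intuitionistic hyperdoctrines thus correspond precisely to intuitionistic morphisms of polyadic Esakia spaces; the dual equivalence then follows exactly as in the proof of Theorem~\ref{thm:duality}, by restricting the pointwise Esakia duality derived from \eqref{eq:priestley-lifted} to the (non-full) subcategories just identified.

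There is no real obstacle here once the propositions of this section are in place; the proof is essentially bookkeeping parallel to Theorem~\ref{thm:duality}. The one point that genuinely goes beyond the coherent case is the dualization of axiom \ref{h-axiom:adjoint-right-dfn-int} to the ``${\downarrow}\sS\sigma[U]$ open'' clause, which rests on the order-dual of Proposition~\ref{prop:left-adjoint-dually} (itself obtained by repeating that proof with up-sets and down-sets interchanged, using the order-dual of Lemma~\ref{lem:adh-spec}); and it is worth flagging explicitly that the boundedness requirement on $\tau_n$ in Definition~\ref{dfn:morphism-int}, which has no analogue in Definition~\ref{dfn:morphism-c-hyp}, is exactly the dual manifestation of $\tau_n$ preserving Heyting implication.
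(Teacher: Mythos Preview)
Your proof is correct and follows exactly the approach the paper indicates; the paper gives no explicit proof of this corollary, noting only that it is a consequence of Propositions~\ref{prop:dual-interpolation} and \ref{prop:left-adjoint-dually}, and your argument is a faithful axiom-by-axiom unpacking of this, including the correct observation that the boundedness clause on $\tau_n$ in Definition~\ref{dfn:morphism-int} is precisely dual to the components being Heyting homomorphisms.
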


Note that a polyadic Esakia space is exactly a polyadic Priestley space $\sS$ respecting three openness conditions, corresponding respectively to Heyting implication, existential quantification, and universal quantification:
\begin{enumerate}
	\item ${\downarrow}W$ is open for all $W \subseteq \sS(n)$ open.
	\item ${\uparrow} \sS(\sigma)[U]$ is open for all $\sigma\colon n\to m$ in $\cC$ and $U \subseteq \sS(m)$ open up-set.
	\item ${\downarrow} \sS(\sigma)[U]$ is open for all $\sigma\colon n\to m$ in $\cC$ and $U \subseteq \sS(m)$ open down-set.
\end{enumerate}

\begin{definition}
	An \emph{intuitionistic polyadic compact ordered space} is a polyadic compact ordered space satisfying the three openness conditions given above.
\end{definition}

\paragraph{Compactification and interpolation}
We conclude this section by explaining how the ordered Stone--Čech compactification interacts with interpolation. In particular, we will see that openness properties can be understood as interpolation properties involving the Stone--Čech compactification. This will be useful in Section~\ref{sec:models} where we define models of polyadic spaces.

The ordered Stone-Čech compactification $\beta \colon \cPoset \to \cKOrd$ is the left adjoint of the forgetful functor $\cKOrd \to \cPoset$. It sends a poset $X$ to $\Up(X)_*$, where $\Up(X)$ denotes the lattice of up-sets of $X$. The functor of up-sets $X \mapsto \Up(X)$ realizes a dual equivalence, called \emph{discrete duality}, between $\cPoset$ and the category of completely distributive complete lattices having enough join-irreducible elements \cite{Raney1952}, also known as \emph{perfect} distributive lattices. The morphisms between these lattices of up-sets preserve arbitrary infima and suprema, so in particular they have left and right adjoints. Moreover, these lattices are Heyting algebras. As a consequence, $\beta \colon \cPoset \to \cKOrd$ takes values in the category of Esakia spaces and morphisms $f \colon A \to B$ such that ${\uparrow}f[U]$ (resp. ${\downarrow}f[U]$) is open for every open up-set (resp. open down-set) $U \subseteq A$, by Proposition~\ref{prop:left-adjoint-dually}.

\begin{proposition}
	The ordered Stone--Čech compactification preserves boundedness of maps and the interpolation property of lax commutative squares.
\end{proposition}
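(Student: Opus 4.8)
The plan is to translate both claims through Priestley duality and the discrete duality for posets, reducing them to statements about the relevant lattice homomorphisms. For the preservation of boundedness, let $f \colon X \to Y$ be a bounded order-preserving map of posets. By Proposition~\ref{prop:frobenius-bounded}, it suffices to show that the dual homomorphism $\Up(Y) \to \Up(X)$ (which is $U \mapsto f^{-1}(U)$) is Frobenius, or equivalently, since $\Up(X)$ and $\Up(Y)$ are perfect (hence Heyting) and this homomorphism preserves all meets and joins, to verify Frobenius reciprocity for its left adjoint. But the left adjoint of $f^{-1} \colon \Up(Y) \to \Up(X)$ is precisely $U \mapsto {\uparrow}f[U]$ by the (discrete-duality analogue of the) last sentence before this proposition, and boundedness of $f$ says exactly that ${\uparrow}f[U] = f[U]$ for up-sets $U$. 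Frobenius reciprocity ${\uparrow}f[U \cap f^{-1}(V)] = {\uparrow}f[U] \cap V$ for $U \in \Up(X)$, $V \in \Up(Y)$ then reduces to the elementary set-theoretic identity $f[U \cap f^{-1}(V)] = f[U] \cap V$, which holds for any function. Hence $\overline{f} \colon \beta X \to \beta Y$ is bounded.

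For the preservation of interpolation, start with a lax commutative square in $\cPoset$
\[\begin{tikzcd}
	P \ar[r,"g"] \ar[d,"f"'] & R \ar[d,"v"]\\
	Q \ar[r,"u"'] \ar[ru,phantom,"\leq"{description,sloped}] & T
\end{tikzcd}\]
with the interpolation property, and apply $\beta$. By Proposition~\ref{prop:dual-interpolation}, it is equivalent to show that the Priestley-dual square of lattice homomorphisms among $\Up(P), \Up(Q), \Up(R), \Up(T)$ has the interpolation property. Using Proposition~\ref{prop:interp-BC} — available because all four morphisms $\Up(g), \Up(f), \Up(v), \Up(u)$ (each of the form ``inverse image'') have left adjoints, namely $U \mapsto {\uparrow}g[U]$ etc. — interpolation for that square is equivalent to commutativity of the square of left adjoints, i.e. to the identity
\[ {\uparrow}g\bigl[f^{-1}(U)\bigr] = v^{-1}\bigl[{\uparrow}u[U]\bigr] \quad \text{for all } U \in \Up(Q).\]
So the task is to derive this equality of up-sets from the interpolation property of the original lax square in $\cPoset$.

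I expect this last identity to be the main obstacle, though a routine one. One inclusion is just the lax commutativity $uf \geq gv$ dualized, i.e. it does not even need interpolation. For the nontrivial inclusion, take $r \in R$ with $v(r) \in {\uparrow}u[U]$, so $u(q) \leq v(r)$ for some $q \in U$; interpolation for the original lax square yields $p \in P$ with $q \leq f(p)$ and $g(p) \leq r$. Then $p \in f^{-1}({\uparrow}U) = f^{-1}(U)$ (as $U$ is an up-set and $f$ monotone, $f^{-1}$ of an up-set is an up-set, and $q \leq f(p)$ with $q \in U$ gives $f(p) \in U$), and $g(p) \leq r$ gives $r \in {\uparrow}g[f^{-1}(U)]$, as required. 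This establishes the displayed identity, hence interpolation for the dual square, hence — via Proposition~\ref{prop:dual-interpolation} — interpolation for the $\beta$-image square, completing the proof. The only subtlety is bookkeeping the variance correctly when passing between $\cPoset$, $\Up(-)$, and Priestley duals; laying out the adjoint-commutativity reformulation of interpolation (Proposition~\ref{prop:interp-BC}) in the perfect-lattice setting first makes this bookkeeping transparent.
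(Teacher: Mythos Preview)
Your proof is correct and follows essentially the same approach as the paper's: factor $\beta$ as $\Spec \circ \Up$ and argue that each factor preserves the relevant property. The paper is considerably terser, simply asserting that ``the proofs of Propositions~\ref{prop:dual-interpolation} and~\ref{prop:frobenius-bounded} also work in the setting of discrete duality,'' whereas you explicitly verify the discrete-duality half (that $\Up$ sends a bounded map to a Frobenius homomorphism, and an interpolation square to an interpolation square) by unfolding Frobenius reciprocity and the Beck--Chevalley reformulation from Proposition~\ref{prop:interp-BC}. Your verification of the identity ${\uparrow}g[f^{-1}(U)] = v^{-1}({\uparrow}u[U])$ is exactly the content of ``Proposition~\ref{prop:dual-interpolation} holds for discrete duality,'' so the two arguments coincide once the paper's one-line claim is expanded.
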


\begin{proof}
	The proofs of Propositions~\ref{prop:dual-interpolation} and \ref{prop:frobenius-bounded} also work in the setting of discrete duality. The interpolation property is thus preserved by both the discrete duality functor $X \mapsto \Up(X)$ and the Priestley duality functor $A \mapsto A_*$. As their composite, the Stone--Čech compactification functor also preserves interpolation. The preservation of boundedness is similar.
\end{proof}

\begin{definition}
	A \emph{polyadic poset} is a functor $\sS \colon \cC^\op \to \cPoset$ sending each morphism to a bounded map and each pushout square to a square with interpolation.
\end{definition}

\begin{corollary}\label{cor:compactification}
	For any polyadic poset $\sS \colon \cC^\op \to \cPoset$, the functor $\beta \circ \sS \colon \cC^\op \to \cEsakia$ is a polyadic Esakia space. If $\sS$ is in particular a polyadic set, then $\beta \circ \sS$ is an open polyadic Boolean space.
\end{corollary}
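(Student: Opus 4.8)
The plan is to unwind the definition of a polyadic Esakia space (Definition~\ref{dfn:morphism-int}), and of an open polyadic Boolean space, and then to verify each of the required conditions for $\beta\circ\sS$ by quoting the properties of the functor $\beta$ recorded above: that $\beta$ preserves boundedness of maps and the interpolation property of lax commutative squares (the preceding Proposition), that it lands in $\cEsakia$, and that every morphism $f$ in its image satisfies that ${\uparrow}f[U]$ is open for each open up-set $U$ and ${\downarrow}f[U]$ is open for each open down-set $U$.

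For the first statement, I would check the axioms for $\beta\circ\sS\colon\cC^\op\to\cEsakia$ one at a time. Its values $\beta(\sS(n))=\Up(\sS(n))_*$ are Esakia spaces because $\Up(\sS(n))$ is a Heyting algebra. Since $\sS$ is a polyadic poset, each $\sS(\sigma)$ is bounded and the image of each pushout square has the interpolation property; by the preceding Proposition both properties are inherited after applying $\beta$ pointwise, which gives the axioms \ref{p-axiom:interpol-2-dfn} and \ref{p-axiom:interpol-1-dfn} for $\beta\circ\sS$. For the openness axiom \ref{p-axiom:open}: since $\beta(\sS(\sigma))$ is bounded, the direct image of an open up-set under it is already an up-set, and that image is open because $\beta(\sS(\sigma))$ lies in the image of $\beta$; hence $\beta(\sS(\sigma))$ is lower semi-open. (Alternatively one invokes Proposition~\ref{prop:stab-open-bounded}, using Proposition~\ref{prop:frobenius-bounded} for the Frobenius condition on the dual homomorphism $\Up(\sS(\sigma))$, which moreover preserves arbitrary meets and so has a left adjoint.) Finally, the extra condition in Definition~\ref{dfn:morphism-int}, that ${\downarrow}\beta(\sS(\sigma))[U]$ is open for each open down-set $U$, holds again because $\beta(\sS(\sigma))$ is in the image of $\beta$. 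This exhausts the axioms, so $\beta\circ\sS$ is a polyadic Esakia space.

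For the second statement, I would first observe that a polyadic set $\sS\colon\cC^\op\to\cSet$, regarded as a functor into $\cPoset$ via the discrete-order embedding $\cSet\hookrightarrow\cPoset$, is a polyadic poset: maps between discrete posets are vacuously bounded, and the image of a pushout square has the interpolation property, which for discrete orders is the amalgamation property. For the latter, given $b\in\sS(B)$ and $c\in\sS(C)$ with a common image in $\sS(A)$, where $A\to B$, $A\to C$ is the span whose pushout is the square's fourth corner, one obtains a span in $\int\sS$; a cocone over it, which exists by the amalgamation hypothesis on $\sS$, factors through the pushout object by the universal property of pushouts in $\cC$, and applying $\sS$ to this factorization produces the required amalgam. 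By the first statement, $\beta\circ\sS$ is then a polyadic Esakia space; and since $\sS(n)$ carries the discrete order, its value $\beta(\sS(n))=\powerset(\sS(n))_*$ is the Priestley dual of a Boolean algebra, hence a Boolean space. A polyadic Esakia space whose values are Boolean spaces is an open polyadic Boolean space, because for discrete orders the interpolation-on-pushouts condition is exactly the amalgamation condition of Definition~\ref{dfn:polyadic-set} and the openness axiom \ref{p-axiom:open} carries over unchanged.

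I expect no serious obstacle: the corollary is essentially a matter of combining the facts about $\beta$ established just above with the unfolded definitions. The one point that demands a genuine argument is, in the polyadic-set case, the passage between the ``every span in $\int\sS$ admits a cocone'' formulation of amalgamation and the ``pushout squares map to amalgamation squares'' formulation appearing in Definition~\ref{dfn:polyadic-priestley}; this is the only place where the standing assumption that $\cC$ has pushouts is actually used, and the only step whose bookkeeping a reader might wish to see spelled out.
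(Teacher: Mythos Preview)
Your proposal is correct and follows essentially the same approach as the paper: the corollary is stated there without proof, since it is immediate from the preceding Proposition (that $\beta$ preserves boundedness and interpolation) together with the earlier observation that $\beta$ lands in $\cEsakia$ with the two openness properties on images. Your write-up simply spells out this verification, including the one nontrivial bookkeeping step (reconciling the two formulations of amalgamation for polyadic sets via the pushout in $\cC$), which is exactly where the argument lies.
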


In the next section, we will consider squares of the form \eqref{eq:beta-XYAB} with the interpolation property.

\begin{minipage}{0.485\linewidth}
\begin{equation}\label{eq:XYAB}\begin{tikzcd}
	X \ar[d,"u"'] \ar[r,"f"] & Y \ar[d,"v"]\\
	A \ar[r,"g"'] \ar[ru,phantom,"\leq"{description,sloped}] & B
\end{tikzcd}\end{equation}
\end{minipage}\begin{minipage}{0.485\linewidth}
\begin{equation}\label{eq:beta-XYAB}\begin{tikzcd}
	\beta X \ar[d,"\ovl{u}"'] \ar[r,"\beta f"] & \beta Y \ar[d,"\ovl{v}"]\\
	A \ar[r,"g"'] \ar[ru,phantom,"\leq"{description,sloped}] & B
\end{tikzcd}\end{equation}
\end{minipage}

Since morphisms of compact ordered spaces $\beta X \to A$ correspond to order-preserving maps $X \to A$, a natural question is to express the interpolation property of square \eqref{eq:beta-XYAB} in terms of square \eqref{eq:XYAB}. This leads to the following definition.

\begin{definition}\label{dfn:weak-interp}
	Let \eqref{eq:XYAB} be a lax commutative square of posets where moreover $A$ and $B$ are compact ordered spaces and $g$ is continuous. Suppose that ${\uparrow}g[U]$ is open for every open up-set $U \subseteq A$. In this situation, we say that the square \eqref{eq:XYAB} has the \emph{weak interpolation property} if for any open up-set $U \subseteq A$ and $y \in Y$ such that $v(y) \in {\uparrow}g[U]$, there exists $x \in X$ such that $f(x) \leq y$ and $u(x) \in U$.
\end{definition}

\begin{proposition}\label{prop:weak-interp}
	In the situation of Definition~\ref{dfn:weak-interp} (in particular, ${\uparrow}g[U]$ is open for every open up-set $U \subseteq A$), if $A$ and $B$ are Priestley spaces, then the square \eqref{eq:beta-XYAB} has the interpolation property if, and only if, the square \eqref{eq:XYAB} has the weak interpolation property.
\end{proposition}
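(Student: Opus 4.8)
The plan is to trade the interpolation property of \eqref{eq:beta-XYAB} for that of its Priestley dual square via Proposition~\ref{prop:dual-interpolation}, and then to test the resulting lattice-theoretic condition against weak interpolation, in both directions by contraposition. Applying $\ClU$ to \eqref{eq:beta-XYAB} and using $\ClU(\beta X)\cong\Up(X)$ and $\ClU(\beta Y)\cong\Up(Y)$ — under which $\beta f$, $\ovl{u}$, $\ovl{v}$ dualize to $W\mapsto f^{-1}(W)$, $V\mapsto u^{-1}(V)$, $Z\mapsto v^{-1}(Z)$ — the Priestley dual of \eqref{eq:beta-XYAB} is the lax square with corners $\ClU(B),\Up(Y),\ClU(A),\Up(X)$ and edges $v^{-1}\colon\ClU(B)\to\Up(Y)$, $g^{-1}\colon\ClU(B)\to\ClU(A)$, $u^{-1}\colon\ClU(A)\to\Up(X)$, $f^{-1}\colon\Up(Y)\to\Up(X)$. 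By Proposition~\ref{prop:dual-interpolation}, \eqref{eq:beta-XYAB} has the interpolation property if and only if this lattice square does, that is, if and only if: \emph{for every clopen up-set $V\subseteq A$ and every up-set $W\subseteq Y$ with $u^{-1}(V)\subseteq f^{-1}(W)$, there is a clopen up-set $Z\subseteq B$ with $g[V]\subseteq Z$ and $v^{-1}(Z)\subseteq W$.}

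For the first implication I would argue the contrapositive: assume weak interpolation fails, so there are an open up-set $U\subseteq A$ and $y\in Y$ with $v(y)\in{\uparrow}g[U]$ but $u^{-1}(U)\cap f^{-1}({\downarrow}y)=\emptyset$ (this last condition is just the negation of ``some $x\in X$ has $f(x)\leq y$ and $u(x)\in U$''). Pick $a_0\in U$ with $g(a_0)\leq v(y)$, a clopen up-set $V$ with $a_0\in V\subseteq U$ (using that $A$ is Priestley, so $U$ is a union of clopen up-sets), and set $W:=Y\setminus{\downarrow}y$. Then $u^{-1}(V)\subseteq u^{-1}(U)\subseteq X\setminus f^{-1}({\downarrow}y)=f^{-1}(W)$, yet no clopen up-set $Z$ can satisfy both $g[V]\subseteq Z$ and $v^{-1}(Z)\subseteq W$: if $g[V]\subseteq Z$ then $g(a_0)\in Z$, hence $v(y)\in Z$ since $Z$ is an up-set and $g(a_0)\leq v(y)$, so $y\in v^{-1}(Z)$, whereas $y\notin W$. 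So the reformulated condition fails, and hence so does interpolation of \eqref{eq:beta-XYAB}.

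For the converse, again by contraposition: suppose the reformulated condition fails, witnessed by a clopen up-set $V\subseteq A$ and an up-set $W\subseteq Y$ with $u^{-1}(V)\subseteq f^{-1}(W)$ for which no clopen up-set $Z\subseteq B$ works, and set $U:=V$, so that ${\uparrow}g[U]={\uparrow}g[V]$ is open by the standing hypothesis. I want to produce a point $y$ in $Y$ (not merely in $\beta Y$) with $v(y)\in{\uparrow}g[V]$ and $y\notin W$; this finishes the proof, since for any $x\in X$ with $f(x)\leq y$ we get $f(x)\notin W$ ($W$ being an up-set), hence $x\notin f^{-1}(W)\supseteq u^{-1}(V)=u^{-1}(U)$, i.e.\ $u(x)\notin U$, so weak interpolation fails. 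To get $y$, work inside $\beta Y=\Up(Y)_*$, where $Y$ is dense and $W$ corresponds to the clopen up-set $\hat W$ (with $\hat W\cap Y=W$). For each clopen up-set $Z\subseteq B$ with $g[V]\subseteq Z$, the failure of the reformulated condition gives a point of $Y$ lying in $v^{-1}(Z)\setminus W$, so the clopen set $\ovl{v}^{-1}(Z)\cap(\beta Y\setminus\hat W)$ of $\beta Y$ is nonempty; as $Z$ varies these sets form a filtered family, so by compactness of $\beta Y$ their intersection, which equals $\ovl{v}^{-1}({\uparrow}g[V])\cap(\beta Y\setminus\hat W)$ because $\bigcap\{Z : g[V]\subseteq Z\text{ clopen up-set}\}={\uparrow}g[V]$ by Lemma~\ref{lem:adh-spec} (noting $g[V]$ is closed), is nonempty. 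But this intersection is also \emph{open}, since ${\uparrow}g[V]$ is open and $\hat W$ is clopen; hence by density of $Y$ it contains some $y\in Y$, and this $y$ has $v(y)=\ovl{v}(y)\in{\uparrow}g[V]$ and $y\notin\hat W$, i.e.\ $y\notin W$, as required.

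The delicate point is this converse direction, where the natural interpolant lives a priori only on $\beta Y$: it works only because the failure of the reformulated condition hands us the \emph{clopen} up-set $V$ and the up-set $W$, which turn the relevant constraints into \emph{open} ones on $\beta Y$ — $\ovl{v}^{-1}({\uparrow}g[V])$ open because $V$ is an open up-set and $g$ satisfies the standing openness hypothesis, and $\beta Y\setminus\hat W$ clopen — so that a nonempty intersection necessarily meets the dense copy of $Y$ and yields an honest $y\in Y$. Everything else is routine bookkeeping with Priestley duality and the description $\beta(-)=\Up(-)_*$.
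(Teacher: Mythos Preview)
Your proof is correct, but it takes a noticeably different route from the paper's. Both arguments start by dualizing \eqref{eq:beta-XYAB} via Proposition~\ref{prop:dual-interpolation} to the lattice square with corners $\ClU(B),\Up(Y),\ClU(A),\Up(X)$. From there the paper goes through Beck--Chevalley (Proposition~\ref{prop:interp-BC}): since $g^{-1}\colon\ClU(B)\to\ClU(A)$ has a left adjoint $V\mapsto{\uparrow}g[V]$ by the standing openness hypothesis, and $f^{-1}\colon\Up(Y)\to\Up(X)$ has left adjoint $W\mapsto{\uparrow}f[W]$, interpolation of the dual square reduces to the equation ${\uparrow}f[u^{-1}(V)]=v^{-1}({\uparrow}g[V])$ for all clopen up-sets $V\subseteq A$; one then observes this is exactly weak interpolation restricted to a basis of opens.

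You instead unfold the interpolation condition on the dual square directly and match it against weak interpolation by two contrapositive arguments. The first direction is routine; the second, where you must extract a genuine witness $y\in Y$ rather than a point of $\beta Y$, is handled by a pleasant compactness-plus-density step: the obstruction set $\ovl{v}^{-1}({\uparrow}g[V])\setminus\hat W$ is shown to be nonempty by writing it as a filtered intersection of nonempty clopens, and then to be \emph{open}, which lets density of $Y$ pull the witness back. This is valid and self-contained, but it effectively reproves the relevant instance of Proposition~\ref{prop:interp-BC} by hand. The paper's route is shorter and makes the role of the hypothesis on $g$ transparent (it is precisely what supplies the left adjoint), while your route has the virtue of not invoking Proposition~\ref{prop:interp-BC} and of making the topological mechanism explicit.
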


\begin{proof}
	Note that the weak interpolation property of \eqref{eq:XYAB} says that for every open up-set $U \subseteq A$, we have ${\uparrow}f[u^{-1}(U)] = v^{-1}({\uparrow}g[U])$. We can restrict this condition to clopen up-sets, since they form a basis of opens. Applying Proposition~\ref{prop:dual-interpolation} to the square \eqref{eq:beta-XYAB}, we recover exactly the same statement, so the two are equivalent.
\end{proof}

This shows that even though $\beta$ is left adjoint to the forgetful functor $\cKOrd \to \cPoset$, composition with $\beta$ is not left adjoint to the forgetful functor from polyadic compact ordered spaces to polyadic posets: given a polyadic poset $\sP$ and a polyadic space $\sQ$, morphisms of polyadic posets $\sP \to \sQ$ are more restrictive than morphisms of polyadic spaces $\beta \circ \sP \to \sQ$.

We now show how two openness properties can be reformulated as interpolation properties. The first proposition shows that the hypothesis in Proposition~\ref{prop:weak-interp} that ${\uparrow}g[U]$ is open for every open up-set $U \subseteq A$ is necessary. For proofs in the more general setting of compact ordered spaces, see \cite[Sec.~1.3]{JMThesis}.

\begin{proposition}\label{prop:openness-as-interp}
	Let $f \colon A \to B$ be a morphism of Priestley spaces. Then the square \eqref{eq:beta-A-B} below has interpolation if and only if ${\uparrow}f[U]$ is open for every open up-set $U \subseteq A$.
	\begin{equation}\label{eq:beta-A-B}\begin{tikzcd}
		\beta A \ar[d,->>] \ar[r,"\beta f"] & \beta B \ar[d,->>]\\
		A \ar[r,"f"'] \ar[ru,phantom,"\leq"{description,sloped}] & B
	\end{tikzcd}\end{equation}
\end{proposition}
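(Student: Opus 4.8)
The plan is to follow the same route as the proof of Proposition~\ref{prop:weak-interp}: use Proposition~\ref{prop:dual-interpolation} to translate the interpolation property of the $\cPriestley$-square \eqref{eq:beta-A-B} into the interpolation property of its Priestley dual, a square in $\cDL$, and then unravel what that says about $f$.

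First I would identify the dual square. Recall that $\beta A$ is the Priestley dual of $\Up(A)$, the lattice of up-sets of the underlying poset of $A$, so that $\ClU(\beta A) \cong \Up(A)$, and that the counit $\beta A \twoheadrightarrow A$, being the continuous extension of $\id \colon A \to A$ from the dense subspace $A$, is dual to the inclusion $\ClU(A) \hookrightarrow \Up(A)$; likewise $\beta f$ is dual to $f^{-1} \colon \Up(B) \to \Up(A)$. By naturality of the counit the square \eqref{eq:beta-A-B} in fact commutes, and Proposition~\ref{prop:dual-interpolation} identifies its interpolation property with that of the square
\[
\begin{tikzcd}
\ClU(B) \ar[r] \ar[d,"f^{-1}"'] & \Up(B) \ar[d,"f^{-1}"]\\
\ClU(A) \ar[r] \ar[ru,phantom,"\leq"{description,sloped}] & \Up(A)
\end{tikzcd}
\]
whose horizontal maps are the inclusions of clopen up-sets into all up-sets. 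Unravelling Definition~\ref{dfn:interp}, this square has the interpolation property exactly when: for every clopen up-set $U \subseteq A$ and every up-set $V \subseteq B$ with $f[U] \subseteq V$, there is a clopen up-set $W \subseteq B$ with $f[U] \subseteq W \subseteq V$.

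It then remains to match this with the openness condition. Since ${\uparrow}f[U]$ is the smallest up-set of $B$ containing $f[U]$, the displayed condition is equivalent to its instance $V = {\uparrow}f[U]$, i.e. to: for every clopen up-set $U \subseteq A$ there is a clopen up-set $W \subseteq B$ with $f[U] \subseteq W \subseteq {\uparrow}f[U]$. But such a $W$ is an up-set containing $f[U]$, hence is forced to equal ${\uparrow}f[U]$, so the condition says precisely that ${\uparrow}f[U]$ is clopen for every clopen up-set $U$. Now ${\uparrow}f[U]$ is always \emph{closed}: $U$ is compact (being clopen in the Priestley space $A$), hence $f[U]$ is closed, and the upward closure of a closed set in a Priestley space is closed by Lemma~\ref{lem:adh-spec}. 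Finally, every open up-set of $A$ is a union of clopen up-sets and $U \mapsto {\uparrow}f[U]$ preserves unions, so ``${\uparrow}f[U]$ clopen for every clopen up-set $U$'' is equivalent to ``${\uparrow}f[U]$ open for every open up-set $U$'', which completes the equivalence.

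I do not expect a serious obstacle here; the only point requiring care is computing the dual square correctly — in particular verifying that the counit $\beta A \to A$ is dual to the inclusion $\ClU(A) \hookrightarrow \Up(A)$, and that, read off in the direction drawn in \eqref{eq:beta-A-B}, the resulting $\cDL$-square is the one above, so that the lax inequality (which here happens to be an equality) points in the right direction when Definition~\ref{dfn:interp} is applied.
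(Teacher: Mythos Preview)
Your proposal is correct and follows essentially the same approach as the paper: apply Proposition~\ref{prop:dual-interpolation} to reduce to the dual $\cDL$-square and unravel the interpolation condition there to the statement that ${\uparrow}f[U]$ is clopen for every clopen up-set $U$. The paper shortcuts one direction by quoting Proposition~\ref{prop:weak-interp} (which already packages the ``if'' part under the openness hypothesis), whereas you run the dual computation symmetrically for both directions; this is a cosmetic difference, not a different argument.
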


\begin{proof}
	If ${\uparrow}f[U]$ is open for every open up-set $U \subseteq A$, Proposition~\ref{prop:weak-interp} shows that the square has interpolation. Reciprocally, suppose that the square has interpolation. Proposition~\ref{prop:dual-interpolation} shows that for every clopen up-set $U \subseteq A$ and all up-sets $V \subseteq B$, if $U \subseteq f^{-1}(V)$, then there is some clopen up-set $V' \subseteq B$ such that $U \subseteq f^{-1}(V')$ and $V' \subseteq V$. Applying that to $V = {\uparrow}f[U]$, we obtain some clopen up-set $V' \subseteq B$ such that $U \subseteq f^{-1}(V')$ and $V' \subseteq {\uparrow}f[U]$. This implies that $V' = {\uparrow}f[U]$.
\end{proof}

\begin{proposition}\label{prop:esakia-as-interp}
	Let $X$ be a Priestley space. Then $X$ is an Esakia space (i.e., ${\downarrow}U$ is open for every open $U \subseteq X$) if and only if $\beta X \twoheadrightarrow X$ is bounded.
\end{proposition}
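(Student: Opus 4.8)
The statement to prove is Proposition~\ref{prop:esakia-as-interp}: for a Priestley space $X$, the space $X$ is Esakia (${\downarrow}U$ open for every open $U$) if and only if the canonical surjection $\beta X \twoheadrightarrow X$ is bounded. The plan is to apply the dictionary built up in this section, in particular Proposition~\ref{prop:frobenius-bounded}, which says that a lattice homomorphism is Frobenius iff its dual is bounded, and the order-dual of Proposition~\ref{prop:left-adjoint-dually} / the discussion in Remark~\ref{rmk:Heyt-dual}, which characterizes when a distributive lattice is a Heyting algebra (equivalently, Esakia) via openness of downsets. The surjection $\beta X \twoheadrightarrow X$ is, dualized, a lattice embedding $\ClU(X) \hookrightarrow \Up(X^{\dset})$ — more precisely, the inclusion of the clopen upsets of $X$ into the lattice of all upsets of the underlying poset of $X$ (this is exactly how $\beta X$ is described in the ``ordered Stone--Čech compactification'' paragraph: $\beta X = \Up(X)_*$, and the counit $\beta X \to X$ is dual to the inclusion of clopen upsets into all upsets).

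So the key translation step is: \emph{$\beta X \twoheadrightarrow X$ is bounded $\iff$ the inclusion $h \colon \ClU(X) \hookrightarrow \Up(X)$ is Frobenius} (by Proposition~\ref{prop:frobenius-bounded}). Now I unfold what ``Frobenius'' means for this particular inclusion. Since $\Up(X)$ is a complete Heyting algebra, for any $a \in \ClU(X)$ the projection $p_{h(a)} \colon \Up(X) \to {\dset}(h(a))$ in $\Up(X)$ has a right adjoint; and the square \eqref{eq:frobenius} defining the Frobenius condition has interpolation iff $h$ commutes with the relevant Heyting operations in the appropriate sense. Concretely, using the reformulation noted just after Definition~\ref{dfn:int-hyp} (a lax square of posets has interpolation iff $f \circ \radj{g} = \radj{u} \circ v$), the Frobenius condition for $h$ becomes: for every clopen upset $a$ and every clopen upset $b$ with $b \subseteq a$, the Heyting arrow $a \to b$ computed in $\Up(X)$ (which is $\{x : {\uparrow}x \cap a \subseteq b\} = X \setminus {\dset}(a \setminus b)$) is again clopen. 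Since the clopen upsets generate and the sets $a \setminus b$ with $a,b$ clopen upsets (equivalently, intersections of a clopen upset with a clopen downset) form a basis for the topology of $X$, this says precisely that ${\dset}V$ is clopen for every basic clopen $V$, hence ${\dset}U$ is open for every open $U$ — i.e., $X$ is an Esakia space. This is the same style of computation as in Remark~\ref{rmk:Heyt-dual}, and I would present it in that spirit.

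The main obstacle is bookkeeping rather than conceptual: one must be careful that $\beta X$ is the Priestley dual of $\Up(X)$ \emph{with the order convention of this paper} (prime filters ordered by inclusion), so that the counit $\beta X \to X$ is genuinely dual to $\ClU(X) \hookrightarrow \Up(X)$ and not to its order-dual, and that the Heyting implication in a perfect lattice $\Up(X)$ is computed as indicated. Given that, the argument is: (i) identify $\beta X \twoheadrightarrow X$ with the dual of $h \colon \ClU(X) \hookrightarrow \Up(X)$; (ii) apply Proposition~\ref{prop:frobenius-bounded} to reduce ``bounded'' to ``$h$ Frobenius''; (iii) unwind the Frobenius condition for this $h$ as ``$a \to b \in \ClU(X)$ whenever $a, b \in \ClU(X)$ with $b \subseteq a$,'' using Proposition~\ref{prop:interp-BC} / Proposition~\ref{prop:frobenius} (the Frobenius condition for $h$ is equivalent to $h$ being a Heyting homomorphism into ${\dset}h(a)$ fibrewise, exactly as in the paragraph following Definition~\ref{dfn:int-hyp}); (iv) observe that sets of the form $a \setminus b$ (clopen upset minus clopen upset) are a basis of clopens of $X$, so step (iii) holds iff ${\dset}V$ is open for every clopen $V$, iff ${\dset}U$ is open for every open $U$, iff $X$ is Esakia. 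I expect (iii)–(iv) to be the only place needing genuine care, and it is a direct variant of the reasoning already carried out in Remark~\ref{rmk:Heyt-dual}.
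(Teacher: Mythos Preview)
Your approach is correct and is essentially the one the paper takes: both reduce the question to the Frobenius property of the inclusion $h \colon \ClU(X) \hookrightarrow \Up(X)$ via Proposition~\ref{prop:frobenius-bounded}, then identify that Frobenius property with the Esakia condition. The only organizational difference is that the paper connects ``$h$ Frobenius'' to ``$X$ Esakia'' by passing through Proposition~\ref{prop:openness-as-interp} and the self-duality of interpolation (Proposition~\ref{prop:dual-interpolation}), whereas you unwind the Frobenius squares directly; the paper itself notes in parentheses that this direct route is available.

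One small point of care in your write-up: the right-adjoint reformulation $f \circ \radj{g} = \radj{u} \circ v$ you cite presupposes that $\radj{g} = \radj{(p_a)}$ exists in $\ClU(X)$, which is precisely the Heyting property you are trying to establish. The argument you actually want (and which your parenthetical formula already points to) is the direct one: the interpolation property of the Frobenius square, instantiated with $b = (a \to c)$ computed in $\Up(X)$, forces $a \to c$ itself to be clopen, and conversely if $a \to c$ is clopen it serves as the interpolant for every $b$. With that one sentence made explicit, your steps (iii)--(iv) go through exactly as you describe.
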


\begin{proof}
	In Remark~\ref{rmk:Heyt-dual}, we explained that ${\downarrow}U$ is open for every open $U \subseteq X$ if and only if for any inclusion $i \colon V \into X$ of clopen up-set, ${\downarrow}i[U]$ is open for every open down-set $U \subseteq V$. By (the dual of) Proposition~\ref{prop:openness-as-interp}, this is equivalent to the fact that each square as below has the interpolation property, where $V \subseteq X$ is a clopen up-set.
	
	\begin{equation}\label{eq:beta-V-X}\begin{tikzcd}
		\beta V \ar[d,->>] \ar[r,hook] & \beta X \ar[d,->>]\\
		V \ar[r,hook] \ar[ru,phantom,"\geq"{description,sloped}] & X
	\end{tikzcd}\end{equation}
	
	Since interpolation is auto-dual (Proposition~\ref{prop:dual-interpolation}), this means that the Priestley dual of $\beta X \twoheadrightarrow X$ is Frobenius, and Proposition~\ref{prop:frobenius-bounded} shows that this is equivalent to the boundedness of $\beta X \twoheadrightarrow X$. (The proof of Proposition~\ref{prop:frobenius-bounded} actually shows that directly without going through the dual side.)
\end{proof}

Even though we will not use this point of view there, we note the following consequence of Propositions~\ref{prop:openness-as-interp} and \ref{prop:esakia-as-interp}.

\begin{corollary}\label{cor:caract-open-interp}
	A polyadic Priestley space $\sS$ is:
	\begin{enumerate}
		\item open if and only if the natural transformation $\beta \circ \sS \to \sS$ is a morphism of polyadic compact ordered spaces;
		\item intuitionistic if and only if the natural transformation $\beta \circ \sS \to \sS$ is an intuitionistic morphism of polyadic compact ordered spaces.
	\end{enumerate}
\end{corollary}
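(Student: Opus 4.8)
The plan is to identify the naturality squares of the natural transformation $\beta\circ\sS\to\sS$ with the squares appearing in Propositions~\ref{prop:openness-as-interp} and~\ref{prop:esakia-as-interp}, and then to match, one by one, the conditions those propositions characterize against the three openness conditions listed just before the statement. First note that a polyadic Priestley space is in particular a polyadic poset, since boundedness and interpolation are purely order-theoretic and hence survive the composition with the forgetful functor $\cPriestley\into\cKOrd\to\cPoset$; so by Corollary~\ref{cor:compactification} the functor $\beta\circ\sS$ is a polyadic Esakia space, and in particular a polyadic compact ordered space. Thus it is meaningful to ask whether $\beta\circ\sS\to\sS$ is a (intuitionistic) morphism of polyadic compact ordered spaces. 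Its components are the canonical surjections $\beta\sS(k)\twoheadrightarrow\sS(k)$, and for each $\sigma\colon n\to m$ in $\cC$ its naturality square is precisely the square~\eqref{eq:beta-A-B} with $A=\sS(m)$, $B=\sS(n)$ and $f=\sS(\sigma)$.

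For~(i): by Definition~\ref{dfn:polyadic-priestley}, $\beta\circ\sS\to\sS$ is a morphism of polyadic compact ordered spaces if and only if each of these naturality squares has the interpolation property, which by Proposition~\ref{prop:openness-as-interp} is equivalent to ${\uparrow}\sS(\sigma)[U]$ being open for every $\sigma\colon n\to m$ and every open up-set $U\subseteq\sS(m)$. On the other hand, $\sS$ is open if and only if each $\sS(\sigma)$ is lower semi-open; but $\sS(\sigma)$ is already bounded by axiom~\ref{p-axiom:interpol-2-dfn}, so by the characterization of lower semi-openness established in the proof of Proposition~\ref{prop:stab-open-bounded} (lower semi-open $=$ bounded plus condition~\ref{cond-lad:2} of Proposition~\ref{prop:left-adjoint-dually}), $\sS(\sigma)$ is lower semi-open if and only if ${\uparrow}\sS(\sigma)[U]$ is open for every open up-set $U$. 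The two conditions coincide, which gives~(i).

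For~(ii): by Definition~\ref{dfn:morphism-int}, $\beta\circ\sS\to\sS$ is an intuitionistic morphism if and only if (a) each component $\beta\sS(n)\twoheadrightarrow\sS(n)$ is bounded, (b) each $\leq$-lax naturality square has the interpolation property, and (c) each $\geq$-lax naturality square has the interpolation property. By Proposition~\ref{prop:esakia-as-interp}, condition~(a) amounts to $\sS(n)$ being an Esakia space for every $n$, i.e., to the first openness condition; by~(i), condition~(b) amounts to the second openness condition; and applying the order-dual of Proposition~\ref{prop:openness-as-interp} to the $\geq$-lax version of~\eqref{eq:beta-A-B}, condition~(c) amounts to ${\downarrow}\sS(\sigma)[U]$ being open for every $\sigma$ and every open down-set $U\subseteq\sS(m)$, i.e., the third openness condition. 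Hence (a)$\wedge$(b)$\wedge$(c) is exactly the definition of $\sS$ being an intuitionistic polyadic compact ordered space.

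The proof is thus purely bookkeeping; the only point requiring a moment's care is that ``bounded'' is already built into the definition of a polyadic Priestley space, so that ``lower semi-open'' reduces to the single clause ``${\uparrow}f[U]$ open'' that Proposition~\ref{prop:openness-as-interp} matches. I do not expect any genuine obstacle here: Propositions~\ref{prop:openness-as-interp} and~\ref{prop:esakia-as-interp} were set up precisely so that this corollary follows at once.
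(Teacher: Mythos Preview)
Your argument is correct and is exactly the approach the paper intends: the corollary is stated as an immediate consequence of Propositions~\ref{prop:openness-as-interp} and~\ref{prop:esakia-as-interp}, and you have simply spelled out the bookkeeping that matches each naturality condition to the corresponding openness clause. The only minor stylistic point is that you rely on a fact extracted from inside the proof of Proposition~\ref{prop:stab-open-bounded} (lower semi-open $=$ bounded plus ${\uparrow}f[U]$ open), but since $\sS(\sigma)$ is bounded by~\ref{p-axiom:interpol-2-dfn} this reduction is transparent and poses no problem.
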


\begin{remark}
	Propositions~\ref{prop:weak-interp}, \ref{prop:openness-as-interp}, \ref{prop:esakia-as-interp} and Corollary~\ref{cor:caract-open-interp} above are also true for compact ordered spaces instead of Priestley spaces. These more general statements can be given direct but less transparent proofs, or can be seen in the same way as above by using the duality for compact ordered spaces of \cite{AbbadiniThesis2021, AbbadiniReggio_2020} instead of Priestley duality.
\end{remark}

\begin{remark}\label{rem:canext}
	The construction $\sS \mapsto \beta \circ \sS$ is dual to the \emph{canonical extension of coherent hyperdoctrines} studied in \cite{CoumansThesis,Coumans12}. Indeed, if $\hD \colon \cC \to \cDL$ is a coherent hyperdoctrine and $\sS$ its dual open polyadic Priestley space, then the canonical extension $\hD^\delta$ as defined in \cite[Prop.~9]{Coumans12} has $\beta \circ \sS$ as its dual polyadic space. This is a consequence of the fact that when $L$ is a distributive lattice with dual Priestley space $X$, then $L^\delta$ is isomorphic to $\Up(X)$, so the Priestley dual space of $L^\delta$ is $\beta X$. Note that, combining these observations with Proposition~\ref{prop:frobenius-bounded} and Proposition~\ref{prop:openness-as-interp}, we see that a distributive lattice $L$ is a Heyting algebra if, and only if, the embedding $L \to L^\delta$ is a Frobenius map. One direction is well-known, see for example \cite[Prop.~2]{Gehrke2014}.
\end{remark}
% !TeX spellcheck = en-US
% !TeX root = GM2022.tex

\section{Models of hyperdoctrines and polyadic spaces}
\label{sec:models}

In this section, we explain how to view models and types in the context of hyperdoctrines and polyadic spaces.

\paragraph{Models of hyperdoctrines}
In classical model theory, a model of a theory $T$ is a set $X$ equipped with some extra structure that allows to interpret the predicate symbols occurring in the theory, in such a way that the formulas of the theory $T$ are validated. More specifically, given a model $X$, every formula $\phi$ with $n$ free variables is interpreted as a subset $\sem{\phi}$ of $X^n$. In categorical terms,
if $\hD_T$ is the hyperdoctrine corresponding to $T$, then such a model can be viewed as a natural transformation $\sem{-} \colon \hD_T \to \powerset_X$, where $\powerset_X(n) = \powerset(X^n)$ as in Example~\ref{exa:powerset-hyperdoc}:
\[\begin{tikzcd}
	\cFinSet \ar[r,"X^{-}"] \ar[rd,"\hD_T"',""{name=A,above,near end}] & \cSet^\op \ar[d, "\powerset"]\\
	 & \cDL \ .
	 \ar[from=A,to=1-2,Rightarrow]
\end{tikzcd}\]
This natural transformation is actually a morphism of coherent hyperdoctrines, because it respects the logical connectives of conjunction, disjunction, existential quantification and equality. Such a morphism is called a ``model of $T$ in the hyperdoctrine $\powerset$'' \cite[Def.~5.1.3]{CoumansThesis}; we will simply call it a ``model of $\hD_T$''. In the general setting where we replace the base category $\cFinSet$ by a category $\cC$ with finite colimits, a morphism from a hyperdoctrine $\hD \colon \cC \to \cDL$ to the hyperdoctrine $\powerset$ consists of a functor $X \colon \cC \to \cSet^\op$ which sends finite colimits in $\cC$ to finite limits in $\cSet$ and a morphism of hyperdoctrines $\cD \Rightarrow \powerset \circ X$. The functors $X$ appearing in this definition are ind-objects of $\cC$, which motivates our definition of ``model of $\hD$'', Definition~\ref{dfn:model-alg} below.

\begin{definition}
	Let $X$ be an ind-object of $\cC$. The \emph{hyperdoctrine of predicates on $X$}, $\powerset_X \colon \cC \to \cBA$, is defined as the composite of $X$ seen as a functor $\cC \to \cSet^\op$ with the powerset functor $\cSet^\op \to \cBA$.
\end{definition}

Note that any ind-object $X$ of $\cC$, which we will often view as a functor $\cC^\op\to\cSet$ in this section, is a polyadic set over $\cC$, because $\int X$ is filtered, so in particular every span admits a cocone.
As a consequence of Corollary~\ref{cor:compactification}, the functor $\powerset_X$ defined above is always a Boolean hyperdoctrine.

\begin{definition}\label{dfn:model-alg}
	Let $\hD \colon \cC \to \cDL$ be a coherent hyperdoctrine and let $X \in \cInd(\cC)$. A \emph{$\hD$-structure} on $X$ is a morphism of coherent hyperdoctrines $\hD \to \powerset_X$. A \emph{model of $\hD$} is an ind-object of $\cC$ equipped with a $\hD$-structure.
\end{definition}

Let us give a bit of intuition about how hyperdoctrines are ``coherent theories of ind-objects of $\cC$.''
Let $\sem{-} \colon \hD \to \powerset_X$ be a model of $\hD$.
For any object $n \in \cC$, we write $X^n$ for the set of morphisms $n \to X$, and we will call the elements of $X^n$ the \emph{$n$-points} or \emph{$n$-tuples} of $X$.
Let $f \colon n\to m$ be an arrow in $\cC$, let $\varphi \in \hD(m)$ and suppose $p \colon n \to X$ is some $n$-tuple of $X$. Write $\exists_f \varphi \in \hD(n)$ for the image of $\varphi$ by the left adjoint of $\hD(f)$. Then $p \in \sem{\exists_f \varphi}$ if and only if there exists some commutative diagram as below with $q \in \sem{\varphi}$, according to the definition of $\powerset_X$.

\[\begin{tikzcd}
	m \ar[r,"q",dashed] & X\\
	n \ar[u,"f"] \ar[ur,"p"'] &
\end{tikzcd}\]

In the case of $\cC = \cFinSet$, we point out two special cases: if $f$ is surjective, this condition for $p \in \sem{\exists_f \varphi}$ gives the semantical interpretation of equality; if $f$ is injective, it gives the semantical interpretation of existential quantification. Similarly, conjunctions have to be interpreted as intersections and disjunctions as unions. It is possible to allow only quantification along arrows of a subcategory of $\cC$, for instance only injections to model first-order logic without equality, but we will not deal with this variation.

Suppose that we are given a set of base symbols, each one equipped with an object of $\cC$ that we call its arity. This generates freely a coherent hyperdoctrine $\hD$ in the sense of multisorted universal algebra, the generators in $\hD(n)$ being the symbols of arity $n$. A model of this hyperdoctrine is an ind-object $X$ equipped with an interpretation of each symbol of arity $n$ as a subset of $X^n$. Each expression built from these symbols using conjunction, disjunction, and the $\exists_f$ is then interpreted as another subset of some $X^m$ according to the rules explained above. This explains how, in the same way that coherent $\cFinSet^\op$-hyperdoctrines algebraize mono-sorted coherent first-order logic, coherent $\cC^\op$-hyperdoctrines algebraize some coherent first-order logic whose objects of discourse are the ind-objects of $\cC$.

\begin{remark}\label{rmk:theories-presheaf-type}
	We briefly explain a connection with coherent theories of presheaf type \cite{BekePresheafType}. By a \emph{finite jointly weak} (fjw) \emph{colimit} of a diagram $D$ we mean a finite set $C$ of cocones such that every cocone under $D$ factors through one of the cocones in $C$; this is called an \emph{fc colimit} in \cite{BekePresheafType}. In particular, by an fjw \emph{finite} colimit we mean an fjw colimit of a finite diagram. It is proved in \cite[Thm.~2.1]{BekePresheafType} that, for an arbitrary category $\cC$, the presheaf topos $[\cC,\cSet]$ is coherent if, and only if, $\cC$ has fjw finite colimits. For such a category $\cC$, denote by $T$ the coherent theory classified by $[\cC,\cSet]$. See \cite{BekePresheafType} for a description of $T$: it has one sort for each object of $\cC$ and its set-valued models are the ind-objects of $\cC$ seen as functors $\cC^\op \to \cSet$. Then the extensions of $T$ obtained by adding new symbols and axioms, but no new sort, are classified by the coherent hyperdoctrines over $\cC^\op$, in the sense of the more general definition of Remark~\ref{rmk:no-pushout}. In the extension associated to a hyperdoctrine $\hD\colon \cC \to \cDL$, the formulas with one free variable of sort $c \in \cC$ modulo equivalence correspond to the elements of $\hD(c)$.
\end{remark}

\begin{example}
	For instance, when $\cC = \cFinSetInj$ is the category of finite sets and injections, the variables in our formulas are interpreted as \emph{distinct} elements of the carrier set. Concretely, this implies that the semantics of an existentially quantified statement $\exists x \colon \varphi(x,\ovl{y})$ is that there exists some element $x$ distinct from the $\ovl{y}$ and such that $\varphi(x,\ovl{y})$ holds. In a classical Boolean setting, the expressive power of this modified logic is strictly the same as usual first-order logic, so that Boolean $\cFinSetInj^\op$-hyperdoctrines are equivalent to Boolean $\cFinSet^\op$-hyperdoctrines. In a more general coherent setting, coherent $\cFinSetInj^\op$-hyperdoctrines are equivalent to coherent $\cFinSet^\op$-hyperdoctrines in which equality has a complement. This is a consequence of the fact that $[\cFinSetInj,\cSet]$ is the classifying topos of \emph{decidable} objects, i.e., those whose equality has a complement \cite[Proposition~D~3.2.7]{Elephant}.
	
	As another example, when $\cC$ is the category of finite linear orders, open polyadic Priestley spaces over $\cC$ classify the extensions of the theory of linear orders by new symbols and axioms, as used by the second author in \cite{MarRamics}.
\end{example}

\paragraph{Models of polyadic spaces}
If we apply Priestley duality directly to Definition~\ref{dfn:model-alg}, we obtain the following dual notion, generalized to open polyadic compact ordered spaces. Recall that $\beta X$ denotes the Stone-Čech compactification of the set $X$, obtained as the Stone dual of $\powerset(X)$. Note that, for any $X \in \cInd(\cC)$, the functor $n \mapsto \beta(X^n)$ is an open $\cC$-adic Boolean space (Corollary~\ref{cor:compactification}).

\begin{definition}\label{dfn:model-topo-1}
	Let $\sS \colon \cC^\op \to \cKOrd$ be an open polyadic compact ordered space and let $X \in \cInd(\cC)$. An \emph{$\sS$-structure} on $X$ is a morphism of polyadic compact ordered spaces $\beta \circ X \to \sS$. A \emph{model} of $\sS$ is an ind-object of $\cC$ equipped with some $\sS$-structure.
\end{definition}

We now reformulate this definition in more concrete terms as a property of the restrictions of the continuous maps $\beta(X^n) \to \sS(n)$ to functions $X^n \to \sS(n)$, using Proposition~\ref{prop:weak-interp}.

\begin{proposition}\label{prop:weak-interp-struct}
	For any ind-object $X$, there is a natural bijection between $\sS$-structures on $X$ and natural transformations $\tau \colon X \to \sS$ such that, for every $f \colon n \to m$, the naturality square below has the weak interpolation property.
	\begin{equation}\label{eq:nat-wip}
		\begin{tikzcd}
			X^m \ar[d,"\tau_m"'] \ar[r,"X(f)"] & X^n \ar[d,"\tau_n"]\\
			\sS(m) \ar[r,"\sS(f)"'] \ar[ru,phantom,"\leq"{description,sloped}] & \sS(n)
		\end{tikzcd}
	\end{equation}
\end{proposition}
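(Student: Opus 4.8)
The plan is to derive the bijection from the adjunction $\beta \dashv U$ between posets and compact ordered spaces, lifted pointwise to presheaf categories, and then to compare, one naturality square at a time, the interpolation property defining an $\sS$-structure with the weak interpolation property of the statement, by means of Proposition~\ref{prop:weak-interp}. Concretely, fix an ind-object $X$ and recall that each $X^n$ is a set, viewed as a discrete poset; since order-preserving maps out of a discrete poset are arbitrary functions, the adjunction $\beta \dashv U$ (with $U\colon\cKOrd\to\cPoset$ the forgetful functor) yields for each $n$ a bijection between morphisms $\beta(X^n)\to\sS(n)$ in $\cKOrd$ and functions $X^n\to\sS(n)$: a morphism $\sigma_n$ restricts along the dense inclusion $X^n\hookrightarrow\beta(X^n)$ to a function $\tau_n$, and conversely $\sigma_n=\ovl{\tau_n}$. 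Lifting this pointwise gives a bijection, natural in $X$, between natural transformations $\sigma\colon\beta\circ X\to\sS$ and natural transformations $\tau\colon X\to\sS$ of $\cSet$-valued presheaves: naturality of $\tau$ follows from that of $\sigma$ by restriction to the dense subsets $X^m\subseteq\beta(X^m)$, and naturality of $\sigma$ follows from that of $\tau$ since two continuous maps into the Hausdorff space $\sS(n)$ agreeing on the dense subset $X^m$ of $\beta(X^m)$ coincide.

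Next I would match the diagrams. Under this correspondence, for each $f\colon n\to m$ in $\cC$ the naturality square of $\sigma$ at $f$ is precisely the square \eqref{eq:beta-XYAB} attached to the naturality square \eqref{eq:nat-wip} of $\tau$ at $f$ --- which commutes on the nose, being a naturality square, and which we read as a lax square only to record the direction of weak interpolation --- upon instantiating $(X,Y,A,B,u,v,f,g)$ as $(X^m,X^n,\sS(m),\sS(n),\tau_m,\tau_n,X(f),\sS(f))$, using $\ovl{\tau_m}=\sigma_m$, $\ovl{\tau_n}=\sigma_n$ and functoriality of $\beta$ for the top horizontal map $\beta(X(f))$. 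The standing hypotheses of Definition~\ref{dfn:weak-interp} are met: $\sS(m)$ and $\sS(n)$ are compact ordered spaces, $\sS(f)$ is continuous, and, $\sS$ being open, $\sS(f)$ is lower semi-open by axiom \ref{p-axiom:open}, so ${\uparrow}\sS(f)[U]=\sS(f)[U]$ is open for every open up-set $U\subseteq\sS(m)$. Hence Proposition~\ref{prop:weak-interp}, in the form valid for compact ordered spaces recorded in the remark following Corollary~\ref{cor:caract-open-interp}, applies and gives that the square \eqref{eq:beta-XYAB} has the interpolation property if, and only if, the square \eqref{eq:nat-wip} has the weak interpolation property.

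It remains to quantify over all morphisms $f$ of $\cC$. The transformation $\sigma$ is a morphism of polyadic compact ordered spaces --- that is, an $\sS$-structure on $X$ (Definitions~\ref{dfn:polyadic-priestley} and~\ref{dfn:model-topo-1}) --- exactly when all of its naturality squares have the interpolation property, and by the previous paragraph this holds exactly when every square \eqref{eq:nat-wip} for $\tau$ has the weak interpolation property. Composing this equivalence with the bijection $\sigma\leftrightarrow\tau$ of the first paragraph yields the asserted natural bijection. The argument is essentially bookkeeping; the points requiring care are that the lifted adjunction bijection genuinely carries the $\cKOrd$-naturality squares onto the squares \eqref{eq:beta-XYAB}, that one invokes the compact-ordered (rather than Priestley) version of Proposition~\ref{prop:weak-interp}, and that this proposition's standing hypothesis ``${\uparrow}g[U]$ open for every open up-set $U$'' is supplied precisely by openness of $\sS$.
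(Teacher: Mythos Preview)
Your proposal is correct and takes essentially the same approach as the paper, which simply gestures at Proposition~\ref{prop:weak-interp} without spelling out the details; you have unwound the argument carefully, including the pointwise lifting of the adjunction $\beta \dashv U$, the identification of the naturality squares with \eqref{eq:XYAB} and \eqref{eq:beta-XYAB}, and the correct appeal to the compact-ordered-space version of Proposition~\ref{prop:weak-interp} together with the openness hypothesis on $\sS$ supplying the standing assumption on ${\uparrow}\sS(f)[U]$.
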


Proposition~\ref{prop:weak-interp-struct} can be used to show the intuition behind the notion of $\sS$-structure: the natural transformation $\tau$ sends an $n$-tuple of $X$ to its ``$n$-type'' in $\sS(n)$. In the special case where $\sS$ is an open polyadic Priestley space, the open up-sets in the definition of the weak interpolation property can be replaced by clopen up-sets. This property then exactly corresponds to the semantical intuition described above: if $f \colon n\to m$ is a morphism in $\cC$ and if an $n$-tuple $y \in X^n$ satisfies a formula of the form $\exists_f \varphi$, represented here as the direct image under $\sS(f)$ of a clopen up-set, then the $n$-tuple can be extended along $f$ to an $m$-tuple $x$ satisfying $\varphi$.

The notion of $\omega$-saturated model is obtained by strengthening the equivalent definition of $\sS$-structure given in Proposition~\ref{prop:weak-interp-struct}.

\begin{definition}\label{dfn:omega-sat-model}
	Let $\sS \colon \cC^\op \to \cKOrd$ be a polyadic compact ordered space and let $X \in \cInd(\cC)$. An \emph{$\omega$-saturated $\sS$-structure} on $X$ is a natural transformation $\tau \colon X \to \sS$ such that each naturality square as in \eqref{eq:nat-wip} has the interpolation property. An \emph{$\omega$-saturated model} of $\sS$ is an ind-object of $\cC$ equipped with some $\omega$-saturated $\sS$-structure.
\end{definition}

\begin{remark}
	We explain briefly why our definition of $\omega$-saturated model is equivalent to the usual one in the Boolean case and when $\cC = \cFinSet$. Let $\alpha \colon X \to \sS$ be a model. In usual model theory, $X$ is called $\omega$-saturated if for any $n$-tuple $x$ of $X$, each complete $k$-type over $x$ is realized in $X$. A direct reformulation is that the naturality square below has the amalgamation property where $i \colon n \to n+k$ is the canonical injection.
	\[\begin{tikzcd}
		X^{n+k} \ar[d,"{X^i}"'] \ar[r,"\alpha_{n+k}"] & \sS(n+k) \ar[d,"\sS(i)"]\\
		X^n \ar[r,"\alpha_n"] & \sS(n)
	\end{tikzcd}\]
	Moreover, if $\alpha \colon X \to \sS$ is a model, then the naturality squares associated to surjections automatically have the amalgamation property: in Proposition~\ref{prop:weak-interp}, if $f$ is bounded and injective, then the square~\eqref{eq:XYAB} has interpolation if, and only if, the square~\eqref{eq:beta-XYAB} has interpolation.
\end{remark}

\begin{remark}\label{rmk:open-for-structures}
	Since the reformulation of Proposition~\ref{prop:weak-interp-struct} is not possible if the polyadic space $\sS$ is not open, we are not sure what the correct notion of model is for polyadic compact ordered spaces that are not open, which is why we restricted Definition~\ref{dfn:model-topo-1} to open ones. On the other hand, the notion of $\omega$-saturated model makes sense also for non-open polyadic compact ordered spaces. If $\sS$ is open, then any $\omega$-saturated model $X \to \sS$ yields a morphism of polyadic compact ordered spaces $\beta \circ X \to \sS$. If $\sS$ is not open, this is not the case anymore.
\end{remark}

\paragraph{Type spaces over ind-objects} We have indicated above that, for $n \in \cC$, the space $\sS(n)$ can be thought of as a space of $n$-types. We now show how to generalize this to a definition of a space of $X$-types over an arbitrary ind-object $X$.

First, given any $\cSet$-valued presheaf $P \colon \cC^\op \to \cSet$, we write $\ovl{P} \colon \cInd(\cC)^\op \to \cSet$ for the unique extension of $P$ to a $\cSet$-valued presheaf on $\cInd(\cC)$ that sends filtered colimits to cofiltered limits; we call $\ovl{P}$ the \emph{extension by continuity} of $P$. This is coherent with the definition of ``extension by continuity'' that we gave at the end of Section~\ref{sec:prelim}, where we identify functors $\cC^\op \to \cSet$ with functors $\cC \to \cSet^\op$. 

Note that, for any $X \in \cInd(\cC)$, $\ovl{P}(X) \cong \Hom_{[\cC^{\op},\cSet]}(X,P)$, using the Yoneda lemma and the fact that the subcategory $\cInd(\cC) \subseteq [\cC^\op,\cSet]$ is closed under filtered colimits. Now, since the forgetful functor $\cKOrd \to \cSet$ creates limits, if $\sS$ is a $\cKOrd$-valued presheaf on $\cC$, its extension by continuity $\ovl{\sS}$ is again a $\cKOrd$-valued presheaf. For any $X \in \cInd(\cC)$, we may identify the points of $\ovl{\sS}(X)$ with the natural transformations $X \to \sS$, and we call $\ovl{\sS}(X)$ the \emph{space of $X$-types of $\sS$}.

The main result of the next section, Proposition~\ref{prop:main}, will show in particular that the extension by continuity of a polyadic compact ordered space is again a polyadic compact ordered space.

We see in particular that the natural transformations $X \to \sS$ have a natural order. This allows us to speak of lax commutative diagrams involving these arrows. To give an example, we reformulate the fact that a natural transformation $X \to \sS$ from an ind-object to a polyadic compact ordered space is an $\omega$-saturated model: this property says that each diagram as on the left below can be completed as on the right, where $n,m \in \cC$.

\begin{minipage}{0.49\linewidth}
	\[\begin{tikzcd}
		X \ar[r] & \sS\\
		n \ar[u] \ar[r] & m \ar[u] \ar[lu,phantom,"\geq"{sloped,description}]
	\end{tikzcd}\]
\end{minipage}\begin{minipage}{0.49\linewidth}
	\[\begin{tikzcd}
		X \ar[r] & \sS\\
		n \ar[u] \ar[r] & m \ar[u,""{left,name=UL}] \ar[lu]
		\ar[from=UL,to=1-1,phantom,"\geq"{description,sloped,pos=0.3}]
	\end{tikzcd}\]
\end{minipage}
Recall that the objects of $\cC$ are exactly the $\omega$-presentable objects of $\cInd(\cC)$, under the assumption that $\cC$ is Cauchy-complete (which it is when $\cC$ has pushouts). Similarly, the notion of $\kappa$-saturated model can be formulated by making $n$ and $m$ range over the $\kappa$-presentable ind-objects instead of over $\cC$. 

If we replace $X$ by a polyadic compact ordered space $\sP$, the exact same property of diagram-completion as above gives the definition of morphisms of polyadic compact ordered spaces.

\begin{definition}\label{dfn:catego-models}
	A \emph{morphism} from a type $X \to \sS$ to a type $Y \to \sS$ is a morphism $X\to Y$ making the following triangle lax commutative.
	\[\begin{tikzcd}
		Y \ar[r] & \sS\\
		X \ar[u] \ar[ru,""{above,name=A}] &
		\ar[from=A,to=1-1,phantom,"\geq"{description,sloped}]
	\end{tikzcd}\]
	Such a morphism is an \emph{elementary embedding} if the inequality in the triangle above is an equality.
\end{definition}

The intuition behind Definition~\ref{dfn:catego-models} is that a morphism of models should preserve truth, but not necessarily reflect it, while an elementary embedding should also reflect truth. More precisely, let $x \colon X\to \sS$ and $y \colon Y\to \sS$ be models of $\sS$ and let $f \colon X\to Y$ be a morphism. Let $n \in \cC$, and let $p \in X^n$ be an $n$-tuple. Then $f(p) \in Y^n$ and we must have $x(p) \leq y(f(p))$, which means that every open up-set containing $x(p)$ also contains $y(f(p))$: ``everything true about $x(p)$ is also true about $y(f(p))$.'' If the inequality in the triangle in Definition~\ref{dfn:catego-models} is an equality, then the converse holds, as well, that is, $x(p)$ and $y(f(p))$ are in exactly the same open up-sets. This corresponds to the usual notion of ``elementary morphism'' in model theory.

\begin{definition}
	We write $\Tp(\sS)$ for the category of types of $\sS$ with the above notion of morphisms, and $\Tp^e(\sS)$ for the subcategory given by the elementary embeddings. We write $\Mod_\omega^e(\sS)$ and $\Mod_\omega(\sS)$ for the full subcategories of $\Tp^e(\sS)$ and $\Tp(\sS)$ on $\omega$-saturated models. If $\sS$ is open, we define $\Mod^e(\sS)$ and $\Mod(\sS)$ similarly for all models instead of $\omega$-saturated ones.
	
	We say that a type $X \to \sS$ is \emph{realized} by a model $Y \to \sS$ if there is a morphism from $X$ to $Y$ in $\Tp^e(\sS)$.
\end{definition}

\begin{remark}
	A morphism $\sS \to \sP$ of polyadic compact ordered spaces induces functors in the same direction between the categories defined in Definition~\ref{dfn:catego-models}.
\end{remark}
% !TeX spellcheck = en-US
% !TeX root = GM2022.tex

\section{The interpolation extension principle}
\label{sec:main-lemma}

Recall from the previous section that we may naturally extend any $\cKOrd$-valued presheaf $\sS \colon \cC^\op \to \cKOrd$ on $\cC$ to a $\cKOrd$-valued presheaf $\ovl{\sS} \colon \cInd(\cC)^\op \to \cKOrd$, via a Kan extension. 
This yields what we will call here the \emph{extension by continuity functor}
\begin{align*}
	[\cC^{\op},\cKOrd] &\to [\cInd(\cC)^{\op},\cKOrd] \\
	\sS &\mapsto \ovl{\sS}.
\end{align*}
The aim of this section is to prove the following principle, which is central to our approach.

\begin{proposition}[Interpolation extension principle]\label{prop:main}
	The extension by continuity functor preserves the axioms \ref{p-axiom:interpol-1}, \ref{p-axiom:interpol-2}, morphisms of polyadic spaces and intuitionistic morphisms of polyadic spaces. In particular, polyadic compact ordered spaces over $\cC$ are stable under extension by continuity.
\end{proposition}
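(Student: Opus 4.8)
The plan is to reduce every assertion to a single preservation property in $\cKOrd$: \emph{both the interpolation property of a lax commutative square and the boundedness of a morphism survive passage to cofiltered limits.} The reduction rests on two facts about inductive completions. First, every ind-object $X$ is a filtered colimit $X=\colim_{i\in\cI}n_i$ of objects of $\cC$, and since $\ovl{\sS}$ takes filtered colimits to cofiltered limits we have $\ovl{\sS}(X)=\lim_i\sS(n_i)$, a cofiltered limit that $\cKOrd$ computes (creating it from $\cSet$) as a closed subspace of a product. Second, because $\cC$ has pushouts, $\cInd(\cC)$ has pushouts and the embedding $\cC\hookrightarrow\cInd(\cC)$ preserves finite colimits, so every pushout square in $\cInd(\cC)$ is a filtered colimit of pushout squares in $\cC$; and every morphism of $\cInd(\cC)$ is a filtered colimit of morphisms of $\cC$ over a single filtered index category (a standard property of $\cInd$, see e.g.\ \cite{MakkaiPare}). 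Applying $\ovl{\sS}$, resp.\ the image $\ovl{\tau}$ of $\tau$ under the extension-by-continuity functor, to such presentations and invoking this preservation property then gives at once: \ref{p-axiom:interpol-1} for $\ovl{\sS}$ (the image of a pushout square of $\cInd(\cC)$ is a cofiltered limit of images of pushout squares of $\cC$, each with interpolation by \ref{p-axiom:interpol-1} for $\sS$); \ref{p-axiom:interpol-2} for $\ovl{\sS}$ (the image of a morphism of $\cInd(\cC)$ is a cofiltered limit of bounded maps); and the preservation of morphisms and of intuitionistic morphisms of polyadic spaces (a naturality square of $\ovl{\tau}$ is a cofiltered limit of naturality squares of $\tau$, each with interpolation in the required direction). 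The final clause is then the conjunction of the first two, read with $\cInd(\cC)$ — which again has pushouts — as the base category.

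To prove this preservation property one repeats the compactness argument behind Lemma~\ref{lem:cofiltered-limit-non-empty}. Consider a cofiltered diagram of lax commutative squares in $\cKOrd$ whose $j$-th square has the shape \eqref{ABCDinterp}, with corners $A_j,B_j,C_j,D_j$ and maps $f_j\colon A_j\to B_j$, $g_j\colon A_j\to C_j$, $u_j\colon B_j\to D_j$, $v_j\colon C_j\to D_j$, each square having the interpolation property; write $A\coloneqq\lim_jA_j$, and likewise $B,C,D$, for the limit square, with limiting maps $f,g,u,v$. Given $b=(b_j)\in B$ and $c=(c_j)\in C$ with $u(b)\leq v(c)$, componentwise we get $u_j(b_j)\leq v_j(c_j)$, so the set $E_j\coloneqq\setst{a\in A_j}{b_j\leq f_j(a)\text{ and }g_j(a)\leq c_j}$ is non-empty by interpolation in the $j$-th square, and it is closed in the compact ordered space $A_j$ because $f_j,g_j$ are continuous and the orders of $B_j,C_j$ are closed. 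The transition maps of the diagram restrict to maps among the $E_j$, since they are order-preserving and compatible with $f,g$ and with the families $(b_j),(c_j)$. Hence $(E_j)_j$ is a cofiltered diagram of non-empty compact ordered spaces, so $\lim_jE_j$ is non-empty by Lemma~\ref{lem:cofiltered-limit-non-empty}, and any of its points is an interpolant $a\in A$. Preservation of boundedness is the same argument: using the reformulation that $f$ is bounded if and only if $f(a)\leq b$ implies $b=f(a'')$ for some $a''\geq a$, one replaces $E_j$ by the set of $a''\in A_j$ with $a_j\leq a''$ and $f_j(a'')=b_j$, which is non-empty because the $j$-th map is bounded and closed because the order is closed and singletons are closed in the Hausdorff space $B_j$; the intuitionistic case uses this boundedness statement once for each $\ovl{\tau}_X$, together with interpolation of the naturality squares in the second direction, which is handled identically.

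I expect the bulk of the work to be bookkeeping rather than mathematics: verifying that a pushout square and a morphism of $\cInd(\cC)$ genuinely admit the claimed filtered-colimit presentations over a common index, and that the extension-by-continuity Kan extension $\ovl{\sS}$ turns such a presentation into the expected cofiltered limit of squares. Once these presentations are in place, the transfer of interpolation and of boundedness is literally the argument of Lemma~\ref{lem:cofiltered-limit-non-empty}; its only slightly delicate point — that the stagewise sets of interpolants, respectively of lifts $a''$, form an honest subdiagram — is settled by order-preservation and naturality of the transition maps, and the intuitionistic case requires no new idea.
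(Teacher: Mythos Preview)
Your proposal is correct and follows essentially the same route as the paper: the paper isolates exactly your two ingredients as Lemma~\ref{lem:filtered-diagrams} (every finite-poset-shaped diagram in $\cInd(\cC)$ is a filtered colimit of diagrams of the same shape in $\cC$, citing \cite{SGA4-1}) and Lemma~\ref{lem:nonempty-limit} (cofiltered limits in $\cKOrd$ preserve interpolation and boundedness, via the same non-empty-limit argument you give), and then combines them precisely as you describe.
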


\begin{remark}\label{rmk:open-not-pres}
	On the other hand, openness is in general not preserved by extension by continuity. For instance, with $\cC = \cFinSetInj$, take $\sS(n) = [\![n,\infty]\!]$ as a subspace of the one-point compactification of $\N$. For any injection $f \colon n \hookrightarrow m$, $\sS(f)$ is the canonical inclusion $[\![m,\infty]\!] \hookrightarrow [\![n,\infty]\!]$. This is actually the terminal $\cFinSetInj$-adic space, all we can speak about being the number of elements of the model. Then $\ovl{\sS}(\omega) = \set{\infty}$ but $\set{\infty} \hookrightarrow [\![n,\infty]\!]$ is not lower semi-open. However, it is still possible to show that some maps are always lower semi-open. For instance, given an open $\cFinSet$-adic space $\sS$, for any map of sets $f \colon X \to Y$, if $\setst{(x_1,x_2)\in X^2}{x_1 \neq x_2 \text{ but } f(x_1) = f(x_2)}$ is finite, then $\sS(f)$ is lower semi-open.
\end{remark}

In order to prove the interpolation extension principle, we will use two lemmas. The first shows that a diagram of ind-objects whose shape is a finite poset $\cI$ can be written as a filtered colimit of diagrams in $\cC$ of the same shape $\cI$. It is a special case of \cite[Prop~8.8.5]{SGA4-1}, also see, e.g., \cite[Corollary~6.4.4]{KS06}.

\begin{lemma}\label{lem:filtered-diagrams}
	Let $\cC$ be an essentially small category and let $\cI$ be a finite poset. Then for every functor $F \colon \cI\to \cInd(\cC)$, there exists a filtered category $\cJ$ and a functor $G \colon \cI\times \cJ \to \cC$ such that $F(i) = \colim_j G(i,j)$. (Thus $\cInd(\cC)^\cI \simeq \cInd(\cC^\cI)$.)
\end{lemma}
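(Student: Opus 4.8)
The plan is to exhibit $F$ as a filtered colimit, over a comma category, of diagrams landing in $\cC$. Let $\cJ$ be the comma category $\cC^{\cI}\!\downarrow F$: its objects are pairs $(H,\eta)$ with $H\colon\cI\to\cC$ a functor and $\eta\colon H\Rightarrow F$ a natural transformation (identifying $H$ with its composite along $\cC\hookrightarrow\cInd(\cC)$), and its morphisms $(H,\eta)\to(H',\eta')$ are natural transformations $\theta\colon H\Rightarrow H'$ with $\eta'\theta=\eta$. Since $\cC$ is essentially small and $\cI$ finite, $\cJ$ is essentially small. Define $G\colon\cI\times\cJ\to\cC$ by $G(i,(H,\eta))\coloneqq H(i)$. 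It then remains to show: (a) $\cJ$ is filtered; and (b) for each $i\in\cI$ the colimit of $G(i,-)\colon\cJ\to\cC\hookrightarrow\cInd(\cC)$ is $F(i)$, naturally in $i$.

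For (b), note that there is a functor $q_i\colon\cJ\to\int F(i)$ sending $(H,\eta)$ to $(H(i),\eta_i)$ — where $\eta_i\colon H(i)\to F(i)$ — and that $G(i,-)$ is the composite of $q_i$ with the tautological functor $\int F(i)\to\cC\hookrightarrow\cInd(\cC)$, whose colimit is $F(i)$ since every ind-object $X$ is the filtered colimit of the tautological functor on $\int X$ (the standard presentation of $X$ as a filtered colimit of objects of $\cC$). So it suffices to prove that each $q_i$ is \emph{final}, i.e.\ that for every $(c,x)\in\int F(i)$ the slice category $(c,x)\!\downarrow q_i$ — whose objects are pairs of an object $(H,\eta)\in\cJ$ and a morphism $c\to H(i)$ over $x$ — is nonempty and connected; naturality in $i$ is then immediate from the construction, since the comparison maps $\colim_{\cJ}G(i,-)\to F(i)$ are induced by the $\eta_i$.

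Both the filteredness of $\cJ$ and the finality of each $q_i$ reduce to a single extension principle, which is the heart of the argument: \emph{any finite amount of partial data — finitely many objects of $\cC$ mapping compatibly to $F$ at various vertices of $\cI$, together with finitely many $\cC$-morphisms among them required to become compatible over $F$ — can be completed to a genuine $\cC$-diagram $H\colon\cI\to\cC$ equipped with a natural transformation $\eta\colon H\Rightarrow F$ absorbing the given data}. To prove this, one builds $H$ by induction along a linear extension $i_1\le\dots\le i_n$ of the finite poset $\cI$: having chosen $H(i_1),\dots,H(i_{k-1})\in\cC$ with the required structure maps, one must now choose $H(i_k)\in\cC$ receiving all the relevant objects and all the already-constructed $H(i_j)$ with $i_j\le i_k$, compatibly over $F(i_k)$. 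Since $\cI$ and the given data are finite, $\int F(i_k)$ is filtered, and every object of $\cC$ is $\omega$-presentable in $\cInd(\cC)$ — so a finite diagram of objects of $\cC$ mapping into the filtered colimit $F(i_k)=\colim_{\int F(i_k)}$ factors coherently through a single stage — the object $H(i_k)$, together with the needed maps and commutativity cells, is obtained simply by passing far enough into that colimit. I expect the main obstacle to be exactly this bookkeeping: keeping all maps built at earlier stages compatible while enlarging at each new vertex. It is made entirely routine by the finiteness of $\cI$ and the $\omega$-presentability of the objects of $\cC$ in $\cInd(\cC)$, but it must be set up with care.

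Finally, the parenthetical equivalence $\cInd(\cC)^{\cI}\simeq\cInd(\cC^{\cI})$ is a formal consequence. The canonical comparison $\cInd(\cC^{\cI})\to\cInd(\cC)^{\cI}$ is induced by $\cC^{\cI}\to\cInd(\cC)^{\cI}$, which lands among $\omega$-presentable objects: as $\cI$ is finite, $\Hom_{\cInd(\cC)^{\cI}}(H,-)$ is a finite limit of the functors $\Hom_{\cInd(\cC)}(H(i),-)$, each preserving filtered colimits, and filtered colimits in $\cInd(\cC)^{\cI}$ are computed pointwise. The comparison is fully faithful — by the Yoneda lemma together with the computation of $\Hom$-sets in an $\cInd$-category as filtered colimits of finite limits — and essentially surjective by the statement proved above.
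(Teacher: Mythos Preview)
The paper does not give its own proof of this lemma: it simply records it as a special case of \cite[Prop.~8.8.5]{SGA4-1}, with an additional pointer to \cite[Cor.~6.4.4]{KS06}. So there is no in-paper argument to compare against directly.

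Your argument is the standard one underlying those references, and it is correct. Taking $\cJ = \cC^{\cI}\!\downarrow F$ and proving the extension principle by induction along a linear extension of the finite poset $\cI$, using at each vertex that $\int F(i_k)$ is filtered and that every finite diagram there admits a cocone, is exactly how the proof goes. Your identification of the main point---that the cocone chosen at stage $i_k$ must absorb not only the ``given data'' at $i_k$ but also all previously constructed $H''(i_j)$ together with the maps among them, so that functoriality of $H''$ and naturality of $\eta''$ are forced---is accurate, and the filtered-category axiom (cocones over finite diagrams, including parallel pairs) handles all of these simultaneously. The finality of $q_i$ and the deduction of $\cInd(\cC)^{\cI}\simeq\cInd(\cC^{\cI})$ are also argued correctly. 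The only comment is that your statement of the extension principle (``morphisms required to become compatible over $F$'') is a bit loose; in practice one should be explicit that the finite diagram assembled at each stage lives in $\int F(i_k)$ and includes all earlier choices, so that a single cocone there yields all needed commutativities at once---but you clearly have this in mind.
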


The second lemma shows that the required properties are preserved in the target category $\cKOrd$.
\begin{lemma}\label{lem:nonempty-limit}
	In $\cKOrd$, cofiltered limits preserve boundedness and the interpolation property.
\end{lemma}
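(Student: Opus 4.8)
The plan is to reduce both assertions to Lemma~\ref{lem:cofiltered-limit-non-empty}, i.e.\ to the fact that a cofiltered limit of non-empty compact ordered spaces is non-empty. Two facts will be used throughout. First, as noted in the proof of Lemma~\ref{lem:cofiltered-limit-non-empty}, limits in $\cKOrd$ are computed as in $\cSet$ (closed subspaces of products), so the order on $\lim D$ is componentwise: $x\le x'$ iff $\pi_j(x)\le\pi_j(x')$ for all $j$, and two elements agreeing under all projections are equal. Second, a morphism $f\colon X\to Y$ of $\cKOrd$ is bounded iff $f[{\uparrow}x]$ is an up-set for every $x\in X$ (since every up-set is a union of principal up-sets and $f[-]$ preserves unions), which amounts to: whenever $f(x)\le y$ there is $x'\ge x$ with $f(x')=y$. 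Recall also that in a compact ordered space ${\uparrow}\{z\}$ and ${\downarrow}\{z\}$ are closed (as sections of the closed relation ${\le}$) and singletons are closed (Hausdorffness), so preimages of such sets under morphisms are closed.

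For boundedness, let $\cJ$ be cofiltered, $\phi\colon D_1\to D_2$ a natural transformation between diagrams $\cJ\to\cKOrd$ with each $\phi_j$ bounded, and $\phi_\infty\colon X\to Y$ the induced map between the limits, with projections $\pi_j,\rho_j$. Given $x\in X$, $y\in Y$ with $\phi_\infty(x)\le y$, set $C_j\coloneqq \phi_j^{-1}(\{\rho_j(y)\})\cap{\uparrow}\{\pi_j(x)\}\subseteq D_1(j)$. Each $C_j$ is closed, hence a compact ordered subspace; it is non-empty because $\phi_j(\pi_j(x))=\rho_j(\phi_\infty(x))\le\rho_j(y)$ lies in the up-set $\phi_j[{\uparrow}\pi_j(x)]$; and a short chase using naturality of $\phi$ and the cone equations shows the transition maps of $D_1$ restrict to maps $C_j\to C_i$. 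Thus $(C_j)_j$ is a cofiltered diagram of non-empty compact ordered spaces, so by Lemma~\ref{lem:cofiltered-limit-non-empty} it has a point $(x'_j)_j$; this compatible family defines $x'\in X$ with $\pi_j(x')=x'_j\ge\pi_j(x)$ and $\rho_j(\phi_\infty(x'))=\phi_j(x'_j)=\rho_j(y)$ for all $j$, whence $x'\ge x$ and $\phi_\infty(x')=y$.

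For the interpolation property the argument is identical mutatis mutandis. A cofiltered diagram of lax squares is the data of diagrams $A_\bullet,B_\bullet,C_\bullet,D_\bullet\colon\cJ\to\cKOrd$ with natural transformations $f_\bullet,g_\bullet,u_\bullet,v_\bullet$ such that $u_j\circ f_j\le v_j\circ g_j$ for all $j$, each square having interpolation; let $f,g,u,v$ be the induced maps between the limits (projecting the inequalities shows the limit square is again lax). Given $b\in\lim B_\bullet$, $c\in\lim C_\bullet$ with $u(b)\le v(c)$, put $b_j\coloneqq\pi^B_j(b)$, $c_j\coloneqq\pi^C_j(c)$ and $E_j\coloneqq f_j^{-1}({\uparrow}\{b_j\})\cap g_j^{-1}({\downarrow}\{c_j\})\subseteq A_j$. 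Projecting $u(b)\le v(c)$ gives $u_j(b_j)\le v_j(c_j)$, so interpolation in the $j$-th square makes $E_j$ non-empty; $E_j$ is closed; and the transition maps of $A_\bullet$ restrict to maps $E_j\to E_i$ by the same kind of chase. A point of $\lim_j E_j$, which exists by Lemma~\ref{lem:cofiltered-limit-non-empty}, yields $a\in\lim A_\bullet$ with $\pi^B_j(f(a))=f_j(a_j)\ge b_j$ and $\pi^C_j(g(a))=g_j(a_j)\le c_j$ for all $j$, i.e.\ $b\le f(a)$ and $g(a)\le c$, so $a$ is the desired interpolant.

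The conceptual weight lies entirely in Lemma~\ref{lem:cofiltered-limit-non-empty}; the only step needing genuine care — and where I expect the write-up to be fiddly rather than hard — is checking that $(C_j)_j$, resp.\ $(E_j)_j$, is a subfunctor of $D_1$, resp.\ $A_\bullet$. This rests on two routine points: that ${\uparrow}\{z\}$ and ${\downarrow}\{z\}$ are closed in a compact ordered space (so the $C_j,E_j$ really are closed, hence compact ordered, subspaces to which Lemma~\ref{lem:cofiltered-limit-non-empty} applies), and a diagram chase combining the cone equations of the limits with the naturality squares of the transformations involved.
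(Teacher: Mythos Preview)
Your proof is correct and follows essentially the same strategy as the paper: define, for each index $j$, the closed subspace of potential solutions ($C_j$ for boundedness, $E_j$ for interpolation), observe it is non-empty by the hypothesis at level $j$, and invoke Lemma~\ref{lem:cofiltered-limit-non-empty} to extract a compatible family. The paper compresses the subfunctor check into a single equality $u^{-1}({\uparrow} b)\cap v^{-1}({\downarrow} c)=\lim_i[u_i^{-1}({\uparrow} b_i)\cap v_i^{-1}({\downarrow} c_i)]$, whereas you spell out the diagram chase; this is a difference in exposition, not in content.
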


\begin{proof}
	Suppose that we have a cofiltered diagram of commutative squares, indexed by $i\in I$, each with the interpolation property as below.
	\[\begin{tikzcd}
		A_i \ar[r,"v_i"] \ar[d,"u_i"'] & C_i \ar[d,"g_i"] \\
		B_i \ar[r,"f_i"'] \ar[ur,phantom,"\leq"{description,sloped}] & D_i
	\end{tikzcd}\]
	Let $A = \lim_i A_i$, $u = \lim_i u_i$, etc. Let $b \in B$ and $c \in C$ be such that $f(b) \leq g(c)$. Let $b_i$ and $c_i$ be the $i$-components of $b$ and $c$. Now note that the set $u^{-1}(\uset b) \cap v^{-1}(\dset c) = \lim_i [u_i^{-1}(\uset b_i) \cap v_i^{-1}(\dset c_i)]$ is non-empty, by Lemma~\ref{lem:cofiltered-limit-non-empty}. Hence, cofiltered limits preserve the interpolation property of commutative squares.
	
	For the preservation of boundedness, suppose we have a diagram of bounded arrows $f_i \colon A_i \to B_i$ indexed by $i \in I$. Let $f \colon A \to B$ be the limit. Let $a \in A$ and let $b \geq f(a)$. This means that $b_i \geq f_i(a_i)$ for all $i$, so that $f_i^{-1}(b) \cap \uset a \neq \emptyset$ for all $i$, and thus the limit $f^{-1}(b) \cap \uset a$ is nonempty too.
\end{proof}

We are now ready to prove the main result of this section, the interpolation extension principle.

\begin{proof}[Proof of Proposition~\ref{prop:main}]
	Let us show that the interpolation property of natural transformations is preserved. Let $\alpha \colon P\to Q$ be a natural transformation with the interpolation property in $[\cC^{\op},\cKOrd]$. Let $f \colon X\to Y$ be a morphism in $\cInd(\cC)$. By Lemma~\ref{lem:filtered-diagrams}, we can write $f$ as a filtered colimit of $(f_i \colon X_i \to Y_i)_{i\in I}$. Then the square on the left is the cofiltered limit of the squares on the right, and since each of them has the interpolation property, the square on the left too by Lemma~\ref{lem:nonempty-limit}.
	
	\begin{minipage}{0.485\linewidth}
		\[\begin{tikzcd}
			P(X) \ar[r,"\alpha_X"] \ar[d,"P(f)"'] & Q(X) \ar[d,"Q(f)"]\\
			P(Y) \ar[r,"\alpha_Y"] & Q(Y)
		\end{tikzcd}\]\end{minipage}\begin{minipage}{0.485\linewidth}
		\[\begin{tikzcd}
			P(X_i) \ar[r,"\alpha_{X_i}"] \ar[d,"P(f_i)"'] & Q(X_i) \ar[d,"Q(f_i)"]\\
			P(Y_i) \ar[r,"\alpha_{Y_i}"] & Q(Y_i)
		\end{tikzcd}\]
	\end{minipage}
	
	The proofs of preservation of the other properties are similar.
\end{proof}

\begin{example}\label{exmp:failure-amalg-noncompact}
	Let us give an example to show that, in the proof above, we need compactness of the spaces in the target category. Our example will be a polyadic set $\sS \colon \cFinSet^\op \to \cSet$ such that its extension by continuity $\ovl{\sS} \colon \cSet^\op \to \cSet$ is not a polyadic set. This answers in the negative Question~2.22 in \cite{Ben-Yaacov03}. If we think of $[\cFinSet^\op,\cSet]$ as the free cocompletion of $\cFinSet$, this example works as follows: we take the formal coproduct of two copies of $\N$, we break the amalgamation property of the extended presheaf by identifying the two copies of the empty set and we try to reconstruct this amalgamation property in a free enough way, in such a way that the result admits a simple enough description.
	
	We first define a notion of \emph{pseudo-tree} of depth $n$ recursively. It will be a finite set equipped with some extra structure. We will also define recursively the sub-pseudo-trees of a pseudo-tree.
	\begin{enumerate}
		\item A pseudo-tree of depth $0$ is a finite set $X$ equipped with an injection $X \hookrightarrow \N \times \set{\text{black},\text{white}}$ such that the composite $X \to \N\times \set{\text{black},\text{white}}\to \set{\text{black},\text{white}}$ is constant. The sub-pseudo-trees are all the subsets of $X$ equipped with the restricted inclusions.
		\item A pseudo-tree of depth $n+1$ is a finite set $X$ equipped with two subsets $A,B \subseteq X$ such that $A \cup B = X$, each equipped with a structure of pseudo-tree of depth $n$, and such that $A \cap B$ is a sub-pseudo-tree of both $A$ and $B$, with the same induced structure of pseudo-tree. The sub-pseudo-trees are either $X$ itself, or sub-pseudo-trees of $A$ or of $B$. The compatibility condition on $A \cap B$ ensures that each subset of $X$ corresponds at most to one sub-pseudo-tree.
	\end{enumerate}
	
	For each subset $Y$ of a pseudo-tree $X$, there is a smallest sub-pseudo-tree $\langle Y\rangle_X$ of $X$ containing $Y$. We define $\sS(n)$ as the set of functions $f \colon n \to X$ from $n$ to a pseudo-tree $X$ (modulo isomorphisms) such that $\langle\operatorname{image}(f)\rangle_X = X$. Given $g \colon m \to n$ and $f \colon n \to X$ in $\sS(n)$, we define $\sS(g)(f)$ as the canonical map $m \to \langle\operatorname{image}(gf)\rangle_X$.
	
	We claim that $\sS \colon \cFinSet^\op\to \cSet$ has amalgamation. Suppose we have the following diagram where $n \to X$ and $m\to Y$ are elements of $\sS(n)$ and $\sS(m)$.
	
	\[\begin{tikzcd}
		&X&\\
		n \ar[ru] &&Y\\
		k \ar[u] \ar[r] & m \ar[ru]&
	\end{tikzcd}\]
	
	If the two induced elements of $\sS(k)$ are equal, this means that $\langle\operatorname{image}(k\to X)\rangle_X \cong \langle\operatorname{image}(k\to Y)\rangle_Y$. Let $W$ be this pseudo-tree. It is a common sub-pseudo-tree of $X$ and $Y$, so define $Z = X \sqcup_W Y$ with the structure of pseudo-tree induced by the structures of pseudo-trees on $X,Y \subseteq Z$. To finish, build the map $n \sqcup_k m \to Z$ using the functoriality of the pushout as below.
	
	\[\begin{tikzcd}
		&&& X && Z \\
		\\
		n && {n \sqcup_k m} & W && Y \\
		\\
		k && m
		\arrow[hook', from=3-4, to=1-4]
		\arrow[hook, from=3-4, to=3-6]
		\arrow[hook', from=3-6, to=1-6]
		\arrow[from=5-1, to=3-1]
		\arrow[from=5-1, to=5-3]
		\arrow[from=3-1, to=3-3]
		\arrow[from=5-1, to=3-4]
		\arrow[from=3-1, to=1-4]
		\arrow[from=5-3, to=3-6]
		\arrow[dashed, from=3-3, to=1-6, crossing over]
		\arrow[from=5-3, to=3-3, crossing over]
		\arrow[hook, from=1-4, to=1-6]
	\end{tikzcd}\]
	
	This shows that $\sS \colon \cFinSet^\op \to \cSet$ has amalgamation. Now, we will show that the extension by continuity $\ovl{\sS} \colon \cSet^\op \to \cSet$ doesn't have amalgamation. Let $x$ be the element of $\ovl{\sS}(\N)$ such that for all $X \subseteq \N$ finite, the induced element of $\sS(X)$ is the identity $X\to X$ where $X$ has the structure of pseudo-tree of depth $0$ given by the inclusion $x \in X \mapsto (x,\text{white}) \in \N \times \set{\text{black},\text{white}}$. Define $y \in \ovl{\sS}(\N)$ similarly by replacing $\text{white}$ by $\text{black}$. The restrictions of $x$ and $y$ to $\sS(0)$ are equal but we claim that there is no element of $\sS(\N + \N)$ restricting to $x$ and $y$ along the two inclusions $\N \rightrightarrows \N+\N$. Suppose it was the case and let $z \in \sS(\N+\N)$ be such an element. For all $n \in \N$, let $[n] = \set{0,\dots,n}$, so that $z$ is given by a sequence of elements of $\sS([n]+[n])$ depicted below, where $X_{n-1} \to X_n$ is the smallest sub-pseudo-tree containing the image of $[n-1]+[n-1] \hookrightarrow [n]+[n] \to X_n$.
	
	\[\begin{tikzcd}
		{[0]+[0]} \ar[d] \ar[r,hook] & {[1]+[1]} \ar[d] \ar[r,hook] & {[2]+[2]} \ar[d] \ar[r,hook] & \cdots\\
		X_0 \ar[r,hook] & X_1 \ar[r,hook] & X_2 \ar[r,hook] & \cdots
	\end{tikzcd}\]
	
	Then we actually have $X_n = X_0$ for all $n \in \N$. To show that, we need the following fact: Let $X$ be a pseudo-tree, let $U \subseteq X$ be a sub-pseudo-tree of depth $>0$ and let $Z \subseteq X$ be a sub-pseudo-tree of depth $0$. If $Z \cap U \neq \emptyset$, then $Z \subseteq U$. We show that by induction on the depth of $X$. Suppose $X$ has depth $n>0$ and that its structure of pseudo-tree is given by the two sub-pseudo-trees $A,B \subseteq X$. If $U = X$, nothing has to be proven. Otherwise, we can suppose that $U \subseteq A$ without loss of generality. If $Z \subseteq A$ too, then $Z \subseteq U$ by the induction hypothesis. If $Z \subseteq B$, then $Z \cap U \subseteq Z \cap (A \cap B)$, so $Z \cap (A \cap B) \neq \emptyset$, so $Z \subseteq A \cap B$ by the induction hypothesis and we conclude that $Z \subseteq U$ as before.
	
	We can now show that $X_n = X_0$ for all $n \in \N$. Indeed, $X_0 \subseteq X_n$ is a sub-pseudo-tree of depth $>0$ since it contains two sub-pseudo-trees of depth $0$ of different colors. For both canonical inclusions $[n] \to [n]+[n]$, the image of $[n] \to [n]+[n] \to X_n$ has to be a sub-pseudo-tree of depth $0$. Its intersection with $X_0$ is nonempty since it contains the image of $[0] \to [n] \to X_n$. Hence the image of $[n] \to X_n$ is contained in $X_0$. This proves that $X_0$ contains the image of $[n]+[n] \to X_n$, so that $X_0 = X_n$.
	
	Notice that for each pseudo-tree $X$, there is a canonical map $X \to \N \times \set{\text{black},\text{white}}$ whose restriction to each sub-pseudo-tree $Y \subseteq X$ of depth $0$ is the canonical map $Y \to \N \times \set{\text{black},\text{white}}$. The composite $[n]+[n] \to X_n \to \N \times \set{\text{black},\text{white}}$ has to be injective, hence $[n]+[n] \to X_n$ is injective but it is impossible if $X_n = X_0$ for all $n$ since $X_0$ is finite. This produces a contradiction and we deduce that $\ovl{\sS} \colon \cSet^\op \to \cSet$ does not have amalgamation.
	
	This polyadic set $\mathcal{S}$ is the type space functor of an $\mathcal{L}_{\infty,\omega}$-theory which does not have enough set-valued models. 

	This example also shows that the interpolation extension principle is not true if we do not assume that $\cC$ has pushouts. Note first that a category $\cC$ has amalgamation if and only if the presheaf $\sP \colon \cC^\op \to \cKOrd$ constant to the one-point space is a polyadic compact ordered space. The extension by continuity of $\sP$ is the presheaf $\ovl{\sP} \colon \cInd(\cC)^\op \to \cKOrd$ which is also constant to the one-point space. If the interpolation extension principle is true for $\cC$, it implies that $\ovl{\sP}$ has the interpolation property, which means that $\cInd(\cC)$ has amalgamation. But a counter example is given by $\cC = \int \sS$ with $\sS$ as in the example above: $\sS$ is a polyadic set, so $\int \sS$ has amalgamation, but $\cInd(\cC) = \cInd(\int \sS) \simeq \int \ovl{\sS}$ does not.
\end{example}

\begin{remark}\label{rmk:fw-colim-butterflies}
	In this paper, we make the hypothesis that $\cC$ has pushouts, but what we really need is that it satisfies the conclusion of the interpolation extension principle. The only thing that can fail to be preserved is the axiom \ref{p-axiom:interpol-1}. As we will see in Remark~\ref{rmk:equiv-main-enough-kappa-sat} in the next section, the preservation of this axiom is equivalent to the presence of enough $\kappa$-saturated models, for every $\kappa$.
	
	We can see that the axiom \ref{p-axiom:interpol-1} is also preserved if $\cC$ has fjw finite colimits. Indeed, as stated in Remark~\ref{rmk:theories-presheaf-type}, under this condition, coherent $\cC^\op$-hyperdoctrines correspond to extensions of the coherent theory classified by $[\cC,\cSet]$. Since there exist enough $\kappa$-saturated models for coherent multi-sorted theories, the interpolation extension principle is also valid if $\cC$ has fjw finite colimits.
	
	One could wonder if there is a condition generalizing both the presence of pushouts and of fjw finite colimits that could ensure the validity of the extension interpolation principle. We mention without proof such a condition: the principle is valid if $\cC$ has fjw colimits of diagrams of the form below.
	
	\[\begin{tikzcd}
		\cdot &[-1em]&[-1em] \cdot\\
		\cdot \ar[u] \ar[urr] && \cdot \ar[u] \ar[ull,crossing over]\\
		&\cdot \ar[ul] \ar[ur] &
	\end{tikzcd}\]
\end{remark}
% !TeX spellcheck = en-US
% !TeX root = GM2022.tex

\section{Completeness for coherent logic}
\label{sec:complete-coherent}

In this section, we will see, through a proof of Gödel's completeness theorem, a first example of how the interpolation extension principle can be used. We start with a lemma giving a formulation of what is known as the method of diagrams in model theory. In this lemma, $P(X)$ is thought of as a set of \emph{problems} and $S(X)$ is the set of those problems which have a solution. This lemma allows us to build objects for which every problem has a solution. It will be used again in Section~\ref{sec:variations-int}.
% By a \emph{linear colimit}, we mean a colimit indexed by a linear order.

\begin{lemma}\label{lem:build-sol}
	Let $\cD$ be a category admitting all filtered colimits. Let $P \colon \cD \to \cSet$ be a functor preserving filtered colimits and let $S \subseteq P$ be a subfunctor. Suppose that for all $X \in \cD$ and all $p \in P(X)$, there is some $f \colon X \to X'$ such that $P(f)(p) \in S(X')$. Then for all $X$, there is some $f \colon X \to Y$ such that $S(Y) = P(Y)$.
\end{lemma}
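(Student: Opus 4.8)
The plan is to construct $Y$ by a transfinite iteration in the style of the small object argument (this is the categorical incarnation of Henkin's method of diagrams). I would build a chain
\[ X = X_0 \to X_1 \to X_2 \to \cdots \]
indexed by $\omega$, arrange that each map $X_n \to X_{n+1}$ solves \emph{all} the problems currently present in $P(X_n)$, and then set $Y \coloneqq \colim_{n < \omega} X_n$, with $f \colon X \to Y$ the first coprojection.

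The heart of the argument is the successor step: given $X_n$, I want a single morphism $X_n \to X_{n+1}$ such that $P(X_n \to X_{n+1})$ sends every element of $P(X_n)$ into $S(X_{n+1})$. Here I use that $P$ takes values in $\cSet$, so $P(X_n)$ is an honest set; fix a well-ordering $P(X_n) = \setst{p_\xi}{\xi < \lambda}$ and build an auxiliary chain $(Y_\xi)_{\xi \le \lambda}$ under $X_n$ by transfinite recursion: put $Y_0 \coloneqq X_n$; at a successor, let $\bar p_\xi \in P(Y_\xi)$ be the image of $p_\xi$ and invoke the hypothesis of the lemma to choose $Y_\xi \to Y_{\xi+1}$ with $P(Y_\xi \to Y_{\xi+1})(\bar p_\xi) \in S(Y_{\xi+1})$; at a limit $\mu$, put $Y_\mu \coloneqq \colim_{\xi < \mu} Y_\xi$, which is a filtered colimit since the indexing diagram is a chain. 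Set $X_{n+1} \coloneqq Y_\lambda$. Because $S$ is a \emph{subfunctor} of $P$, once the image of $p_\xi$ lands in $S$ at stage $\xi + 1$ it stays in $S$ along all subsequent maps; hence every $p \in P(X_n)$ has image in $S(X_{n+1})$, as wanted.

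It then remains to check that $Y = \colim_{n < \omega} X_n$ works. Since $P$ preserves filtered colimits, $P(Y) = \colim_{n < \omega} P(X_n)$ computed in $\cSet$, so any $p \in P(Y)$ is the image of some $p' \in P(X_n)$. By the successor step the image of $p'$ in $P(X_{n+1})$ lies in $S(X_{n+1})$, and then, again using that $S$ is a subfunctor, its image $p$ in $P(Y)$ lies in $S(Y)$. Thus $S(Y) = P(Y)$, and $f \colon X = X_0 \to Y$ is the desired morphism.

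I do not expect a serious obstacle: the construction is routine transfinite recursion, and the only points needing care are (i) recognising that $P(X_n)$ is a genuine set, which is what makes the inner well-ordering legitimate, and (ii) using the subfunctoriality of $S$ twice — once to see that solved problems remain solved inside the inner chain, and once to transport solutions from $X_{n+1}$ up to $Y$. A cosmetic variant would interleave all problems into a single transfinite chain and avoid the two-level construction, but the bookkeeping at limit stages is cleaner with the nesting above.
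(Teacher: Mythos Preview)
Your proof is correct and follows essentially the same two-level transfinite construction as the paper: an inner chain that well-orders $P(X_n)$ and solves each problem one at a time (with colimits at limit stages), iterated $\omega$ times in an outer chain whose colimit is $Y$. The paper's $X^i$ is your $X_n$ and its inner chain $(X_\alpha)_{\alpha\le\kappa}$ is your $(Y_\xi)_{\xi\le\lambda}$; you are slightly more explicit about invoking subfunctoriality of $S$ to propagate solutions, which the paper leaves implicit.
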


\begin{proof}
	Let $X \in \cD$. Choose a well-ordering $(x_\alpha)_{\alpha<\kappa}$ of $P(X)$. We build a sequence $(X_\alpha)_{\alpha \leq \kappa} \subseteq \cD$ as follows.
	\begin{enumerate}
		\item $X_0 = X$.
		\item $f_\alpha \colon X_\alpha \to X_{\alpha+1}$ is chosen such that if $g \colon X_0 \to X_{\alpha}$ is the canonical map, then
		\[ P(g f_\alpha)(x_\alpha) \in S(X_{\alpha+1}) \text{.} \]
		\item For $\alpha$ a limit ordinal, $X_\alpha = \colim_{\lambda < \alpha} X_\lambda$.
	\end{enumerate}
	We put $X^1 = X_{\kappa}$ and we iterate this construction $\omega$ times so as to obtain a sequence
	 \[ X \to X^1 \to X^2 \to \cdots \text{.}\]
	 The image of $P(X^i)$ in $P(X^{i+1})$ is included in $S(X^{i+1})$. Finally, let $Y = \colim_i X^i$. Then $P(Y) = \colim_i P(X^i) \subseteq \colim_i S(X^{i+1}) \subseteq S(Y)$.
\end{proof}

We now prove a version of Gödel's completeness theorem in our setting. The proof can be thought of as a small object argument. It generalizes the usual Henkin proof of Gödel's theorem to our setting.

\begin{theorem}[Gödel's completeness theorem]\label{thm:Godel}
	Let $\sS$ be a polyadic compact ordered space and let $X \in \cInd(\cC)$. Then every $X$-type is realized by an $\omega$-saturated model.
\end{theorem}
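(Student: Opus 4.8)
The plan is to construct $(Y,\sigma_Y)$ by a small-object argument, applying Lemma~\ref{lem:build-sol} in the category $\cD \coloneqq \Tp^e(\sS)$ of types of $\sS$ with elementary embeddings. First I would check that $\cD$ admits all filtered colimits: given a filtered diagram $i \mapsto (W_i,\rho_i)$ in $\cD$, set $W \coloneqq \colim_i W_i$ in $\cInd(\cC)$; since $\ovl{\sS}$ sends filtered colimits to cofiltered limits, the compatible family $(\rho_i)$ assembles into a type $\rho \colon W \to \sS$ for which every coprojection $W_i \to W$ is elementary, and $(W,\rho)$ is then the colimit in $\cD$. Next, set up the ``problem'' functor: for $(W,\rho) \in \cD$ let $P(W,\rho)$ be the set of triples $(\sigma \colon n \to m$ in $\cC$, $c \in W^n$, $b \in \sS(m))$ with $\sS(\sigma)(b) \le \rho_n(c)$, and let $S(W,\rho) \subseteq P(W,\rho)$ consist of those triples for which there is an $m$-point $a$ of $W$ with $b \le \rho_m(a)$ and $W(\sigma)(a) = c$. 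Since each $n \in \cC$ is $\omega$-presentable in $\cInd(\cC)$, $P$ preserves filtered colimits, and $S$ is a subfunctor because elementary embeddings preserve types exactly. Comparing with \eqref{eq:nat-wip}, one sees that $S(W,\rho) = P(W,\rho)$ holds precisely when $(W,\rho)$ is an $\omega$-saturated model in the sense of Definition~\ref{dfn:omega-sat-model}.

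The heart of the argument is verifying the hypothesis of Lemma~\ref{lem:build-sol}. Given $(W,\rho) \in \cD$ and a problem $(\sigma \colon n \to m, c, b) \in P(W,\rho)$, I would form the pushout $W' \coloneqq W \sqcup_n m$ in $\cInd(\cC)$ of $W \xleftarrow{c} n \xrightarrow{\sigma} m$, with coprojections $g \colon W \to W'$ and $a \colon m \to W'$ (this pushout exists since $\cC$ has pushouts). By the interpolation extension principle (Proposition~\ref{prop:main}), $\ovl{\sS}$ satisfies \ref{p-axiom:interpol-1} and \ref{p-axiom:interpol-2} over $\cInd(\cC)$, so, as in Remark~\ref{rmk:p-axioms-1+2}, it sends the above pushout square to a square with the strong interpolation property. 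I would apply that property with $\ovl{\sS}(W)$ in the role of the corner whose element the interpolant must hit exactly, and $\ovl{\sS}(m) = \sS(m)$ in the other middle corner, to the data $b \in \sS(m)$ and $\rho \in \ovl{\sS}(W)$ — for which the required inequality in the corner $\ovl{\sS}(n) = \sS(n)$ is exactly $\sS(\sigma)(b) \le \rho_n(c)$, i.e. $(\sigma,c,b) \in P(W,\rho)$. This produces a type $\rho' \colon W' \to \sS$ with $\rho' \circ g = \rho$ and $\rho'_m(a) \ge b$. The equality $\rho' \circ g = \rho$ says that $g \colon (W,\rho) \to (W',\rho')$ is an \emph{elementary} embedding (Definition~\ref{dfn:catego-models}), hence a morphism of $\cD$; and since $W'(\sigma)(a) = a \circ \sigma = g \circ c$ by the pushout square, the $m$-point $a$ witnesses that the image $P(g)(\sigma,c,b)$ lies in $S(W',\rho')$.

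Applying Lemma~\ref{lem:build-sol} to $\cD$, $P$, $S$ and the starting type $(X,\tau)$ then yields a morphism $(X,\tau) \to (Y,\sigma_Y)$ in $\cD$ — that is, an elementary embedding — with $S(Y,\sigma_Y) = P(Y,\sigma_Y)$, so that $(Y,\sigma_Y)$ is an $\omega$-saturated model. Hence $\tau$ is realized by the $\omega$-saturated model $(Y,\sigma_Y)$, as claimed.

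The step I expect to be the main obstacle is the single-problem-solving step, and specifically obtaining the \emph{equality} $\rho' \circ g = \rho$ rather than merely $\rho' \circ g \le \rho$: the bare interpolation axiom \ref{p-axiom:interpol-1} only delivers the inequality, and it is the boundedness axiom \ref{p-axiom:interpol-2} — absorbed into the strong interpolation property of Remark~\ref{rmk:p-axioms-1+2} — that upgrades the interpolant so its restriction along $g$ is $\rho$ on the nose, which is what makes the embedding elementary. A second delicate point is that this interpolation must be invoked for a pushout of \emph{ind}-objects, not of objects of $\cC$, which is exactly what Proposition~\ref{prop:main} makes legitimate.
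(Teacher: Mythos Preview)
Your proposal is correct and follows essentially the same route as the paper's proof: both work in $\cD=\Tp^e(\sS)$, set up the same problem/solution functors, solve a single problem by forming the pushout $W\sqcup_n m$ in $\cInd(\cC)$ and invoking the interpolation extension principle (Proposition~\ref{prop:main}) to produce a type on it, and then apply Lemma~\ref{lem:build-sol}. Your treatment is in fact slightly more explicit than the paper's---you spell out why $\Tp^e(\sS)$ has filtered colimits and you correctly isolate that it is \ref{p-axiom:interpol-2} (via Remark~\ref{rmk:p-axioms-1+2}) that upgrades the interpolant to satisfy $\rho'\circ g=\rho$ exactly, which is precisely what makes the embedding elementary; the paper's diagram~\eqref{eq:Xr-IEP} encodes the same equality without singling out this point.
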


\begin{proof}
	Recall that an $X$-type $x \colon X \to \sS$ is an $\omega$-saturated model if and only if each lax diagram as below on the left (where $n,m \in \cC$) can be completed as on the right.
	
	\begin{minipage}{0.48\linewidth}
	\[\begin{tikzcd}
		X \ar[r,"x"] & \sS\\
		n \ar[u,"f"] \ar[r,"g"'] & m \ar[u,"t"'] \ar[lu,phantom,"\geq"{sloped,description}]
	\end{tikzcd}\]
	\end{minipage}\begin{minipage}{0.48\linewidth}
	\[\begin{tikzcd}
		X \ar[r,"x"] & \sS\\
		n \ar[u,"f"] \ar[r,"g"'] & m \ar[u,""{left,name=UL},"t"'] \ar[lu,"h"]
		\ar[from=UL,to=1-1,phantom,"\geq"{description,sloped,pos=0.3}]
	\end{tikzcd}\]
	\end{minipage}
	
	For any $X$-type $x \colon X \to \sS$, let us call a tuple $r = (n,m,f,g,t)$ as in the lax diagram on the left a \emph{request} for $x$, and a morphism $h \colon m \to X$ like in the diagram on the right an \emph{answer} for $r$. Thus, an $X$-type is an $\omega$-saturated model if, and only if, every request has an answer.

	With this terminology in place, we will now construct a functor $P \colon \Tp^e(\sS) \to \cSet$ and a subfunctor $S$ of $P$, to which we will apply Lemma~\ref{lem:build-sol}. On objects, for any $x \in \Tp^e(\sS)$, define $P(x)$ to be the set of requests for $x$. For any morphism $h \colon (x,X) \to (x',X')$ in $\Tp^e(\sS)$, define $Ph \colon P(x) \to P(x')$ to be the function sending any request $r = (n,m,f,g,t)$ for $x$ to the request $(Ph)(r) \coloneqq (n,m,fh,g,t)$ for $x'$.
	Finally, for any $x \in \Tp^e(\sS)$, write $S(x)$ for the subset of $P(x)$ consisting of those requests that admit an answer. We now show that $P$ preserves filtered colimits and that $S$ is a subfunctor of $P$ satisfying the assumption of Lemma~\ref{lem:build-sol}.

	Let $r = (n,m,f,g,t)$ be a request for an $X$-type $x \colon X \to \sS$. Thanks to the interpolation extension principle, we may construct a lax diagram \eqref{eq:Xr-IEP} with $X_r \in \cInd(\cC)$.
	
	\begin{equation}\label{eq:Xr-IEP}
	\begin{tikzcd}
		&&\sS\\
		X \ar[rru,bend left=20] \ar[r,"\iota_1"'] & X_r \ar[ru,"x_r" pos=0.4] &\\
		n \ar[u] \ar[r] & m \ar[u,"\iota_2"] \ar[ruu,bend right=20,""{left,name=UL,pos=0.4}]
		\ar[from=UL,to=2-2,phantom,"\geq"{description,sloped,pos=0.45}]
	\end{tikzcd}
	\end{equation}
	
	\begin{enumerate}
		\item $P$ preserves filtered colimits: if $X = \colim_i X_i$ is a filtered colimit in $\Tp^e(\sS)$, then every request for $x \colon X \to \sS$ is of the form $(P\iota_i)(r)$, where $\iota_i \colon X_i \to X$ denotes the canonical injection and $r$ a request for $\ovl{\sS}(\iota_i)(x) \in \ovl{\sS}(X_i)$.\label{colim-sol}
		\item $S$ is a subfunctor: if $r$ has an answer, then $(Ph)(r)$ has an answer too.\label{transfer-sol}
		\item $S$ satisfies the assumption of Lemma~\ref{lem:build-sol}: with the notations $\iota_1$ and $\iota_2$ of the diagram \eqref{eq:Xr-IEP}, $\iota_2$ is an answer for $(P\iota_1)(r)$.\label{adding-sol}
	\end{enumerate}
	
	Thus, by Lemma~\ref{lem:build-sol}, for any $X$-type $x \colon X \to \sS$, there exists a morphism $f \colon (x, X) \to (y, Y)$ such that $S(y) = P(y)$, and this is exactly an $\omega$-saturated model realizing $x$.
\end{proof}

\begin{remark}\label{rmk:equiv-main-enough-kappa-sat}
	The same reasoning as above shows that for each regular cardinal $\kappa$, each type is realized by a $\kappa$-saturated model, by extending the sets of requests. Actually, the interpolation extension principle for a general category $\cC$ is equivalent to the presence of enough $\kappa$-saturated models, for all $\kappa$. Indeed, suppose that each type is realized by a $\kappa$-saturated model for $\kappa$ arbitrarily large. Let the following be an interpolation problem for $\sS$, with $X,Y,Z$ arbitrary ind-objects of $\cC$.
	
	\[\begin{tikzcd}
		X \ar[r] & \sS\\
		Y \ar[u] \ar[r] & Z \ar[u] \ar[ul,phantom,"\geq"{description,sloped}]
	\end{tikzcd}\]

	Let $\kappa$ be large enough such that $Y$ and $Z$ are $\kappa$-presentable (such a $\kappa$ always exists, see \cite[Rmk. below Thm.~1.20]{AdaRos94}). Let $X' \to \sS$ be a realization of $X \to \sS$ by a $\kappa$-saturated model. Then there is an arrow $Z \to X'$ giving a solution to the interpolation problem.
\end{remark}

\begin{remark}\label{rmk:pos-closed-models}
	We explain how the same reasoning as above also shows the existence of enough \emph{positively closed models}.
	
	Given a compact ordered space $A$, we denote by $\max A$ the set of maximal points of $A$. This defines a functor from compact ordered spaces and bounded morphisms to sets: if $f \colon A\to B$ is bounded, then $f[\max A] \subseteq \max B$. If $\sS$ is a $\cC$-adic compact ordered space, let $\max \sS \colon \cC^\op \to \cSet$ be the composite of $\sS$ with this functor. A model $X \to \sS$ is called \emph{positively closed} if it takes values in $\max \sS \subseteq \sS$. This is the terminology used in \cite{HAYKAZYAN_2019}, but it is also called \emph{existentially} closed in \cite{Ben-Yaacov03,Kamsma_2022}.
	
	If $\ovl{\sS}$ satisfies \ref{p-axiom:interpol-1} and \ref{p-axiom:interpol-2}, then $\max \ovl{\sS}$ satisfies \ref{p-axiom:interpol-1}, using Remark~\ref{rmk:p-axioms-1+2}. Moreover, for any object $X$ of $\cInd(\cC)$, an element $x$ of $\ovl{\sS}(X)$ is in the image of the inclusion $\ovl{\max \sS} \hookrightarrow \ovl{\sS}$ if, and only if, for every $f \colon n \to X$ with $n \in \cC$, the element $\sS(f)(x) \in \sS(n)$ is maximal. From this and \ref{p-axiom:interpol-2}, one can derive that the inclusion $\ovl{\max \sS} \hookrightarrow \ovl{\sS}$ identifies $\ovl{\max \sS}$ with $\max \ovl{\sS}$. 
	Hence $\ovl{\max \sS}$ also satisfies \ref{p-axiom:interpol-1}. Thus, the proof of Gödel's completeness theorem above works also for $\max \sS$ instead of $\sS$: every maximal $X$-type is realized by a positively closed model. In particular, every model $\sS$ admits a morphism to a positively closed one. This implies another definition for positively closed models: they are the models such that every morphism to another model is an elementary embedding.
\end{remark}
% !TeX spellcheck = en-US
% !TeX root = GM2022.tex

\section{Completeness for Kripke models}
\label{sec:complete-intuitionistic}

We will now see the completeness of Kripke models for polyadic compact ordered spaces. To make sense of Kripke models on the algebraic side, we need to make the additional assumption that there is a Heyting implication and universal quantification. On the topological side, it translates as openness conditions, as explained in Section~\ref{sec:duality}. However, similarly to what we did for coherent logic in Sections~\ref{sec:models} and \ref{sec:complete-coherent}, we will work with an $\omega$-saturated notion of Kripke model that doesn't need this openness hypothesis.

\begin{definition}
	Let $F \colon \cI \to \cInd(\cC)$ be a diagram indexed by a small category $\cI$. We write $\tilde{F} \in [\cC^\op,\cPoset]$ for its oplax colimit when considering $\cInd(\cC)$ as a subcategory of $[\cC^\op,\cPoset]$.
\end{definition}

We recall that the elements of $\tilde{F}(c)$ are the pairs $(i,x)$ with $i \in \cI$ and $x \in F(i)$.

\begin{lemma}
	Boundedness and the interpolation property are preserved by oplax colimits in $\cPoset$. In particular, polyadic posets are stable under oplax colimits in $[\cC^\op,\cPoset]$.
\end{lemma}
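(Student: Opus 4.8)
The statement has two parts: (1) boundedness and interpolation are preserved by oplax colimits in $\cPoset$, and (2) polyadic posets are stable under oplax colimits in $[\cC^\op, \cPoset]$. Part (2) follows from part (1) applied pointwise (the oplax colimit in $[\cC^\op,\cPoset]$ is computed componentwise at each $c \in \cC$, by the description given in the ``Oplax cocones and oplax colimits'' paragraph), together with the observation that a pushout square in $\cC$ gives rise, after applying an oplax colimit over the diagram $F \colon \cI \to \cInd(\cC)$, to a square that is again an oplax-colimit-of-squares; so the real content is part (1).

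For part (1), the plan is to work directly with the explicit description of $\tilde F$: elements of $\tilde F$ are pairs $(i, x)$ with $i \in \cI$, $x \in F(i)$, and $(i,x) \le (j,y)$ iff there is a morphism $i \to j$ in $\cI$ with $F(i \to j)(x) \le y$. For \emph{boundedness}: given a natural transformation $F \to G$ of diagrams, each component $F(i) \to G(i)$ bounded, one shows the induced map $\tilde F \to \tilde G$ is bounded. Take an up-set $U \subseteq \tilde F$; I would check that its image is an up-set by: given $(i, h_i(x))$ in the image (with $(i,x) \in U$) and $(i, h_i(x)) \le (j, y')$ in $\tilde G$, unwind to get $i \to j$ with $G(i\to j)(h_i(x)) \le y'$; by naturality this equals $h_j(F(i\to j)(x))$, and since $U$ is an up-set and $(i,x) \le (j, F(i\to j)(x))$, we get $(j, F(i\to j)(x)) \in U$, so its image $(j, h_j(F(i\to j)(x)))$ lies in the image, and then boundedness of $h_j$ upgrades $\le y'$ to an actual point. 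For \emph{interpolation}: given a lax square of diagrams (i.e., an $\cI$-indexed lax square in $\cPoset$, each with interpolation), and an interpolation problem $u(b) \le v(c)$ in the oplax colimit with $b = (i, b_i)$, $c = (j, c_j)$, I would first use filteredness-type cofinality — or rather, just pick a common target: unwinding $u(b) \le v(c)$ in $\tilde F$ produces some object $k$ and morphisms $i \to k$, $j \to k$ in $\cI$ making $u_k(F(i\to k)b_i) \le v_k(F(j\to k)c_j)$ hold in the $k$-th square; apply interpolation there to get an interpolant $a_k$, and $(k, a_k)$ is the desired interpolant in $\tilde F$.

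The main obstacle — and the step requiring care — is the ``common target'' manipulation in the interpolation argument: the inequality $u(b) \le v(c)$ in the oplax colimit is witnessed by a \emph{zigzag} of morphisms and intermediate inequalities in the various $F(i)$, not a single morphism, because $\cI$ need not be filtered (it is only assumed small). I expect one must either (a) observe that composition of the order relations in the oplax colimit still forces the existence of a single index $k$ receiving maps from both $i$ and $j$ together with the required inequality at stage $k$ — this works because $u$ and $v$ are natural and the order on $\tilde F(c)$ is defined via existence of a single morphism in $\cI$ — or (b) if the $2$-categorical bookkeeping is delicate, reduce to the filtered case by a cofinality argument, or simply cite that oplax colimits in $\cPoset$ of lax squares are computed as stated and the verification is a routine diagram chase. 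I would present it via route (a), keeping the chase short: the key identity is that for a composable $i \to j \to k$ one has $(i,x) \le (k, y)$ in $\tilde F$ iff $F(i \to k)(x) \le y$, and naturality of $u, v$ transports the problem along these morphisms. The remaining details (that $\tilde F \to \tilde G$ is monotone, that the square remains lax) are immediate from the definitions.
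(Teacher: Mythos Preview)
Your proposal is correct and follows essentially the same approach as the paper: the paper's proof of boundedness is precisely your chase (take $(i,x)$, use naturality to rewrite $Y_f(F_i(x)) = F_j(X_f(x))$, then apply boundedness of $F_j$), and it declares the interpolation argument ``similar.'' Your worry about zigzags is a non-issue, since by definition $(i,u_i(b_i)) \le (j,v_j(c_j))$ in the oplax colimit already furnishes a single morphism $i \to j$ in $\cI$ along which the inequality holds; transporting $b_i$ to index $j$ via naturality of $u$ and applying interpolation of the $j$-th square yields the interpolant $(j,a_j)$ directly, with no need to manufacture a separate common target $k$.
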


\begin{proof}
	Let $(X_i)_{i\in\cI}, (Y_i)_{i\in\cI} \subseteq \cPoset$ be two diagrams and let $(F_i \colon X_i \to Y_i)_i$ be a natural transformation. Suppose that each $F_i$ is bounded. Let $X$, $Y$ and $F$ be the respective oplax colimits of $(X_i)_i$, $(Y_i)_i$ and $(F_i)_i$. Suppose $F(i,x) \leq (j,y)$. This means that there is some map $f \colon i\to j$ in $\cI$ with $Y_f(F_i(x)) \leq y$. Since $Y_f(F_i(x)) = F_j(X_f(x))$ and since $F_j$ is bounded, we get some $x' \geq X_f(x)$ with $F_j(x') = y$. This implies that $(i,x) \leq (j,x')$ and $F(j,x') = (j,y)$.
	
	The proof of preservation of the interpolation property is similar.
\end{proof}

Consequently, $\tilde{F}$ is a polyadic poset for any diagram $F \colon \cI \to \cInd(\cC)$.

\begin{definition}
	Let $\sS$ be an intuitionistic polyadic compact ordered space. A \emph{Kripke model} of $\sS$ based on a diagram $F$ of ind-objects is an intuitionistic morphism $\beta \circ \tilde{F} \to \sS$.
\end{definition}

Strengthening this definition, we get the notion of $\omega$-saturated Kripke model below.

\begin{definition}
	Let $\sS$ be a polyadic compact ordered space. An \emph{$\omega$-saturated Kripke model} of $\sS$ based on a diagram $F$ of ind-objects is an intuitionistic morphism $\tilde{F} \to \sS$ (of polyadic posets).
\end{definition}

Note that if $\sS$ is an polyadic Esakia space, then any $\omega$-saturated Kripke model is a Kripke model: the hypothesis on $\sS$ implies that $\beta \circ \sS \to \sS$ is an intuitionistic morphism, as was proved when $\sS$ is a polyadic Esakia space in Corollary~\ref{cor:caract-open-interp}. If moreover $\tilde{F} \to \sS$ is an intuitionistic morphism of polyadic posets, then $\beta \circ \tilde{F} \to \beta \circ \sS$ is also intuitionistic and the composite $\beta \circ \tilde{F} \to \beta \circ \sS \to \sS$ too.

A natural transformations $\tilde{F} \to \sS$ can also be viewed as an oplax cocone $(F(i) \to \sS)_{i\in\cI}$. If $\tilde{F} \to \sS$ is an $\omega$-saturated Kripke model, then each component $F(i) \to \sS$ is an $\omega$-saturated model.
This allows us to view $\omega$-saturated Kripke models as diagrams $\cI \to \Mod_\omega(\sS)$, where we defined $\Mod_\omega(\sS)$ as the category of $\omega$-saturated models of $\sS$. That is how we will think of them from now on. In the same way, Kripke models will be viewed as special functors $\cI \to \Mod(\sS)$.

Here is a more explicit way of stating that $(X_i)_{i \in \cI} \subseteq \Mod_\omega(\sS)$ is an $\omega$-saturated Kripke model:
\begin{enumerate}[wide, labelwidth=!, labelindent=0pt]
	\item[(Implication.)] Suppose that we have the diagram below on the left where $i \in \cI$ and $n\in\cC$. Then there is some morphism $i \to j$ completing the diagram as on the right.
	
	\begin{minipage}{0.48\linewidth}\[\begin{tikzcd}
			X_i \ar[r] & \sS\\
			n \ar[u] \ar[ur,""{left,name=A}] &
			\ar[from=A,to=1-1,phantom,"\leq"{description,sloped}]
		\end{tikzcd}\]\end{minipage}\begin{minipage}{0.48\linewidth}\[\begin{tikzcd}
			X_i \ar[r] \ar[rr,bend left=50,""{below,name=A}] & X_j \ar[r] & \sS\\
			n \ar[u] \ar[urr] & &
			\ar[from=A,to=1-2,phantom,"\leq"{description,sloped}]
		\end{tikzcd}\]\end{minipage}
	
	\item[(Universal quantification.)] Suppose we have the diagram below on the left where $i \in \cI$ and $n,m \in \cC$. Then there is a morphism $i \to j$ and a way of completing the diagram as on the right.
	
	\begin{minipage}{0.48\linewidth}\[\begin{tikzcd}
			X_i \ar[r] & \sS\\
			n \ar[u] \ar[r] & m \ar[u] \ar[lu,phantom,"\leq"{description,sloped}]
		\end{tikzcd}\]\end{minipage}\begin{minipage}{0.48\linewidth}\[\begin{tikzcd}
			&& \sS \\
			X_i \ar[r] \ar[urr,bend left=30,""{below,name=A,pos=0.35}] & X_j \ar[ur] &\\
			n \ar[u] \ar[r] & m \ar[u] \ar[ruu,bend right=30,""{left,name=B,pos=0.35}] &
			\ar[from=A,to=2-2,phantom,"\leq"{description,sloped}] \ar[from=2-2,to=B,phantom,"\leq"{description,sloped}]
		\end{tikzcd}\]\end{minipage}
\end{enumerate}

As in Remark~\ref{rmk:p-axioms-1+2}, the two conditions above can be merged into one by requiring in the ``universal quantification'' part that $m \to X_j \to \sS = m \to \sS$ instead of $m \to X_j \to \sS \leq m \to \sS$.

Intuitively, the inequality $n \to X_i \to \sS \leq n \to m \to \sS$ represents the statement that some universally quantified sentence is \emph{not} satisfied, and we must provide a counter-example at a later stage in the Kripke model.

\begin{remark}
	In the usual notion of Kripke model, the set of worlds is a poset, replaced here by a more general category $\cI$, as is commonly done in the context of, for instance, Kripke--Joyal semantics, see, e.g., \cite{MarquisReyes2011} for more on the rich history of the topic. From any Kripke model in this generalized sense, we can extract Kripke models indexed by trees, as we explain now. Suppose that $F \colon \cI \to \Mod_\omega(\sS)$ is an $\omega$-saturated Kripke model, with $\cI$ a small category. Let $i \in \cI$. We define $\cJ$ to be the poset whose elements are finite paths in $\cI$ starting at $i$, with the extension order. The minimal element of $\cJ$ is the constant path at $i$. Then the composite of the canonical projection $\cJ \to \cI$ and $F$ is also an $\omega$-saturated Kripke model. 
	If $\cI$ is not set-sized, one can still extract set-sized tree-shaped models from it with a similar construction. For this, we select recursively, for each node in the tree starting with the root $i$, a \emph{set} of children solving each of the diagram-completion problems explained above, instead of all possible children.
	In topos-theoretic terms, the above construction is an instance of the Diaconescu cover applied to a presheaf topos, see, e.g., \cite[Sec.~IX.9]{MM1992}.
\end{remark}
We now derive the following theorem which can be seen as a version of a theorem due to Joyal, see, e.g., \cite[p.~75]{MarquisReyes2011}.
\begin{theorem}[Joyal's completeness theorem]
	Let $\sS$ be a polyadic compact ordered space. Then the identity $\Mod_\omega(\sS) \to \Mod_\omega(\sS)$ is an $\omega$-saturated Kripke model.
\end{theorem}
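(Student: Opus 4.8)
The plan is to unwind the statement using the explicit description of $\omega$-saturated Kripke models recorded just before the theorem. Writing $\cI=\Mod_\omega(\sS)$ and letting $F\colon\cI\to\cInd(\cC)$ be the forgetful diagram $(X,x)\mapsto X$, the assertion is that the tautological natural transformation $\tilde F\to\sS$ is an intuitionistic morphism of polyadic posets, and by that description this is equivalent to the two diagram-completion conditions called ``Implication'' and ``Universal quantification''. Both have the same shape: one is given an $\omega$-saturated model $x_i\colon X_i\to\sS$ together with an extension request --- a span in $\cInd(\cC)$ one of whose legs lands in $X_i$ (degenerating to a single arrow $n\to X_i$ in the implication case), plus a point of $\sS$ prescribed on the other leg, compatible in the relevant direction with the $\sS$-type that $x_i$ already assigns to the shared part --- and one must exhibit a morphism $i\to j$ in $\Mod_\omega(\sS)$, i.e.\ a morphism of models in the sense of Definition~\ref{dfn:catego-models}, together with a solution of the request inside $X_j$.

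I would handle both conditions by a single recipe. Form in $\cInd(\cC)$ the pushout $Z$ of the request: for the universal-quantification condition $Z=X_i\sqcup_n m$, where $n\xrightarrow{g}m$ and $n\xrightarrow{f}X_i$; for the implication condition the request lives over a single arrow $n\xrightarrow{p}X_i$ and one simply takes $Z=X_i$. By the interpolation extension principle (Proposition~\ref{prop:main}), $\ovl{\sS}$ is a polyadic compact ordered space over $\cInd(\cC)$, so it satisfies the strong interpolation property of Remark~\ref{rmk:p-axioms-1+2}. Feeding into it the image under $\ovl{\sS}$ of the pushout square defining $Z$, with $x_i\in\ovl{\sS}(X_i)$ on one input and the prescribed point of $\sS$ on the other, and orienting the square so that the \emph{equality} leg of the conclusion lands on the new generator, produces a type $x_Z\colon Z\to\sS$ that restricts along $X_i\to Z$ to something $\geq x_i$ (so $X_i\to Z$ is a morphism of types) and restricts along the new leg to \emph{exactly} the prescribed point. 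Concretely, in the implication case this is just the observation that, $\ovl{\sS}(p)\colon\ovl{\sS}(X_i)\to\sS(n)$ being bounded (axiom \ref{p-axiom:interpol-2}), the up-set $\ovl{\sS}(p)[{\uparrow}x_i]$ contains every point above $x_i\circ p$, hence contains the requested $q$. Now apply Gödel's completeness theorem (Theorem~\ref{thm:Godel}) to realise $x_Z$ by an $\omega$-saturated model: one obtains an elementary embedding $Z\to X_j$ with $X_j$ $\omega$-saturated and $x_j\circ(Z\to X_j)=x_Z$. The composite $X_i\to Z\to X_j$ is then a morphism $i\to j$ in $\Mod_\omega(\sS)$, and the image in $X_j$ of the new leg is a tuple whose $\sS$-type equals the prescribed point, solving the request; its compatibility with the given leg is forced by the pushout square.

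The remaining work is routine bookkeeping --- checking that the data so produced matches the exact shapes of the two diagrams --- the only point needing care being the orientation of the square fed to Remark~\ref{rmk:p-axioms-1+2}, chosen so that the equality falls on the newly added generator, which is precisely what the merged form of the universal-quantification condition asks for and is more than enough for the implication condition. I do not expect a genuine obstacle: the theorem carries essentially no content beyond Theorem~\ref{thm:Godel} and Proposition~\ref{prop:main}, being a global repackaging of Gödel's completeness theorem to the effect that the category $\Mod_\omega(\sS)$ of all $\omega$-saturated models, equipped with its identity diagram, is itself a Kripke model witnessing everything $\sS$ witnesses. The one delicate point is set-theoretic: $\Mod_\omega(\sS)$ is a proper class, so ``$\omega$-saturated Kripke model indexed by $\Mod_\omega(\sS)$'' must be read with the ambient universe conventions (equivalently, one may replace $\Mod_\omega(\sS)$ by a large enough small full subcategory closed under the constructions above, as in the Diaconescu-cover discussion), and this does not affect the argument.
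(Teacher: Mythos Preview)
Your proposal is correct and follows essentially the same route as the paper: form the pushout $Z=X_i\sqcup_n m$ in $\cInd(\cC)$, use the interpolation extension principle to obtain a type on $Z$ with the required restrictions, and then invoke Gödel's completeness theorem to realise that type by an $\omega$-saturated model. The paper's proof is terser because it works directly with the merged form of the two conditions (the remark just before the theorem), whereas you treat ``Implication'' and ``Universal quantification'' separately before noting the merge; your aside on the set-theoretic size of $\Mod_\omega(\sS)$ is a reasonable caveat that the paper leaves implicit.
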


\begin{remark}
	There is a strong analogy with Esakia duality. In Esakia duality, one looks at the space of \emph{coherent} models of a Heyting algebra, i.e., the Priestley dual of the algebra as a distributive lattice. If the distributive lattice happens to be a Heyting algebra, one gets a Kripke model (the canonical one). Here, we do the same thing: the collection of all coherent models of an intuitionistic $\cC$-adic compact ordered space forms a Kripke model.
\end{remark}

\begin{proof}
	Suppose we have the following lax square where $x \colon X \to \sS$ is a model.
	\[\begin{tikzcd}
		& & \sS\\
		X \ar[rru,"x",bend left=20] & &\\
		n \ar[u] \ar[r] & m \ar[uur,bend right=20] \ar[lu,phantom,"\leq"{description,sloped}] &
	\end{tikzcd}\]
	Then thanks to the interpolation extension principle, we can complete it as below where $Y \to \sS$ is a $Y$-type.
	\[\begin{tikzcd}
		& & \sS\\
		X \ar[rru,"x",bend left=20,""'{name=UL,pos=0.3}] \ar[r] & Y \ar[ru] &\\
		n \ar[u] \ar[r] & m \ar[uur,bend right=20] \ar[u] &
		\ar[from=UL,to=2-2,phantom,"\leq"{description,sloped}]
	\end{tikzcd}\]
	Gödel's completeness theorem allows us to factor $Y \to \sS$ through a model and we are done.
\end{proof}
% !TeX spellcheck = en-US
% !TeX root = GM2022.tex

\section{Omitting types}
\label{sec:omitting-types}

In this section, we will see an omitting types theorem applicable to open polyadic compact ordered spaces. This is similar to \cite{EAGLE2021102907,HAYKAZYAN_2019,RasiowaSikorski1950}, where omitting types theorems are put in connection with the Baire property. However, in our context, a formulation as an \emph{application} of the Baire property doesn't seem natural, so we essentially mix its proof with that of Gödel's completeness theorem. We do not assume that the base category $\cC$ has pushouts in this section. See Remark~\ref{rmk:no-pushout} for the definition of a $\cC$-adic compact ordered space when $\cC$ doesn't have pushouts.

Let $\sS$ be a $\cC$-adic compact ordered space. We say that a model $X \to \sS$ \emph{omits} a type $t \in \sS(n)$ with $n \in \cC$ if there is no arrow $n \to X$ such that the composite $n \to X \to \sS$ is $t$. Omitting types theorems give conditions for the existence of models avoiding a given set of types.

In this section, we work with the stably compact topology of compact ordered spaces. The \emph{stably compact interior} of a subset $A \subseteq X$ of a compact ordered space is its interior in the stably compact topology, i.e., the largest open up-set contained in it. We will say that $A$ is \emph{meager} if it is meager in the stably compact topology, i.e., if it is contained in a countable union of closed down-sets containing no nonempty open up-set.

An essentially small category $\cC$ is \emph{essentially countable} if it is equivalent to a small category with countably many arrows. If $\sS$ is a $\cC$-adic space, we say that it has a countable basis of opens if every $\sS(n)$ for $n\in\cC$ admits a countable basis of opens (either in the stably compact topology or in the compact ordered topology, it is equivalent). When $\sS$ is a polyadic Priestley space, the dual of this condition is that the Priestley dual of each $\sS(n)$ is countable.

In the following statement, we will use the convention that when an arrow in a diagram is labeled by a \emph{set} of morphisms, the commutativity of the diagram means that there is \emph{some} arrow in the set making the diagram commute.

\begin{proposition}[Omitting types]
	Let $\cC$ be an essentially countable category. Let $\sS$ be an open $\cC$-adic compact ordered space with a countable basis of opens. For each $n \in \cC$, let $A_n \subseteq \sS(n)$ be a meager subset. Let $n_0 \in \cC$ and let $R \subseteq \sS(n_0)$ be a nonempty open up-set. Then there exist $X \in \cInd(\cC)$ of presentability rank at most $\omega_1$, a morphism $n_0 \to X$, and a model $X\to\sS$ which omits $t$ for every $t \in A_n$ and $n \in \cC$, such that the diagram below commutes.
	\[\begin{tikzcd}
		X \ar[r] & \sS\\
		n_0 \ar[u] \ar[ur,"R"'] &
	\end{tikzcd}\]
\end{proposition}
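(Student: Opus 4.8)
The plan is to run a single countable construction that blends the Henkin-style ``method of diagrams'' from the proof of Gödel's completeness theorem (Theorem~\ref{thm:Godel}) with a Baire-category-style refinement that rules out the forbidden types. Concretely, I would build a chain $n_0 = X_0 \xrightarrow{\phi_0} X_1 \xrightarrow{\phi_1} X_2 \to \cdots$ in $\cC$ together with nonempty open up-sets $V_k \subseteq \sS(X_k)$ such that the closure of $V_{k+1}$ (in the compact ordered topology) is contained in $\sS(\phi_k)^{-1}(V_k)$; then set $X \coloneqq \colim_k X_k \in \cInd(\cC)$ --- a countable filtered colimit of objects of $\cC$, hence of presentability rank at most $\omega_1$ --- and take the structure map $\tau \in \ovl{\sS}(X) = \lim_k \sS(X_k)$ to be a point with $\sS(X_k \to X)(\tau) \in \overline{V_k}$ for all $k$. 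Such a $\tau$ exists by Lemma~\ref{lem:cofiltered-limit-non-empty} applied to the cofiltered system $(\overline{V_k})_k$, whose transition maps are well defined because $\sS(\phi_k)[\overline{V_{k+1}}] \subseteq V_k$; and then, using the same inclusions, $\sS(X_k \to X)(\tau) \in V_k$ for every $k$.

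Two observations drive the refinement. First, since $\sS$ is \emph{open}, every structure map $\sS(h) \colon \sS(m) \to \sS(n)$ (for $h$ a morphism of $\cInd(\cC)$) is lower semi-open, so the preimage along $\sS(h)$ of a closed down-set with empty stably compact interior again has empty stably compact interior: if a nonempty open up-set $W$ were contained in $\sS(h)^{-1}(D)$, then $\sS(h)[W]$ would be a nonempty open up-set contained in $D$. Writing each $A_n \subseteq \bigcup_i D_{n,i}$ with the $D_{n,i}$ closed down-sets of empty stably compact interior, it follows that each $\sS(h)^{-1}(D_{n,i})$ has empty stably compact interior. Second, compact ordered spaces are order-normal, so within any open up-set $O$ and around any point of $O$ one can find an open up-set $V$ whose closure is contained in $O$; this is what makes the closure condition on the $V_k$ achievable while still shrinking $V_{k+1}$ into any prescribed open up-set. (For Priestley spaces both of these are available through Lemmas~\ref{lem:adh-spec}--\ref{lem:Priest-sep} and Proposition~\ref{prop:left-adjoint-dually}.)

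Since $\cC$ is essentially countable and each $\sS(n)$ has a countable basis of open up-sets, there are countably many tasks of each of three kinds, and I would dovetail through all of them, modifying at each stage only the \emph{last} open set $V_j$ in the chain built so far. \emph{(i) Realizing $R$}: set $V_0 \coloneqq R$. \emph{(ii) Omitting}: given a morphism $h \colon n \to X_j$ (the composite with $X_j \to X$ of an arbitrary $h' \colon n \to X$, which factors through some $X_j$) and an index $i$, replace $V_j$ by the nonempty open up-set $V_j \setminus \sS(h)^{-1}(D_{n,i})$; running over all such tasks forces $\sS(h')(\tau) \notin \bigcup_i D_{n,i} \supseteq A_n$ for every $h' \colon n \to X$, i.e.\ $X$ omits every $t \in A_n$. \emph{(iii) Enrichment} (so that $\tau$ becomes a model, via Proposition~\ref{prop:weak-interp-struct}): given $f \colon n \to m$ in $\cC$, a basic open up-set $U \subseteq \sS(m)$, and $y \colon n \to X_j$, note that $\sS(f)[U]$ is an open up-set by openness of $\sS$; if $V_j \cap \sS(y)^{-1}(\sS(f)[U]) = \emptyset$ the task is vacuous (the hypothesis of weak interpolation will fail for $\tau$), and otherwise I shrink $V_j$ into $\sS(y)^{-1}(\sS(f)[U])$ and apply the interpolation axiom of $\sS$ to the span $(f, y)$ to obtain a new object $X_{j+1}$, a map $x \colon m \to X_{j+1}$ with $x \circ f = \phi_j \circ y$, and a point $s \in \sS(X_{j+1})$ with $\sS(\phi_j)(s) \in V_j$ and $\sS(x)(s) \in U$; I then choose $V_{j+1}$ with $s \in V_{j+1} \subseteq \sS(x)^{-1}(U)$ and $\overline{V_{j+1}} \subseteq \sS(\phi_j)^{-1}(V_j)$. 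At the end $\tau \colon X \to \sS$ is a model whose restriction to $n_0 = X_0$ lies in $R$ and which omits all the required types, and $X$ has presentability rank at most $\omega_1$, as wanted.

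The step I expect to be the main obstacle --- beyond carefully arranging the bookkeeping so that omitting-shrinks never disturb the closure inclusions of already-frozen $V_k$'s --- is the enrichment step in the absence of pushouts in $\cC$. There the span $(f,y)$ has to be handled through the formal-pushout form of axiom \ref{p-axiom:interpol-1-dfn} (Remark~\ref{rmk:no-pushout}), so the interpolant is produced only in the colimit $\colim_Z \sS(Z)$ over cocones $Z$ under the span; one must observe that, colimits in $\cKOrd$ being quotients of coproducts, this interpolant is represented by an element of $\sS(Z)$ for a single cocone $Z$, which one then takes as $X_{j+1}$. This is also the point where, just as in Gödel's proof but now without the interpolation extension principle, one must check that each such extension keeps the construction running and that the resulting $\tau$ genuinely has the weak interpolation property of Proposition~\ref{prop:weak-interp-struct} (here $x$, viewed in $X^m$, satisfies $X(f)(x) = y$ and $\tau_m(x) \in U$, which is exactly the required answer).
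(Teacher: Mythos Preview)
Your proposal is correct and follows essentially the same approach as the paper: build a countable chain $n_0 \to n_1 \to \cdots$ in $\cC$ while maintaining a nested system of ``good'' regions in the $\sS(n_i)$, interleaving Baire-style shrinking steps (to avoid the meager sets) with Henkin-style extension steps (to witness the weak interpolation property), and take $X = \colim_i n_i$ with the structure map a point in the limit of the good regions. The only cosmetic differences are that the paper tracks \emph{closed} up-sets $R_i$ with nonempty stably compact interior (rather than your open $V_k$ with closure inclusions), and that the paper performs your ``preimages of nowhere-dense sets stay nowhere-dense'' observation once up front---replacing each $A_n$ by its closure under all $\sS(f)^{-1}$---which slightly streamlines the omitting bookkeeping; your per-task treatment is equivalent.
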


\begin{proof}
	Without loss of generality, suppose that for each $f \colon n \to m$ in $\cC$, we have $\sS(f)^{-1}(A_n) \subseteq A_m$. Otherwise, we can take the closure of the $A_n$ under these conditions: since $\sS(f)$ is lower semi-open, $\sS(f)^{-1}(A_n)$ is meager and the countable union of meager subsets stays meager. This step is the reason for which we need to use the stably compact topology. We will not need that $\sS$ is open anymore in this proof.
	
	To organize the induction, we choose a bijection $e = (e_0,e_1) \colon \N \to \N^2$ such that $e_0(n) \leq n$ for all $n \in \N$. We also fix, for each $n \in \cC$, a basis of opens of the stably compact topology of $\sS(n)$.
	
	We will build inductively a sequence $n_0 \to n_1 \to \cdots$ of objects of $\cC$, and a sequence $(R_i \subseteq \sS(n_i))_{i\in\N}$ of closed up-sets with nonempty stably compact interior such that $\sS(n_i \to n_{i+1})(R_{i+1}) \subseteq R_i$. During the induction, we choose for each $i \in \N$:
	\begin{enumerate}
		\item a sequence $(Q_i^k)_{k\in\N}$ of closed meager subsets of $\sS(n_i)$ such that $A_{n_i} \subseteq \bigcup_k Q_i^k$ and such that $\sS(n_{i-1}\to n_i)^{-1}(Q_{i-1}^k) \subseteq Q_i^k$ for all $k$;
		\item an enumeration $(p_i^k,q_i^k,U_i^k)_{k\in\N}$ of all the configurations of the following form, where $U \subseteq \sS(q)$ is an open up-set of the fixed basis of $\sS(q)$.
		\[\begin{tikzcd}
			n_i & \sS\\
			p \ar[u] \ar[r] & q \ar[u,"U"']
		\end{tikzcd}\]
	\end{enumerate}
	The hypothesis of the proposition gives $n_0$, and we can take a closed up-set $R_0 \subseteq R$ with nonempty stably compact interior. We explain how to build $(n_{i+1},R_{i+1})$ from $(n_i,R_i)$. First, we choose a closed up-set $R_i' \subseteq R_i$ with nonempty stably compact interior and disjoint from each $Q_i^k$ for $k \leq i$. Let $(a,b) = e(i)$ and consider the following (possibly non-commutative) diagram.
	
	\[\begin{tikzcd}
		n_a \ar[r] & n_i \ar[r,"R_i'"] & \sS\\
		p_q^b \ar[rr] \ar[u] & & q_a^b \ar[u,"U_a^b"']
	\end{tikzcd}\]
	
	If this diagram doesn't commute, we take $n_{i+1} = n_i$ and $R_{i+1} = R_i'$. Otherwise, let $x \in R_i'$ and $u \in U_a^b$ witnessing the commutativity. Thanks to the interpolation property of $\sS$, we can complete the diagram as follows.
	
	\[\begin{tikzcd}
		&&& \sS\\
		n_a \ar[r] & n_i \ar[r,"\alpha_i"'] \ar[urr,bend left=20,"x"] & n_{i+1} \ar[ur,"x'"] &\\
		p_a^b \ar[u] \ar[rr] & & q_a^b \ar[u,"\beta_i"] \ar[uur,bend right=20,"u"',""{name=UL}]&
		\ar[from=UL,to=2-3,phantom,"\geq"{sloped,description}]
	\end{tikzcd}\]
	
	Then $\sS(\alpha_i)^{-1}(R_i') \cap \sS(\beta_i)^{-1}(U_a^b)$ is a nonempty open up-set since it contains $x'$. We choose $R_{i+1}$ to be any closed up-set contained in $\sS(\alpha_i)^{-1}(R_i') \cap \sS(\beta_i)^{-1}(U_a^b)$ and with nonempty stably compact interior.
	
	Once the induction is finished, we take $X = \colim_i n_i$, and our model is any point $x \in \lim_i R_i \subseteq \sS(X)$.
	
	To check that $x \in \sS(X)$ is a model, consider the following commutative square with $U \subseteq \sS(q)$ an open up-set in the chosen basis.
	
	\[\begin{tikzcd}
		X \ar[r,"x"] & \sS\\
		p \ar[u] \ar[r] & q \ar[u,"U"']
	\end{tikzcd}\]
	
	We can factor $p \to X$ through some $n_i \to X$ and find $j \geq i$ such that $e_0(j) = i$ and $e_1(j)$ is the index of the situation $(p,q,U)$ associated to $n_i$. The arrow $n_{j+1} \to X \to P$ is in $R_{j+1}$ and the situation is solved at the step $j+1$ as illustrated below, by construction of $n_{j+1}$.
	
	\[\begin{tikzcd}
		n_i \ar[r] &[-1em] n_j \ar[r] &[-1em] n_{j+1} \ar[r] &[-1em] X \ar[r,"x"] & \sS\\
		&&&p \ar[ulll,bend left=20] \ar[r] & q \ar[u,"U"'] \ar[ull,dashed,"\beta_j" pos=0.65]
	\end{tikzcd}\]
	
	To finish, we check that $x \in \sS(X)$ avoids all the $A_n$. For all $i \in \N$, we know that $n_i \to X \to \sS$ is in $R_i$, so it is not in any of the $Q_i^k$ for $k \leq i$. For $k > i$, we have $\sS(n_i \to n_k)^{-1}(Q_i^k) \subseteq Q_k^k$, and since $n_k \to X \to P$ is not in $Q_k^k$, we also know that $n_i \to n_k \to X \to P$ is not in $Q_i^k$. Hence $n_i \to X \to \sS$ is not in $A_{n_i}$. Let $n \in \cC$ and let $n \to X$ be any arrow. It factors through one of the $n_i \to X$. Since $\sS(n\to n_i)^{-1}(A_n) \subseteq A_{n_i}$ and since $n_i \to X \to P$ is not in $A_{n_i}$, we also have $n \to n_i \to X \to \sS$ not in $A_n$.
\end{proof}
% !TeX spellcheck = en-US
% !TeX root = GM2022.tex

\section{Colimits in categories of models}
\label{sec:access}

In this section, we explain how to compute filtered colimits in $\Mod(\sS)$ and $\Mod_\omega^e(\sS)$. This will be used in Section~\ref{sec:FO-interpol}. We will treat two cases: a strict construction for $\omega$-saturated models, which doesn't need the openness hypothesis, and a lax construction for general models (non-$\omega$-saturated ones), but which needs openness. These categories are even accessible, as can be shown by writing them as categories of models of some $L_{\infty,\infty}$ theory in the sense of \cite[Sect.~3.2]{MakkaiPare}, but we will not need that.

\begin{lemma}\label{lem:Tarski-Vaught}
	Let $\sS$ be a $\cC$-adic compact ordered space. Then filtered colimits in $\Mod^e_\omega(\sS)$ exist and are preserved by the forgetful functor $\Mod^e_\omega(\sS) \to \cInd(\cC)$.
\end{lemma}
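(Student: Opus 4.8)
The plan is to show that a filtered colimit of $\omega$-saturated models, computed in $\cInd(\cC)$, carries a canonical $\omega$-saturated $\sS$-structure making it the colimit in $\Mod^e_\omega(\sS)$. So let $D \colon \cJ \to \Mod^e_\omega(\sS)$ be a filtered diagram, write $X_j$ for the underlying ind-object of $D(j)$ together with its structure map $\alpha_j \colon X_j \to \sS$, and let $X \coloneqq \colim_j X_j$ in $\cInd(\cC)$, with canonical maps $\iota_j \colon X_j \to X$. First I would use the fact that, since each $D(j) \to D(k)$ is an elementary embedding, the composites $X_j \to X_k \to \sS$ equal $\alpha_j$, so the family $(\alpha_j)_j$ is a cocone over $\colim_j X_j$ in $[\cC^\op,\cKOrd]$ after applying $\ovl{\sS}$; concretely, $\ovl{\sS}(\iota_j)$ sends a putative structure on $X$ to $\alpha_j$. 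By the universal property of $\ovl{\sS}(X) = \lim_j \ovl{\sS}(X_j)$ (recall $\ovl{\sS}$ sends filtered colimits to cofiltered limits, and points of $\ovl{\sS}(X)$ are natural transformations $X \to \sS$), there is a unique $\alpha \colon X \to \sS$ with $\ovl{\sS}(\iota_j)(\alpha) = \alpha_j$ for all $j$. This makes each $\iota_j \colon D(j) \to (X,\alpha)$ an elementary embedding by construction.

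The main step is to verify that $(X,\alpha)$ is $\omega$-saturated, i.e., that for every $f \colon n \to m$ in $\cC$ the naturality square \eqref{eq:nat-wip} for $\alpha$ has the interpolation property. I would argue this via a diagram-completion/request argument as in the proof of Theorem~\ref{thm:Godel}: given $p \colon n \to X$ and $t \in \sS(m)$ with $\sS(f)(t) \le \alpha(p)$ (reading off the relevant arrows from the square), use that $n$ is $\omega$-presentable in $\cInd(\cC)$ to factor $p$ through some $\iota_j \colon X_j \to X$, say $p = \iota_j \circ p'$ with $p' \colon n \to X_j$. Since $\iota_j$ is elementary, $\alpha(p) = \alpha_j(p')$, so $\sS(f)(t) \le \alpha_j(p')$; now apply $\omega$-saturation of $D(j)$ to the request $(n,m,p',f,t)$ to get an answer $h \colon m \to X_j$, and $\iota_j \circ h \colon m \to X$ is the required interpolant for the original square. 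This shows $(X,\alpha) \in \Mod^e_\omega(\sS)$.

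Finally I would check the universal property. Given a cocone $(D(j) \to (Y,\beta))_j$ of elementary embeddings, the underlying morphisms $X_j \to Y$ induce a unique $u \colon X \to Y$ in $\cInd(\cC)$ by the colimit property there; it remains to see $u$ is an elementary embedding, i.e., $\beta \circ u = \alpha$. But $\ovl{\sS}(\iota_j)(\beta \circ u) = \beta \circ (u \iota_j)$ is the structure map of $D(j)$, which is also $\ovl{\sS}(\iota_j)(\alpha)$; by the uniqueness clause in the universal property of $\ovl{\sS}(X) = \lim_j \ovl{\sS}(X_j)$ we conclude $\beta \circ u = \alpha$. Uniqueness of $u$ in $\Mod^e_\omega(\sS)$ is immediate from uniqueness in $\cInd(\cC)$ since the forgetful functor is faithful. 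This simultaneously proves existence of the colimit and that the forgetful functor preserves it.

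The step I expect to be the main obstacle is the $\omega$-saturation verification: one has to be careful that $n$ (and then $m$) being an object of $\cC$, hence $\omega$-presentable in $\cInd(\cC)$, genuinely lets one push the entire request down to a single stage $X_j$ of the filtered diagram, and that elementarity of the $\iota_j$ is exactly what is needed to transport both the hypothesis $\sS(f)(t)\le\alpha(p)$ down and the answer $h$ back up without any inequality degrading. Everything else is a routine application of the universal property of cofiltered limits in $\cKOrd$ (which, as noted after Definition~\ref{dfn:model-topo-1}, is computed as in $\cSet$) together with the characterization of points of $\ovl{\sS}(X)$ as natural transformations $X \to \sS$.
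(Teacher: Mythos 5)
Your proposal is correct and follows essentially the same route as the paper: the core step in both is to factor a request $p \colon n \to X$ through some stage $X_j$ using $\omega$-presentability of $n$, transport the hypothesis down via elementarity, answer it by $\omega$-saturation of $X_j$, and push the answer back up along $\iota_j$. The paper's proof is terser, leaving implicit the construction of the induced structure map via $\ovl{\sS}(X)=\lim_j \ovl{\sS}(X_j)$ and the verification of the universal property, both of which you spell out correctly.
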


\begin{proof}
	Let $\sS$ be a $\cC$-adic compact ordered space and let $(X_i \to \sS)_i$ be a filtered cocone with apex $\sS$ such that every $X_i \to \sS$ is an $\omega$-saturated model for each $i$. We want to show that $X = \colim_i X_i \to \sS$ is still an $\omega$-saturated model. Suppose we have a lax commutative square such as below with $n,m \in \cC$.
	\[\begin{tikzcd}
		X \ar[r] & \sS\\
		n \ar[u] \ar[r] & m \ar[u] \ar[lu,phantom,"\geq"{sloped,description}]
	\end{tikzcd}\]
	We want to show that there is some morphism $m \to X$ making the two triangles lax commutative. But since $n \in \cC$ and since $X = \colim_i X_i$ is a filtered colimit, we can factor $n \to X$ through some $X_i \to X$. Then because $X_i \to \sS$ is a model, there is some $m \to X_i$ making the two triangles lax commutative and the composite $m \to X_i \to X$ gives the desired morphism.
\end{proof}

Models too are stable by filtered colimits of elementary embeddings, but they also admit a stronger stability property if $\sS$ is open. This generalizes the well-known fact (see, e.g., \cite[Thm.~5.23]{AdaRos94}) that directed colimits in a category of models of a first-order theory are computed as in $\cSet$.

\begin{lemma}\label{lem:lax-colim-models}
	Let $\sS$ be an open $\cC$-adic compact ordered space. Then filtered colimits in $\Mod(\sS)$ exist and are preserved by the forgetful functor $\Mod(\sS) \to \cInd(\cC)$.
\end{lemma}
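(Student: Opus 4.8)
The plan is to build the filtered colimit by hand. Let $D \colon \cI \to \Mod(\sS)$ be a filtered diagram, with underlying ind-objects $X_i$ and, via Proposition~\ref{prop:weak-interp-struct}, $\sS$-structures $\tau_i \colon X_i \to \sS$ satisfying the weak interpolation property; the laxness of a transition morphism $D(\phi)$, $\phi \colon i \to j$, amounts to $\tau_i \leq \tau_j \circ D(\phi)$ pointwise. The underlying ind-object of the colimit will be $X \coloneqq \colim_i X_i$, computed in $\cInd(\cC)$, so that $X(n) = \colim_i X_i(n)$ for each $n \in \cC$. All the content is in equipping $X$ with a suitable $\sS$-structure.

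The order-topological fact I would isolate first is that in a compact ordered space every up-directed subset $A$ has a supremum, to which the net $(a)_{a \in A}$ converges in the compact Hausdorff topology (a cluster point of that net lies in $\ovl A$ and is an upper bound of $A$, hence is the least one, and is the unique cluster point). It follows that every morphism of compact ordered spaces preserves up-directed suprema. Using this, I would define $\tau_n \colon X(n) \to \sS(n)$ on $[p,i]$ (with $p \in X_i(n)$) as the supremum of the family $\bigl(\tau_{j,n}(D(\phi)_n(p))\bigr)$ indexed by the objects $\phi \colon i \to j$ of the coslice $i/\cI$; the laxness inequalities show this family is up-directed (as $i/\cI$ is filtered), so the supremum exists. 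Then I would check: the value is independent of the representative $(p,i)$; $\tau = (\tau_n)_n$ is natural, because each $\sS(f)$ commutes with the defining suprema; and each coprojection $\iota_i \colon X_i \to X$ is a morphism of models, i.e.\ $\tau_i \leq \tau \circ \iota_i$, by taking $\phi = \id_i$.

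The crux is that $(X,\tau)$ is again a model, and this is the unique place where openness of $\sS$ enters. Given $f \colon n \to m$, an open up-set $U \subseteq \sS(m)$, and $y = [q,i] \in X(n)$ with $\tau_n(y) \in {\uparrow}\sS(f)[U]$: since $\sS$ is open, $\sS(f)$ is lower semi-open, so ${\uparrow}\sS(f)[U] = \sS(f)[U]$ is an \emph{open} up-set; as the defining net for $\tau_n(y)$ converges to it, some $\tau_{j,n}(D(\phi)_n(q))$ already lies in $\sS(f)[U]$, and the weak interpolation property of the model $X_j$ provides an extension $x_j \in X_j(m)$ of $D(\phi)_n(q)$ with $\tau_{j,m}(x_j) \in U$; its image $x \in X(m)$ then satisfies $X(f)(x) = y$ (the inequality in weak interpolation is an equality since $X(n)$ is discretely ordered) and $\tau_m(x) \geq \tau_{j,m}(x_j)$, hence $\tau_m(x) \in U$ as $U$ is an up-set. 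Finally, for the universal property: a cocone $(g_i \colon X_i \to Y)$ in $\Mod(\sS)$ to a model $(Y,\sigma)$ induces a unique $g \colon X \to Y$ in $\cInd(\cC)$, and $g$ is automatically a morphism of models because $\sigma_n(g_n([p,i])) = \sigma_n(g_{j,n}(D(\phi)_n(p)))$ dominates $\tau_{j,n}(D(\phi)_n(p))$ for every $\phi \colon i \to j$, hence dominates the supremum $\tau_n([p,i])$; uniqueness is inherited from $\cInd(\cC)$. I expect the main obstacle to be organizing the directedness and representative-independence bookkeeping in the construction of $\tau$ cleanly; conceptually, once suprema of up-directed subsets in compact ordered spaces are understood, the rest is routine.
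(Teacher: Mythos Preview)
Your proposal is correct and follows essentially the same route as the paper's proof: both define the $\sS$-structure on $X=\colim_i X_i$ by taking, for each $n$-point of $X$, the up-directed supremum (what the paper calls the ``increasing limit'') of its images under the $\tau_j$, and both use openness of $\sS$ in exactly one place, namely to ensure that $\sS(f)[U]$ is an open up-set so that some term of the converging net already lands in it. Your explicit isolation of the fact that up-directed subsets of a compact ordered space have suprema to which the corresponding net converges is a welcome clarification of what the paper leaves implicit.
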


\begin{proof}
	Let $\sS$ be an open $\cC$-adic compact ordered space. We will use Proposition~\ref{prop:weak-interp-struct} in order to manipulate models of $\sS$. A filtered diagram in $\Mod(\sS)$ is given by an oplax cocone $(h_i \colon X_i \to \sS)_i$ where each $h_i$ is a $\sS$-structure. Let $X = \colim_i X_i$, let $u_{i,j} \colon X_i \to X_j$ be the connecting morphisms and let $u_i \colon X_i \to X$ be the canonical injections. We define the natural transformation $X \to \sS$ as the one sending $pu_i \colon n \to X_i \to X$ to the increasing limit $\lim_{j\geq i} p u_{i,j} h_j$. Given any other model $Y \to \sS$ equipped with a cocone $(X_i \to Y)_i$ in $\Mod(\sS)$, there is a unique morphism $X \to Y$ factoring the morphisms $X_i \to Y$.
	
	To conclude the proof, we need to show that $X \to \sS$ is a model of $\sS$. Suppose we have a commutative diagram such as below on the left with $n,m \in \cC$, and where $U$ is an open up-set of $\sS(m)$. Our goal is to complete it as in the commutative diagram on the right, where we reuse the convention of Section~\ref{sec:omitting-types} concerning arrows indexed by sets of morphisms.
	
	\begin{minipage}{0.48\linewidth}
		\[\begin{tikzcd}
			X \ar[r,"h"] & \sS\\
			n \ar[r,"f"'] \ar[u,"p"] & m \ar[u,"U"']
		\end{tikzcd}\]
	\end{minipage}\begin{minipage}{0.48\linewidth}
		\[\begin{tikzcd}
			X \ar[r,"h"] & \sS\\
			n \ar[r,"f"'] \ar[u,"p"] & m \ar[u,"U"'] \ar[lu,"w"]
		\end{tikzcd}\]
	\end{minipage}
	
	Since $n \in \cC$, we can write $p \colon n \to X$ as $p_i u_i$ for some $p_i \colon n \to X_i$. By definition of $h$, we have $ph = \lim_{j\geq i} p_i u_{i,j} h_j$. Since $fU$ is open, we can suppose that $p_i h_i \in fU$, replacing $i$ by some $j \geq i$ if needed. Since $h_i \colon X_i \to \sS$ is a model, there is some $w_i \colon m \to X_i$ such that $fw_i = p_i$ and $w_i h_i \in U$. Let $w = w_i u_i$. Then $f w = f w_i u_i = p_i u_i = p$ and $w h \in U$ since $w h \geq w_i h_i$.
\end{proof}

\begin{remark}
	If $\cC$ has fjw finite colimits, then $\cInd(\cC)$ has ultraproducts because it is the category of models of some coherent first-order theory. Given a family $(X_i)_{i\in I} \subseteq \cInd(\cC)$, and some $\lambda \in \beta I$, we denote by $\int_i X_i \mathrm{d}\lambda$ the corresponding ultraproduct. If $n \in \cC$, we have $\Hom(n,\int_i X_i \mathrm{d}\lambda) = \int_i \Hom(n,X_i) \mathrm{d}\lambda$, where the latter ultraproduct is the usual one in the category of sets. If $\sS$ is an open polyadic compact ordered space on $\cC$, then a family of models $(m_i \colon X_i \to \sS)_{i\in I}$ can be turned into a model $m \colon \int_i X_i \mathrm{d}\lambda \to \sS$, using a similar argument as the previous proof. To describe the transformation $\int_i X_i \mathrm{d}\lambda \to \sS$ is described as follows, pick $x \in \big(\int_i X_i \mathrm{d}\lambda\big)(n)$. Then $x$ is the equivalence class of some family $(x_i \in X_i(n))_{i\in I}$, and we define $m(x) = \lim_{i \to \lambda} m_i(x_i)$.
\end{remark}
% !TeX spellcheck = en-US
% !TeX root = GM2022.tex

\section{First-order interpolation}
\label{sec:FO-interpol}

Recall Robinson's consistency theorem from classical (Boolean) model theory.

\begin{theorem}[Robinson]
	Let $\mathcal{L}_1$ and $\mathcal{L}_2$ be two first-order signatures with a possibly non-empty intersection. Let $T_1$ and $T_2$ be two Boolean first-order theories on respectively $\mathcal{L}_1$ and $\mathcal{L}_2$. If $T_1 \cap T_2$ is consistent, then $T_1 \cup T_2$ is also consistent.
\end{theorem}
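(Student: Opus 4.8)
### Proof plan for Robinson's consistency theorem

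The plan is to translate the statement entirely onto the topological side via the duality machinery of Section~\ref{sec:duality}, and then realize ``$T_1 \cap T_2$ consistent'' as a non-emptiness statement which is preserved by a suitable interpolation argument. First I would set up the algebraic framework: let $\cC$ be the category of finite sets (or, more precisely, work with the three signatures $\mathcal{L}_0 \coloneqq \mathcal{L}_1 \cap \mathcal{L}_2$, $\mathcal{L}_1$, $\mathcal{L}_2$ over a common base $\cFinSet$). The theory $T_1 \cap T_2$ should be interpreted as a Boolean $\cFinSet^\op$-hyperdoctrine $\hD_0$ over the common signature, and $T_1$, $T_2$ as Boolean hyperdoctrines $\hD_1, \hD_2$ over the larger signatures, equipped with morphisms $\hD_0 \to \hD_1$ and $\hD_0 \to \hD_2$ (the inclusions of subtheories; these are morphisms of Boolean hyperdoctrines since they preserve all connectives and quantifiers). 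Dually, by Corollary~\ref{cor:boolean-duality}, we obtain open polyadic Boolean spaces $\sS_0, \sS_1, \sS_2$ over $\cFinSet$ with morphisms $\sS_1 \to \sS_0$ and $\sS_2 \to \sS_0$. ``$T_1 \cup T_2$ consistent'' means the pushout $\hD_1 +_{\hD_0} \hD_2$ in Boolean hyperdoctrines is nontrivial, which dually means the ``pullback'' $\sS_1 \times_{\sS_0} \sS_2$ (computed pointwise in $\cBoolSp$) has a nonempty type space somewhere, equivalently has a model.

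Next I would pass to models. By Gödel's completeness theorem (Theorem~\ref{thm:Godel}), $T_1$ and $T_2$ each have models since they are consistent (being extensions of the consistent $T_1 \cap T_2$), and more importantly, ``$T_1 \cup T_2$ consistent'' is equivalent to the existence of an ind-object $X \in \cInd(\cFinSet) = \cSet$ with compatible $\sS_1$- and $\sS_2$-structures agreeing on the $\sS_0$-reduct. The strategy is a back-and-forth / elementary-chain construction: start from a model $M_1$ of $T_1$; its $\mathcal{L}_0$-reduct is a model of $T_0 \coloneqq T_1 \cap T_2$; realize its $\mathcal{L}_0$-type space inside an $\omega$-saturated model $M_2$ of $T_2$ (using Theorem~\ref{thm:Godel} applied to $\sS_2$ over the ind-object underlying $M_1$'s reduct), so that there is an $\mathcal{L}_0$-elementary embedding $M_1|_{\mathcal{L}_0} \to M_2|_{\mathcal{L}_0}$; then go back, embedding $M_2|_{\mathcal{L}_0}$ $\mathcal{L}_0$-elementarily into an $\omega$-saturated model $M_3$ of $T_1$; and iterate. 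The key input that makes each step possible is precisely the interpolation property: the naturality squares for the inclusion $\sS_i \to \sS_0$ have interpolation (this is what it means to be a morphism of polyadic spaces, Definition~\ref{dfn:polyadic-priestley}), and the interpolation extension principle (Proposition~\ref{prop:main}) lets us use this over arbitrary ind-objects, not just over finite sets.

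The union of this chain $M_1 \to M_3 \to M_5 \to \cdots$ is, by Lemma~\ref{lem:Tarski-Vaught}, an $\omega$-saturated model $N_1$ of $T_1$; similarly $M_2 \to M_4 \to \cdots$ has colimit $N_2 \models T_2$; and the two chains are cofinal in each other, so the underlying $\mathcal{L}_0$-reducts have a common colimit, i.e., $N_1|_{\mathcal{L}_0} \cong N_2|_{\mathcal{L}_0}$ as $\mathcal{L}_0$-structures. Transporting the $\mathcal{L}_1$-structure of $N_1$ and the $\mathcal{L}_2$-structure of $N_2$ onto this common underlying ind-object $X$ gives a structure interpreting all of $\mathcal{L}_1 \cup \mathcal{L}_2$ and satisfying both $T_1$ and $T_2$; hence $T_1 \cup T_2$ is consistent. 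The main obstacle I anticipate is the bookkeeping in the back-and-forth: one must ensure that at stage $k$ every finite $\mathcal{L}_0$-configuration (every request, in the language of the proof of Theorem~\ref{thm:Godel}) that needs to be matched on the other side eventually gets matched, so that the colimit reducts genuinely agree; this is handled by the same well-ordering/enumeration device as in Lemma~\ref{lem:build-sol}, but threading two interleaved chains through it — and checking that ``$\mathcal{L}_0$-elementary'' (equality, not just inequality, in the relevant lax triangle of Definition~\ref{dfn:catego-models}) is preserved at limit stages — is where the care is needed. A cleaner alternative I would also consider is to observe directly that $\sS_1 \times_{\sS_0} \sS_2$ is again an open polyadic Boolean space: it is a presheaf valued in $\cBoolSp$ (limits of Boolean spaces are Boolean), it sends pushouts to squares with the amalgamation property because $\sS_1, \sS_2$ do and the fibers can be amalgamated using the interpolation property of the morphisms $\sS_i \to \sS_0$, and its sections are nonempty because the morphism $\sS_1 \to \sS_0$ is (pointwise) surjective on points — surjectivity following from $\hD_0 \to \hD_1$ being injective, which holds because $T_1$ is a conservative-enough extension, or rather because any complete $\mathcal{L}_0$-type consistent with $T_0$ extends to a complete $\mathcal{L}_1$-type consistent with $T_1$. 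Then Corollary~\ref{cor:boolean-duality} and Theorem~\ref{thm:Godel} finish it immediately. I expect the surjectivity/conservativity point to be the real crux in this streamlined version.
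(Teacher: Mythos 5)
Your main argument — the alternating elementary chain, with Theorem~\ref{thm:Godel} producing a model at each step, the interpolation extension principle (Proposition~\ref{prop:main}) lifting the $\mathcal{L}_0$-elementary diagram across the morphisms $\sS_i \to \sS_0$, and Lemma~\ref{lem:Tarski-Vaught} handling the colimit — is exactly the route the paper takes: the theorem is only \emph{recalled} in Section~\ref{sec:FO-interpol}, and its actual proof is the Boolean specialization of Proposition~\ref{prop:intuitionistic-interpolation}, whose proof is precisely this two-construction iteration (the paper's remark after that proposition notes that in the Boolean case openness is unneeded and Lemma~\ref{lem:Tarski-Vaught} suffices). One reassurance: the bookkeeping you worry about does not arise, because each single step of the chain already yields a \emph{full} model by Theorem~\ref{thm:Godel}, so no enumeration of requests needs to be threaded across stages; cofinality of models in the chain is all that Lemma~\ref{lem:Tarski-Vaught} requires. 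Two caveats. First, the very first step (``realize $M_1|_{\mathcal{L}_0}$ in an $\omega$-saturated model of $T_2$'') needs the complete $\mathcal{L}_0$-theory of $M_1$ to be consistent with $T_2$; this requires $T_1 \cap T_2$ to be a \emph{complete} $\mathcal{L}_0$-theory (or the relativized formulation as amalgamation over a fixed point of $\sS_{\mathcal{L}_1\cap\mathcal{L}_2}(0)$, which is the square the paper actually writes down) — the statement as printed omits this standard hypothesis, and your proof silently uses it at this lifting step. Second, your ``cleaner alternative'' is circular: the claim that $\sS_1 \times_{\sS_0} \sS_2$ sends pushouts to amalgamation squares is essentially the relativized Robinson theorem itself — given amalgams in $\sS_1$ and in $\sS_2$ separately, there is no a priori reason they project to a common point of $\sS_0$ — so it cannot be ``observed directly'' but must be proved by the chain argument you already gave.
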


In terms of hyperdoctrines, this can be reformulated as follows. Given a signature $\mathcal{L}$, let $\sS_{\mathcal{L}} \colon \cFinSet^\op \to \cBoolSp$ be the polyadic space associated to the empty theory on $\mathcal{L}$. A theory $T$ on signature $\mathcal{L}$ can be seen as a closed subset of $\sS_{\mathcal{L}}(0)$. If $\mathcal{L}' \subseteq \mathcal{L}$, then $T$ can be restricted to $\mathcal{L}'$ by taking its direct image under the canonical map $\sS_{\mathcal{L}}(0) \to \sS_{\mathcal{L}'}(0)$. Keeping that in mind, Robinson's consistency theorem says that the square below has the interpolation property (or, equivalently, the amalgamation property in this Boolean setting).

\[\begin{tikzcd}
	\sS_{\mathcal{L}_1 \cup \mathcal{L}_2}(0) \ar[r] \ar[d] & \sS_{\mathcal{L}_1}(0) \ar[d]\\
	\sS_{\mathcal{L}_2}(0) \ar[r] & \sS_{\mathcal{L}_1 \cap \mathcal{L}_2}(0)
\end{tikzcd}\]

The first proposition of this section is an adaptation of Robinson's consistency theorem, and its usual proof, to an ordered and non-zero dimensional setting.

\begin{proposition}\label{prop:intuitionistic-interpolation}
	Suppose we have a square like below on the left with $\sS,\sP,\sQ$ three $\cC$-adic compact ordered spaces and $X \in \cInd(\cC)$. Suppose that $\sS$ is open and that $f \colon \sS \to \sP$ is an intuitionistic morphism. Then there is some way of completing the square like below on the right with $X' \to \sS$ and $X' \to \sQ$ models.
	
	\begin{minipage}{0.48\linewidth}
		\[\begin{tikzcd}
			\sP & \sS \ar[l,"f"'] \ar[ld,"\geq"{sloped,description}, phantom]\\
			\sQ \ar[u,"g"] & X \ar[l,"u"] \ar[u,"v"']
		\end{tikzcd}\]
	\end{minipage}\begin{minipage}{0.48\linewidth}
		\[\begin{tikzcd}
			\sP & \sS \ar[l,"f"'] &\\
			\sQ \ar[u,"g"] & X' \ar[l,dashed] \ar[u,dashed] &\\
			& & X \ar[lu,dashed] \ar[llu,bend left=25,"u"] \ar[luu,bend right=25,"v"',""{name=V,below}]
			\ar[from=V,to=\tikzcdmatrixname-2-2,phantom,"\geq"{sloped,description}]
		\end{tikzcd}\]
	\end{minipage}
\end{proposition}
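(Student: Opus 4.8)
The argument is a small-object / elementary-chain construction, in the style of the proof of Gödel's completeness theorem (Theorem~\ref{thm:Godel}), run for $\sS$ and $\sQ$ simultaneously with $\sP$ keeping their reducts synchronized; the engine is Lemma~\ref{lem:build-sol} together with the interpolation extension principle (Proposition~\ref{prop:main}). First I would pass to the inductive completions: by Proposition~\ref{prop:main}, $\ovl\sS$, $\ovl\sP$, $\ovl\sQ$ are polyadic compact ordered spaces over $\cInd(\cC)$ and $\ovl f\colon\ovl\sS\to\ovl\sP$ is again an intuitionistic morphism. Openness of $\sS$ is not inherited by $\ovl\sS$ (Remark~\ref{rmk:open-not-pres}), but all that will be used is that each map $\sS(\sigma)=\ovl\sS(\sigma)$ with $\sigma$ in $\cC$ is lower semi-open --- equivalently, via Propositions~\ref{prop:weak-interp} and \ref{prop:weak-interp-struct}, that a ``model'' only has to satisfy the \emph{weak} interpolation property on its $\cC$-indexed naturality squares. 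As in the proof of Theorem~\ref{thm:Godel} I would then fix the notions of \emph{request} (a finite datum over objects of $\cC$ witnessing that a type into $\sS$, resp.\ into $\sQ$, fails to be a model) and of \emph{answer}, and assemble them into a functor $P$ with a subfunctor $S$, now on a category of triples $(Z,\,v\colon Z\to\sS,\,u\colon Z\to\sQ)$ with $vf$ and $ug$ in the inequality supplied by the hypothesis.

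The heart of the proof is a one-step amalgamation lemma. Given such a triple $(Z,v,u)$ and, say, a request for $u$ to be a $\sQ$-model: one first answers it inside $\ovl\sQ$ by the interpolation extension principle, exactly as in diagram~\eqref{eq:Xr-IEP}, obtaining $Z\to Z'$ and an extension $u'\colon Z'\to\sQ$; one then has to re-extend $v$ to a type $v'\colon Z'\to\sS$ for which $v'f$ and $u'g$ are again compatible. This is where the hypotheses enter: applying interpolation to the naturality square of the \emph{intuitionistic} morphism $\ovl f$ for the arrow $Z\to Z'$ --- in the direction opposite to the defining one, which is available precisely because $f$ is intuitionistic --- yields a type $v'$ extending $v$ whose $\sP$-reduct lies in the required relation to $u'g$, while the weak-interpolation / lower-semi-openness data packaged in $\ovl\sS$ is what lets one simultaneously keep $Z\to Z'\to\sS\geq v$. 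The symmetric move --- answering a request on the $\sS$-side and transporting $u$ along --- uses the same toolkit: realize the request in $\ovl\sS$ (as a formal pushout $Z'=Z\sqcup_n m$) and extend $u$ by interpolation in the pushout square of $\ovl\sQ$. Feeding $P$ and $S$ into Lemma~\ref{lem:build-sol} and interleaving $\sS$- and $\sQ$-requests then gives a chain $X=X_0\to X_1\to\cdots$ whose colimit $X'$, with the induced maps $v'\colon X'\to\sS$ and $u'\colon X'\to\sQ$, is an ind-object equipped with two compatible ($\omega$-saturated) models; $X\to X'$ together with $X\to X'\to\sS\geq v$ is the desired factorization, the maps into $\sQ$ appearing in the same construction.

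The main obstacle is the bookkeeping in this one-step lemma. In contrast with the Boolean Robinson argument, where reducts of elementary embeddings stay elementary, here one must track, at every step, the \emph{direction} of each interpolation instance and of each of the two compatibility inequalities, and decide which of the two structures is being pushed ``upward'' and which is merely carried along; $f$ being intuitionistic (two-sided interpolation for $\ovl f$) and $\sS$ being open (weak interpolation sufficing for $\sS$-structures) are exactly what make both directions simultaneously maintainable, and getting all of this to fit together --- not any individual computation --- is the delicate point.
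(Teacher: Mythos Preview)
Your overall strategy — a back-and-forth construction synchronizing an $\sS$-structure and a $\sQ$-structure via their common $\sP$-shadow — is the right idea, but the paper organizes it quite differently, and your sketch of the one-step lemma has a concrete error.

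\medskip

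\textbf{How the paper actually argues.} The paper does \emph{not} run Lemma~\ref{lem:build-sol} on a category of triples. Instead it alternates two macro constructions, each invoking the full Gödel completeness theorem (Theorem~\ref{thm:Godel}) once:
\begin{itemize}
\item \emph{First construction.} Apply Gödel to the $\sS$-type to get $X\to X'$ with $X'\to\sS$ an ($\omega$-saturated) model. Then apply the morphism-interpolation property of $g\colon\sQ\to\sP$ (extended to $\cInd(\cC)$ by Proposition~\ref{prop:main}) at the arrow $X\to X'$ to produce $X'\to\sQ$.
\item \emph{Second construction.} Apply Gödel to the $\sQ$-type to get $X\to X'$ with $X'\to\sQ$ a model. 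Then apply the \emph{intuitionistic} (reverse-direction) interpolation of $f\colon\sS\to\sP$ at $X\to X'$ to produce $X'\to\sS$.
\end{itemize}
Iterating $\omega$ times and taking the colimit $X_\omega$, the $(X_i\to\sQ)_i$ form a genuine cocone while $(X_i\to\sS)_i$ is only oplax; Lemma~\ref{lem:lax-colim-models} (which needs openness of $\sS$) is then what makes $X_\omega\to\sS$ a model. So openness is used only at the colimit stage, not in the one-step moves.

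\medskip

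\textbf{The gap in your one-step lemma.} For the ``$\sS$-side'' move you write: \emph{realize the request in $\ovl\sS$ as a pushout $Z'=Z\sqcup_n m$ and extend $u$ by interpolation in the pushout square of $\ovl\sQ$}. This does not work: the pushout-interpolation axiom \ref{p-axiom:interpol-1-dfn} of $\ovl\sQ$ needs data at \emph{both} legs, i.e.\ an element of $\ovl\sQ(m)$ as well as $u\in\ovl\sQ(Z)$, and you have nothing in $\ovl\sQ(m)$. What is actually available is $f(v')\in\ovl\sP(Z')$ together with $u\in\ovl\sQ(Z)$, and the tool that takes this to a point of $\ovl\sQ(Z')$ is the \emph{morphism} interpolation of $g\colon\sQ\to\sP$ at the arrow $Z\to Z'$ — exactly what the paper uses in its first construction. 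The two moves are therefore \emph{not} symmetric in the way you suggest: the $\sQ$-side move genuinely needs the extra (intuitionistic) direction of $f$, while the $\sS$-side move uses only the ordinary morphism property of $g$.

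A second, related omission: to run Lemma~\ref{lem:build-sol} you must say what the morphisms in your category of triples are, and check that (i) the one-step extensions are morphisms, (ii) answered requests remain answered along morphisms, and (iii) the colimit carries model structures on both sides. The paper's explicit alternation keeps the $\sQ$-side strict and the $\sS$-side lax, which is precisely what lets the colimit argument go through with openness assumed only on $\sS$; your sketch does not address this asymmetry.
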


\begin{proof}
	We will apply the following two constructions a countable number of times. These two constructions take as input a lax commutative diagram as below, where $\sS,\sP,\sQ$ are $\cC$-adic spaces with $\sS$ open, $\sS \to \sP$ intuitionistic, and where $X \in \cInd(\cC)$.
	
	\[\begin{tikzcd}
		\sP & \sS \ar[l,"f"'] \ar[ld,"\geq"{sloped,description}, phantom]\\
		\sQ \ar[u,"g"] & X \ar[l,"u"] \ar[u,"v"']
	\end{tikzcd}\]
	
	\paragraph{First construction} Thanks to Gödel's completeness theorem, we can write $X \to \sS$ as a composite $X \to X' \to \sS$ where $X' \to \sS$ is a model of $\sS$. We obtain the lax commutative square below on the left. Thanks to the interpolation extension principle, we can find an arrow $X' \to \sQ$ as in the diagram on the right.
	
	\begin{minipage}{0.48\linewidth}
		\[\begin{tikzcd}
			\sP & X' \ar[l] \ar[ld,phantom,"\geq"{description,sloped}]\\
			\sQ \ar[u] & X \ar[l] \ar[u]
		\end{tikzcd}\]
	\end{minipage}\begin{minipage}{0.48\linewidth}
		\[\begin{tikzcd}
			\sP & X' \ar[l,""{name=LU}] \ar[ld]\\
			\sQ \ar[u] & X \ar[l] \ar[u]
			\ar[from=LU,to=2-1,phantom,near start,"\geq"{sloped,description}]
		\end{tikzcd}\]
	\end{minipage}
	
	In terms of our initial diagram, this means that we can complete it as below with $X' \to \sS$ a model.
	
	\[\begin{tikzcd}
		\sP & \sS \ar[l] \ar[ld,phantom,"\geq"{sloped,description}] &[-1em]\\
		\sQ \ar[u] & X' \ar[l] \ar[u] &\\[-1em]
		& & X \ar[lu] \ar[llu,bend left=20] \ar[luu,bend right=20]
	\end{tikzcd}\]
	
	\paragraph{Second construction} In the second construction, we use Gödel's completeness theorem to write $X \to \sQ$ as a composite $X \to X' \to \sQ$ with $X' \to \sQ$ a model. We obtain the square below on the left. Thanks to the fact that $\sS \to \sP$ is an intuitionistic morphism and thanks to the interpolation extension principle, we obtain an arrow $X' \to \sS$ as in the diagram on the right.
	
	\begin{minipage}{0.48\linewidth}
		\[\begin{tikzcd}
			\sP & \sS \ar[l] \ar[ld,phantom,"\geq"{sloped,description}]\\
			X' \ar[u] & X \ar[u] \ar[l]
		\end{tikzcd}\]
	\end{minipage}\begin{minipage}{0.48\linewidth}
		\[\begin{tikzcd}
			\sP & \sS \ar[l]\\
			X' \ar[u] \ar[ru] & X \ar[u,""{name=UL,pos=0.4}] \ar[l]
			\ar[from=UL,to=2-1,phantom,"\geq"{sloped,description,pos=0.3}]
		\end{tikzcd}\]
	\end{minipage}
	
	In terms of our initial diagram, this means that we can complete it as below with $X' \to \sQ$ a model.
	
	\[\begin{tikzcd}
		\sP & \sS \ar[l] &[-1em]\\
		\sQ \ar[u] & X' \ar[l] \ar[u] &\\[-1em]
		& & X \ar[lu] \ar[llu,bend left=20] \ar[luu,bend right=20,""{left,name=LU}]
		\ar[from=LU,to=\tikzcdmatrixname-2-2,phantom,"\geq"{sloped,description}]
	\end{tikzcd}\]
	
	\paragraph{Iterating the constructions} We now iterate our constructions, alternating the two. We get a sequence $X = X_0 \to X_1 \to X_2 \to \cdots$ where $X_{2n} \to X_{2n+1}$ is obtained with the first construction and $X_{2n+1} \to X_{2n+2}$ is obtained with the second construction. Our final model is $X_{\omega} = \colim_i X_i$. The arrow $X_{\omega} \to \sQ$ is built using the cocone $(X_i \to \sQ)_i$. The arrow $X_{\omega} \to \sS$ is built using the oplax cocone $(X_i \to \sS)_i$. As a consequence of Lemma~\ref{lem:lax-colim-models}, these two arrows are both models, because each $X_{2n} \to \sQ$ is a model for $X_{\omega} \to \sQ$, and because each $X_{2n+1} \to \sS$ is a model for $X_{\omega} \to \sS$. We also have $X \to \sQ = X \to X_{\omega} \to \sQ$ and $X \to \sS \leq X \to X_{\omega} \to \sS$.
	
	The last thing we have to show is that $X_{\omega} \to \sQ \to \sP = X_{\omega} \to \sS \to \sP$. This is indeed the case since $X_{\omega} \to \sS \to \sP$ is obtained from the oplax cocone $(X_i \to \sS \to \sP)_i$ which is equal to $(X_i \to \sQ \to \sP)_i$ on even indices, which itself produces $X_{\omega} \to \sQ \to \sP$.
\end{proof}

\begin{remark}
	In the Boolean case, we don't need openness in the proof above: the orders being discrete, the oplax cocones are actually just cocones and we can use Lemma~\ref{lem:Tarski-Vaught} instead of Lemma~\ref{lem:lax-colim-models}.
\end{remark}

As a consequence of this, we obtain interpolation for first-order intuitionistic logic.

\begin{proposition}
	Let $\sS \to \sP \from \sQ$ be a cospan of intuitionistic morphisms between $\cC$-adic compact ordered spaces such as below, where $\sS$ and $\sQ$ are open. Let $\cat{B}$ be the category of pairs $(X \to \sS, X \to \sQ)$ of models of $\sS$ and $\sQ$ making the square commute, where morphisms from $(X \to \sS, X \to \sQ)$ to $(Y \to \sS, Y \to \sQ)$ are morphisms $X \to Y$ such that $X \to \sS \leq X \to Y \to \sS$ and $X \to \sQ \leq X \to Y \to \sQ$. Then the canonical projections $\cat{B} \to \Mod(\sS)$ and $\cat{B} \to \Mod(\sQ)$ are Kripke models of $\sS$ and $\sQ$.
\end{proposition}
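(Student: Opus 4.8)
The plan is to verify directly that the two projections satisfy the diagram-completion conditions that characterise Kripke models among functors into a category of models: the ``implication'' and ``universal quantification'' conditions displayed just before Joyal's completeness theorem, now read with index category $\cat{B}$ and the $X_i$ replaced by objects of $\cat{B}$. Since the hypotheses are symmetric in $\sS$ and $\sQ$ (both are open, and both $f\colon\sS\to\sP$ and $g\colon\sQ\to\sP$ are intuitionistic), it suffices to treat $\cat{B}\to\Mod(\sS)$. Each condition amounts to a ``request'' at an object $(x\colon X\to\sS,\ u\colon X\to\sQ)$ of $\cat{B}$ --- so $x$ and $u$ are models and $xf=ug$ as natural transformations $X\to\sP$ --- asking for a morphism of $\cat{B}$ to some object $(x'\colon X'\to\sS,\ u'\colon X'\to\sQ)$, together with extra arrows out of objects of $\cC$, completing a prescribed lax diagram.

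To answer such a request one runs a countable back-and-forth in $\cInd(\cC)$, which is essentially the small object argument of Theorem~\ref{thm:Godel} and of Joyal's completeness theorem, run in parallel with the two alternating constructions in the proof of Proposition~\ref{prop:intuitionistic-interpolation}. Starting from $X_0:=X$ with $(x_0,u_0):=(x,u)$, one first uses the interpolation extension principle (Proposition~\ref{prop:main}, which gives that $\ovl{\sS}$ satisfies \ref{p-axiom:interpol-1} and \ref{p-axiom:interpol-2}) to pass to a map $X_0\to X_0^{\flat}$ carrying an $\sS$-type that solves the request and dominates $x_0$, exactly as in the interpolation-extension step of those two theorems. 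Then one alternates: use Gödel's completeness theorem to factor the current $\sS$-type through a model on a larger ind-object, and the interpolation extension principle to transport the $\sQ$-type along so that the square over $\sP$ again commutes (the ``first construction'' of Proposition~\ref{prop:intuitionistic-interpolation}); then perform the symmetric move on the $\sQ$-side, where the hypothesis that $f$ is intuitionistic is what lets the $\sS$-type be transported back (the ``second construction''). Retaining all connecting maps, the request stays solved at every stage. Let $X_\omega:=\colim_i X_i$; by Lemma~\ref{lem:lax-colim-models} --- here one uses that $\sS$ and $\sQ$ are open --- the resulting $\sS$- and $\sQ$-structures on $X_\omega$ are again models, and since the square over $\sP$ commutes at each finite stage it still commutes at $X_\omega$ by the explicit colimit formula of that lemma together with the continuity of $f$ and $g$. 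Hence $(X_\omega\to\sS,\ X_\omega\to\sQ)$ is an object of $\cat{B}$, the canonical map $X\to X_\omega$ is a morphism of $\cat{B}$ (it dominates both components), and the request is solved there; interchanging $\sS$ and $\sQ$ then handles $\cat{B}\to\Mod(\sQ)$.

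The main obstacle is the bookkeeping in the alternation rather than any new idea. Once a request has been solved by enlarging the ind-object on the $\sS$-side, the $\sQ$-structure is only defined on the old ind-object, so it must be re-extended compatibly using interpolation --- and symmetrically for the $\sS$-structure --- which is precisely where the hypothesis that $f$ (and, by symmetry, $g$) is an intuitionistic morphism is used. The alternation has to be organised so that each of the two structures is a model on a cofinal subdiagram of the $X_i$, since that is what makes the colimit step legitimate; granting this careful threading, everything reduces to the earlier results.
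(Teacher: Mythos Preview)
Your plan is essentially the paper's own argument, and the ingredients you cite (the interpolation extension principle, Gödel's completeness theorem, Lemma~\ref{lem:lax-colim-models}, and the two ``constructions'' from Proposition~\ref{prop:intuitionistic-interpolation}) are exactly the right ones. The difference is organisational: the paper works modularly, whereas you inline the back-and-forth. Concretely, the paper first solves the request at the level of types on some $X'$ as in Joyal's completeness theorem, then uses the intuitionistic hypothesis on $g\colon\sQ\to\sP$ (not $f$) to transport the $\sQ$-type to $X'$, and only then invokes Proposition~\ref{prop:intuitionistic-interpolation} as a black box, with the roles of $\sS$ and $\sQ$ \emph{swapped}, to upgrade both types on $X'$ to models on some $X''$.

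There is one point you glide over which the paper singles out explicitly, and it is precisely where your sketch becomes vague. The orientation of Proposition~\ref{prop:intuitionistic-interpolation} is not symmetric: one side comes out with equality and the other only with $\leq$. For the universal-quantification request you must end up with $m\to X''\to\sS = m\to\sS$, so the alternation has to be run so that the $\sS$-side is the one preserved exactly; the paper states this as ``we must use Proposition~\ref{prop:intuitionistic-interpolation} in such a way as to have $X'\to\sQ \leq X'\to X''\to\sQ$ and $X'\to\sS = X'\to X''\to\sS$, not the other way around''. In your description you apply the first construction (G\"odel on $\sS$) and then the second (G\"odel on $\sQ$, transporting the $\sS$-type via $f$ intuitionistic), which gives only $X_i\to\sS \leq X_i\to X_{i+1}\to\sS$ at the second step, and then your colimit yields $m\to X_\omega\to\sS \geq m\to\sS$, the wrong inequality. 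So the claim ``the request stays solved at every stage'' is not justified by the order you wrote down; you need to swap the roles of $\sS$ and $\sQ$ in the alternation (equivalently, cite Proposition~\ref{prop:intuitionistic-interpolation} with $\sQ$ in the role of the open space whose structure map is intuitionistic), which is available since both $\sQ$ is open and $g$ is intuitionistic. Once that orientation is fixed, your argument is the paper's argument, just unpacked.
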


We thus obtain a commutative square of intuitionistic morphisms
\[\begin{tikzcd}
	\sP & \sS \ar[l]\\
	\sQ \ar[u] & \tilde{F} \ar[l] \ar[u]
\end{tikzcd}\]
which has pointwise the interpolation property, where $F \colon \cat{B} \to \cInd(\cC)$ is the projection sending $(X \to \sS, X \to \sQ)$ to $X$. Since the dual of interpolation is interpolation, the dual square of intuitionistic hyperdoctrines also has pointwise interpolation: this implies Craig interpolation for first-order intuitionistic logic.

\begin{proof}
	Suppose we have a diagram as below where $X \to \sS$ and $X \to \sQ$ are models and where $n,m \in \cC$.
	
	\[\begin{tikzcd}
		\sP & \sS \ar[l] & m \ar[l] \ar[ld,phantom,"\leq"{sloped,description}]\\
		\sQ \ar[u] & X \ar[l] \ar[u] & n \ar[l] \ar[u]
	\end{tikzcd}\]
	
	Like in Joyal's completeness theorem, we can complete the diagram as below with $X' \in \cInd(\cC)$.
	
	\[\begin{tikzcd}
		& \sS \ar[ld,bend right=20] & \\
		\sP & X' \ar[u] & m \ar[l] \ar[lu]\\
		\sQ \ar[u] & X \ar[l] \ar[u] \ar[uu,bend left=55,""{right,name=LU}] & n \ar[l] \ar[u]
		\ar[from=LU,to=\tikzcdmatrixname-2-2,phantom,"\leq"{sloped,description}]
	\end{tikzcd}\]
	
	Let us consider the part of the diagram below on the left. Since $\sQ \to \sP$ is an intuitionistic morphism, we can complete our diagram like on the right.
	
	\begin{minipage}{0.48\linewidth}
		\[\begin{tikzcd}
			\sQ \ar[r] \ar[rd,phantom,"\leq"{sloped,description}] & \sP\\
			X \ar[u] \ar[r] & X' \ar[u]
		\end{tikzcd}\]
	\end{minipage}\begin{minipage}{0.48\linewidth}
		\[\begin{tikzcd}
			\sQ \ar[r] & \sP\\
			X \ar[u,""{right,name=LU}] \ar[r] & X' \ar[u] \ar[lu]
			\ar[from=\tikzcdmatrixname-2-2,to=LU,phantom,near end,"\leq"{sloped,description}]
		\end{tikzcd}\]
	\end{minipage}
	
	The last step is to extend the span $(X' \to \sS,X' \to \sQ)$ using Proposition~\ref{prop:intuitionistic-interpolation} as below, where $X'' \to \sQ$ and $X'' \to \sS$ are models.
	
	\[\begin{tikzcd}
		\sP & \sS \ar[l] & \\
		\sQ \ar[u] & X'' \ar[l] \ar[u] & \\
		& & X' \ar[lu] \ar[uul,bend right=20] \ar[ull,bend left=20,""{right,name=LU}]
		\ar[from=LU,to=\tikzcdmatrixname-2-2,phantom,"\leq"{sloped,description}]
	\end{tikzcd}\]
	
	Gluing everything together, we get an extension as desired. Note that we must use Proposition~\ref{prop:intuitionistic-interpolation} in such a way as to have $X' \to \sQ \leq X' \to X'' \to \sQ$ and $X' \to \sS = X' \to X'' \to \sS$, not the other way around, because we need to get in the end
	\[ m \to \sS = m \to X' \to \sS = m \to X' \to X'' \to \sS = m \to X'' \to \sS \text{.} \]
\end{proof}

As in \cite{PittsOpenMap}, we also get Beth definability as a consequence.

\begin{proposition}\label{prop:Beth-def}
	Let $\sS,\sP$ be two polyadic compact ordered spaces with $\sS$ open and let $f \colon \sS \to \sP$ be an intuitionistic morphism. If $f$ is injective on models, then it is a pointwise embedding.
\end{proposition}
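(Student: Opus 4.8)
The plan is to deduce Proposition~\ref{prop:Beth-def} from the first-order intuitionistic Craig interpolation theorem proved just above, applied to the cospan consisting of two copies of $f$. First recall that a continuous order-preserving injection between compact ordered spaces is automatically a homeomorphism onto a closed subspace, so that such a map is an embedding as soon as it is injective and order-reflecting; moreover, since the order on $\sS(n)$ is antisymmetric, order-reflection already forces injectivity. Hence it suffices to prove that each component $f_n \colon \sS(n) \to \sP(n)$ is order-reflecting: whenever $x, y \in \sS(n)$ satisfy $f_n(x) \leq f_n(y)$, we want $x \leq y$.

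To set this up, I would apply the Craig interpolation theorem (the unnamed proposition following Proposition~\ref{prop:intuitionistic-interpolation}) to the cospan $\sS \xrightarrow{f} \sP \xleftarrow{f} \sS$, which is legitimate: $f$ is an intuitionistic morphism and the source space $\sS$ is open. Let $\cat{B}$ be the category whose objects are the pairs $(m_1 \colon X \to \sS,\ m_2 \colon X \to \sS)$ of models of $\sS$ with $f \circ m_1 = f \circ m_2$, and let $F \colon \cat{B} \to \cInd(\cC)$ be the projection $(m_1,m_2) \mapsto X$. The theorem provides a commutative square of intuitionistic morphisms
\[\begin{tikzcd}
	\sP & \sS \ar[l,"f"']\\
	\sS \ar[u,"f"] & \tilde{F} \ar[l,"\pi_1"'] \ar[u,"\pi_2"']
\end{tikzcd}\]
with the pointwise interpolation property, where $\pi_1, \pi_2 \colon \tilde{F} \to \sS$ are the two maps induced respectively by the two projections $\cat{B} \to \Mod(\sS)$.

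This is exactly where the hypothesis enters: since $f$ is injective on models, the functor $\Mod(\sS) \to \Mod(\sP)$, $m \mapsto f \circ m$, is injective on objects, so in every object $(m_1,m_2)$ of $\cat{B}$ we have $m_1 = m_2$; hence the two projections $\cat{B} \to \Mod(\sS)$ coincide, and therefore $\pi_1 = \pi_2$. Writing $\pi$ for this common map and unwinding the pointwise interpolation property of the square above at a fixed $n \in \cC$ — reading off the direction via Proposition~\ref{prop:dual-interpolation}, the tip of the relevant square of posets being $\tilde{F}(n)$ — one obtains: for all $b, c \in \sS(n)$ with $f_n(b) \leq f_n(c)$ in $\sP(n)$, there exists $a \in \tilde{F}(n)$ with $b \leq \pi(a)$ and $\pi(a) \leq c$ in $\sS(n)$. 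Taking $b = x$ and $c = y$ yields $x \leq \pi(a) \leq y$, hence $x \leq y$. So each $f_n$ is an order-embedding, and thus an embedding of compact ordered spaces, which is the claim.

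The only genuinely delicate points are bookkeeping ones, and I expect no further obstacle: pinning down the lax/variance conventions so that the interpolation property of the Craig square reads ``$f_n(b) \leq f_n(c)$ implies $b \leq c$ (through $\tilde{F}(n)$)'' rather than its transpose, and verifying that ``$f$ injective on models'' is precisely the condition collapsing $\pi_1$ and $\pi_2$ (equivalently, that the commutation condition defining $\cat{B}$ with $\sQ = \sS$, $g = f$ is $f \circ m_1 = f \circ m_2$). This argument is the order-topological shadow of the classical derivation of Beth's theorem from Craig interpolation, with the order handled transparently by the interpolant $a$ living in the polyadic poset $\tilde{F}(n)$.
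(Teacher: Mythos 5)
Your argument is correct, but it takes a genuinely different route from the paper's. The paper proves Proposition~\ref{prop:Beth-def} by a single direct application of Proposition~\ref{prop:intuitionistic-interpolation}: given $x,y \colon n \to \sS$ with $f(x) \leq f(y)$, it instantiates that proposition with $\sQ = \sS$, $g = f$, $X = n$, $u = y$, $v = x$, obtaining one ind-object $X'$ carrying two model structures $X' \to \sS$ whose composites with $f$ agree and which satisfy $y = (n \to X' \to \sQ)$ and $x \leq (n \to X' \to \sS)$; injectivity on models forces the two structures to coincide, whence $x \leq y$ immediately. You instead invoke the global Craig interpolation square through $\tilde{F}$ built from the category $\cat{B}$ of compatible pairs of models, specialize it to the cospan $\sS \to \sP \leftarrow \sS$, observe that injectivity on models collapses the two projections $\tilde{F} \to \sS$ to a single map $\pi$, and read off order-reflection from the pointwise interpolation property via the interpolant $x \leq \pi(a) \leq y$. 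This is the classical ``Beth from Craig'' derivation, and it does go through here: the hypotheses of the Craig proposition are met since both legs of your cospan are the open space $\sS$ and the intuitionistic morphism $f$, and the symmetry $\sQ = \sS$ disposes of your worry about the orientation of the interpolation square. The trade-off is that your route passes through strictly heavier machinery (the Craig square is itself established via Joyal's completeness theorem, Lemma~\ref{lem:lax-colim-models}, and repeated applications of Proposition~\ref{prop:intuitionistic-interpolation}), whereas the paper's proof is the same idea stripped to its core: one application of the Robinson-style proposition to the span formed by the two $n$-types. Your preliminary reduction of ``pointwise embedding'' to order-reflection of each $f_n$ agrees with what the paper's proof implicitly establishes.
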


\begin{proof}
	Suppose that $x \colon n \to \sS$ and $y \colon n \to \sS$ are two $n$-types of $\sS$ with $f(x) \leq f(y)$. Then we can complete the diagram as follows by Proposition~\ref{prop:intuitionistic-interpolation}, with the two morphisms $X \to \sS$ being models of $\sS$.
	\[\begin{tikzcd}
		\sP & \sS \ar[l,"f"'] &\\
		\sS \ar[u,"f"] & X \ar[l,dashed] \ar[u,dashed] &\\
		& & n \ar[lu,dashed] \ar[llu,bend left=25,"y"] \ar[luu,bend right=25,"x"',""{name=V,below,pos=0.7}]
		\ar[from=V,to=\tikzcdmatrixname-2-2,phantom,"\geq"{sloped,description}]
	\end{tikzcd}\]
	Because of the hypothesis, the two morphisms $X \to \sS$ are actually equal, which shows that $x \leq y$.
\end{proof}

The dual interpretation is the Beth definability theorem. Suppose $\hD_1 \to \hD_2$ is a morphism of intuitionistic hyperdoctrines. Suppose that each model of $\hD_1$ can be extended to a model of $\hD_2$ in at most one way. Then the dual of Proposition~\ref{prop:Beth-def} says that $\hD_1(n) \to \hD_2(n)$ is surjective for each $n \in \cC$, which means that $\hD_2$ is obtained from $\hD_1$ by adding axioms only, which in turn trivially implies that the models of $\hD_2$ are a subset of the models of $\hD_1$. In terms of first-order signatures, each $n$-ary symbol in the signature of $\hD_2$ can be defined by a $n$-ary predicate in the signature of $\hD_1$.

Conceptual completeness \cite[Thm.~7.1.8]{MakkaiReyes} says that if $F \colon \cT \to \cE$ is a logical functor between pretoposes that induces an equivalence $F^* \colon \Mod(\cE) \to \Mod(\cT)$ at the level of the categories of models, then $F$ itself is an equivalence. Below is a version of conceptual completeness in our context. However, it is equivalent to only half of conceptual completeness in \cite{MakkaiReyes}, namely \cite[Thm.~7.1.4]{MakkaiReyes}, because the base category $\cC$ of the polyadic spaces we consider is fixed.

\begin{proposition}\label{prop:concept-compl}
	Let $\sS,\sP$ be two polyadic compact ordered spaces and let $f \colon \sS \to \sP$ be any morphism. Let $f^* \colon \Mod(\sS) \to \Mod(\sP)$ be the induced functor.
	\begin{enumerate}
		\item $f^*$ is always faithful.
		\item If $f^*$ is essentially surjective, then $f$ is pointwise surjective.
		\item If $f^*$ is an embedding (fully faithful), then $f$ is a pointwise embedding.
		\item If $f^*$ is an equivalence, then $f$ is an isomorphism.
	\end{enumerate}
\end{proposition}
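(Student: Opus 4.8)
The plan is to deduce all four statements from Gödel's completeness theorem (Theorem~\ref{thm:Godel}) --- which lets us replace types by suitably saturated models --- together with the observation that $f^*$ leaves underlying ind-objects and underlying morphisms untouched, only post-composing structure maps with $f$. This already gives (i): $f^*$ commutes with the faithful forgetful functors to $\cInd(\cC)$, hence is faithful. For (ii), fix $t\in\sP(n)$ with $n\in\cC$, and use Theorem~\ref{thm:Godel} for $\sP$ to realize it by an $\omega$-saturated $\sP$-model $\eta\colon N\to\sP$ with an elementary embedding $b\colon n\to N$ of $\sP$-type $t$; if $f^*$ is essentially surjective, pick an $\sS$-model $\xi\colon M\to\sS$ and an isomorphism $\varphi\colon M\to N$ in $\cInd(\cC)$ witnessing $f^*\xi\cong\eta$ (which is automatically elementary, so $f\circ\xi=\eta\circ\varphi$). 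Then the tuple $\varphi^{-1}\circ b\colon n\to M$ has some $\sS$-type $s\in\sS(n)$ with $f_n(s)=(f\circ\xi)\circ\varphi^{-1}\circ b=\eta\circ b=t$, so $f_n$ is surjective.

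The heart of the matter is (iii), and it rests on one auxiliary fact: \emph{$f^*$ preserves $\kappa$-saturation for every regular cardinal $\kappa$}. This is where the hypothesis that $f$ is a morphism of polyadic spaces --- equivalently, that all its naturality squares have the interpolation property (Definition~\ref{dfn:polyadic-priestley}) --- enters. By the interpolation extension principle (Proposition~\ref{prop:main}), the extension $\overline f\colon\overline\sS\to\overline\sP$ is again a morphism of polyadic spaces, so its naturality squares along maps of $\kappa$-presentable ind-objects have interpolation. Now consider a request posed to the $\sP$-model $f^*\xi'$ along some $g\colon n\to m$: extend a tuple $p$ of $\sS$-type $\sigma_0$ (hence $\sP$-type $f_n(\sigma_0)$) to a tuple realizing a $\sP$-type $t$ on $m$, the request being posed meaning $\sP(g)(t)\le f_n(\sigma_0)$. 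Interpolation of the naturality square of $f$ at $g$ yields $\rho\in\sS(m)$ with $\sS(g)(\rho)\le\sigma_0$ and $f_m(\rho)\ge t$; $\kappa$-saturation of $\xi'$ over $\sS$ then realizes $\rho$ by an extension $h$ of $p$, and $f_m(\xi'\circ h)\ge f_m(\rho)\ge t$ shows $h$ answers the original request over $\sP$.

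For (iii) itself, suppose $f_n(s)\le f_n(s')$ with $s,s'\in\sS(n)$, $n\in\cC$. By Theorem~\ref{thm:Godel} and Remark~\ref{rmk:equiv-main-enough-kappa-sat}, realize $s$ by an $\omega$-saturated $\sS$-model $\xi\colon M\to\sS$ with elementary embedding $a\colon n\to M$, and realize $s'$ by a $\kappa$-saturated $\sS$-model $\xi'\colon M'\to\sS$ with elementary embedding $a'\colon n\to M'$, where $\kappa$ is large enough that $M$ is $\kappa$-presentable. By the auxiliary fact, $f^*\xi'$ is a $\kappa$-saturated $\sP$-model; since the $\sP$-type of $a$ in $f^*\xi$ equals $f_n(s)\le f_n(s')=$ the $\sP$-type of $a'$ in $f^*\xi'$, the diagram-completion property defining $\kappa$-saturation of $f^*\xi'$ (with the $\kappa$-presentable $M$ in the role of the ``large'' object) produces a morphism $g\colon M\to M'$ in $\cInd(\cC)$ which is a morphism of $\sP$-models $f^*\xi\to f^*\xi'$ and satisfies $g\circ a=a'$. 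Since $f^*$ is fully faithful and does not change underlying morphisms, $g$ is already a morphism of $\sS$-models $\xi\to\xi'$, whence $\xi\le\xi'\circ g$ and $s=\xi\circ a\le\xi'\circ g\circ a=\xi'\circ a'=s'$. Thus $f_n$ reflects the order; being also order-preserving and a continuous injection of compact ordered spaces, it is an embedding, i.e.\ $f$ is a pointwise embedding.

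Finally, (iv) follows by combining (ii) and (iii): an equivalence is fully faithful and essentially surjective, so each $f_n$ is a continuous, order-preserving, order-reflecting bijection of compact ordered spaces, hence an isomorphism in $\cKOrd$ (a continuous bijection of compact Hausdorff spaces is a homeomorphism, and order-reflection makes the inverse monotone); so $f$ is an isomorphism of polyadic compact ordered spaces. I expect the main obstacle to be the auxiliary fact that $f^*$ preserves saturation; once one notices that this is governed exactly by the Beck--Chevalley/interpolation property of the naturality squares of $f$ and of $\overline f$, everything else is the familiar ``realize by a saturated model and transport the resulting morphism along $f^*$'' pattern already used in the proofs of Gödel's and Joyal's completeness theorems.
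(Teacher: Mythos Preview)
Your argument is correct. Parts (i), (ii), and (iv) match the paper's proof essentially verbatim. For (iii), however, you take a genuinely different route.

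The paper proves (iii) by a direct diagram chase: it shows the stronger statement that for \emph{any} triangle $X \to Y \to \sS$, $X \to \sS$ with $X \to Y \to \sS \to \sP \ge X \to \sS \to \sP$, one already has $X \to Y \to \sS \ge X \to \sS$. This is done by first reducing, via Gödel's completeness theorem, to the case where $X \to \sS$ is a model; then, using the interpolation property of the morphism $f$ together with the interpolation extension principle, one manufactures an auxiliary ind-object $Z$ and a further model so that full faithfulness can be applied to a pair of honest models. The argument stays entirely within $\omega$-saturated models.

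Your approach instead isolates an auxiliary lemma of independent interest---$f^*$ preserves $\kappa$-saturation, which is exactly the content of the interpolation property of $f$'s naturality squares (extended to $\cInd(\cC)$)---and then runs a classical ``absorb into a large saturated model'' argument: realize $s'$ in a $\kappa$-saturated model large enough to swallow the model realizing $s$, pull the resulting $\sP$-morphism back through full faithfulness, and read off $s \le s'$. This is closer in spirit to standard model-theoretic proofs using saturated models, and the auxiliary lemma is a clean conceptual takeaway. The cost is the appeal to Remark~\ref{rmk:equiv-main-enough-kappa-sat} (existence of $\kappa$-saturated models for arbitrary $\kappa$), whereas the paper's chase needs only $\omega$-saturation. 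Both proofs ultimately hinge on the same ingredient: the interpolation property of $f$'s naturality squares.
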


\begin{proof}
	The functor $f^*$ is necessarily faithful since composing it with the forgetful functor $\Mod(\sP)  \to  \cInd(\cC)$ gives the faithful forgetful functor $\Mod(\sS)  \to  \cInd(\cC)$.
	
	If $f^*$ is essentially surjective and if $x  \in   \sP(n)$, then $x$ is realized by some model of $\sP$, which can be pulled back to $\sS$ and this gives an antecedent of $x$ by $f$.
	
	Suppose that $f^*$ is an embedding. We want to show that for all (non commutative) triangles
	\[\begin{tikzcd}
		& \sS\\
		X \ar[ur] \ar[r] & Y \ar[u]
	\end{tikzcd}\]
	such that $X \to Y \to \sS \to \sP  \geq  X \to \sS \to \sP$, we have $X \to Y \to \sS  \geq  X \to \sS$. The fact that $f^*$ is an embedding tells us that this is true if $X \to \sS$ and $Y \to \sS$ are models. If only $X \to \sS$ is a model, we can factorize $Y \to \sS$ through a model $Y' \to \sS$ thanks to Gödel's completeness theorem and we obtain the result. If $X \to \sS$ is not a model, we factorize it through a model $X' \to \sS$. Since $X \to Y \to \sP  \geq  X \to X' \to \sP$, we obtain an interpolant $Z \to \sP$ as follows.
	\[\begin{tikzcd}
		&\sS &&&&& \sP \\
		\\
		&&&& Z \\
		& X' &&&& Y \\
		\\
		&&& X
		\arrow[from=6-4, to=4-6]
		\arrow[from=6-4, to=4-2]
		\arrow[from=4-2, to=1-2]
		\arrow[bend left=25, from=4-6, to=1-2]
		\arrow[bend left=10, from=1-2, to=1-7]
		\arrow[from=3-5, to=1-7]
		\arrow[bend right=10, from=4-6, to=1-7,""{name=LU,left}]
		\arrow[from=4-2, to=3-5, crossing over]
		\arrow[from=4-6, to=3-5]
		\arrow[bend left=10, from=4-2, to=1-7, crossing over, ""{name=UL,right}]
		\ar[from=UL,to=3-5,phantom," \leq "{description,sloped,pos=0.6}]
		\ar[from=3-5,to=LU,phantom," \leq "{description,sloped}]
	\end{tikzcd}\]
	We complete the diagram on the left below like on the right.
	
	\begin{minipage}{0.48\linewidth}
		\[\begin{tikzcd}
			\sS \ar[r] & \sP\\
			Y \ar[u] \ar[r] & Z \ar[u] \ar[lu,phantom," \geq "{sloped,description}]
		\end{tikzcd}\]
	\end{minipage}\begin{minipage}{0.48\linewidth}
		\[\begin{tikzcd}
			\sS \ar[r] & \sP\\
			Y \ar[u] \ar[r] & Z \ar[u,""{left,name=A}] \ar[lu]
			\ar[from=1-1,to=A,phantom," \geq "{description,sloped,pos=0.7}]
		\end{tikzcd}\]
	\end{minipage}
	
	We see that $X'  \to  Z  \to  \sS  \to  \sP  \geq  X'  \to  Z  \to  \sP  \geq  X'  \to  \sP = X'  \to  \sS  \to  \sP$. Hence $X'  \to  Z  \to  \sS  \geq  X'  \to  \sS$ since $X' \to \sS$ is a model. We deduce that $X \to Y \to \sS  \geq  X \to \sS$ with the diagram below.
	
	\[\begin{tikzcd}
		&&\sS\\
		X' \ar[r] \ar[urr,bend left=20,""{name=WW,right,near start}] & Z \ar[ru] & \\
		X \ar[u] \ar[r] & Y \ar[u] \ar[uur,bend right=20]
		\ar[from=WW,to=2-2,phantom," \leq "{description,sloped}]
	\end{tikzcd}\]
\end{proof}

\begin{remark}
	Note that the same situation as the two previous propositions appears already in the propositional case. The category of models is replaced by the underlying poset of the Priestley space.
	\begin{enumerate}
		\item A morphism of Esakia spaces which is injective is an embedding. (Proposition~\ref{prop:Beth-def}.)
		\item A morphism of Priestley spaces which gives an isomorphism at the level of posets is an isomorphism of Priestley spaces. (Proposition~\ref{prop:concept-compl}.)
	\end{enumerate}
	We cannot omit in Proposition~\ref{prop:Beth-def} that the morphism is intuitionistic, since the resulting proposition is already false at the propositional level, where intuitionistic morphisms become bounded morphisms of Priestley spaces.
\end{remark}

\begin{remark}\label{rmk:link-conceptual-comp}
	Theorem~7.1.4 in \cite{MakkaiReyes} can be obtained from Proposition~\ref{prop:concept-compl}, but for that we would need to develop a bit more the link between hyperdoctrines and pretoposes. We only sketch here how this can be done, leaving detailed proofs to future work. Let $f \colon \cE \to \cC$ be a morphism of coherent categories and let $f^* \colon \Mod(\cC) \to \Mod(\cE)$ be the induced functor between categories of models.
	
	Given a coherent category $\cE$, an \emph{$\cE$-hyperdoctrine} is a hyperdoctrine $\hD \colon \cE^{\op} \to \cDL$ such that whenever $(X_i \to Y)_i$ is a finite collection of morphisms covering $Y$, the canonical map $\hD(Y) \to \prod_i \hD(X_i)$ is injective. The initial $\cE$-hyperdoctrine is $\Sub_\cE \colon \cE^{\op} \to \cDL$, sending an object to its lattice of subobjects.
	
	The adjunction between coherent categories and hyperdoctrines described in \cite{Coumans12} specializes to an adjunction between $\cE$-hyperdoctrines and coherent categories equipped with a morphism from $\cE$. This adjunction actually gives $\cE$-hyperdoctrines as a reflective subcategory of coherent categories equipped with a morphism from $\cE$. The morphism $f \colon \cE \to \cC$ is sent to $\Sub_\cC \circ f$, and coming back into coherent categories produces the factorization $\cE \to \cC' \to \cC$ where $\cC'$ is the full subcategory of $\cC$ on objects which are subobjects of some $f(x)$ with $x \in \cE$. Moreover, models of $\cC'$ are equivalent to models of $\Sub_\cC \circ f$ (and models of $\cE$ are equivalent to models of $\Sub_\cE$).
	
	The functor $\Mod(\cC') \to \Mod(\cE)$ is always faithful, as said earlier, and $\Mod(\cC) \to \Mod(\cC')$ is always essentially surjective, since a model $\cC' \to \cSet$ can be extended to a model $\cC \to \cSet$ with a left Kan extension.
	\begin{enumerate}
		\item If $f^*$ is full, then $\Mod(\cC') \to \Mod(\cE)$ too and Proposition~\ref{prop:concept-compl} says that $\Sub_\cE \to \Sub_\cC \circ f$ is pointwise surjective, which means that $f$ is ``full on subobjects'' in the terminology of \cite{MakkaiReyes}.
		\item If $f^*$ is essentially surjective, then $\Mod(\cC') \to \Mod(\cE)$ too and $\Sub_\cE \to \Sub_\cC \circ f$ is pointwise injective, which means that $\cE \to \cC'$ is conservative, hence so is $f$.
		\item If $f^*$ is full and essentially surjective, then $\Sub_\cE \to \Sub_\cC \circ f$ is an isomorphism, hence $\cE \to \cC'$ is an equivalence.
	\end{enumerate}
\end{remark}
% !TeX spellcheck = en-US
% !TeX root = GM2022.tex

\section{Variations on intuitionistic logic}
\label{sec:variations-int}

We will now prove completeness theorems for variations of first-order intuitionistic logic: with a linearity axiom \cite{Corsi1992}, on constant domains \cite{Grzegorczyk64}, and both (Gödel-Dummett logic) \cite{Takano87}. We will work in this section with $\omega$-saturated Kripke models exclusively, and in order to shorten our notation, we will just call them Kripke models.

\subsection{Linearity}

A Heyting algebra satisfies the linearity axiom if for all elements $\phi$ and $\psi$, we have
\[ (\phi \to \psi) \lor (\psi \to \phi) = \top \text{.} \]
If $X$ is the dual compact ordered space, this is equivalent to the fact that $X$ is a forest with branches pointing downward, meaning that ${\uparrow}z$ must be linearly ordered for each $z \in X$. If it is the case, we will say that $X$ is \emph{locally linear}. We also say that a $\cC$-adic compact ordered space $\sS$ is \emph{locally linear} if $\sS(n)$ is locally linear for all $n \in \cC$. A Kripke model $F \colon \cI \to \Mod_\omega(\sS)$ is \emph{linear} if $\cI$ is a linear order, and it implies that $\tilde{F}$ is locally linear. We now show that linear Kripke models are a complete semantics for locally linear polyadic compact ordered spaces.

\begin{proposition}
	Let $\sS$ be a locally linear $\cC$-adic compact ordered space. Let $\cI$ be a linearly ordered set, let $(X_i)_i$ be an $\cI$-indexed diagram in $\cInd(\cC)$ and let $X_i \to \sS$ be an oplax cocone. Then we can complete $X_i \to \sS$ into a linear Kripke model in the sense that there is an embedding of linear orders $\cI \subseteq \cI'$, a diagram $(X'_i)_{i \in \cI'} \subseteq \cInd(\cC)$ equipped with a natural transformation $X_i \to X'_i$ and a Kripke model $(X'_i \to \sS)_i$ such that $X_i \to X'_i \to \sS = X_i \to \sS$.
	Moreover, if $0 \in \cI$ is initial and $X_0 \to \sS$ is a model, then we can take $X_0 = X'_0$.
\end{proposition}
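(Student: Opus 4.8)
The proof follows the pattern of the completeness theorems already proved (Gödel's, Theorem~\ref{thm:Godel}, and Joyal's): a transfinite ``method of diagrams'' argument via Lemma~\ref{lem:build-sol}, but carried out inside a category of \emph{linearly}-indexed diagrams so that the frame produced is a linear order, with local linearity of $\sS$ used precisely to avoid having to branch. The plan is to let $\cD$ be the category whose objects are triples consisting of a linear order $\cJ$, an order-embedding $\cI \into \cJ$, and a $\cJ$-indexed diagram $(Y_j)_{j\in\cJ}$ in $\cInd(\cC)$ together with an oplax cocone $(Y_j\to\sS)_{j\in\cJ}$ and a natural family $(X_i\to Y_i)_{i\in\cI}$ satisfying $X_i\to Y_i\to\sS = X_i\to\sS$; a morphism is an order-embedding $\cJ\into\cJ'$ under $\cI$ with a natural family $(Y_j\to Y'_j)$ compatible with everything and satisfying $Y_j\to Y'_j\to\sS = Y_j\to\sS$. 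One checks $\cD$ has filtered colimits: an increasing union of linear orders is linear, and for a fixed world the relevant filtered colimit in $\cInd(\cC)$ inherits a map to $\sS$ because $\ovl{\sS}$ sends filtered colimits to cofiltered limits in $\cKOrd$ and the connecting maps are elementary over $\sS$, so the maps to $\sS$ are compatible; the oplax condition passes to colimits. We then apply Lemma~\ref{lem:build-sol} starting from $(\cI,(X_i)_{i\in\cI})$ with identity maps.

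For the functor $P$, a \emph{request} on an object of $\cD$ is one of three kinds: a \emph{model-request} (a world $j$ together with an interpolation problem for a naturality square of $Y_j\to\sS$, as in Definition~\ref{dfn:omega-sat-model}); an \emph{implication-request} (a world $j$, an object $n\in\cC$, a morphism $p\colon n\to Y_j$, and $t\in\sS(n)$ with $(n\xrightarrow{p}Y_j\to\sS)\le t$); and a \emph{universal-request} (a world $j$, a morphism $f\colon n\to m$ in $\cC$, $p\colon n\to Y_j$, and $t\in\sS(m)$ with the inequality of the ``universal quantification'' clause). Let $S\subseteq P$ collect the requests that are \emph{answered}: model-requests by the interpolant existing, and implication/universal-requests by a later world realizing $t$, exactly as in the explicit reformulation of being an $\omega$-saturated Kripke model. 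Since objects of $\cC$ are $\omega$-presentable in $\cInd(\cC)$, $P$ preserves filtered colimits (any $p\colon n\to Y_j$ over a colimit factors through a finite stage, and $t$ already lives at a stage), and $S$ is a subfunctor because the connecting maps are elementary over $\sS$ and hence carry answers to answers.

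The core is answering a single request. For a model-request at $j$, Gödel's completeness theorem (Theorem~\ref{thm:Godel}) yields an $\omega$-saturated model $Y'_j\to\sS$ realizing the $Y_j$-type $Y_j\to\sS$; one must then propagate this along the worlds $k>j$, replacing $Y_k$ by an object built from $Y'_j$, $Y_k$ and their maps to $\sS$ using the interpolation extension principle (Proposition~\ref{prop:main}) on the $\ovl{\sS}$-image of the pushout $Y'_j\sqcup_{Y_j}Y_k$, arranged so that $Y_k\to Y'_k\to\sS = Y_k\to\sS$ and $Y'_j\to\sS\le Y'_j\to Y'_k\to\sS$. For an implication-request $(j,p,t)$, push $p$ forward to the current top world $j_0$: if $t\le(n\to Y_{j_0}\to\sS)$ the request is already answered; otherwise, since $(n\xrightarrow{p}Y_j\to\sS)\le t$ and $(n\xrightarrow{p}Y_j\to\sS)\le(n\to Y_{j_0}\to\sS)$ both lie in the linearly ordered up-set of $\sS(n)$ above $(n\xrightarrow{p}Y_j\to\sS)$ --- this is where \emph{local linearity} of $\sS$ enters --- we get $(n\to Y_{j_0}\to\sS)\le t$, so we may add a single new top element $\top$ to $\cJ$ and define $Y_\top$ from $Y_{j_0}\to\sS$, $p$ and $t$ by the interpolation extension principle (as in Joyal's completeness proof), realizing $t$ at $\top$ and preserving the oplax cocone; a subsequent model-request step then turns $Y_\top$ into a model. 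Universal-requests are handled identically with $m$ in place of $n$.

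Lemma~\ref{lem:build-sol} now produces $(\cI',(X'_i)_{i\in\cI'})\in\cD$ with $S=P$: every model-request is answered, so every $X'_i\to\sS$ is an $\omega$-saturated model, and every implication- and universal-request is answered, so $(X'_i\to\sS)_{i\in\cI'}$ satisfies the (Implication) and (Universal quantification) conditions and is a Kripke model --- linear, since $\cI'$ is a linear order --- while the data $\cI\into\cI'$, $X_i\to X'_i$, $X_i\to X'_i\to\sS = X_i\to\sS$ is built into membership in $\cD$. For the ``moreover'', if $\cI$ has an initial element $0$ with $X_0\to\sS$ a model, delete the model-requests at $0$ from $P$ (they are already answered); since implication/universal steps only add worlds at the top and model-propagation only touches worlds strictly above the one being modelled, world $0$ is never modified, so $X'_0=X_0$. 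The main obstacle I expect is the propagation in the model-request step: reconciling the equality $Y_k\to Y'_k\to\sS = Y_k\to\sS$ for the original worlds with the required direction of the oplax inequalities forces one to use the interpolation extension principle in the correct variance, and may in fact require a preliminary pass replacing $(X_i)_{i\in\cI}$ by a compatible diagram of models before anything is added on top; by contrast, the local-linearity argument in the implication/universal step, which is what keeps the frame a \emph{linear} order rather than a tree, is short.
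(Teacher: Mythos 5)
Your overall architecture coincides with the paper's: a category of linearly indexed diagrams with oplax cocones to $\sS$ and elementary connecting maps, Lemma~\ref{lem:build-sol} applied to a functor of ``requests'' split into saturation-type problems (solved at the same world) and universal/implication-type problems (solved at a later world), with local linearity invoked to keep the index poset linear. The secondary point you flag as ``the main obstacle'' --- propagating a modification at world $j$ to all worlds $k>j$ while keeping $Y_k\to Y_k'\to\sS = Y_k\to\sS$ --- is handled in the paper not by a preliminary pass but by compactness: one looks for a point of the compact space $\prod_{k\geq j}\ovl{\sS}(Y'_k)$ satisfying a family of closed conditions, reduces to finitely many worlds, and then runs a finite induction applying the strong interpolation property to each cocartesian square $(Y_k,Y'_k,Y_{k+1},Y'_{k+1})$. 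That part of your sketch is repairable.

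The genuine gap is in your treatment of implication/universal requests. First, a linear order has no ``current top world'' in general, so $j_0$ need not exist. Second, and more seriously, your dichotomy is wrong: if $t\leq (n\to Y_{j_0}\to\sS)$ with strict inequality, the request is \emph{not} already answered --- answering it requires a world at which the type of $p$'s image is \emph{exactly} $t$, and since the $n$-types along the chain of worlds are increasing, such a world must be inserted at the cut in the linear order determined by $t$, not at the top. Adding the realizing world at the top is only possible when $t$ dominates the $n$-type of every existing world above $j$; otherwise the oplax inequality $Y_k\to Y_{\top}\to\sS\geq Y_k\to\sS$ would force $t\geq (n\to Y_k\to\sS)$, which fails. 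The paper handles precisely this case by forming the down-set $A$ of worlds whose $n$-type is dominated by $\sS(f)(t)$, inserting the colimit $X_A=\colim_{j\in A}X_j$ at the boundary, placing the new world $i'$ just above it, and then using local linearity a second time --- on the \emph{full} types $X_i\to Y_{i'}\to\sS$ and $X_i\to X_j\to\sS$ in $\ovl{\sS}(X_i)$, not just on their restrictions to $n$ --- to show the inserted world's type is strictly below that of the next existing world, so that the oplax cocone can be continued past the insertion point via the strong interpolation property. Your use of local linearity (comparing $n$-types at the top) does not reach this, and without the insertion-in-the-middle step the construction cannot answer all requests.
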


\begin{proof}
	As a preliminary remark, we note that filtered limits of locally linear compact ordered spaces are also locally linear, so that $\ovl{\sS}\colon \cInd(\cC)^\op \to \cKOrd$ is locally linear if $\sS$ is.
	
	In order to ensure that $(X_i \to \sS)_{i\in\cI}$ is a Kripke model, we need to be able to solve two kinds of problems:
	\begin{enumerate}[wide, labelwidth=!, labelindent=0pt]
		\item[(Existential problems.)] For every $i \in \cI$ and for every lax commutative square as on the left (with $n,m \in \cC$), there is a way of completing it as on the right.\\
		\begin{minipage}{0.485\linewidth}
			\[\begin{tikzcd}
				X_i \ar[r] & \sS\\
				n \ar[u] \ar[r] & m \ar[u] \ar[lu,phantom,"\geq"{sloped,description}]
			\end{tikzcd}\]
		\end{minipage}\begin{minipage}{0.485\linewidth}
			\[\begin{tikzcd}
				X_i \ar[r] & \sS\\
				n \ar[u] \ar[r] & m \ar[u,""{left,name=LU}] \ar[lu]
				\ar[from=LU,to=\tikzcdmatrixname-1-1,phantom,near start,"\geq"{sloped,description}]
			\end{tikzcd}\]
		\end{minipage}
		\item[(Universal problems.)] For every $i \in \cI$ and for every lax commutative square as on the left (with $n,m \in \cC$), there is some $j \geq i$ and a way of completing the diagram as on the right.\\
		\begin{minipage}{0.485\linewidth}
			\[\begin{tikzcd}
				X_i \ar[r] & \sS\\
				n \ar[u] \ar[r] & m \ar[u] \ar[lu,phantom,"\leq"{sloped,description}]
			\end{tikzcd}\]
		\end{minipage}\begin{minipage}{0.485\linewidth}
			\[\begin{tikzcd}
				& & \sS \\
				X_i \ar[r] \ar[rru,bend left=25,""{right,name=LU}] & X_j \ar[ru] &\\
				n \ar[u] \ar[r] & m \ar[u] \ar[uur,bend right=25] &
				\ar[from=LU,to=\tikzcdmatrixname-2-2,phantom,"\leq"{sloped,description}]
			\end{tikzcd}\]
		\end{minipage}
	\end{enumerate}
	
	We will apply Lemma~\ref{lem:build-sol} as in the proof of Gödel's completeness theorem. The objects of our category $\cD$ are the linear orders $\cI$ equipped with an $\cI$-indexed diagram $(X_i)_i \subseteq \cInd(\cC)$ and an oplax cocone $(X_i \to \sS)_i$. The morphisms from $(\cI,(X_i)_i)$ to $(\cJ,(Y_j)_j)$ are the order-preserving maps $f \colon \cI \to \cJ$ equipped with a natural transformation $X_i \to Y_{f(i)}$ such that $X_i \to \sS = X_i \to Y_{f(i)} \to \sS$.
	
	The linear colimits in $\cD$ are computed as follows. Let $(\cI_k, X_k)_k$ be a linear diagram in $\cD$. Let $\cJ = \colim_k \cI_k$. Each $j \in \cJ$ can be written as $j = (\cI_k \to \cJ)(i)$ for some $k$ and some $i \in \cI_k$, and we define $Y_j = \colim_{k' \geq k} (X_{k'})_{(\cI_k \to \cI_{k'})(i)}$. Then $\colim_k (\cI_k, X_k)_k = (\cJ, Y)$.
	
	Given $(\cI,X) \in \cD$, define $\operatorname{EU}(\cI,X)$ to be the set of existential and universal problems as presented above. This produces a functor $\operatorname{EU} \colon \cD \to \cSet$ and it preserves linear colimits. Let $S(\cI,X) \subseteq \operatorname{EU}(\cI,X)$ be the subset of existential and universal problems having a solution. We must prove that this is a subfunctor. Let $(\cI,X) \to (\cJ,Y)$ be a morphism in $\cD$ with underlying order-preserving map $f \colon \cI \to \cJ$. Given a solvable existential problem for $(\cI,X)$ as below on the left, its image is a solvable existential problem as illustrated below on the right.
	
	\begin{minipage}{0.485\linewidth}
		\[\begin{tikzcd}
			X_i \ar[r] & \sS\\[1em]
			n \ar[u] \ar[r] & m \ar[u,""{left,name=LU}] \ar[lu]
			\ar[from=LU,to=\tikzcdmatrixname-1-1,phantom,near start,"\geq"{sloped,description}]
		\end{tikzcd}\]
	\end{minipage}\begin{minipage}{0.485\linewidth}
		\[\begin{tikzcd}
			Y_{f(i)} \ar[r] & \sS\\[-1em]
			X_i \ar[u] &\\[-1em]
			n \ar[u] \ar[r] & m \ar[uu,""{left,name=LU}] \ar[lu] &
			\ar[from=LU,to=\tikzcdmatrixname-1-1,phantom,bend left=15,near start,"\geq"{sloped,description}]
		\end{tikzcd}\]
	\end{minipage}
	
	The same goes for universal problems: the image of a solvable universal problem illustrated below on the left is also a solvable universal problem.
	
	\begin{minipage}{0.485\linewidth}
		\[\begin{tikzcd}
			& & \sS \\
			X_i \ar[r] \ar[rru,bend left=25,""{right,name=LU}] & X_j \ar[ru] &\\[1em]
			n \ar[u] \ar[r] & m \ar[u] &
			\ar[from=LU,to=\tikzcdmatrixname-2-2,phantom,"\leq"{sloped,description}]
		\end{tikzcd}\]
	\end{minipage}\begin{minipage}{0.485\linewidth}
		\[\begin{tikzcd}
			& & \sS \\
			Y_{f(i)} \ar[r] \ar[rru,bend left=25,""{right,name=LU}] & Y_{f(j)} \ar[ru] &\\[-1em]
			X_i \ar[u] \ar[r] & X_j \ar[u] &\\[-1em]
			n \ar[u] \ar[r] & m \ar[u] &
			\ar[from=LU,to=\tikzcdmatrixname-2-2,phantom,"\leq"{sloped,description}]
		\end{tikzcd}\]
	\end{minipage}
	
	In order to apply Lemma~\ref{lem:build-sol}, we need to show that for each $(\cI,X) \in \cD$ and for each existential or universal problem, there is some morphism $(\cI,X) \to (\cJ,Y)$ sending the problem to one with a solution.
	
	\paragraph{Existential problems} Suppose we have an existential problem as below.
	\[\begin{tikzcd}
		X_i \ar[r] & \sS\\
		n \ar[u] \ar[r] & m \ar[u] \ar[lu,phantom,"\geq"{sloped,description}]
	\end{tikzcd}\]
	Take $\cJ = \cI$, $Y_k = X_k$ if $k < i$ and $Y_k = X_k \sqcup_n m$ if $k \geq i$, with the obvious connecting morphism $(\cI,X) \to (\cJ,Y)$. We want to build an oplax cocone $Y_k \to \sS$ for $k \geq i$ such that $X_k \to Y_k \to \sS = X_k \to \sS$ and such that $m \to \sS \leq m \to Y_i \to \sS$. In order to do so, we will use compactness: we are looking for a point in the compact space $\prod_{k\geq i} \sS(Y_k)$ satisfying a collection of closed conditions. So we must show that for each finite family of these conditions, there is a point satisfying them. Each finite family of conditions impacts only finitely many of the $Y_k$, so we are reduced to showing the statement when $\cI$ is finite and $i$ is the minimal element. Suppose $\cI = \{0,1,\dots,t\}$ with $i=0$. We will define $Y_k \to \sS$ inductively. The strong interpolation property of $\sS$ and the interpolation extension principle allow us to find some $Y_0 \to \sS$ factoring $X_0 \to \sS$ and such that $m \to \sS \leq m \to Y_0 \to \sS$. We now want to define $Y_1 \to \sS$, and we find ourselves in a similar situation, having replaced the cocartesian square $(n,m,X_0,Y_0)$ by the cocartesian square $(X_0,Y_0,X_1,Y_1)$. We can continue the induction.
	
	\[\begin{tikzcd}
		\vdots & \vdots &\\
		X_1 \ar[u] \ar[r] & Y_1 \ar[r] \ar[u] & \sS\\
		X_0 \ar[u] \ar[r] & Y_0 \ar[u] \ar[ru,""{left,name=LU}] &\\
		n \ar[u] \ar[r] & m \ar[u] \ar[ruu,bend right=15,""{left,name=LU1}] &
		\ar[from=LU1,to=\tikzcdmatrixname-3-2,phantom,"\geq"{sloped,description}]
		\ar[from=LU,to=\tikzcdmatrixname-2-2,phantom,"\geq"{sloped,description}]
	\end{tikzcd}\]
	
	\paragraph{Universal problems} Suppose we have a universal problem as below.
	
	\[\begin{tikzcd}
		X_i \ar[r] & \sS\\
		n \ar[u] \ar[r] & m \ar[u] \ar[lu,phantom,"\leq"{sloped,description}]
	\end{tikzcd}\]
	
	We will first reduce the problem to the case where $n \to m \to \sS \not\geq n \to X_k \to \sS$ for all $k > i$. In order to do so, consider the set $A$ of all $j \in \cI$ which are either below $i$ or such that $n \to m \to \sS \geq n \to X_j \to \sS$. Define $X_A = \colim_{j \in A} X_j$ and build the map $X_A \to \sS$ as in Lemma~\ref{lem:lax-colim-models}. We then insert $X_A$ between $A$ and $\cI \setminus A$ in $(X_k)_{k \in \cI}$. We have $n \to m \to \sS \geq n \to X_A \to \sS$ and we can replace $i$ by $A$ (a solution to the universal problem for $A$ implies a solution for $i$). But for all $k \not\in A$, we don't have $n \to m \to \sS \geq n \to X_k \to \sS$.
	
	Now, we will suppose that $n \to m \to \sS \not\geq n \to X_k \to \sS$ for all $k > i$. (Actually, it implies that $n \to m \to \sS < n \to X_k \to \sS$ by local linearity of $\sS$, but we don't need that now.) First, define $\cJ$ as $\cI \uplus \{i'\}$, where $i'$ is an element added just after $i$. Define
	
	\[Y_k = \begin{cases}X_k & \text{if $k \leq i$,}\\X_i \sqcup_n m & \text{if $k = i'$,}\\X_k \sqcup_n m & \text{if $k > i'$.}\end{cases}\]
	
	We leave out the description of the morphism $(\cI,X) \to (\cJ,Y)$, it is the obvious one associated to the canonical inclusion $\cI \hookrightarrow \cJ$. We still need to define the morphisms $Y_k \to \sS$ for $k \geq i'$. The morphism $Y_{i'} = X_i \sqcup_n m \to \sS$ is chosen using the strong interpolation property of $\sS$ and the interpolation extension principle, as in the following diagram.
	
	\[\begin{tikzcd}
		& & \sS\\
		X_i \ar[r] \ar[rru,bend left=10,""{right,name=LU}] & X_i \sqcup_n m \ar[ru] &\\
		n \ar[u] \ar[r] & m \ar[u] \ar[ruu,bend right=10]
		\ar[from=LU,to=\tikzcdmatrixname-2-2,phantom,"\geq"{sloped,description}]
	\end{tikzcd}\]
	
	This will ensure that the image of our universal problem in $(\cJ,Y)$ has a solution. After that, we use the same technique as for existential problems to show that we can choose the morphisms $Y_k \to \sS$ for $k > i'$. Thanks to compactness, we can suppose that the set of $k > i'$ is finite. Let $j$ be the successor of $i'$. Since $X_i \to X_j \to \sS \geq X_i \to \sS$ and $X_i \to Y_{i'} \to \sS \geq X_i \to \sS$, and thanks to the local linearity of $\sS$, the morphisms $X_i \to X_j \to \sS$ and $X_i \to Y_{i'} \to \sS$ are comparable. But we cannot have $X_i \to Y_{i'} \to \sS \geq X_i \to X_j \to \sS$. Indeed, precomposing with $n \to X_i$, we would find that $n \to X_i \to Y_{i'} \to \sS = n \to m \to Y_{i'} \to \sS = n \to m \to \sS$ is greater than $n \to X_i \to X_j \to \sS = n \to X_j \to \sS$. But we supposed that $n \to m \to \sS \not\geq n \to X_j \to \sS$ for all $j > i$. We conclude that $X_i \to Y_{i'} \to \sS < X_i \to X_j \to \sS$.
	
	\[\begin{tikzcd}
		X_j \ar[r] & \sS\\
		X_i \ar[u] \ar[r] & Y_{i'} \ar[u] \ar[lu,phantom,"\geq"{sloped,description}]
	\end{tikzcd}\]
	
	Thanks to the strong interpolation property of $\sS$ and the interpolation extension principle, we can complete the diagram as below.
	
	\[\begin{tikzcd}
		& & \sS\\
		X_j \ar[r] \ar[rru,bend left=10] & Y_j \ar[ru] &\\
		X_i \ar[u] \ar[r] & Y_{i'} \ar[u] \ar[ruu,bend right=10,""{left,name=LU}]
		\ar[from=LU,to=\tikzcdmatrixname-2-2,phantom,"\geq"{sloped,description}]
	\end{tikzcd}\]
	
	At this point, we have $X_j \to Y_j \to \sS = X_j \to \sS \leq X_j \to X_{j+1} \to \sS$, where $j+1$ is the successor of $j$, so that we can continue to factor $X_{j+1} \to \sS$ as $X_{j+1} \to Y_{j+1} \to \sS$ inductively.
	
	Finally, we can apply Lemma~\ref{lem:build-sol} to the functors $S \subseteq \operatorname{EU}$ and conclude the proof.
\end{proof}

Notice that we used only once in the proof above the hypothesis that $\sS$ is locally linear.

\subsection{Constant domain}

In this subsection, we add the co-Frobenius rule to intuitionistic logic, which goes as follows:
\[ \forall x \colon (\phi(y) \lor \psi(x,y)) = \phi(y) \lor \forall x:\psi(x,y) \text{.} \]
This is the order-dual of the Frobenius law, and on the topological side, the corresponding axiom is that of co-boundedness. A $\cC$-adic compact ordered space $\sS$ is \emph{co-bounded} if all the diagrams like the one on the left can be completed as on the right, where $n,m \in \cC$.

\begin{minipage}{0.485\linewidth}
	\[\begin{tikzcd}
		& \sS\\
		n \ar[ur,""{name=UL,right}] \ar[from=2-2,to=UL,phantom,"\leq"{description,sloped}] \ar[r] & m \ar[u,"x"']
	\end{tikzcd}\]
\end{minipage}\begin{minipage}{0.485\linewidth}
	\[\begin{tikzcd}
		& \sS\\
		n \ar[ur] \ar[r] & m \ar[u,bend right=50,"x"',""{name=L,left}] \ar[u,bend left=35,""{name=R,right}]
		\ar[from=L,to=R,phantom,"\leq"{description,sloped}]
	\end{tikzcd}\]
\end{minipage}

The boundedness axiom \ref{p-axiom:interpol-1} of $\cC$-adic compact ordered spaces is obtained by reversing the direction of the inequalities above.

A Kripke model $F \colon \cI \to \Mod_\omega(\sS)$ of a $\cC$-adic space $\sS$ has \emph{constant domain} if the composite $\cI \to \Mod_\omega(\sS) \to \cInd(\cC)$ is constant. This means that we have an ind-object $X \in \cInd(\cC)$ and a family of elements $(x_i)_{i\in\cI} \subseteq \sS(X)$ with $x_i \leq x_j$ if $i \leq j$, such that each time we have a diagram as one in the left column below, we can complete it as in the diagram on its right.

\begin{minipage}{0.485\linewidth}
	\[\begin{tikzcd}
		X \ar[r,"x_i"] & \sS\\
		n \ar[r] \ar[u] & m \ar[u] \ar[lu,phantom,"\geq"{description,sloped}]
	\end{tikzcd}\]
\end{minipage}\begin{minipage}{0.485\linewidth}
	\[\begin{tikzcd}
		X \ar[r,"x_i"] & \sS\\
		n \ar[r] \ar[u] & m \ar[u,""{name=R,left}] \ar[lu]
		\ar[from=1-1,to=R,phantom,"\geq"{description,sloped,pos=0.7}]
	\end{tikzcd}\]
\end{minipage}

\begin{minipage}{0.485\linewidth}
	\[\begin{tikzcd}
		X \ar[r,"x_i"] & \sS\\
		n \ar[r] \ar[u] & m \ar[u] \ar[lu,phantom,"\leq"{description,sloped}]
	\end{tikzcd}\]
\end{minipage}\begin{minipage}{0.485\linewidth}
	\[\begin{tikzcd}
		X \ar[r,"x_i"{above},bend left=50,""{name=A,below}] \ar[r,"x_j"{below},""{name=B,above}] & \sS\\
		n \ar[r] \ar[u] & m \ar[u,""{name=R,left}] \ar[lu]
		\ar[from=A,to=B,phantom,"\leq"{description,sloped}]
	\end{tikzcd}\]
\end{minipage}

Co-boundedness and constant domain models are closely related: if $F$ has constant domain, then $\tilde{F} \colon \cC^\op \to \cPoset$ is co-bounded; in fact, one may even show that, if $\cC$ has finite coproducts, then the converse is true, but we will not need this fact. We will now show that constant domain models give a complete semantics for co-bounded compact ordered spaces.

In the following proposition and in its proof, a (rooted) \emph{tree} will be a poset $\cI$ such that ${\downarrow}i$ is linearly ordered for each $i \in \cI$ and such that $\cI$ has a minimal element that we call its \emph{root}. A \emph{sub-tree} will be a non-empty sub-poset $\cJ \subseteq \cI$ such that ${\downarrow}j \subseteq \cJ$ for all $j \in \cJ$. Below, when $\cI$ is a tree and $S$ is a poset, by an \emph{order-respecting family} $(x_i)_{i \in \cI} \subseteq S$, we mean an order preserving function $\cI \to S$ whose value at $i$ is $x_i$.

\begin{proposition}
	Let $\sS$ be a co-bounded $\cC$-adic compact ordered space. Let $X \in \cInd(\cC)$, let $\cI$ be a tree and let $(x_i)_{i \in \cI} \subseteq \sS(X)$ be an order-respecting family. Then we can complete $(x_i)_{i\in\cI}$ into a model in the following sense. There is a morphism $X \to Y$ in $\cInd(\cC)$, a tree $\cJ$ containing $\cI$ as a sub-tree, and a constant domain model $(y_j)_{j\in\cJ} \subseteq \sS(Y)$ such that for all $i \in \cI \subseteq \cJ$, we have $X \to^{x_i} \sS = X \to Y \to^{y_i} \sS$.
\end{proposition}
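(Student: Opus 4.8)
The plan is to repeat the \emph{method of diagrams} argument of Lemma~\ref{lem:build-sol}, exactly as in the proofs of Gödel's and Joyal's completeness theorems and of the linearity case above, but carried out inside a category of "constant-domain pre-models indexed by trees''. Concretely, I would let $\cD$ be the category whose objects are triples $(\cI, X, (x_i)_{i\in\cI})$ with $\cI$ a tree, $X\in\cInd(\cC)$, and $(x_i)_{i\in\cI}$ an order-respecting family of points of $\ovl{\sS}(X)$ (recall that, by the interpolation extension principle, $\ovl{\sS}$ is again a $\cKOrd$-valued presheaf), and whose morphisms $(\cI,X,(x_i))\to(\cJ,Y,(y_j))$ are an inclusion of $\cI$ as a sub-tree of $\cJ$ together with a morphism $\kappa\colon X\to Y$ in $\cInd(\cC)$ such that $\ovl{\sS}(\kappa)(y_i)=x_i$ for all $i\in\cI$. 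Filtered colimits in $\cD$ are formed by taking colimits of the trees and of the ind-objects and the induced cofiltered limit of the type-families, which is well-defined and order-respecting because $\ovl{\sS}$ turns filtered colimits into cofiltered limits and cofiltered limits of compact ordered spaces are non-empty (Lemma~\ref{lem:cofiltered-limit-non-empty}). The functor $P\colon\cD\to\cSet$ assigns to each object the set of its \emph{requests}: an \emph{existential request} at a world $i$ is the data $f\colon n\to X$, $g\colon n\to m$ with $n,m\in\cC$ and a point $t$ of $\sS(m)$ whose pullback along $g$ is $\le$ the pullback of $x_i$ along $f$ (i.e.\ the first displayed square in the definition of a constant-domain model); a \emph{universal request} is the same data with the inequality reversed. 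The subfunctor $S\subseteq P$ picks out those requests for which the required completion exists. That $P$ preserves filtered colimits (the $n,m\in\cC$ are $\omega$-presentable in $\cInd(\cC)$) and that $S$ is a subfunctor (a completion transports along any morphism of $\cD$) are routine.

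The substance lies in verifying the hypothesis of Lemma~\ref{lem:build-sol}: every request becomes solvable after a suitable morphism of $\cD$. Given an existential request at $i$, I would set $Y:=X\sqcup_n m$ (pushout in $\cInd(\cC)$, available since $\cC$ has pushouts), with structure maps $\kappa\colon X\to Y$ and $\eta\colon m\to Y$, and keep the tree unchanged; the task is then to choose the new family $(y_{i'})_{i'\in\cI}$ in $\ovl{\sS}(Y)$ with $\kappa^{*}y_{i'}=x_{i'}$, order-respecting, and with $\eta^{*}y_i\ge t$. Since $\prod_{i'}\ovl{\sS}(Y)$ is compact and these are all closed conditions, it suffices to treat each finite subtree, and then to induct along it from the root: at a node other than $i$ one lifts $x_{i'}$ to some $y_{i'}\ge y_{\mathrm{parent}}$ with $\kappa^{*}y_{i'}=x_{i'}$, using that $\ovl{\sS}(\kappa)$ is a bounded map (the interpolation extension principle preserves axiom~\ref{p-axiom:interpol-2-dfn}, so $\ovl{\sS}$ sends every morphism of $\cInd(\cC)$ to a bounded map, and bounded maps send up-sets to up-sets); at the node $i$ one moreover applies the strong interpolation property of the pushout square under $\ovl{\sS}$ (Proposition~\ref{prop:main} together with Remark~\ref{rmk:p-axioms-1+2}) to the pair $(t,x_i)$ — whose compatibility is exactly the hypothesis of the existential request — to obtain $y_i$ with $\kappa^{*}y_i=x_i$ and $\eta^{*}y_i\ge t$, chosen above the common refinement of what this step and the boundedness step from $i$'s parent give, so that order-respectingness is preserved. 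For a universal request at $i$ one proceeds similarly but also grafts a new leaf $i^{\sharp}$ with $i\lessdot i^{\sharp}$ onto the tree; after enlarging the domain to $Y=X\sqcup_n m$, the new world $i^{\sharp}$ must carry a type $y_{i^{\sharp}}\ge y_i$ whose pullback along $\eta$ dominates $t$, and producing such a type from $y_i$ — raising its "$m$-component'' from $\eta^{*}y_i$ past $t$ while controlling its restriction — is exactly what \textbf{co-boundedness} of $\sS$ delivers (extended to $\ovl{\sS}$ by the order-dual of the interpolation extension principle, whose proof is order-neutral). Applying Lemma~\ref{lem:build-sol} to $S\subseteq P$ then produces, from $(\cI,X,(x_i))$, a morphism to an object $(\cJ,Y,(y_j))$ with $S=P$, which is precisely a constant-domain model completing the given family.

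The part I expect to be the main obstacle is the bookkeeping of the \emph{simultaneous} choice of the whole new family $(y_{i'})$: the order-respecting requirement couples all of the worlds, so one cannot choose the new types independently and must run the finite-tree induction with care, in particular at the grafted world of a universal request, which is the one step that genuinely needs the co-boundedness hypothesis and where co-boundedness plays, for universal quantification, the role that boundedness/Frobenius (via the strong interpolation property) plays for existential quantification. The final clause, that $X_0=X'_0$ when $x_0\to\sS$ is already a model, is handled by running the same construction inside the full subcategory of $\cD$ on those objects whose root type is a model of $\sS$, where one arranges the recursion so that existential requests are always discharged over the already-present common domain (the root model witnessing them, and witnesses persisting up the tree), so that no enlargement of $X$ is ever forced and only new worlds are adjoined.
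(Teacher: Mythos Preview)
Your overall framework—setting up the auxiliary category $\cD$ and applying Lemma~\ref{lem:build-sol}, with the compactness reduction to finite sub-trees—matches the paper's. The gap is in where and how you deploy co-boundedness.

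For the existential request at $i$, you propose to induct from the root upward using only boundedness of $\ovl{\sS}(\kappa)$, invoking strong interpolation at the single node $i$. But when the root $r$ lies strictly below $i$, this induction has no base case: boundedness lets you pass from a parent to a child, but gives no way to produce the initial lift $y_r\in\ovl{\sS}(Y)$ with $\kappa^{*}y_r=x_r$; the interpolation data lives at $i$, not at $r$, and nothing forces $\kappa^{*}$ to hit $x_r$. Your phrase ``chosen above the common refinement'' at node $i$ papers over a second problem: the element furnished by strong interpolation and the one furnished by boundedness from the parent need not have a common upper bound satisfying both constraints. The paper's fix is to reverse the direction: first obtain $y_i$ via strong interpolation, then use \emph{co-boundedness} of $\kappa^{*}$ (which does extend to $\ovl{\sS}$, as you correctly note) to descend along the chain ${\downarrow}i$ to the root, and only then use boundedness to climb the remaining branches. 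So co-boundedness is already essential in the existential step.

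For the universal request, your description of co-boundedness as ``raising the $m$-component of $y_i$ past $t$'' is backwards: co-boundedness produces \emph{smaller} preimages, not larger ones. The new type at the grafted leaf $i^{\sharp}$ is obtained directly by interpolation in the $\leq$ direction, yielding $a\in\ovl{\sS}(Y)$ with $\kappa^{*}a\geq x_i$ and $\eta^{*}a=t$; one sets $y_{i^{\sharp}}=a$ and then, exactly as above, uses co-boundedness of $\kappa^{*}$ to define $y_j\leq a$ for $j<i^{\sharp}$ (in particular $y_i$), and boundedness for the rest of the tree. In both problem types, co-boundedness is what carries the distinguished lift \emph{downward} through the tree; that is where the hypothesis is actually consumed.

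A minor point: your final paragraph about preserving $X_0$ refers to the addendum in the linearity proposition, not to this one.
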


\begin{proof}
	We will apply Lemma~\ref{lem:build-sol} again. We have two kinds of problems to solve.
	
	Suppose we have the following configuration.
	
	\[\begin{tikzcd}
		X \ar[r,"x_i"] & \sS\\
		n \ar[r] \ar[u] & m \ar[u] \ar[lu,phantom,"\geq"{description,sloped}]
	\end{tikzcd}\]
	
	We can complete the diagram as below, thanks to the interpolation extension principle.
	
	\[\begin{tikzcd}
		&&\sS\\
		X \ar[rru,bend left=20,"x_i"] \ar[r] & X' \ar[ru,"a"{pos=0.35}]\\
		n \ar[r] \ar[u] & m \ar[u] \ar[uur,bend right=20,""{left,name=B,pos=0.35}]
		\ar[from=2-2,to=B,phantom,"\geq"{description,sloped}]
	\end{tikzcd}\]
	
	We then look for an order-respecting family $(x'_j)_{j\in\cI} \subseteq \sS(X')$ such that $x'_i = a$ and such that $X \to^{x_j} \sS = X \to X' \to^{x'_j} \sS$ for all $j \in \cI$. Thanks to compactness, it is enough to show that it is possible for all finite subsets of $\cI$. We can define $x'_j$ for $j < i$ inductively by using the co-boundedness hypothesis and the linearity of ${\downarrow}i$. After that, we use the boundedness axiom of $\sS$ (\ref{p-axiom:interpol-2}) to define $x'_j$ for all $j$, again thanks to the tree structure of $\cI$. This solves problems of the first kind.
	
	For the problems of the second kind, suppose we have a diagram as below.
	
	\[\begin{tikzcd}
		X \ar[r,"x_i"] & \sS\\
		n \ar[r] \ar[u] & m \ar[u] \ar[lu,phantom,"\leq"{description,sloped}]
	\end{tikzcd}\]
	
	Thanks to the interpolation extension principle, we complete our diagram as below.
	
	\[\begin{tikzcd}
		&&\sS\\
		X \ar[rru,bend left=20,"x_i",""{right,name=A}] \ar[r] & X' \ar[ru,"a"{pos=0.35}]\\
		n \ar[r] \ar[u] & m \ar[u] \ar[uur,bend right=20]
		\ar[from=A,to=2-2,phantom,"\leq"{description,sloped}]
	\end{tikzcd}\]
	
	We define $\cJ$ as $\cI$ with an additional point $i'$ above $i$ and incomparable with any point not below $i$. We define $x'_{i'} = a$ and using the same method as above, we extend this to an order-respecting family $(x'_j)_{j\in\cJ} \subseteq \sS(X')$ such that $X \to^{x_j} \sS = X \to X' \to^{x'_j} \sS$ for all $j \in \cI$. This solves problems of the second kind, and we can apply Lemma~\ref{lem:build-sol} to conclude.
\end{proof}

\subsection{Gödel-Dummett logic}

We now combine the two previous axioms. A polyadic compact ordered space $\sS$ is \emph{Gödel-Dummett} if it is both locally linear and co-bounded. On the algebraic side, we add the linearity and coFrobenius axioms. A \emph{Gödel-Dummett model} of $\sS$ is a linearly ordered Kripke model with constant domain. More explicitly, it is an ind-object $X$ of $\cC$ and an increasing linear sequence $(x_i)_{i\in\cI} \subseteq \sS(X)$ such that:

\begin{enumerate}
	\item Each lax diagram as below on the left can be completed as below on the right.
	
	\begin{minipage}{0.485\linewidth}\[\begin{tikzcd}
		\sS & m \ar[l] \ar[dl,phantom,"\geq"{description,sloped}] \\
		X \ar[u,"x_i"] & n \ar[l] \ar[u]
	\end{tikzcd}\]\end{minipage}\begin{minipage}{0.485\linewidth}\[\begin{tikzcd}
		\sS & m \ar[l,""{below,name=UL}] \ar[dl] \\
		X \ar[u,"x_i"] & n \ar[l] \ar[u]
		\ar[from=UL,to=2-1,phantom,"\geq"{description,sloped,pos=0.35}]
	\end{tikzcd}\]\end{minipage}
	\item Each lax diagram as below on the left can be completed as below on the right.
	
	\begin{minipage}{0.485\linewidth}\[\begin{tikzcd}
		\sS & m \ar[l] \ar[dl,phantom,"\leq"{description,sloped}] \\
		X \ar[u,"x_i"] & n \ar[l] \ar[u]
	\end{tikzcd}\]\end{minipage}\begin{minipage}{0.485\linewidth}\[\begin{tikzcd}
		\sS & m \ar[l,""{below,name=UL}] \ar[dl] \\
		X \ar[u,"x_i",bend left=100,""{right,name=A}] \ar[u,bend right=0,"x_j"{right},""{left,name=B}] & n \ar[l] \ar[u]
		\ar[from=A,to=B,phantom,"\leq"{description,sloped}]
	\end{tikzcd}\]\end{minipage}
\end{enumerate}

This semantics is complete for Gödel-Dummett polyadic compact ordered spaces.

\begin{proposition}\label{prop:completeness-GD}
	Let $\sS$ be a Gödel-Dummett $\cC$-adic compact ordered space. Let $X \in \cInd(\cC)$ and let $(x_i)_{i\in\cI} \subseteq \sS(X)$ be an increasing sequence. Then there exists a Gödel-Dummett model $(y_i)_{i\in\cI'} \subseteq \sS(Y)$ with $\cI \subseteq \cI'$ and a morphism $X \to Y$ such that $x_i$ is $X \to Y \to^{y_i} \sS$ for all $i \in \cI$. If $\cI$ has a minimal element $0$, it is possible to preserve it in $\cI'$.
\end{proposition}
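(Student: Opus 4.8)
The plan is to combine the two preceding completeness proofs (for linear Kripke models and for constant-domain models): the linear index of the constant-domain case together with the use of local linearity of the linearity case, all organized through the method of diagrams (Lemma~\ref{lem:build-sol}). As a preliminary, I would observe that the extension by continuity $\ovl{\sS}\colon\cInd(\cC)^{\op}\to\cKOrd$ is again Gödel-Dummett: local linearity passes to cofiltered limits (as already noted in the linearity proof), and the proof of the interpolation extension principle, with Lemma~\ref{lem:nonempty-limit} used in its evident dual form, shows that co-boundedness is preserved as well. Then I would set up a category $\cD$ whose objects are triples $(\cI,Z,(z_i)_{i\in\cI})$ with $\cI$ a linear order, $Z\in\cInd(\cC)$, and $(z_i)$ an increasing family in $\ovl{\sS}(Z)$, and whose morphisms $(\cI,Z,(z_i))\to(\cJ,W,(w_j))$ consist of an order embedding identifying $\cI$ with a sub-linear-order of $\cJ$ together with a morphism $Z\to W$ in $\cInd(\cC)$ with $Z\to^{z_i}\sS = Z\to W\to^{w_i}\sS$ for all $i\in\cI$; colimits of chains in $\cD$ are computed as in the linearity proof. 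On an object of $\cD$, let $P$ be the set of \emph{problems} of the two kinds appearing in the definition of a Gödel-Dummett model — the existential ones (a lax square over a world $i$, to be completed without enlarging $\cI$) and the universal ones (to be completed after passing to some $j\ge i$) — and let $S\subseteq P$ pick out the problems admitting a solution. One checks, just as before, that $P$ preserves colimits of chains and $S$ is a subfunctor, so that only the hypothesis of Lemma~\ref{lem:build-sol} remains.

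The substance of the proof is to solve, after a morphism of $\cD$, each problem of either kind. For an existential problem at a world $i$, I would apply the strong interpolation property (Remark~\ref{rmk:p-axioms-1+2}) together with the interpolation extension principle to obtain $Z\to Z'$ and $a\in\ovl{\sS}(Z')$ lying over $z_i$ and solving the square, and then extend $a=z'_i$ to an increasing family $(z'_j)_{j\in\cI}\subseteq\ovl{\sS}(Z')$ lying over $(z_j)_{j\in\cI}$: descending from $i$ by co-boundedness of $\ovl{\sS}$, ascending from $i$ by the boundedness axiom \ref{p-axiom:interpol-2} for $\ovl{\sS}$, and using compactness of $\prod_{j\in\cI}\ovl{\sS}(Z')$ to reduce the existence of such a family to the case of a finite linear $\cI$, where the construction is an explicit induction along the chain; local linearity is not needed here. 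For a universal problem at $i$, I would first run the reduction step of the linearity proof: let $A$ be the initial segment $\{\,j : j\le i\ \text{or}\ n\to m\to\sS\ge n\to Z\to^{z_j}\sS\,\}$, adjoin a new world $\sup A$ carrying the directed limit $z_{\sup A}$ of $(z_j)_{j\in A}$ in the compact ordered space $\ovl{\sS}(Z)$, and replace $i$ by $\sup A$; after this step $n\to m\to\sS\not\ge n\to Z\to^{z_k}\sS$ for every $k>\sup A$. Then I would adjoin a further world $i'$ immediately above $\sup A$ (keeping the index linear), use the interpolation extension principle to obtain $Z\to Z'$ and $z'_{i'}=a$ solving the square laxly, and — exactly as in the linearity proof — invoke local linearity of $\ovl{\sS}$ to see that $Z\to^{z'_{i'}}\sS$ and $Z\to^{z'_j}\sS$ (for $j$ the world just above $i'$) are comparable, the wrong comparison being excluded precisely by the reduction step; this is what permits the family to be extended over the enlarged chain, again by co-boundedness, the boundedness axiom, and compactness.

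Feeding $S\subseteq P$ over $\cD$ into Lemma~\ref{lem:build-sol} then yields, from the given $(\cI,X,(x_i))$, a morphism in $\cD$ to some $(\cI',Y,(y_i))$ with $S=P$; by construction of $P$ this object is precisely a Gödel-Dummett model $(y_i)_{i\in\cI'}\subseteq\ovl{\sS}(Y)$ with $\cI\subseteq\cI'$ and $x_i = X\to Y\to^{y_i}\sS$ for all $i\in\cI$. Since every morphism used adds worlds only strictly above worlds already present, in particular nothing below the least world, if $\cI$ has a least element $0$ then $0$ remains least in $\cI'$. I expect the main obstacle to be the universal problem: making the reduction step and the insertion of the new world $i'$ cohere with \emph{both} the linearity of the index and the constancy of the underlying ind-object, and checking that the compactness argument for extending a family over a chain genuinely applies — that the order-respecting and factorization constraints are closed in the product and that the finite case goes through using only co-boundedness and the boundedness axiom. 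The existential case and the bookkeeping for $\cD$, $P$, and $S$ should be routine given the two preceding propositions.
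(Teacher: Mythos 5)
Your proposal is correct and follows essentially the same route as the paper's proof: the method of diagrams via Lemma~\ref{lem:build-sol} with existential and universal problems, the interpolation extension principle plus strong interpolation to create the new datum, co-boundedness to descend and boundedness to ascend along the chain, local linearity to place the new world, and compactness to reduce to finite index sets. The only (harmless) divergence is in the universal step, where the paper skips your $\sup A$ reduction and directly inserts the new world $p$ at the cut $\{j : x_j \leq X \to X' \to^{x'} \sS\}$, using local linearity to see that all $j$ above the cut satisfy $x_j > X \to X' \to^{x'} \sS$; your explicit remark that co-boundedness must be checked to survive extension by continuity (via the dual of Lemma~\ref{lem:nonempty-limit}) is a point the paper leaves implicit.
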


\begin{proof}
	We will again use the method of diagrams and apply Lemma~\ref{lem:build-sol}. Suppose we have a configuration as below.
	
	\[\begin{tikzcd}
		\sS & m \ar[l] \ar[dl,phantom,"\geq"{description,sloped}] \\
		X \ar[u,"x_i"] & n \ar[l] \ar[u]
	\end{tikzcd}\]
	
	Thanks to the interpolation extension principle, we can complete the diagram as follows for some $X'$ and $x'_i \in \sS(X')$.
	
	\[\begin{tikzcd}
		\sS&&\\
		&X' \ar[lu,"x'_i"{pos=0.35}] & m \ar[l] \ar[llu,bend right=20,""{below,name=A,pos=0.35}]\\
		&X \ar[uul,"x_i",bend left=20] \ar[u] & n \ar[l] \ar[u]
		\ar[from=A,to=2-2,phantom,"\geq"{description,sloped}]
	\end{tikzcd}\]
	
	Tanks to the boundedness of $\sS$, and thanks to the interpolation extension principle, we can define $x'_j \in \sS(X')$ for $j > i$ such that $x_j = X \to X' \to^{x'_j} \sS$. Symmetrically for $j < i$. (We also use compactness to reduce to the case where $\cI$ is finite.) Thus situations of the first kind can be solved.
	
	Suppose now we have a configuration as below.
	
	\[\begin{tikzcd}
		\sS & m \ar[l] \ar[dl,phantom,"\leq"{description,sloped}] \\
		X \ar[u,"x_i"] & n \ar[l] \ar[u]
	\end{tikzcd}\]
	
	We can complete it as below thanks to the interpolation property and boundedness of $\sS$.
	
	\[\begin{tikzcd}
		\sS&&\\
		&X' \ar[lu,"x'"'{pos=0.35}] & m \ar[l] \ar[llu,bend right=20]\\
		&X \ar[uul,"x_i",bend left=20,""'{name=A,pos=0.35}] \ar[u] & n \ar[l] \ar[u]
		\ar[from=A,to=2-2,phantom,"\leq"{description,sloped}]
	\end{tikzcd}\]
	
	Once again, we use compactness to reduce to the case where $\cI$ is finite. We define $\cI'$ as $\cI$ with one point $p$ added above all the $j \in \cI$ such that $x_j \leq X \to X' \to^{x'} \sS$. We define $x'_p = x'$. Using the co-boundedness of $\sS$, we define $x'_j \in \sS(X')$ for $j < p$ such that $x_j = X\to X' \to^{x'_j} \sS$. For all $j > p$, we have $x_j > X \to X' \to^{x'} \sS$ since $\sS$ is locally linear. So we can define $x'_j \in \sS(X')$ such that $x_j = X\to X' \to^{x'_j} \sS$ thanks to the boundedness of $\sS$. This solves situations of the second kind.
	
	We can now apply Lemma~\ref{lem:build-sol} and it proves the proposition.
\end{proof}
% !TeX spellcheck = en-US
% !TeX root = GM2022.tex

\section*{Acknowledgments}

We are grateful to Mai Gehrke for her generous advice and guidance, and to André Joyal for providing the inspiration for this paper. We also thank Pino Rosolini and Josh Wrigley for helpful discussions. Finally, we would like to thank the anonymous reviewer for their careful reading of the paper and for their many thoughtful remarks. The research reported here has been supported financially by the European Research Council (ERC) under the European Union's Horizon 2020 research and innovation program, grant agreement \#670624.

\AtNextBibliography{\small}
{\printbibliography[
heading=bibintoc,
title={References}
]}

\end{document}